\newcommand{\w}{\mathbf w}
\newtheorem{thm}{Theorem}[section]
\newtheorem{lemma}[thm]{Lemma}
\newtheorem{prop}[thm]{Proposition}
\newtheorem{claim}[thm]{Claim}
\newtheorem{cor}[thm]{Corollary}
\newtheorem{remark}[thm]{Remark}
\newtheorem{defi}[thm]{Definition}
\newcommand{\Z}{\mathbb{Z}}
\newcommand{\R}{\mathbb{R}}
\newcommand{\N}{\mathbb{N}}
\newcommand{\G}{G}
\newcommand\fX{\mathfrak X}
\renewcommand{\G}{\mathscr G}
\newcommand{\E}{\mathscr E}
\renewcommand{\H}{\mathscr H}
\newcommand{\rat}{\mathrm{rat}}
\newcommand{\Ch}{\mathrm{Chow}_{\mathrm{\tiny GS}}}
\renewcommand{\v}{\mathbf{v}}
\renewcommand{\u}{\mathbf u}
\newcommand{\e}{\mathbf e}
\newcommand{\Rat}{\mathscr I_{\rat}}
\title{The combinatorial Chow ring of products of graphs}
\author{Omid Amini}
\address{CNRS - D\'epartement de math\'ematiques et applications, \'Ecole Normale Sup\'erieure, Paris}
\email{oamini@math.ens.fr}
\begin{document}
\maketitle
\begin{abstract} 
We prove results describing the structure of a Chow ring associated to a product of graphs, which
arises from the Gross-Schoen desingularization of a product
of regular proper semi-stable curves over discrete valuation rings.  
By the works of Johannes Kolb and Shou-Wu Zhang,  this ring controls the behavior of 
the non-Archimedean height pairing on products of smooth proper curves over non-Archimedean fields.  We provide a complete description of the degree map, and 
prove vanishing results affirming a conjecture
of Kolb, which, combined with his work, leads to an analytic formula for the arithmetic intersection number of adelic
metrized line bundles on products of curves over complete discretely valued fields.  
\end{abstract}

\section{Introduction}
 Let $R$ be a complete discrete valuation ring with an algebraically closed residue field $k$ and fraction field $K$, and let $X$ be a smooth proper curve over $K$. 
 By semi-stable reduction theorem, replacing $K$ with a finite extension if necessary, we can find  a regular proper strict semi-stable model $\fX$ of $X$ over the valuation ring. Denote by 
 $\fX_s$ the special fiber of $\fX$, and let $G=(V, E)$ be the dual graph of $\fX_s$. For each vertex $v\in V$, denote by $X_v$ the corresponding 
irreducible component of $\frak X_s$.  The intersection products $X_v.X_u$ are described by the graph $G$ as follows:
\begin{align}\label{pairing:graph} 
\forall \,\,u,v\in V, \qquad X_u.\,X_v = \begin{cases}
             \textrm{number of edges }\{u,v\}  \textrm{ in } G\, & \textrm{ if } u\neq v,\\
             -\mathrm{val}_G(v) & \textrm{ if } u=v,
            \end{cases} 
\end{align}
where $\mathrm{val}_G(v)$ is the valence of $v$ in $G$. The intersection products satisfy the following two sets of relations:
\begin{itemize}
 \item[($\mathscr A1$)] For all $u,v\in V$, $X_v X_u = 0$ if $\{u,v\} \notin E$;
 \item[($\mathscr A2$)] For all $u\in V$, $X_u (\sum_{v\in V} X_v) =0$.
 \end{itemize}

 Consider the polynomial ring $Z(G) = \Z[C_v\,|\,{v\in V}]$ on variables $C_v$, and define the ideal $\mathscr I_{\rat} \subseteq Z(G)$ of elements \emph{rationally equivalent to zero} as the ideal generated by the polynomials 
 $C_uC_v$, for $\{u,v\} \notin E$, and $C_u (\sum_{v\in V} C_v)$ for $u\in V$. Define the Chow ring $\Ch(G)$ of the graph $G$ by $\Ch(G):=Z(G) / \mathscr I_{\rat}$, and note that we have a morphism of graded rings
 $\Ch(G) \rightarrow \mathrm{Chow}^c_{\fX_s}(\fX)$, where $\mathrm{Chow}^c_{\fX_s}(\fX)$ denotes the subring of the Chow ring with support $\mathrm{Chow}_{\fX_s}(\fX)$  of $\fX$ with support in $\fX_s$ generated by the 
 irreducible components of the special fiber. 
  
 For a graph consisting of a single edge $e=\{u,v\}$ on two vertices, we have $\Ch(e) = \mathbb Z[C_u,C_v]/(C_u^2+C_{u}C_v, C_v^2 + C_{u}C_v) \simeq \mathbb Z \oplus \mathbb ZC_u\oplus \mathbb Z C_v \oplus \mathbb Z C_u C_v$. For a general graph $G$, the structure of $\Ch(G)$ is completely described by an exact sequence of rings of the form
 \begin{equation}\label{eq:es1}
 0 \rightarrow \Ch(G) \rightarrow \prod_{e\in E} \Ch(e) \rightarrow \prod_{\substack{\{e_1,e_2 \} \in L(G)}} \mathbb Z,
 \end{equation}
 where $L(G)$ denotes the line graph of $G$ (see Definition~\ref{defi:linegraph}). 
  
 We have a (local) degree map $\deg: \Ch(G) \rightarrow \mathbb Z$ which is defined as follows. 
 For any edge $e = \{u,v\}\in E$, define $\deg_e: \Ch(e) \rightarrow \mathbb Z$ which sends an element $x$ of $\Ch(e)$ to the coefficient of $C_uC_v$ in $x$. The degree map $\deg$ is then the composition of the embedding 
 $\Ch(G) \hookrightarrow \prod_{e\in E} \Ch(e)$ with the map 
 $\sum_{e\in E} \deg_e$.  By definition, the degree map coincides with the intersection pairing~\eqref{pairing:graph}, i.e., for all $u, v\in V$, we have $\deg(C_uC_v) = X_u X_v$.
 
\medskip

Our aim in this paper is to provide a generalization of the above picture for the products of proper smooth curves, that we now describe.

\medskip

Let $X_1, \dots, X_d$ be proper smooth curves over $K$, and, replacing $K$ with a finite extension if necessary, consider a regular strict semi-stable model $\frak X_i$ of $X_i$ over the valuation ring for each $i$.
Starting from the product $\fX_1\times_{\mathrm{Spec}(R)}\dots \times_{\mathrm{Spec}(R)} \fX_d$, the
Gross-Schoen desingularization procedure~\cite{GS} provides a regular proper semi-stable model  $\fX$ of the product $X=X_{1}\times \dots \times X_{d}$ over the valution ring $R$. 
The desingularization  depends on the choice of a total order on the components of the special fiber of each $\fX_i$.

Denote by $G_1=(V_1,E_1), \dots, G_d = (V_d, E_d)$ the dual graphs of the special fibers of $\fX_1 \dots, \fX_d$, respectively, and suppose that a total order $\leq_i$ on the vertex set $V_i$ is given for each $i$.
The dual complex of the special fiber of  the Gross-Schoen model
$\fX$ of $X$ is a triangulation of the product $\mathscr G=G_1 \times \dots \times G_d$. When $\G$ is given by its natural cubical
structure with cubes corresponding to the elements of the product 
$\mathscr E=E_1\times \dots \times E_d$, the triangulation consists of the union of the standard triangulation of 
these $d$-dimensional cubes,  compatible with the fixed total orders on the vertex set of each graph $G_i$, see Section~\ref{sec:generalities} for the precise definition. 
The  vertex set $\G_0$ of the simplicial set $\G$ 
is the product $\G_0= V_1\times \dots \times V_d$ of the vertex sets, whose elements are in bijection with  irreducible components of
the special fiber $\fX_s$ of $\fX$: for an element $\v\in \G_0$, we denote by $X_\v$ the corresponding 
irreducible component of $\fX_s$.

Consider the Chow ring with support  $\mathrm{Chow}_{\fX_s}(\fX)$ (see~\cite{Fulton} for the definition of Chow rings with support). The intersection products between $X_\v$, for $\v\in \mathscr V$, in  $\mathrm{Chow}_{\fX_s}(\fX)$ verify three types of 
equations (see $(\mathscr R1), (\mathscr R2), (\mathscr R3)$ below), given by Kolb in~\cite{Kolb1, Kolb2}, two of which are higher dimensional analogous of the relations $(\mathscr A1)$ and $(\mathscr A2)$. 
This leads to the definition of a Chow ring for the product of graphs, that we describe next.
 
 \subsection{Definition of the Chow ring $\Ch(\G)$.} Denote by $Z(\G)$ the polynomial ring with coefficients in $\Z$ generated by the vertices of $\G$, namely, 
 \[Z(\G) := \Z[C_{\v} \,|\, \v\in \mathscr G_0],\]
 where the variables $C_{\v}$ are associated to the vertices (0-simplices) of $\G$.
 We view $Z(\G)$ as a graded ring where each variable $C_\v$ is of degree one.

 The  graded ideal $\mathscr I_{\rat}$ of all the elements of $Z(\G)$ which are \emph{rationally equivalent to zero}  is defined as the ideal generated by the following three types  of generators:
 
 \begin{itemize}
  \item[($\mathscr R1$)] $C_{\v_1} C_{\v_2} \dots C_{\v_k}$ \qquad for $k\in \N$  and elements $\v_j\in \G_0$ such that $\v_1,\dots, \v_k$ do not form a simplex in $\G$\,; 
  \item[($\mathscr R2$)]  $ C_{\u} \Bigl(\,\sum_{\v \in \G_0} C_{\v}\,\Bigr)$\qquad for any vertex  $\u \in \G_0$; and
  \item[($\mathscr R3$)]   $C_{\u} C_{\mathbf{w}}\Bigl(\,\sum_{\v \in \G_0: v_i = u_i}C_{\v}\,\Bigr)\, $ \quad for any pair of vertices $\u, \mathbf{w} \in \G_0$ and any index $1\leq i \leq d$ with $u_i \neq w_i$.
 \end{itemize}
 
 \begin{defi}\rm
  The combinatorial Chow ring of $\G$ is the graded ring $\Ch(\G) := Z(\G)/\Rat$. 
 \end{defi}

 The ring $\Ch(\G)$ is the universal graded commutative ring with generators indexed by vertices of $\G$ and  verifying relations ($\mathscr R1$), ($\mathscr R2$), ($\mathscr R3$) above; in particular 
  we get a well-defined map
  \begin{equation}\label{eq:univ}
  \alpha_\fX: \Ch(\G) \rightarrow \mathrm{Chow}_{\mathfrak X_s}(\mathfrak X),
  \end{equation}
  for $\mathfrak X$ the Gross-Schoen desingularization of the products of curves $\fX_1 \times \dots \times \fX_d$ (where for each $i$, the dual graph of the special fiber of $\fX_i$ is isomorphic to $G_i$).

  \subsection{Statement of the main results} The main contributions of this paper are the structure Theorem~\ref{thm:st} which describes the additive structure of the graded pieces of the Chow ring, 
  the localization Theorem~\ref{thm1}, which is a generalization of the exact sequence~\eqref{eq:es1} to products of graphs, the calculation of the degree map, which is a generalization 
  of~\eqref{pairing:graph} to higher dimension, and a vanishing theorem confirming a conjecture of Kolb. We now  discuss these results.

  \subsubsection{Localization} We prove the localization theorem~\ref{thm1}, a generalization of the exact sequence~\eqref{eq:es1} to products of graphs, which shows that the calculations in the ring 
  $\Ch(\G)$ can be reduced to calculation in the Chow ring of the hypercubes of dimension $d$, namely, the products of $d$ copies of the complete graph $K_2$ on two vertices.

 \medskip

 Recall first that a homomorphism of graphs $f: H \rightarrow G$ is a map $f: V(H) \rightarrow V(G)$ such that for any edge $\{u,v\}\in E(H)$, either $f(u) =f(v)$ or $\{f(u), f(v)\}\in E(G)$. 
 
Let $H_1,\dots, H_d$ be $d$ simple connected graphs. Define $\mathscr H= H_1\times \dots \times H_d$ with the induced simplicial structure. Suppose that for each $i=1, \dots, d$, a homomorphism of graphs 
 $f_i : H_i \rightarrow G_i$ is given (such that $f_i$ respects also the two fixed orderings on the vertex set of $H_i$ and $G_i$).
  The product of $f_i$ leads to 
 a morphism of simplicial sets 
 $f: \mathscr H \rightarrow \G$, and induces a morphism of graded rings $f^* :  Z(\G) \rightarrow Z(\H)$, which is defined on the level of generators by sending $C_\v$ for $\v \in \G_0$ to
 \[f^*(C_\v) = \sum_{\substack{\u\in \mathscr H_0\\ f(\u) =\v}} C_\u.\]
 It is not hard to see that the map $f^*$ sends $\Rat(\G)$ to $\Rat(\H)$ and induces a well-defined map of Chow rings $f^*: \Ch(\G) \rightarrow \Ch(\H)$, c.f., Proposition~\ref{prop:func}.
 
 Let now $G_1=(V_1, E_1), \dots, (G_d, E_d)$ be a collection of $d$ simple connected graphs, and $\G = \prod_i G_i$, as above.
 For $\e\in \E = E_1 \times \dots \times E_d$, let $\square_\e =e_1\times \dots \times e_d$.  
  Regarding each edge $e_i$ as a subgraph of $G$ isomorphic to $K_2$, 
  and applying the functoriality to the inclusions of the subgraph $e_i \hookrightarrow G_i$, we get 
  a map $\iota_{\e}^*: \Ch(\G) \rightarrow \Ch(\square_\e) \simeq \Ch(\square^d)$ associated to the 
  inclusion map of simplicial sets
 $$\iota_{\e}:\square_{\e} \hookrightarrow \G.$$ 
 By definition, the map $\iota^*_\e$ is identity on the generators associated to the vertices of 
 $\square_\e$, and is zero otherwise.

 Write $e_i = u_iv_i$ for vertices $u_i <v_i$ of $G_i$ with respect to the total order of $G_i$, and define the two corresponding facets $\square_{\e,{0}_i }$ and $\square_{\e,{1}_i}$ of $\square_\e$ associated to $u_i$ and $v_i$, respectively, by 
 \begin{align*}
 \square_{\e,0_i}&:=e_1\times \dots \times e_{i-1}\times \{u_i\} \times e_{i+1} \times \dots \times e_d, \textrm{ and }\\
 \quad \square_{\e,1_i}&:=e_1\times\dots \times e_{i-1}\times \{v_i\} \times e_{i+1} \times \dots \times e_d.
 \end{align*}
 Let $\iota_{\e,0_i}:\square_{\e,0_i} \hookrightarrow \square_{\e}$, and similarly,  $\iota_{\e,1_i}:\square_{\e,1_i} \hookrightarrow \square_{\e}$, the inclusion maps. 
By functoriality, we get maps 
$\iota_{\e,0_i}^* : \Ch(\square_\e)  \rightarrow \Ch(\square_{\e,0_i})$ 
and $\iota_{\e,1_i}^* : \Ch(\square_\e)  \rightarrow \Ch(\square_{\e,1_i})$ on the level of Chow rings.  

\medskip
We next recall the definition of the line graph of a give graph.

\begin{defi}\label{defi:linegraph} \rm Let $G = (V,E)$ be a given simple connected graph on vertex set $V$ and edge set $E$. 
The \emph{line graph} of $G$ denoted by 
$L(G)$ is the graph on vertex set $E$ and with edge set  
consisting of all the pairs $\{e,e'\}\subset E$ with $e$ and $e'$ incident edges in $G$. 
\end{defi}

Let $G_1 = (V_1,E_1), \dots, G_d=(V_d,E_d)$ be a collection of $d$ simple connected graphs as before. 
For each $i=1,\dots, d$, define the set 
$\E_i:=E_1\times \dots \times E_{i-1} \times E(L(G_i)) \times E_{i+1} \times \dots \times E_d$. 
Thus, an element $\mathbf x$ of 
$\E_i$ is a collection $e_j \in E_j$, for $j\neq i$, and $\{e_{i,1}, e_{i,2}\}$ with $e_{i,1},e_{i,2} \in E_i$ and 
$e_{i,1} \cap e_{i_2} \neq \emptyset$. The element $\mathbf x \in \E_i$ therefore gives two hypercubes 
$\square_{\e_1}$ and $\square_{\e_2}$ in $\G$, with 
$\e_k=e_1 \times \dots \times e_{i-1}\times e_{i,k} \times \dots \times e_{d}$, for 
$k=1,2$. Note that $\square_{\e_1}$ and $\square_{\e_2}$ share the facet $\square_{\mathbf x}:= e_1 \times \dots \times e_{i-1} 
\times (e_1 \cap e_2) \times e_{i+1} \times \dots \times e_d$. Denote by $\iota_{{\mathbf x},1}$ and $\iota_{{\mathbf x},2}$ the inclusion of 
$\square_{\mathbf x}$ in $\square_{\mathbf e_1}$ and $\square_{\mathbf e_2}$, respectively. 
Denoting by $j_{{\mathbf x}}^*: 
\Ch(\square_{\e_1}) \times \Ch(\square_{\e_2}) \rightarrow \Ch(\square_{\mathbf x})$ the map which sends 
the pair $(\alpha, \beta)$ to $\iota_{{\mathbf x},1}^*(\alpha) -  \iota_{{\mathbf x},2}^*(\beta)$, 
we get a map 
$$j :  \prod_{\e\in \E} \Ch(\square_{\e}) \longrightarrow \prod_{i=1}^d \prod_{\mathbf x \in \E_i} \Ch(
\square_{\mathbf x}).$$
Note that the $\mathbf x$-coordinate of $j$ is the composition of 
$j_{\mathbf x}^*$ with the projection from 
$$\prod_{\e\in \E} \Ch(\square_{\e}) \to \Ch(\square_{\e_1}) \times \Ch(\square_{\e_2}).$$
\begin{remark}\rm 
The map $j_{\mathbf x}^*$, and so $j$, is well-defined only up to the sign consisting in changing the role 
of $e_1$ and $e_2$. In order the get a well-defined map, we can fix a total order on the set $E_i$, 
for $i=1, \dots, d$, and for any edge $\{e_{i,1}, e_{i,2}\}$ of $L(G_i)$ require that $e_{i,1}< e_{i,2}$ with respect to this total order.
We remark that the choice of the sign is irrelevant for what follows.
\end{remark}

 \medskip
 
With these notations, we can state our localization theorem. 
 \begin{thm}\label{thm1} The map of graded rings
$  \prod_{{\bf e} \in \mathcal E}\, \iota_{\bf e}*: \Ch(\G) \rightarrow \bigoplus_{\e \in \E} \Ch(\square_{\mathbf e})$ is injective and identifies  $\Ch(\G)$ with the kernel of the map
\[j:\prod_{\e \in \E} \Ch(\square_{\e}) \longrightarrow \prod_{i=1}^d \prod_{\mathbf x \in \E_i} \Ch(
\square_{\mathbf x}).\]
 \end{thm}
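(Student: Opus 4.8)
The plan is to prove the theorem in two stages: first establish injectivity of $\prod_{\e}\iota_\e^*$, then identify the image with $\ker j$.

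\medskip

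\textbf{Injectivity.} First I would reduce to a statement about the additive structure of each graded piece $\Ch(\G)^{(p)}$, using the structure Theorem~\ref{thm:st} (quoted above) which describes $\Ch(\G)^{(p)}$ additively. A monomial $C_{\v_1}\cdots C_{\v_k}$ survives modulo $(\mathscr R1)$ only when $\{\v_1,\dots,\v_k\}$ spans a simplex of $\G$, and every simplex of $\G$ lies inside some hypercube $\square_\e$. Thus any element of $\Ch(\G)$ is, modulo $\Rat$, a $\Z$-linear combination of monomials supported on the hypercubes, and the map $\iota_\e^*$ simply reads off (via the structure theorem's normal form) the part supported on $\square_\e$. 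Concretely: if $x \in \Ch(\G)$ maps to zero in every $\Ch(\square_\e)$, then writing $x$ in terms of a $\Z$-basis furnished by Theorem~\ref{thm:st} — each basis element being visibly detected by exactly one $\iota_\e^*$ (up to the overlap on shared facets, which is where $(\mathscr R3)$ enters) — forces $x=0$. The clean way to organize this is to observe that $\iota_\e^*$ is the identity on generators $C_\v$ with $\v \in \square_\e$ and kills the rest, so on the normal-form side $\prod_\e \iota_\e^*$ is essentially the identity embedding of a direct sum of free pieces.

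\medskip

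\textbf{Image equals $\ker j$.} That the image lands in $\ker j$ is a formal consequence of functoriality (Proposition~\ref{prop:func}): for $\mathbf x \in \E_i$ giving hypercubes $\square_{\e_1}, \square_{\e_2}$ sharing the facet $\square_{\mathbf x}$, the two composites $\square_{\mathbf x} \hookrightarrow \square_{\e_k} \hookrightarrow \G$ agree as maps of simplicial sets, so $\iota_{\mathbf x,1}^*\circ\iota_{\e_1}^* = \iota_{\mathbf x,2}^*\circ\iota_{\e_2}^*$ on $\Ch(\G)$, i.e.\ $j\circ(\prod_\e\iota_\e^*) = 0$. For the reverse inclusion — the heart of the theorem — I would take a family $(\alpha_\e)_{\e\in\E} \in \ker j$ and build a preimage in $\Ch(\G)$. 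Using the normal form from Theorem~\ref{thm:st}, each $\alpha_\e$ is a combination of monomials in the $C_\v$, $\v\in\square_\e$; the condition $j((\alpha_\e))=0$ says precisely that whenever two hypercubes $\square_{\e_1},\square_{\e_2}$ meet along a facet $\square_{\mathbf x}$, the restrictions $\iota_{\mathbf x,1}^*\alpha_{\e_1}$ and $\iota_{\mathbf x,2}^*\alpha_{\e_2}$ coincide in $\Ch(\square_{\mathbf x})$. One then glues the $\alpha_\e$ into a single polynomial representative $\tilde\alpha \in Z(\G)$ (coefficients on monomials supported in a hypercube are read off from the corresponding $\alpha_\e$; the gluing condition guarantees consistency on overlaps, after accounting for the relations $(\mathscr R2)$, $(\mathscr R3)$ which redistribute mass along shared facets and within fibers $\{v_i = u_i\}$), and checks that $\tilde\alpha$ maps to $(\alpha_\e)$ under $\prod_\e\iota_\e^*$.

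\medskip

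\textbf{Main obstacle.} The difficulty is entirely in this last gluing step, and specifically in bookkeeping the relations $(\mathscr R2)$ and $(\mathscr R3)$: a monomial supported on a lower-dimensional face $\sigma$ shared by several hypercubes is, modulo $\Rat$, expressible in several ways, and one must ensure the local data $(\alpha_\e)$ patch to something well-defined in $\Ch(\G)$ and not merely in $Z(\G)$. I expect the right approach is an induction — either on the dimension $d$, peeling off one factor $G_d$ and using the one-variable case~\eqref{eq:es1} as the base, or on the "size" of the product (number of edges) via a Mayer–Vietoris style decomposition $\G = \G' \cup \square_\e$ along a single top hypercube — so that at each stage the gluing is over a single facet and reduces to the two-hypercube case, which in turn is handled by the explicit presentation of $\Ch(\square^d)$. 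The compatibility with the grading and with the explicit $\Z$-bases of Theorem~\ref{thm:st} should make each inductive step a finite linear-algebra verification rather than an abstract diagram chase.
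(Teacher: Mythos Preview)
Your overall strategy --- reduce everything to the additive description of Theorem~\ref{thm:st} and then patch --- is the right one, and your surjectivity-onto-$\ker j$ sketch is essentially what the paper does: it enumerates the hypercubes $\e_1,\dots,\e_N$ and, for each new cube $\e_{l+1}$, subtracts off the discrepancies $\delta_t = (\lambda_{t-1}-\gamma_{\e_t})|_{\e_{l+1}\cap\e_t}$ with the already-fixed cubes; the key point (which you gesture at but do not state) is that each $\delta_t$ lies in $\mathscr I_k^{nd}(\square_{\e_{l+1}\cap\e_t}) \subset \mathscr I_k^{nd}(\G)$ by the structure theorem applied to the facet and by the trivial inclusion of Proposition~\ref{prop:triv3}, so the corrections stay in the rational equivalence class. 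That is the whole bookkeeping --- no induction on $d$ or Mayer--Vietoris is needed.

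Your injectivity argument, however, has a genuine gap. The claim that ``each basis element is visibly detected by exactly one $\iota_\e^*$'' is false: a non-degenerate $k$-simplex lying on a facet shared by several hypercubes is detected by all of them, and more seriously, Theorem~\ref{thm:st} does \emph{not} furnish a $\Z$-basis of $\Ch^{k+1}(\G)$ indexed by simplices --- it only presents $\Ch^{k+1}(\G)$ as the quotient $\Z\langle\G_k^{nd}\rangle/\mathscr I_k^{nd}$. So knowing that a representative $\alpha\in\Z\langle\G_k^{nd}\rangle$ has $\alpha|_\e$ \emph{rationally equivalent} to zero in each $\square_\e$ tells you only that $\alpha|_\e\in\mathscr I_k^{nd}(\square_\e)$, not that $\alpha|_\e=0$ in $\Z\langle\square_{\e,k}^{nd}\rangle$; you cannot read off vanishing of $\alpha$ from this. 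The paper's fix is a short correction argument: among all representatives of the class, choose $\alpha$ maximizing the number of cubes on which $\alpha|_\e=0$ literally; if some $\alpha|_{\e_0}\neq0$, then since $\alpha|_{\e_0}\in\mathscr I_k^{nd}(\square_{\e_0})\subset\mathscr I_k^{nd}(\G)$, the element $\alpha':=\alpha-\alpha|_{\e_0}$ is still a representative but vanishes on strictly more cubes, contradicting maximality. This is the missing step you need.
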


In other words, the ring $\Ch(\G)$ is the inverse limit of the Chow ring of cubes of $\G$ for the diagram of maps induced by the inclusion of cubes.
Endowing the simplicial set $\G$ with the cubical topology with a basis of open sets consisting of the products of subgraphs of $G_i$, the localization theorem ensures that the Chow rings 
 form a sheaf for the coverings of $\G$ with open sets whose union covers all the simplices of $\G$.

 \medskip
 
 The proof of this theorem is given in Section~\ref{sec:loc}.

\subsubsection{Description of the additive structure of $\Ch(\G)$} 
The proof of the localization theorem is indirect, and is based on a structure theorem which provides a description of the Chow groups in terms of 
non-degenerate simplices of the product $\G$ and specific relations, taking into account the cubical structure of $\G$. We now describe this.

\medskip

The total orders $\leq_i$ on the vertex sets $V_i$, $i=1,\dots, d$, induce a partial order $\leq$ on $\G_0$ defined by saying $\u=(u_1, \dots, u_d) \leq \v=(v_1, \dots, v_d)$ if $u_i \leq_i v_i$ for each $i$.

\medskip

  Let $\u<\v$ be two elements of $\G_0$ such that $\{\u,\v\}$ forms a one-simplex. This means that for each $i$, we have $u_i=v_i$ or $\{u_i,v_i\} \in E_i$, c.f. Section~\ref{sec:generalities}.
  Denote by $I(\u,\v)$  the set of all indices $i$ with $u_i<v_i$, and let $\e_{\u,\v}$ be the cube of dimension $|I(\u,\v)|$ formed by all the vertices $\mathbf z=(z_1, \dots, z_d)$ in $\G_0$ with 
  $z_i \in \{u_i,v_i\}$, i.e., $\e_{\u,\v} = \prod_{i=1}^d \{u_i,v_i\}$.
  
  \medskip
  
For any integer $k\in \mathbb N$, denote by $\G_k^{nd}$ the set of all non-degenerate $k$-simplices of $\G$. Each element $\sigma$ of $\G_k^{nd}$ is a sequence $\u_0 < \u_1<\dots <\u_k$ of vertices $\u_i\in \G_0$ 
such that $\{\u_j,\u_{j+1}\}$ is a 1-simplex of $\G$, for any $0\leq j\leq k-1$.

 Let $\sigma$ be a non-degenerate $k$-simplex of $\G$. For two indices $1\leq i,j \leq d $ lying both in $I(\u_{t}, \u_{t+1})$ for some $0\leq t \leq r-1$, define 
 \[\widetilde R_{\sigma,i,j} := \sum_{\substack{\w\in \e_{\u_t, \u_{t+1}}\\ w_i =u_{t,i}}} C_\w - \sum_{\substack{\w \in \e_{\u_t,\u_{t+1}}\\ w_j =u_{t,j}}} C_\w.\]
  For any $k$-simplex $\sigma \in \G_k$, denote by $C_\sigma$ the product (with multiplicity) of $C_\v$ over all vertices of $\sigma$, i.e., $C_\sigma:=\prod_{\v \in \sigma} C_\v.$ 
  Using $(\mathscr R1)$ and $(\mathscr R3)$, one verifies that $C_\sigma \widetilde R_{\sigma, i,j} \in \mathscr I_{\rat}$. 
 
 \medskip

 For any $k\in \mathbb N$, denote by $\mathbb Z\langle \G_{k}^{nd}\rangle$ the $\mathbb Z$-submodule of $Z(\G)$  generated by all elements $C_{\sigma}$ for $\sigma \in \G_k^{nd}$.
 By definition of the simplicial structure, one sees that for an element $\sigma \in \G_{k}^{nd}$ consisting of vertices $\u_0<\dots <\u_k$, and for $i,j \in I(\u_t, \u_{t+1})$ as above, the product 
 $C_\sigma \widetilde R_{\sigma, i,j}$ lies in
 $\mathbb Z\langle \G_{k}^{nd}\rangle$. Denote by $\mathscr I^{nd}_{k}$ the $\mathbb Z$ submodule of $\mathbb Z\langle \G_{k}^{nd} \rangle $ generated by all the elements 
 $C_\sigma \widetilde R_{\sigma, i,j}$, for $\sigma \in \G_k^{nd}$ and $i,j\in I(\u_t, \u_{t+1})$ as above.  We have the following theorem.
 \begin{thm}\label{thm:st} For any non-negative integer $k$, we have 
\[\Ch^{k+1}(\G) \simeq \mathbb Z\langle \G_{k}^{\textrm{nd}}\rangle / \mathscr I^{nd}_{k}. \]
\end{thm}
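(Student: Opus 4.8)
The plan is to describe $\Ch^{k+1}(\G)$ through an explicit straightening algorithm that rewrites, modulo $\Rat$, an arbitrary degree-$(k+1)$ monomial as an integral combination of the classes $C_\sigma$, $\sigma\in\G_k^{nd}$, and then to check that the only relations this forces among these classes are the generators of $\mathscr I^{nd}_k$. First I would record the obvious map: since each generator $C_\sigma\widetilde R_{\sigma,i,j}$ of $\mathscr I^{nd}_k$ lies in $\Rat$ (expand it with $(\mathscr R1)$ and $(\mathscr R3)$), the inclusion $\mathbb Z\langle\G_k^{nd}\rangle\hookrightarrow Z(\G)^{k+1}$ followed by the projection onto $\Ch^{k+1}(\G)$ descends to a homomorphism $\bar\psi_k:\mathbb Z\langle\G_k^{nd}\rangle/\mathscr I^{nd}_k\to\Ch^{k+1}(\G)$, and the theorem says $\bar\psi_k$ is an isomorphism.

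For the straightening, to a monomial $m=C_{\v_0}\cdots C_{\v_k}$ attach the multiset of its vertices. If their underlying set is not a simplex of $\G$, then $m\equiv 0$ by $(\mathscr R1)$; if it is a non-degenerate $k$-simplex, then $m=C_\sigma\in\mathbb Z\langle\G_k^{nd}\rangle$; otherwise write the support as a chain $\w_0<\dots<\w_r$ with multiplicities $a_0,\dots,a_r$, where necessarily some $a_s\geq 2$ and $r<k$. Choosing such an $\w_s$, a neighbour of it in the chain (and using $(\mathscr R2)$ directly in the degenerate case $r=0$), and a coordinate in which this neighbour differs from $\w_s$, one applies $(\mathscr R3)$ (resp. $(\mathscr R2)$) to rewrite $m$ modulo $\Rat$ as an integral combination of monomials, discarding the non-simplex ones by $(\mathscr R1)$. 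A short inspection of which terms survive shows that each of them has strictly smaller complexity than $m$, where complexity is the pair $\bigl((k+1)-|\mathrm{supp}(m)|,\ \textstyle\sum_s a_s(m)\,s\bigr)$ in lexicographic order: either the support strictly grows (first entry drops), or it is unchanged with one unit of multiplicity transported toward the bottom of the chain (first entry stable, second drops). As these pairs are well-ordered the algorithm terminates, so every class of $\Ch^{k+1}(\G)$ is represented inside $\mathbb Z\langle\G_k^{nd}\rangle$, and $\bar\psi_k$ is surjective.

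For injectivity I would fix the choices in the algorithm once and for all (for instance: resolve the repeated vertex of highest index, via its upper neighbour when it has one and its lower neighbour otherwise, and the least admissible coordinate), turning it into a deterministic terminating rewriting, hence into a well-defined $\mathbb Z$-linear map $\rho_0:Z(\G)^{k+1}\to\mathbb Z\langle\G_k^{nd}\rangle$ which restricts to the identity on $\mathbb Z\langle\G_k^{nd}\rangle$ and satisfies $m-\rho_0(m)\in\Rat$ for every monomial $m$. Then injectivity of $\bar\psi_k$ follows formally from the single assertion
\[\rho_0\bigl(\Rat\cap Z(\G)^{k+1}\bigr)\subseteq\mathscr I^{nd}_k.\]
Indeed, $\rho_0$ being idempotent gives $Z(\G)^{k+1}=\mathbb Z\langle\G_k^{nd}\rangle\oplus\ker\rho_0$ with $\ker\rho_0\subseteq\Rat$ (as $x=x-\rho_0(x)\in\Rat$ for $x\in\ker\rho_0$), whence $\Rat\cap Z(\G)^{k+1}=\ker\rho_0\oplus\rho_0(\Rat\cap Z(\G)^{k+1})$, and $\rho_0(\Rat\cap Z(\G)^{k+1})$ equals $\mathscr I^{nd}_k$ (it is contained in it by the assertion, and contains it because $\mathscr I^{nd}_k\subseteq\Rat$ and $\rho_0$ fixes $\mathbb Z\langle\G_k^{nd}\rangle$); dividing out gives $\Ch^{k+1}(\G)\cong\mathbb Z\langle\G_k^{nd}\rangle/\mathscr I^{nd}_k$, realised by $\bar\psi_k$.

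Proving the assertion is where I expect the real work to lie. Since $\Rat\cap Z(\G)^{k+1}$ is spanned by products $\mu\,g$ of a monomial $\mu$ with a generator $g$ of type $(\mathscr R1)$, $(\mathscr R2)$ or $(\mathscr R3)$, it suffices to show $\rho_0(\mu g)\in\mathscr I^{nd}_k$ in each case. For type $(\mathscr R1)$ this is immediate, because $\mathrm{supp}(\mu g)$ then contains a non-simplex and $\rho_0(\mu g)=0$. The types $(\mathscr R2)$ and $(\mathscr R3)$ constitute a confluence (critical-pair) analysis of the rewriting system: one must check that applying such a relation and then straightening produces, modulo $\mathscr I^{nd}_k$, the same element as straightening directly, the relations one is forced to add to repair the critical pairs being exactly the $C_\sigma\widetilde R_{\sigma,i,j}$ --- which encode the two coordinate directions in which a non-degenerate simplex may be rerouted through a cube-step of dimension at least two. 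I would organise this as a case distinction on the positions, within their common chain, of the repeated vertex being resolved, its chosen neighbour, the coordinate, and the vertex $\u$ entering the $(\mathscr R2)$ or $(\mathscr R3)$ relation, using $(\mathscr R1)$ repeatedly to kill non-simplex terms; it looks cleanest to handle the single-hypercube case $\square_\e\simeq\square^d$ first --- where the non-degenerate top simplices are the maximal chains of $\{0,1\}^d$ and $\mathscr I^{nd}$ is generated by the ``square moves'' identifying consecutive ones up to sign --- and then to transport the argument over the cubes $\square_\e$ of $\G$. This finite but intricate verification is the main obstacle.
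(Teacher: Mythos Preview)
Your surjectivity argument is essentially the paper's moving lemma (its Theorem~\ref{thm:nd} and Proposition~\ref{prop:rewr}), repackaged with a clean complexity measure; that part is fine. Your formal reduction of injectivity to $\rho_0(\Rat\cap Z^{k+1})\subseteq\mathscr I^{nd}_k$ via an idempotent retraction is also correct and elegant.

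The genuine gap is the confluence step, which you yourself flag as ``the main obstacle'' but do not carry out. This is precisely where all the content of the theorem lives, and there is no evidence in your outline that a direct critical-pair analysis is tractable: the relations $(\mathscr R2)$ and $(\mathscr R3)$ are global sums over all vertices with a fixed coordinate, so each overlap produces a large, non-local expression, and it is not at all clear that straightening these expressions terminates in $\mathscr I^{nd}_k$ by a finite case check. Your suggestion to ``handle the single-hypercube case first and then transport the argument over the cubes'' is also dangerous: the localization theorem in the paper is deduced \emph{from} the structure theorem, so you cannot lean on it here, and there is no other mechanism offered for why the cubewise analysis globalises.

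The paper circumvents a direct confluence check entirely. Instead of fixing a deterministic rewriting and analysing overlaps, it introduces a decreasing filtration $\mathcal F^\bullet$ on $Z^{k+1}(\G)$ whose $l$-th piece consists of elements expressible (modulo $\mathscr I_1$) as sums $\sum_{t\ge l}\sum_{\tau\in\G_t^{nd}} a_{\tau,S}\,C_\tau\prod_{(i,\epsilon)\in S}R^{\epsilon}_{\tau,i}$. The key move (Proposition~\ref{prop:keyprop}) is to choose, for each $\tau\in\G_l^{nd}$, a transversal $J_\tau\subset I(\tau)$ meeting each $I(\u_j,\u_{j+1})$ once, and to project every $R$-factor onto $J_\tau$; the difference lands in $\mathcal F^{l+1}$. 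After also using $R^0+R^1\equiv\sum C_\v$ to reduce to a restricted index set $\mathcal B_\tau$, one reads off the contribution of a fixed $\tau$ to the monomials supported on its vertices alone: this is a polynomial in $C_{\u_0},C_{\u_0}+C_{\u_1},\dots,C_{\u_l}$, and since $\alpha$ is supported on non-degenerate simplices, that polynomial must vanish, forcing all the coefficients to zero (Claim~\ref{claim2}). A second, similar coefficient-matching argument (Lemma~\ref{lem:crucial2}) pins down $\mathcal F^{k-1}\cap\mathbb Z\langle\G_k^{nd}\rangle=\mathscr I^{nd}_k$. This two-step filtration/transversal argument replaces your open-ended confluence verification with a finite linear-algebra computation, and is the idea your proposal is missing.
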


 The existence of a surjection from $\mathbb Z\langle \G_{k}^{\textrm{nd}}\rangle / \mathscr I^{nd}_{k}$ to $\Ch^{k+1}(\G)$ is a consequence of the moving lemma, c.f. Theorem~\ref{thm:nd}. 
 The proof of the injectivity of this map, on the other hand, turns out to be quite tricky and technical.   This is given in Section~\ref{sec:str}.
 \subsubsection{Combinatorics of the degree map}
 Let $\square^d = \{0,1\}^d$, the $d$-dimensional hypercube, be $d$-fold product of the complete graph $K_2$ on two vertices $0<1$ with its standard simplicial structure.  
 It follows from the structure theorem that the Chow ring $\Ch(\square^d)$ is of rank one in graded degree $d+1$, i.e., $\Ch^{d+1}(\square^d) \simeq \mathbb Z$, generated by $C_\sigma$ for any non-degenerate $d$-simplex  
 $\sigma$ of $\square^{d}$.  This leads to a well-defined degree  map 
\[\deg: \Ch^{d+1}(\square^d) \rightarrow \Z.\]
 Combining this with the localization theorem, and the vanishing of $\Ch^{i}(\square^d)$ in degree $i\geq d+2$, which follows e.g. from the structure theorem, or the moving lemma, we infer that for any collection of 
 simple connected graphs $G_1 = (V_1,E_1), \dots, G_d=(V_d, E_d)$, we have $\Ch^{d+1}(\mathscr G) \simeq \mathbb Z^{|\mathscr E|}$. Therefore we get a degree map $\deg: \Ch^{d+1}(\G) \simeq \mathbb Z^{|\mathscr E|} \rightarrow \mathbb Z$, 
 by aditionning the coordinates in $\mathbb Z^{|\mathscr E|}$.

\medskip

Our next result gives a combinatorial formula for the value of the degree map.  
Combined with the map $\alpha_\fX$ in~\eqref{eq:univ}, this results in a concrete effective description of the local degrees in the Chow ring 
$\mathrm{Chow}^c_{\fX_s}(\fX)$ for the Gross-Schoen desingularization $\fX$ of a product of semi-stable $R$-curves $X_1, \dots, X_d$, generalizing~\eqref{pairing:graph} to higher dimension.

\medskip

Since $\Ch^{d+1}(\G)$ is generated by monomials, we can restrict to the case of a  monomial. In addition, by localization theorem, and the definition of the degree map, 
it will be enough to treat the case of the hypercube $\square^d$.

\medskip

 By definition of the simplicial structure, each (possibly degenerate) $d$-simplex $\sigma$ of $\square^d$ is of the form 
$\v_1^{n_1}\v_{2}^{n_2}\dots \v_{k}^{n_k}$ with $\v_1<\v_2<\dots<\v_k$, and $n_i\geq 1$ with $\sum_i n_i =d+1$.  We have $0\leq |\v_1|<\dots<|\v_k| \leq d$, where for any $\v\in \square^d$,  the \emph{length} $|\v|$ of $\v$ denotes  the number of coordinates of $\v$ equal to one.
Consider the set $[d] := \{0,1,\dots, d\}$. Let us say a point $|\v_i|$ is a \emph{neighbor} of a point $x \in [d] \setminus \bigl\{\,|\v_1|, \dots, |\v_k|\,\bigr\}$ if the interval formed by $x$ and $|\v_i|$ does not contain any other point of $\bigl\{\,|\v_1|,\dots, |\v_k|\,\bigr\}$ beside $|\v_i|$. In this way each point $x \in [d] \setminus \bigl\{\,|\v_1|, \dots, |\v_k|\,\bigr\}$ has either one or two neighbors among the points $|\v_1|,\dots, |\v_k|$.

Assume now that $n_i$ chips are placed on the  point $|\v_i|$ in $[d]$. The total number of chips is thus $\sum_i n_i =d+1$. We assume further that the chips are labelled. Each point $|\v_i| \in [d]$ chooses, once for all, one of its $n_i$ chips that she wants to keep, and decide to give all the extra remaining chips to $n_i-1$ of its neighbors in $[d] \setminus \bigl\{\,|\v_1|,\dots,|\v_k|\,\bigr\}$  in such a way that at the end, each point of $[d]$ has precisely one chip.
  In how many ways this can be done? 
 The following theorem states that, up to a $\pm$ sign, the degree of $\v_1^{n_1}\dots \v_k^{n_k}$ in $\Ch(\square^d)$ is given by this number. 
  
\begin{thm}\label{thm:main1-intro} Notations as above, let $C_\sigma = C_{\v_1}^{n_1}\dots C_{\v_k}^{n_k}$. 
One of the two following cases can happen.
\begin{enumerate}
 \item If there exists an $1\leq i <k$ such that $n_1+\dots+n_i > |\v_{i+1}|$, then   $C_{\sigma} =0$. Similarly, if there exists an $k\geq i \geq 2$ such that $n_{i}+ \dots + n_k > d-|\v_{i-1}|$, then $C_\sigma =0$.
 \item Otherwise, there exists  a sequence of integers $y_0, x_1,y_1, x_2, y_2, \dots, x_{k-1}, y_{k-1}, x_k$ verifying the following properties
 \begin{itemize}
   \item $|\v_1| = y_0$.
   \item For all $i=2,\dots, k$, $|\v_i| = |\v_{i-1}|+x_i+y_i+1$.
  \item $n_i = y_{i-1}+x_{i}+1 $ for all $i=1,\dots, k$,
   \end{itemize}
   and, in this case, we have
   \[\deg(C_\sigma) = (-1)^{d+1-k}\binom{y_0+x_1}{y_0} \binom{x_1+y_1}{x_1}\binom{y_1+x_2}{y_1} \dots \binom{x_{k-1}+y_{k-1}}{y_{k-1}}\binom{y_{k-1}+x_k}{x_k}.\]
\end{enumerate}
\end{thm}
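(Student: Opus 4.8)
The plan is to reduce everything to the hypercube and then induct on the ``defect'' $d+1-k$, using the relations $(\mathscr R1)$–$(\mathscr R3)$ to set up a recursion which turns out to be the one obeyed by the stated product of binomials. First I would invoke the localization Theorem~\ref{thm1} together with the definition of the degree map recalled in the introduction: both sides are computed cube by cube, so it suffices to compute $\deg(C_\sigma)$ for a monomial $C_\sigma=C_{\v_1}^{n_1}\cdots C_{\v_k}^{n_k}$ in $\Ch^{d+1}(\square^d)$, where $\v_1<\cdots<\v_k$ is a chain in $\{0,1\}^d$ and $\sum_i n_i=d+1$ (I write $S_\v=\{i:v_i=1\}$, so $\v<\w$ means $S_\v\subsetneq S_\w$ and $|\v|=|S_\v|$). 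By the structure Theorem~\ref{thm:st} applied to $\square^d$ one has $\Ch^{d+1}(\square^d)\simeq\mathbb Z$, generated by $C_\tau$ for any maximal chain $\tau$, and $\deg(C_\tau)=1$ (this is exactly the asserted formula in the case $k=d+1$, where every $x_i=y_i=0$ and the product of binomials equals $1$); so the content is to identify the integer $m_\sigma$ with $C_\sigma=m_\sigma C_\tau$. I will also use, as consequences of the structure theorem and of the local moving lemma Theorem~\ref{thm:nd}, that $\Ch^{m}(\square^d)=0$ for $m\ge d+2$, and that the moving lemma rewrites a degenerate simplex modulo $\Rat$ as a $\mathbb Z$-combination of non-degenerate ones, the resolution of a repeated vertex inserting only vertices lying between its neighbours in the chain.

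\emph{Part (1).} Suppose $n_1+\cdots+n_i>|\v_{i+1}|$ for some $i<k$, and factor
$C_\sigma = \bigl(C_{\v_1}^{n_1}\cdots C_{\v_i}^{n_i}C_{\v_{i+1}}\bigr)\cdot\bigl(C_{\v_{i+1}}^{\,n_{i+1}-1}C_{\v_{i+2}}^{n_{i+2}}\cdots C_{\v_k}^{n_k}\bigr)$.
In the first factor every vertex is $\le\v_{i+1}$, so by the local moving lemma it equals, modulo $\Rat$, a combination of $C_{\sigma'}$ with $\sigma'$ a non-degenerate simplex all of whose vertices are $\le\v_{i+1}$ — i.e.\ a chain of $(n_1+\cdots+n_i)+1\ge|\v_{i+1}|+2$ distinct vertices inside the sub-hypercube $\square^{S_{\v_{i+1}}}$, a triangulated ball of dimension $|\v_{i+1}|$, which has no simplex of dimension $>|\v_{i+1}|$. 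Hence the first factor vanishes and $C_\sigma=0$. The second, dual condition $n_i+\cdots+n_k>d-|\v_{i-1}|$ follows by applying the coordinatewise complementation $0\leftrightarrow1$ of $\square^d$, which induces a graded ring automorphism of $\Ch(\square^d)$ compatible with $\deg$, reversing the order and interchanging the two families of inequalities.

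\emph{Part (2).} Assume we are outside the vanishing regime; since then in particular $n_1\ge|\v_1|+1$ and all the complementary bottom/top partial sums hold, a short check shows the nonnegative integers $y_0,x_1,\dots,x_k$ of the statement exist and are unique. I argue by induction on $D:=d+1-k=\sum_i(n_i-1)$, the case $D=0$ being $C_\sigma=C_\tau$. For $D\ge1$, pick $j$ with $n_j\ge2$ and multiply the relation $C_{\v_j}\bigl(\sum_\v C_\v\bigr)=0$ of type $(\mathscr R2)$ by the degree-$(d-1)$ monomial $C_\sigma/C_{\v_j}^2$; deleting via $(\mathscr R1)$ all monomials supported on non-chains yields
\[ C_\sigma \;=\; -\sum_{j'\neq j}\bigl(\text{move one chip from }\v_j\text{ to }\v_{j'}\bigr)\;-\;\sum_{\w}\bigl(\text{delete one }\v_j,\text{ insert }\w\bigr), \]
the first sum over those $j'$ for which the result is again a chain, the second over vertices $\w\notin\{\v_1,\dots,\v_k\}$ fitting into the chain $\v_1<\cdots<\v_k$. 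Each ``insert'' term has defect $D-1$; the ``move a chip'' terms keep defect $D$, but each is either $0$ by Part~(1) or is removed by combining the identity with a further $(\mathscr R3)$-relation $C_{\v_{j'}}C_{\v_{j'+1}}\bigl(\sum_{\v:\,v_\ell=0}C_\v\bigr)=0$ (with $\ell\in S_{\v_{j'+1}}\setminus S_{\v_{j'}}$), so that, ordering the degree-$(d+1)$ monomials by the pair $(\text{defect},\ \text{lexicographic multiplicity vector})$, $C_\sigma$ becomes a $\mathbb Z$-combination of strictly smaller ones. Substituting the inductive hypothesis, $m_\sigma$ equals $-1$ times a sum of products of binomials indexed by the admissible insertions; grouping the insertions by the segment of $[d]\setminus\{|\v_1|,\dots,|\v_k|\}$ into which the new level falls rewrites this sum as an iterated Vandermonde/Pascal identity, which collapses to $\binom{y_0+x_1}{y_0}\binom{x_1+y_1}{x_1}\cdots\binom{y_{k-1}+x_k}{x_k}$, while the $(\mathscr R2)$-sign is used exactly once per unit drop of $D$, producing the factor $(-1)^{D}=(-1)^{d+1-k}$.

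\emph{Main obstacle.} The technical heart is Part~(2): arranging the reduction — which power $n_j$ to lower, and whether to use $(\mathscr R2)$ or $(\mathscr R3)$ — so that the ``move a chip'' contributions are genuinely absorbed, proving that the process terminates with respect to $(\text{defect},\ \text{multiplicity vector})$, and then recognising the resulting recursion on $m_\sigma$ as precisely the one satisfied by the stated product of binomial coefficients. Part~(1) feeds into this as the catalogue of ``absorbing'' configurations that keeps the recursion consistent, and is exactly the phenomenon the authors describe as ``quite tricky and technical''.
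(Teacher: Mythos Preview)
Your Part~(1) has a real gap. You claim that the moving lemma, applied to $C_{\v_1}^{n_1}\cdots C_{\v_i}^{n_i}C_{\v_{i+1}}$, produces only non-degenerate simplices with all vertices $\le\v_{i+1}$. Neither Theorem~\ref{thm:nd} nor Proposition~\ref{prop:rewr} asserts this, and it is false: the resolution in Proposition~\ref{prop:rewr} introduces vertices $\w>\u_l$ (the top vertex), so the rewritten expression escapes the sub-hypercube. Your conclusion is nonetheless salvageable, but via a different mechanism: choose $\ell\in S_{\v_{i+1}}\setminus S_{\v_i}$ and use the intersection map $\beta=\beta_{\v_{i+1},\ell}$ of Proposition~\ref{prop:intersection} (specialized in Proposition~\ref{lem:embedding}). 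Then the first factor equals $\beta\bigl(C_{\v_1}^{n_1}\cdots C_{\v_i}^{n_i}\bigr)$, where the argument lives in $\Ch^{\,n_1+\cdots+n_i}(\square^{|\v_{i+1}|-1})=0$ because $n_1+\cdots+n_i>|\v_{i+1}|$. The paper instead proves the vanishing by a direct decreasing induction on $n_1+\cdots+n_i+|\v_{i+1}|$ using $(\mathscr R3)$ (Claims~\ref{claim:deg} and~\ref{claim:deg2}); no appeal to the moving lemma is made.

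Your Part~(2) follows a genuinely different route from the paper, and the difference is instructive. You induct on the defect $D=d+1-k$ and open up with $(\mathscr R2)$; the paper inducts on the dimension $d$ and opens up with $(\mathscr R3)$. The paper's key structural input is the multiplicativity formula of Proposition~\ref{prop:aux1}: once a single new vertex $\v$ with $|\v|=|\v_1|+x_1$ is inserted, the degree factors as a product of two degrees in strictly smaller hypercubes, which is exactly where the product-of-binomials shape comes from (together with Proposition~\ref{prop:power2} for the base $k=1$). Your $(\mathscr R2)$-recursion instead produces, beside the defect-dropping ``insert'' terms, the same-defect ``move a chip'' terms $\v=\v_m$, $m\neq j$; you assert these are killed either by Part~(1) or by a further $(\mathscr R3)$. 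That is not automatic: moving a chip from $\v_j$ to $\v_m$ with $m<j$ only violates Part~(1) when $n_1+\cdots+n_m=|\v_{m+1}|$ already, so many such terms survive and must be cancelled by hand. The paper's use of $(\mathscr R3)$ instead of $(\mathscr R2)$ sidesteps this entirely, because the sum in $(\mathscr R3)$ is restricted to $\{w_\ell=0\}$, which (combined with $(\mathscr R1)$ and the already-proved vanishing) prunes the surviving new vertices to exactly those with $\v_1<\v<\v_2$ and $|\v|=|\v_1|+x_1$, plus the single $\v=\mathbf 0$ term that drops the dimension. Your approach can likely be pushed through, but it needs a careful termination argument and an explicit identification of the recursion; the paper's route explains the product structure conceptually via Proposition~\ref{prop:aux1}.

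Finally, a small correction: the phrase ``quite tricky and technical'' in the introduction refers to the injectivity in the structure Theorem~\ref{thm:st}, not to the present degree computation.
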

  Note that the product $\binom{y_0+x_1}{y_0} \binom{x_1+y_1}{x_1}\binom{y_1+x_2}{y_1} \dots \binom{x_{k-1}+y_{k-1}}{y_{k-1}}\binom{y_{k-1}+x_k}{x_k}$ in (2) is precisely the number of ways the extra (labelled) chips can be placed on the points of $[d] \setminus \{\ell_1,\dots, \ell_k\}$ so that each point receives precisely one chip from one of its neighbors. Otherwise, in case (1), it is not possible to place the chips ensuring to have one chip at each point of $[d]$.
  
\subsubsection{Fourier transform and a vanishing theorem}  
 Identifying the points of $\square^d$ with the elements of the vector space $\mathbb F_2^d$, it is possible to give a dual description of the Chow ring of the hypercube using the Fourier duality. 
 So let $\langle\,,\rangle$ be the scalar product on $\mathbb F_2^d$ defined by 
\[\forall\,\u,\v\in \mathbb F_2^d,\qquad \langle\v,\u\rangle  := \sum_{i=1}^d v_i.u_i \in \mathbb F_2.\]
For $\w \in \mathbb F_2^d$, define $F_\w$ by
\[F_\w := \sum_{\v\in\square^d} (-1)^{\langle \v,\w\rangle}C_\v.\]
By Fourier duality, we have for any $\v\in \mathbb F_2^d$, 
\[C_\v = \frac 1{2^d} \sum_{\v\in\square^d} (-1)^{\langle \v,\w\rangle }F_\w.\] It follows that the set $\{F_\w\}_{\w\in \square^d}$ forms another system of generators for the  
Chow ring $\Ch(\square^d)[\frac 12]$ localized at 2, that we call the Fourier dual of the set $\{C_\v\}_{\v\in \square^d}$. 

Denote by $\bf 1$, and $\bf 0$, the points of $\square^d$ whose coordinates are all equal to one, and zero, respectively.  Let $\e_i$ be the point of $\mathbb F_2^d$ with $i$-th coordinate equal to 1, and all the other coordinates equal to $0$. 
Kolb proved in~\cite{Kolb1}  that the following set of relations are verified by $\{F_\w\}$ in $\Ch(\square^d)$:

\begin{itemize}
 \item[($\mathscr R^* 1$)] For any $\bf w \in \mathbb F^d$, we have $F_{\bf 0} F_{\bf w}=0$;
 \item[($\mathscr R^*2$)] For any $i\in [d]$, and any $\bf w, \bf z \in \mathbb F^d$, we have $F_{\e_i}(F_{\bf w}-F_{\w+\e_i})(F_{\bf z}+ F_{\bf z+\e_i})=0$;
 \item[($\mathscr R^*3$)] For any pair of indices $i,j\in[d]$, and any $\bf w, \bf z$, we have $(F_{{\bf w}+\e_i+\e_j}-F_{\bf w})(F_{{\bf z}+\e_i+\e_j}-F_{\bf z}) = (F_{{\bf w}+\e_i}-F_{{\bf w}+\e_j})(F_{{\bf z}+\e_i}-F_{{\bf z}+\e_j})$.
\end{itemize}

We have the following more precise statement. Let $\widetilde \Rat(\square^d)$ be the ideal of $\Z[F_{\bf w}]_{{\bf w}\in \square^d}$ generated by the relations given by ($\mathscr R^*1$), ($\mathscr R^*2$), and ($\mathscr R^*3$) above, and define 
$$\widetilde \Ch(\square^d) := \Z[F_{\bf w}]_{\w\in \square^d}\,/\,\widetilde \Rat(\square^d).$$
\begin{thm}\label{thm:iso}
 The set of relations $(\mathscr R^* 1)$, $(\mathscr R^* 2)$, and $(\mathscr R^* 3)$ generate the ideal $\Rat(\square^d)$ in $\Z[\frac 12][C_{\bf v}]_{\v \in \square^d} = \Z[\frac 12][F_{\bf w}]_{\bf w\in \square^d}$. In particular,  we have $\widetilde \Ch(\square^d)[\frac 12]  =\Ch(\square^d)[\frac 12]$. 
\end{thm}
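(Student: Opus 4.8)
\textbf{Proof plan for Theorem~\ref{thm:iso}.}

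The plan is to prove the two ideals $\Rat(\square^d)$ and $\widetilde\Rat(\square^d)$ coincide after inverting $2$, by showing each is contained in the other. Since the Fourier transform $C_\v \mapsto F_\w$ is an invertible $\Z[\frac12]$-linear change of the generating set (the inversion formula is given in the excerpt), the two polynomial rings $\Z[\frac12][C_\v]$ and $\Z[\frac12][F_\w]$ are literally the same ring, so it makes sense to compare ideals inside it. One inclusion, $\widetilde\Rat(\square^d)[\frac12] \subseteq \Rat(\square^d)[\frac12]$, is exactly Kolb's computation: he verified that the relations $(\mathscr R^*1)$, $(\mathscr R^*2)$, $(\mathscr R^*3)$ hold in $\Ch(\square^d)$, i.e. that their defining polynomials lie in $\Rat(\square^d)$; I would just cite~\cite{Kolb1} for this direction (one may also re-derive it directly by expanding each $F_\w$ as the appropriate signed sum of $C_\v$'s and matching against $(\mathscr R1)$--$(\mathscr R3)$, but that is routine).

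For the reverse inclusion $\Rat(\square^d)[\frac12] \subseteq \widetilde\Rat(\square^d)[\frac12]$, the cleanest route is dimension-counting rather than an explicit rewriting of each generator of $(\mathscr R1)$--$(\mathscr R3)$ in terms of the starred relations. Concretely: $(\mathscr R^*1)$--$(\mathscr R^*3)$ are all homogeneous, so $\widetilde\Ch(\square^d)$ is graded, and by the first inclusion we have a surjection of graded $\Z[\frac12]$-algebras $\widetilde\Ch(\square^d)[\frac12] \twoheadrightarrow \Ch(\square^d)[\frac12]$. I would use Theorem~\ref{thm:st} (the structure theorem) together with the subsequent remarks on $\square^d$ — namely that $\Ch^k(\square^d)$ is free of known finite rank for $k \le d+1$ (with $\Ch^{d+1}(\square^d)\simeq\Z$) and vanishes for $k\ge d+2$, and $\Ch^0=\Z$ — to pin down the Hilbert function of $\Ch(\square^d)[\frac12]$ in every degree. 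It then suffices to show $\widetilde\Ch(\square^d)[\frac12]$ has Hilbert function bounded above by the same values in each degree; a surjection between graded modules that is a termwise isomorphism on ranks and whose target is free is an isomorphism, which forces the kernel — that is, $\Rat(\square^d)[\frac12]/\widetilde\Rat(\square^d)[\frac12]$ — to vanish.

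So the technical heart is an upper bound, degree by degree, on $\widetilde\Ch(\square^d)$: I need to show that modulo $(\mathscr R^*1)$--$(\mathscr R^*3)$ every monomial in the $F_\w$ of degree $k$ can be written as a $\Z[\frac12]$-combination of a spanning set no larger than $\dim_{\Q}\Ch^k(\square^d)$. The natural strategy is a normal-form / straightening argument on monomials $F_{\w_1}\cdots F_{\w_k}$: relation $(\mathscr R^*1)$ kills any monomial involving $F_{\mathbf 0}$; relation $(\mathscr R^*2)$, rearranged, lets one trade products involving $F_{\e_i}$ against shifts, and $(\mathscr R^*3)$ is a "commutator-type" exchange relation among the weight differences $\e_i+\e_j$ that should let one linearize the weight pattern of a monomial into a canonical ascending chain, matching the description of $\Ch^{k+1}(\square^d)$ by non-degenerate simplices $\u_0<\cdots<\u_k$ in Theorem~\ref{thm:st}. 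I expect \emph{this straightening step to be the main obstacle}: one must check the rewriting terminates and that the resulting spanning set is small enough, and carefully track where the denominator $2$ is genuinely needed (it enters precisely because passing between the $C$- and $F$-bases, and symmetrizing the quadratic relations $(\mathscr R^*2)$, $(\mathscr R^*3)$, involves halving). Once the matching upper bound is in hand, the isomorphism $\widetilde\Ch(\square^d)[\frac12]=\Ch(\square^d)[\frac12]$ and hence the equality of ideals follows formally as above.
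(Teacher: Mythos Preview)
Your plan takes a genuinely different route from the paper, and the hard direction is not actually established.

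For the inclusion $\Rat(\square^d)[\tfrac12]\subseteq\widetilde\Rat(\square^d)[\tfrac12]$ the paper does \emph{not} argue by dimension-counting. Instead it observes that each computation showing a starred relation lies in $\Rat$ is, after applying Fourier inversion on the appropriate product space $\mathbb F_2^N$, literally the same computation showing the matching unstarred relation lies in $\widetilde\Rat$. Concretely: expanding $F_{\bf 0}F_\w$ gives a $\pm$-linear combination of the generators $C_\u(\sum_\v C_\v)$ of $(\mathscr R2)$, and Fourier-inverting on $\mathbb F_2^d$ recovers each such generator as a $\Z[\tfrac12]$-combination of the $F_{\bf 0}F_\w$; the expansion of $F_{\e_i}(F_\w-F_{\w+\e_i})(F_{\bf z}+F_{{\bf z}+\e_i})$ is a signed sum of $(\mathscr R3)$-generators $C_\u C_\v R^\epsilon_i$, and Fourier-inverting on $\mathbb F_2\times\mathbb F_2^d\times\mathbb F_2^d$ gives each $(\mathscr R3)$-generator back; and the expansion of the $(\mathscr R^*3)$ expression is a signed sum of products $C_\u C_\v$ with $\u,\v$ incomparable, so Fourier-inverting on $\mathbb F_2^d\times\mathbb F_2^d$ recovers each $(\mathscr R1)$-generator. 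Thus the two inclusions are proved simultaneously, by three short calculations, with no appeal to the structure theorem.

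Your proposed route is not wrong in principle, but the ``straightening'' you outline is the entire content and you have not done it. Knowing the Hilbert function of $\Ch(\square^d)$ from Theorem~\ref{thm:st} is fine, but bounding the Hilbert function of $\widetilde\Ch(\square^d)$ from above by the same values amounts to producing, for each degree $k$, a spanning set in bijection with a basis of $\Z\langle\G_{k-1}^{nd}\rangle/\mathscr I^{nd}_{k-1}$ for $\G=\square^d$; your sketch of how $(\mathscr R^*2)$ and $(\mathscr R^*3)$ would accomplish this (``linearize the weight pattern into a canonical ascending chain'') is not a procedure one can check terminates or lands in the right set. In effect you would be reproving the structure theorem for the dual presentation, which is at least as hard as the theorem you are trying to establish. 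The paper's Fourier-duality argument bypasses all of this.
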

We now describe 
a criterion guaranteeing  the vanishing of  a monomial of the form $F_{\w_0} \dots F_{\w_d}$, for elements $\w_0,\dots, \w_d \in \mathbb F_2^d$. \medskip

  Let $\mathcal P = \{P_1,\dots,P_k\}$ be  a partition of $\{1,\dots, d\}$ into $k$ disjoint non-empty sets. For each $\w_i$, denote by 
$\alpha(\w_i,\mathcal P)$ the number of indices $1\leq i\leq k$ such that there exists $j\in P_i$ with $\w_j=1$.
 \begin{thm}\label{thm:vanishing-intro}
 If $\sum_{i=0}^d \alpha(\w_i, \mathcal P) <d+k$, then we have $F_{\w_0}\dots F_{\w_d}=0$ \textrm{in the Chow ring}. 
 \end{thm}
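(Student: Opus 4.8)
The plan is to reduce Theorem~\ref{thm:vanishing-intro} to a computation with the chip-firing interpretation of the degree map established in Theorem~\ref{thm:main1-intro}, passing through the Fourier-dual description of Theorem~\ref{thm:iso}. First I would observe that since $\Ch^{d+1}(\square^d) \simeq \Z$ is torsion-free and $\Ch^{d+2}(\square^d) = 0$, a monomial $F_{\w_0}\cdots F_{\w_d}$ of degree $d+1$ vanishes in $\Ch(\square^d)$ if and only if it vanishes in $\Ch(\square^d)[\frac12]$, which by Theorem~\ref{thm:iso} is the same as vanishing in $\widetilde\Ch(\square^d)[\frac12]$; and this in turn holds if and only if $\deg(F_{\w_0}\cdots F_{\w_d}) = 0$, because $\deg$ is an isomorphism on $\Ch^{d+1}$. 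So it suffices to compute $\deg(F_{\w_0}\cdots F_{\w_d})$ and show it vanishes under the stated inequality. Expanding each $F_{\w_i} = \sum_{\v}(-1)^{\langle \v,\w_i\rangle}C_\v$ gives
\[
\deg(F_{\w_0}\cdots F_{\w_d}) = \sum_{\v_0,\dots,\v_d \in \square^d} (-1)^{\sum_i \langle \v_i,\w_i\rangle}\, \deg(C_{\v_0}\cdots C_{\v_d}),
\]
and by Theorem~\ref{thm:main1-intro} only the tuples $(\v_0,\dots,\v_d)$ forming (after reordering and collecting multiplicities) a non-degenerate-supported $d$-simplex with the "consecutive length" constraint contribute, with the degree given by the product of binomial coefficients counting chip placements.

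Next I would reorganize this sum according to the chip-firing model. A nonzero term corresponds to a choice, for each of the $d+1$ indices $j\in\{0,\dots,d\}$, of a "level" $\ell_j = |\v_j| \in [d]$ such that the multiset of levels is a valid configuration, together with a chip-redistribution witnessing $\deg = \pm(\text{product of binomials})$; simultaneously the full vector $\v_j$ (not just its length) is constrained because $\langle\v_j,\w_j\rangle$ appears in the sign. The key idea is that a partition $\mathcal P = \{P_1,\dots,P_k\}$ of $\{1,\dots,d\}$ should be chosen to group the coordinate directions, and $\alpha(\w_i,\mathcal P)$ counts how many blocks $\w_i$ "touches". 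I would show that when $\sum_i \alpha(\w_i,\mathcal P) < d+k$, one can organize the tuples $(\v_0,\dots,\v_d)$ into pairs whose contributions cancel: concretely, find a coordinate direction, or a block $P_m$, in which some $\v_j$ can be flipped/translated so that the degree term $\deg(C_{\v_0}\cdots C_{\v_d})$ is preserved (the chip count only depends on the lengths $|\v_j|$ and their arrangement, not on which coordinates are set) while the sign $(-1)^{\sum\langle\v_i,\w_i\rangle}$ changes — this requires that the flip be "invisible" to the length constraints but "visible" to at least one $\w_i$. The counting inequality $\sum \alpha(\w_i,\mathcal P) < d+k$ is exactly the pigeonhole statement guaranteeing such a direction exists: the $d$ coordinate slots, grouped into $k$ blocks, cannot all be "pinned down" by the $\w_i$'s.

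More precisely, I would argue as follows. Fix a valid length-configuration with support levels $\ell_1 < \dots < \ell_k$ (abusing notation, recycling $k$ — I would rename the partition size if it conflicts) and the associated chip-firing data; the product-of-binomials formula shows that the chips not kept by their home level can be freely distributed among the "empty" positions in $[d]$, and translating such a distribution corresponds to choosing, within the relevant coordinate-direction intervals, which coordinates of each $\v_j$ are $1$ versus $0$. The freedom in this choice lives in the directions that lie strictly between consecutive occupied levels. Counting: the number of directions that are "free" in this sense, summed appropriately, is governed by $d - \sum_i (\alpha(\w_i,\mathcal P) - 1)\cdot(\text{something})$; I would make the bookkeeping precise so that $\sum_i \alpha(\w_i,\mathcal P) < d+k$ forces at least one free direction inside some block $P_m$ on which at least one $\w_i$ is nonzero. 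Then the involution "flip the $\ell_j$-th coordinate of $\v_j$ in that direction" (extended consistently to keep the configuration valid and the binomial product unchanged) is a fixed-point-free sign-reversing involution on the contributing tuples, so the whole sum is zero. The main obstacle — and the part I expect to be genuinely delicate — is setting up this involution so that it simultaneously (a) stays within the support of valid non-degenerate simplices handled by Theorem~\ref{thm:main1-intro}, (b) leaves $\deg(C_{\v_0}\cdots C_{\v_d})$ exactly invariant including the $(-1)^{d+1-k}$ prefactor, and (c) provably reverses the Fourier sign; matching the abstract inequality to the combinatorics of "which coordinate is free" is where the real content lies, and I would likely need an auxiliary lemma translating $\alpha(\w_i,\mathcal P)$ into a count of constrained coordinate-directions per contributing tuple.
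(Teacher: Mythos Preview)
Your strategy has a genuine gap at the decisive step. You correctly reduce to showing $\deg(F_{\w_0}\cdots F_{\w_d})=0$ and expand via Fourier, but the sign-reversing involution is never actually constructed---you only assert that one \emph{should} exist by a pigeonhole heuristic, and you concede yourself that this is ``genuinely delicate.'' The specific move you sketch, ``flip the $\ell_j$-th coordinate of $\v_j$ in that direction,'' cannot work as written: flipping any coordinate of $\v_j$ changes $|\v_j|$ by $\pm 1$, which alters the length data entering Theorem~\ref{thm:main1-intro} and hence the degree. Your alternative suggestion, that the freedom in the chip-redistribution corresponds to a choice of which coordinates of the $\v_j$ are $1$, conflates two different things: the chip placements are a counting device for a \emph{fixed} chain $\v_1<\dots<\v_k$, not a parametrization of the $\v_j$'s themselves. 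What one would actually need is an involution on chains (not on chip placements) with prescribed length profile that reverses $(-1)^{\sum\langle\v_i,\w_i\rangle}$, and nothing in your outline explains why the chain condition survives such a move.

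The paper's argument avoids the explicit degree formula and the combinatorics entirely. It first reinterprets the hypothesis $\sum_i\alpha(\w_i,\mathcal P)<d+k$ as disconnectedness of the bipartite incidence graph between $\{\w_0,\dots,\w_d\}$ and $\{1,\dots,d\}$; this yields a subset $V_1\subset[d]$ of size $r$ and $r+1$ of the $\w_i$'s whose support lies in $V_1$. It then proves the stronger statement that whenever $\w_0,\dots,\w_r$ have support in a set of size $r$, already $F_{\w_0}\cdots F_{\w_r}F_\v=0$ for \emph{every} $\v$. Using the pushforward $\eta_*:\widetilde\Ch(\square^r)\to\widetilde\Ch(\square^d)$ (Proposition~\ref{prop:func2}) this reduces to $F_{\e_1}\cdots F_{\e_r}F_{\e_1+\cdots+\e_r}F_\v=0$, which is dispatched by a short induction on $r$ using only the relations $(\mathscr R^*2)$ and $(\mathscr R^*3)$---a few lines of algebra, no chip-firing, no involutions. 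If you want to salvage your approach you would need to build the involution explicitly and verify the chain condition is preserved; but the paper's route is both shorter and proves a sharper vanishing (in degree $r+2$ rather than $d+1$).
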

 This property was conjectured by Kolb and is required in~\cite{Kolb2} in order to get the analytic description of the local degree map, that we briefly describe in the next section.

  \subsection{Analytic description of the local intersection numbers}
  
  Let $X$ be a smooth proper curve over a complete discretely valued field $K$ with an algebraically closed residue field. 
  The Berkovich analytification $X^{\mathrm{an}}$ of $X$ is a compact path-wise connected Hausdorff topological space which deformation retracts to a compact metric graph $\Gamma$~\cite{Ber90, BPR, Ducros}. 
  If $X$ admits a regular semi-stable model $\fX$ over the valuation ring of $K$, the metric graph $\Gamma$ has a model $(G, \ell)$ given by the dual graph $G=(V,E)$ of $\fX$ and 
  the edge length $\ell: E(G) \rightarrow \mathbb R$ given by $\ell(e)=1$ for all edges $e\in E$. (So $\Gamma$ is the metric realization of $(G,\ell)$ in the sense that each edge in $G$ is 
  replaced with an interval of length one, see e.g.~\cite{Ami, BR}.) 
  
  Any Cartier divisor $D$ on $\fX$ with support in the special fiber $\fX$ gives a map $f: V \rightarrow \mathbb Z$, that we can extend to $\Gamma$ by linear interpolation on interior points of the 
  intervals in $\Gamma$ corresponding to the edges of $G$. For two Cartier divisors $D_1, D_2 \in \mathrm{Chow}^1_{\fX_s}(\fX)$ with functions $f_1, f_2 : \Gamma \rightarrow \mathbb R$, the degree map given by the pairing~\eqref{pairing:graph} gives a number $\deg(D_1 D_2)$, which can be described analytically as 
  \begin{equation} \label{analyticdeg1}
  \deg(D_1D_2) = \langle f_1, f_2\rangle_{\mathrm{Dir}} = -\int_{\Gamma} f_1'f_2'.
  \end{equation}
  Here $\langle.\,,.\rangle_{\mathrm{Dir}}$ denotes the Dirichlet pairing on piecewise smooth functions on $\Gamma$~\cite{BPR, Zhang-ad}.

 By an approximation argument involving semi-stable models of curves $X_{K'}$ for finite extensions $K'/K$, and viewing Cartier divisors with support in the special fibers of semi-stable models of $X_{K'}$ 
  as piecewise linear functions on $\Gamma$, one can continuously extend the (degree) pairing between divisors to the full class of piecewise smooth functions on $\Gamma$ such that the equation above remains valid for 
  this more general class of functions~\cite{Zhang-ad}.

  \medskip
  
  Motivated by applications in arithmetic geometry, Zhang derived in~\cite{Zhang} a generalization of the analytic formula~\eqref{analyticdeg1} for the degree pairing in the case of a 2-fold product of a smooth proper 
  curve $X$ over $K$.  Kolb~\cite{Kolb2} later generalized this to $d$-fold 
  products of $X$ assuming the validity of the vanishing Theorem~\ref{thm:vanishing-intro}. We state his result in the more general setting of a product of smooth proper curves $X_1, \dots, X_d$. 
  
  Let $X_1, \dots, X_d$ be smooth proper curves over $K$, that we suppose (up to passing to a finite extension of $K$), to have 
  regular strict semi-stable models $\fX_1, \dots, \fX_d$ over the valuation ring $R$. Denote by $G_1, \dots, G_d$ the dual graph of the special fibers of $\fX_1, \dots, \fX_d$, and let $\G = G_1 \times \dots \times G_d$ 
  with its simplicial structure. 
  The Gross-Schoen desingularization gives a regular proper strict
   semi-stable model $\fX$ of the product $X= X_1 \times \dots \times X_d$ with special fiber $\fX_s$ having a dual complex isomorphic to $\G$.  
   The geometric realization of $\G$ is a locally affine space which embeds in the Berkovich analytification $X^{\mathrm{an}}$ of $X$ (by a general theorem of Berkovich~\cite{Ber99}). 
   Each Cartier divisor $D$ with support in the special fiber $\fX_s$ induces
 a metric on the trivial line bundle corresponding to a piecewise affine
function on the geometric realization of $\G$ (for metric on lines bundles see e.g.~\cite{Zhang-sm}).  
   The intersection pairing given by the degree map induces a multi-linear pairing between piecewise affine
functions (by passing to finite extensions of $K$ if necessary), and we have 
   \[\langle f_{D_0}, \dots, f_{D_d}\rangle = \deg(D_0\dots D_d).\]
This pairing can be viewed as the local contribution to the intersection product of
metrized line bundles in non-Archimedean Arakelov theory. It is useful to extend
this pairing to a larger class of metrized line bundles. By approximation one may take a sequence of piecewise linear functions converging to each of the piecewise smooth functions $f_{i}$. This has been carried out in great detail in~\cite{Kolb2}. 
The well-definedness of the extension as well as the analytic generalization of the Formula~\eqref{analyticdeg1} is guaranteed if  the vanishing condition in Theorem~\ref{thm:vanishing-intro} holds. 
To state the theorem, we need to introduce some notations.

For each graph $G_i$, denote by $\Gamma_i$ the metric graph associated to $(G,\ell)$ with length function $\ell\equiv 1$, the constant function.  For each $n\in \mathbb N$, denote by $G_i^{(n)} = (V_i^{(n)}, E_i^{(n)})$ the $n$-th subdivision of $G_i$, where each edge $e$ is subdivided into $n$ edges.  The pair $(G_i^{(n)}, \ell^{(n)})$ with length function $\ell^{(n)} \equiv 1/n$ is a model of the same metric graph $\Gamma_i$.  A total order on the vertex set of $G_i$ naturally extends to a total order on the vertex set of $G_i^{(n)}$ such that the vertices of $G_i^{(n)}$ on each edge  $e$ of $G$ form a monotone sequence.  Denote by $\G^{(n)}$ the simplicial set on the product $G_1^{(n)} \times \dots \times G_d^{(n)}$. This provides a triangulation of the topological space $\mathscr T = \Gamma_1\times \dots \times \Gamma_d$.  The space $\mathscr T$ has natural affine structure induced by the cubes $\square_\e \simeq [0,1]^d$  for each $\e \in \mathscr E^{(n)} = E^{(n)}_1 \times \dots \times E^{(n)}_d$. 
Define the space $\mathcal C^{\infty}_{\Delta}(\
\mathscr T)$ as the space of functions $f: \mathscr T \rightarrow \mathbb R$ which are smooth on simplices of $\G$~\cite{Kolb2}. This means, for any cube $\square_\e \simeq [0,1]^d$, the restriction of $f$ to each triangle $\Delta$ of $[0,1]^d$ can be extended to a smooth function in a neighborhood of $\Delta$.  	

For each $f\in \mathcal C^{\infty}_{\Delta}(\mathscr T)$, denote by $f^{(n)}$ the piecewise affine function on $\mathscr T$ obtained by interpolating on each simplex $\sigma$ of $\G^{(n)}$, the values of $f$ on the vertices of $\sigma$ to all the interior points of $\sigma$.
 
The graphs $G_i^{(n)}$ are the dual graphs of a semi-stable model $\fX_i^{(n)}$ of $X_{K'}$ for an appropriate finite extension $K'$ of $K$, and the simplicial set $\G^{(n)}$ corresponds 
to the dual complex of the Gross-Schoen desingularization of the product $\fX_1^{(n)} \times \dots \fX_d^{(n)}$. Looking at $\mathbb R$-Cartier divisors with support in the special fiber $\fX_s^{(n)}$ 
as real valued functions defined on the vertices of $\G^{(n)}$, the degree map in the ring $\mathrm{Chow}_{\fX_{s}}^{\fX^{(n)}}$ leads to a pairing $\langle f_0^{(n)}, \dots, f_d^{(n)}\rangle$ 
for any collection of functions $f_0, \dots, f_d \in \mathcal C^{\infty}_{\Delta}(\mathscr T)$. With these preliminaries, combining our Theorem~\ref{thm:vanishing-intro} with the results in~\cite{Kolb2}, 
we get the following generalization of Equation~\eqref{analyticdeg1}.

\begin{thm}[Kolb~\cite{Kolb2}] For any collection of functions $f_0, \dots, f_d \in \mathcal C^{\infty}_{\Delta}(\mathscr T)$, the limit 
\[\langle f_0, \dots, f_d \rangle := \lim_{n\to \infty} \langle f_0^{(n)}, \dots, f_d^{(n)}\rangle \]
exists, and admits the following analytic development 
\[\langle f_0, \dots, f_d \rangle  =  \sum_{\substack{\textrm{ partitions} \\ {\mathcal P} \textrm{ of }[d]}} \,\,\frac{1}{2^{|\mathcal P|+d}} \sum_{\substack{\w_0,\dots, \w_d \in \mathbb F_2^d\\ \sum \alpha(\w_i,\mathcal P) = d+ |\mathcal P|}} \deg\,\bigl(\prod_{i=0}^d F_{\w_i}\bigr) \int_{\mathrm{Diag}_{\mathcal P}} \prod_{i=0}^d D^{\w_i}_{\alpha(\w_i, \mathcal P)}(f_i).\]
\end{thm}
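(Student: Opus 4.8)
The plan is to reduce the computation of $\langle f_0^{(n)},\dots,f_d^{(n)}\rangle$ to the combinatorics of the Chow rings of hypercubes developed above, to analyse the asymptotics of the resulting local degrees as $n\to\infty$, and then to invoke the vanishing Theorem~\ref{thm:vanishing-intro} to suppress all but the ``critical'' Fourier terms. For a fixed $n$ I would begin with the localization Theorem~\ref{thm1} and the ensuing identification $\Ch^{d+1}(\G^{(n)})\simeq\Z^{|\mathscr E^{(n)}|}$, which breaks the degree pairing into a sum over the $d$-cubes of $\G^{(n)}$,
\[\langle f_0^{(n)},\dots,f_d^{(n)}\rangle=\sum_{\e\in\mathscr E^{(n)}}\deg_{\square_\e}\Bigl(\prod_{i=0}^d\iota_\e^*\bigl(D_{f_i^{(n)}}\bigr)\Bigr),\]
where $D_{f_i^{(n)}}\in\Ch^1(\G^{(n)})$ is the class of the $\R$-Cartier divisor attached to $f_i^{(n)}$ and $\iota_\e^*$ restricts to the cube $\square_\e\simeq[0,1]^d$ of side $1/n$. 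On a single cube the restriction depends only on the $2^d$ vertex values of $f_i$; rewriting it in the Fourier basis $\{F_\w\}$ of $\Ch(\square_\e)[\tfrac12]$ through Theorem~\ref{thm:iso}, the coefficient of $F_\w$ is, up to the normalization $2^{-d}$, the alternating sum $\sum_{\v\in\square_\e}(-1)^{\langle\v,\w\rangle}f_i(\v)$, which is a finite difference of order $|\w|$ of $f_i$ along the directions where $\w\neq0$.

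Next I would expand each local contribution as $\deg_{\square_\e}(\prod_i F_{\w_i})$ tested against these finite differences and sum over $\e$, producing for each tuple $(\w_0,\dots,\w_d)$ a weighted Riemann-type sum over the cubes tiling $\mathscr T=\Gamma_1\times\dots\times\Gamma_d$. Taylor-expanding the $f_i$ on each simplex (legitimate since $f_i\in\mathcal C^\infty_\Delta(\mathscr T)$, so its restriction extends smoothly past each triangle) and balancing the powers of $1/n$ carried by the finite differences against the number $\sim n^d$ of cubes, one sees that the contributions surviving in the limit are governed by the way the supports of the $\w_i$ interlock; grouping the coordinate set $\{1,\dots,d\}$ into blocks according to this pattern yields a partition $\mathcal P$, with $\alpha(\w_i,\mathcal P)$ the number of blocks met by $\w_i$, and the configurations that contribute are exactly those with $\sum_i\alpha(\w_i,\mathcal P)=d+|\mathcal P|$. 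For such a configuration the Riemann sum converges to the integral over the diagonal stratum $\mathrm{Diag}_{\mathcal P}\subseteq\mathscr T$ of the product of the partial-derivative operators $D^{\w_i}_{\alpha(\w_i,\mathcal P)}(f_i)$ of \cite{Kolb2}, the residual normalization being $2^{-(|\mathcal P|+d)}$; this is the asserted expansion.

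The step I expect to be the genuine obstacle, and the place where the present paper supplies the missing piece, is the elimination of the non-critical tuples: the asymptotic analysis also throws up tuples with $\sum_i\alpha(\w_i,\mathcal P)<d+|\mathcal P|$, and both the existence of the limit and the clean form of the formula require $\deg(\prod_i F_{\w_i})=0$ for every such tuple. This is precisely Theorem~\ref{thm:vanishing-intro}: applied to the finest partition for which the strict inequality holds it yields the vanishing, and monotonicity of $\alpha$ under refinement then propagates it to all relevant $\mathcal P$. Granting this input, the remaining analytic work---uniform control of Taylor remainders across simplices, the precise definitions of $\mathrm{Diag}_{\mathcal P}$ and of the operators $D^\w_\alpha$, and the bookkeeping of the powers of $2$---is exactly what is carried out in \cite{Kolb2}, so I would cite it for those details and present Theorem~\ref{thm:vanishing-intro} as the single new ingredient that renders Kolb's formula unconditional.
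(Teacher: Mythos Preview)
Your proposal is correct and matches the paper's treatment: the paper does not give an independent proof of this theorem but states it as Kolb's result from~\cite{Kolb2}, noting that its validity was conditional on the vanishing property now established as Theorem~\ref{thm:vanishing-intro}. Your sketch of the underlying analytic argument (localization to cubes, Fourier expansion, Riemann-sum asymptotics along diagonal strata) is a faithful outline of what is carried out in~\cite{Kolb2}, and you correctly isolate Theorem~\ref{thm:vanishing-intro} as the sole new ingredient supplied here.
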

In the above formula the generalized diagonal $\mathrm{Diag}_{\mathcal P}$ is the union of the generalized diagonals $\mathrm{Diag}^\e_{\mathcal P}$ in the hypercubes $\square_\e \simeq \square^d=[0,1]^d$ consisting of all the points $(x_1, \dots, x_d) \in [0,1]^d$ which verify $x_i=x_j$ for all $i,j\in \{d\}$ belonging to the same element of the partition  $\mathcal P$. The term $D^{\w_i}_{\alpha(\w_i, \mathcal P)}(f_i)$ is a  \emph{partial derivative of $f_i$ of order $\alpha(\w_i, \mathcal P)$  in the direction of $\w_i$ and along the generalized diagonal $\mathrm{Diag}_\mathcal P$}.  For example, for the partition $\mathcal P$ of $[d]$ into singletons, we have $\alpha(\w, \mathcal P) = |\w|$ for any $\w \in \mathbb F_2^d$, and on any cube $\square_\e \simeq [0,1]^d$, we have $D^{\w_i}_{\alpha(\w_i, \mathcal P)}= 
 (\frac{\partial}{\partial x_1})^{w_1}\dots\, (\frac{\partial}{\partial x_d})^{w_d}.$ We omit the formal definition and refer to~\cite{Kolb2} for more details.

\medskip

Finally, we note that the case $d=2$ in the above theorem was proved by Zhang in~\cite{Zhang}, and, was shown by him there to have some very interesting applications in arithmetic geometry (see also~\cite{Cinkir}). 
For a  general approach to non-Archimedean Arakelov geometry using Berkovich theory and tropical geometry, see~\cite{CLD, GK}.

  \subsection{Organization of the paper} In Section~\ref{sec:generalities}, we give the 
  formal definition of the simplicial structure of $\G$, and 
  prove several basic properties of the Chow ring which will be used all through the paper. 
  The structure theorem is proved in Section~\ref{sec:str}. The proof of the localization theorem is given in Section~\ref{sec:loc}. 
In Section~\ref{sec:hypercube}, we prove Theorem~\ref{thm:main1-intro}. 
Section~\ref{sec:fourier} is devoted to the study of the structure of the Chow ring in the Fourier dual 
basis. In particular, 
the vanishing Theorem~\ref{thm:vanishing-intro} is proved in that section.

      \section{Basic definitions and properties} \label{sec:generalities}
  
 In this section, we define the simplicial set structure on products of graphs, and prove basic results on the structure of the combinatorial Chow ring.

\medskip

All through this section, by $G_1 = (V_1, E_1), \dots, G_d = (V_d, E_d)$ we denote $d$ simple connected graphs. All graphs are finite. 

\subsection{Simplicial set structure on the product of graphs} 
We view $G_i$ as a simplicial set of dimension one in a natural way. Suppose that for each $i=1, \dots, d$, a total order $\leq_{G_i}$, 
or simply $\leq_i$ if there is no risk of confusion, on the vertices of $G_i$ is fixed. We can endow 
 the product $\G := G_1 \times\dots \times  G_d$ with a simplicial set structure of dimension $d$ induced by orders 
 $\leq_i$. This works as follows. The set of vertices (0-simplices) of $\G$  is $\G_0 = V_1\times \dots \times V_d$.
 For two vertices  $\v = (v_1,\dots, v_d)$ and $\u=(u_1,\dots, u_d)$ in $\G_0$, we say $\u\leq \v$ if for any $i=1,\dots, d$, we have $u_i \leq_i v_i$.
  A 1-simplex of $\G$ is a pair of vertices $\v = (v_1,\dots, v_d)$ and $\u=(u_1,\dots, u_d)$ in $\G_0$ such that $\v\leq \u$ and such that, in addition, for each $1\leq i\leq d$, either $v_i = u_i$, or if 
  $v_i <_i u_i$, then $\{v_i,u_i\}$ is an edge of $G_i$. The set of 1-simplices of $\G$ is denoted by $\G_1$. An element $\{\u,\v\} \in \G_1$ as above is non-degenerate if $\u \neq \v$. 
 For any 1-simplex $\{\u,\v\}$ with $\u\leq \v$, denote by 
  $I(\u,\v)$ the set of all indices $i\in \{1,\dots, d\}$ with $u_i <v_i$. In particular, if $\u = \v$, we have $I(\u,\v)=\emptyset$.

   More generally, for $k\in \mathbb N$, the set $\G_k$ of $k$-simplices of $\G$ is defined as follows. 
   A $k$-simplex $\sigma$ is a sequence $\v_0\leq \dots \leq \v_k$
of vertices in $\G_0$  such that for each $0\leq j\leq k-1$, the pair $\{\v_j,\v_{j+1}\}$ belongs to 
$\G_1$, and in addition, the sets $I(\v_j, \v_{j+1})$ are all pairwise disjoint. 
We denote by $I(\sigma)$ the union of all the disjoint sets $I(\v_j, \v_{j+1})$, for $0\leq j\leq k-1$. 

\noindent We say $\sigma$ is non-degenerate if we have $\v_0<\v_1<\dots <\v_k$. 
The set of non-degenerate $k$-simplices of $\G$ is denoted by $\G^{nd}_k$. 
%writing $\v_j = (v_{j,1},\dots, v_{j,d})$, for each $0\leq j\leq k$, we have either $\v^j = \v^{j+1}$, or if $\v^j\neq \v^{j+1}$, then the pair 
%, and 

 \medskip
 
 Here is an alternative way to describe to simplicial structure of $\G$. First, for each $1\leq i\leq d$, we orient the edges of $G_i$ with respect to the total order $\leq_i$ in such a way that any edge $\{u,v\} \in E_i$ gets orientation $uv$ with $u <_i v$. By an abuse of the notation, we use $E_i$ to denote as well. the set of oriented edges of $G_i$ given by the total order $\leq_i$. 

\medskip

Let $\E = E_1\times \dots\times E_d$, and for each $\e=(e_1,\dots, e_d) \in \E$, for oriented edges 
 $e_1\in E_1, \dots, e_d\in E_d$, denote by $\square_\e$ the product $e_1 \times \dots \times e_d$. We identify $\square_\e$ with the $d$-dimensional cube $\square^d$ with vertices $\{0,1\}^d$ via the identification 
 of each 
 oriented edge  $e_i = u_iv_i$ with $\{0,1\}$, identifying thus $u_i$ with $0$ and $v_i$ with 1. We endow the hypercube $\square^d$ with its standard simplicial structure. Namely, identify $\square^d$ with the vertex set of the hypercube $[0,1]^d$, and for each element $\sigma$ of the symmetric group $\mathfrak S_d$ of order $d$, define \[\Delta_\sigma :=\Bigl\{\,(x_1,\dots, x_d) \in [0,1]^d \,\Bigl |\, \,0\leq x_{\sigma(1)} \leq \dots \leq x_{\sigma(d)}\leq 1
 \,\Bigr\}.\]
 The non-degenerate  $d$-simplices of $\square^d$ are the vertices of $\Delta_\sigma$, for any element $\sigma \in \mathfrak S_d$.
 % in bijection with the elements $\sigma$ of the symmetric group 
 % $\mathfrak S_d$ such that $s_\sigma$ coincides with the set of vertices of the convex subset $\Delta_\sigma$ of $[0,1]^d$ where the triangle $\Delta_\sigma$ is defined by 
 % \[\Delta_\sigma :=\Bigl\{\,(x_1,\dots, x_d) \in [0,1]^d \,\Bigl |\, \,0\leq x_{\sigma(1)} \leq \dots \leq x_{\sigma(d)}\leq 1
 %\,\Bigr\}.\]
 The simplicial structure on $\G$ is the union of the simplicial structures on each $\square_\e \simeq \square^d$ for $\e\in \E$. %(viewed as a subset of $\G_0$). 
 
 \medskip
 
 \noindent {\bf Notation.} All through the paper, we use bold letters $\u,\v,\w, \textrm{ etc.}$ to denote a 0-simplex in a product of graphs. For graphs $G_1, \dots, G_d$ with product $\G$, if $\v,\u,\w, \textrm{etc.}$ is a vertex in $\G_0$, we denote by $v_i,u_i,w_i, \textrm{etc.}$, respectively, to denote the corresponding vertex of the graph $G_i$, so we have $\v = (v_1, \dots, v_d), \u=(u_1,\dots, u_d), \textrm{ etc.}$
 \subsection{Definition of the combinatorial Chow ring}  We recall the definition of the Chow ring given in the introduction, and use the opportunity to introduce a few useful notations.  Denote by $Z(\G)$ the free polynomial ring with coefficients in $\Z$ generated by the vertices of $\G$, namely, 
 \[Z(\G) := \Z[C_{\v} \,|\, \v\in \G_0],\]
 where the variables $C_{\v}$ are associated to the vertices in $\G_0$.
 We view $Z(\G)$ as a graded ring where each variable $C_\v$ is of degree one. For $k\in \mathbb N$, 
 denote by $Z^k(\G)$ the  graded piece consisting of homogeneous polynomials of degree $k$.
 %and let $Z^{+}(\G)$ be the sum of graded pieces of strictly  positive degree. 
 
 Let $\mathscr I_{\mathrm{rat}}(\G)$, or simply $\mathscr I_\rat$ if there is no risk of confusion, be the  graded ideal of all the elements of $Z(\G)$ which are rationally equivalent to zero: this is the ideal generated by the following generators of homogenous degrees two, two, and three, respectively. 
  \begin{itemize}
  \item[($\mathscr R1$)] $C_{\v_1} C_{\v_2} \dots C_{\v_k}$ \qquad for $k\in \N$  and elements $\v_j\in \G_0$ such that $\v_1,\dots, \v_k$ do not form a simplex in $\G$\,; 
  \item[($\mathscr R2$)]  $ C_{\u} \Bigl(\,\sum_{\v \in \G_0} C_{\v}\,\Bigr)$\qquad for any vertex  $\u \in \G_0$; and
  \item[($\mathscr R3$)]   $C_{\u} C_{\mathbf{w}}\Bigl(\,\sum_{\v \in \G_0: v_i = u_i}C_{\v}\,\Bigr)\, $ \quad for any pair of vertices $\u, \mathbf{w} \in \G_0$ and any index $1\leq i \leq d$ with $u_i \neq w_i$.
 \end{itemize}
  For two elements $\alpha, \beta \in Z(\G)$, we write $\alpha \sim_{\mathrm{rat}} \beta$ iff $\alpha- \beta \in \mathscr I_{\rat}$. 
 
 \medskip

 The  combinatorial Chow ring of $\G$ is the graded ring $\Ch(\G) := Z(\G)/\mathscr I_{\rat}$. It has a natural grading, and for $k\in \mathbb N$, we denote by $\Ch^k(\G)$ the graded  piece of degree $k$.

  \begin{remark}\rm
   There are other types of cohomological rings one can associate to a product of graphs, e.g., the Stanley ring of the product of graphs (with its natural cubical structure)~\cite{Het}, 
   the tropical Chow ring of products of (metric) graphs~\cite{AR, Shaw}, Tropical cohomology groups~\cite{IKMZ}, and the Chow ring of matroids~\cite{AHK}.   
    It would be interesting to study the relation between these different rings.  
  \end{remark}

\begin{remark}\rm 
As it was mentioned before, the Chow ring $\Ch(\G)$ comes with a map $\alpha_\fX: \Ch(\G) \to \mathrm{Chow}_{\fX_s}(\fX)$ for the Gross-Schoen desingularization  $\fX$ of a product of 
regular proper semi-stable curves $\fX_1, \dots, \fX_d$ over discrete valuation ring $R$ where the dual graph of the special fiber of each $\fX_{i,s}$ is $G_i$. 
It seems natural to expect that under some genericity condition on the semi-stable curves $\fX_{i,s}$, and the regular smoothings $\fX_i$ of $\fX_{i,s}$,  the ring $\Ch(\G)$ becomes isomorphic to the subring $\mathrm{Chow}^c_{\fX_s}(\fX)$ of the Chow ring $\mathrm{Chow}_{\fX_s}(\fX)$ generated by the irreducible components of the special fiber of $\fX$.  \end{remark}
 
 It will be useful to introduce the following.
 \begin{defi}[The ideal $\mathscr I_1$]\rm
 Denote by $\mathscr I_1$ the ideal of 
the polynomial ring 
$Z(\G) = \Z[C_\v|\v\in \G_0]$ generated by the relations $(\mathscr R1)$, i.e., by the products 
$C_{\v_1}\dots C_{\v_k}$ for any $k\in \mathbb N$ and $\v_1,\dots, \v_k\in \G_0$ which do not form a simplex.
\end{defi}

 \subsection{Functoriality.} 
 Let $H_1,\dots, H_d$ be $d$ simple connected graphs with orders $\leq_{H_i}$ on the vertices of each $H_i$. Define $\mathscr H= H_1\times \dots \times H_d$ with the induced simplicial structure as described above. Suppose that for each $i=1, \dots, d$, a homomorphism of graphs 
 $f_i : H_i \rightarrow G_i$ is given such that $f_i$ respects also the two orderings $\leq_{H_i}$ and $\leq_{G_i}$, namely, for two vertices $u\leq_{H_i} v$ of $H_i$, we have $f(u) \leq_{G_i} f(v)$ in $G_i$. 
By the definition of the simplicial structure, the product of $f_i$ leads to 
 a morphism of simplicial sets 
 $f: \mathscr H \rightarrow \G$. Moreover, $f$ induces the graded algebra map $f^* :  Z(\G) \rightarrow Z(\H)$, which is defined on the level of generators by sending $C_\v$ for $\v \in \G_0$ to
 \[f^*(C_\v) = \sum_{\substack{\u\in \mathscr H_0\\ f(\u) =\v}} C_\u.\]
\begin{prop} \label{prop:func} Notations as above, the map $f^*$ sends $\Rat(\G)$ to $\Rat(\H)$ and induces a map of Chow rings $f^*: \Ch(\G) \rightarrow \Ch(\H)$.
\end{prop}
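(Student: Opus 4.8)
The plan is to verify directly that $f^*$ sends each of the three families of generators $(\mathscr R1)$, $(\mathscr R2)$, $(\mathscr R3)$ of $\Rat(\G)$ into $\Rat(\H)$; since these generate $\Rat(\G)$ as an ideal and $f^*$ is a ring homomorphism, this suffices. Throughout I would use the key combinatorial observation that since each $f_i$ is a graph homomorphism respecting the orders, the product map $f:\H\to\G$ is a morphism of simplicial sets: if $\tau$ is a simplex of $\H$, then $f(\tau)$ is a simplex of $\G$ (possibly degenerate). Contrapositively, if $\u_1,\dots,\u_k\in\H_0$ have the property that their images $f(\u_1),\dots,f(\u_k)$ do \emph{not} form a simplex in $\G$, then $\u_1,\dots,\u_k$ do not form a simplex in $\H$ — this is exactly what makes the $(\mathscr R1)$ generators behave well.

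For $(\mathscr R1)$: take $\v_1,\dots,\v_k\in\G_0$ not forming a simplex. Then
\[
f^*(C_{\v_1}\cdots C_{\v_k}) = \prod_{j=1}^k\Bigl(\sum_{f(\u)=\v_j} C_\u\Bigr) = \sum_{(\u_1,\dots,\u_k)} C_{\u_1}\cdots C_{\u_k},
\]
the sum over tuples with $f(\u_j)=\v_j$. For any such tuple, $f(\{\u_1,\dots,\u_k\}) \subseteq\{\v_1,\dots,\v_k\}$ is not a simplex of $\G$ since no sub-collection of a non-simplex is a simplex; hence $\{\u_1,\dots,\u_k\}$ is not a simplex of $\H$, so each monomial $C_{\u_1}\cdots C_{\u_k}$ is an $(\mathscr R1)$-generator of $\Rat(\H)$ (up to repetitions of variables, which is still covered by $(\mathscr R1)$). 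For $(\mathscr R2)$: compute $f^*\bigl(C_\u(\sum_{\v\in\G_0}C_\v)\bigr) = f^*(C_\u)\cdot\sum_{\v}f^*(C_\v)$. The crucial point is that $\sum_{\v\in\G_0}f^*(C_\v) = \sum_{\v\in\G_0}\sum_{f(\u')=\v}C_{\u'} = \sum_{\u'\in\H_0}C_{\u'}$, because $f:\H_0\to\G_0$ partitions $\H_0$ into fibers. Writing $f^*(C_\u) = \sum_{f(\u')=\u}C_{\u'}$, we get $f^*$ of the $(\mathscr R2)$-generator equal to $\sum_{f(\u')=\u}\bigl(C_{\u'}\sum_{\w\in\H_0}C_\w\bigr)$, a sum of $(\mathscr R2)$-generators of $\Rat(\H)$.

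For $(\mathscr R3)$: given $\u,\w\in\G_0$ and an index $i$ with $u_i\neq w_i$, apply $f^*$ to $C_\u C_\w\bigl(\sum_{\v:v_i=u_i}C_\v\bigr)$. The subtlety is the inner sum: I claim $\sum_{\v\in\G_0:\,v_i=u_i}f^*(C_\v) = \sum_{\u'\in\H_0:\,f_i(u'_i)=u_i}C_{\u'}$, because $f(\u')$ has $i$-th coordinate $f_i(u'_i)$, so the fibers over $\{\v:v_i=u_i\}$ are exactly the $\u'$ with $f_i(u'_i)=u_i$. Now expand $f^*(C_\u) f^*(C_\w)$ as a double sum over $\u',\w'$ with $f(\u')=\u$, $f(\w')=\w$; for each such pair, since $u_i\neq w_i$ we have $f_i(u'_i)\neq f_i(w'_i)$, hence in particular $u'_i\neq w'_i$. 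It remains to see that $C_{\u'}C_{\w'}\bigl(\sum_{\u'':\,f_i(u''_i)=u_i}C_{\u''}\bigr)$ lies in $\Rat(\H)$: partition the index set $\{u''\in\H_0:f_i(u''_i)=u_i\}$ according to the value of $u''_i$ — note $u'_i$ is one such value — and for each fixed value $a$ with $f_i(a)=u_i$, the piece $C_{\u'}C_{\w'}\bigl(\sum_{\u'':\,u''_i=a}C_{\u''}\bigr)$ is an $(\mathscr R3)$-generator of $\Rat(\H)$ \emph{provided} $u'_i\neq a$ or $w'_i\neq a$ (which holds automatically when $a\neq w'_i$; and when $a=w'_i$ we use $u'_i\neq w'_i$). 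So the whole expression is a sum of $(\mathscr R3)$-generators.

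I expect the $(\mathscr R3)$ case to be the main obstacle, purely for bookkeeping reasons: one must carefully track that splitting the fiber $\{u''_i : f_i(u''_i)=u_i\}$ into level sets of $u''_i$ produces honest $(\mathscr R3)$-generators of $\H$, and that the hypothesis $u'_i\neq w'_i$ (inherited from $u_i\neq w_i$ via $f_i$) is exactly what is needed to invoke $(\mathscr R3)$ rather than only $(\mathscr R1)$ or $(\mathscr R2)$. Once all three families are handled, $f^*(\Rat(\G))\subseteq\Rat(\H)$ follows, and the induced map $f^*:\Ch(\G)\to\Ch(\H)$ on quotients is automatic and is a graded ring homomorphism since $f^*$ on $Z(\G)$ is.
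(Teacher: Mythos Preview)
Your overall strategy and the treatment of $(\mathscr R1)$ and $(\mathscr R2)$ match the paper's proof. There is, however, a genuine gap in your $(\mathscr R3)$ argument.

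After partitioning the inner sum according to the value $a=u''_i$ (with $f_i(a)=u_i$), you assert that each block
\[
C_{\u'}C_{\w'}\Bigl(\sum_{\u'':\,u''_i=a}C_{\u''}\Bigr)
\]
is an $(\mathscr R3)$-generator of $\Rat(\H)$ ``provided $u'_i\neq a$ or $w'_i\neq a$''. This condition is the wrong way around. By definition, an $(\mathscr R3)$-generator has the form $C_{\mathbf p}C_{\mathbf q}\bigl(\sum_{r_i=p_i}C_{\mathbf r}\bigr)$ with $p_i\neq q_i$: the value over which one sums must be the $i$-th coordinate of one of the two reference vertices. So the block above is an $(\mathscr R3)$-generator only when $a\in\{u'_i,w'_i\}$; since $f_i(w'_i)=w_i\neq u_i=f_i(a)$ forces $a\neq w'_i$, the only $(\mathscr R3)$-block is $a=u'_i$. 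For every other value $a$ in the fibre $f_i^{-1}(u_i)$, your claim fails.

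The fix is exactly what the paper does: for $a\neq u'_i$ (and automatically $a\neq w'_i$) the three $i$-th coordinates $u'_i,w'_i,a$ are pairwise distinct, and by the definition of the simplicial structure on $\H$ (the index sets $I(\v_j,\v_{j+1})$ are pairwise disjoint, so each coordinate changes at most once along a simplex) no simplex of $\H$ can contain vertices with three distinct values at position $i$. Hence each monomial $C_{\u'}C_{\w'}C_{\u''}$ in such a block is an $(\mathscr R1)$-generator, and the block lies in $\Rat(\H)$ for that reason, not via $(\mathscr R3)$. Once you split the fibre into $a=u'_i$ (handled by $(\mathscr R3)$, using $u'_i\neq w'_i$) and $a\neq u'_i$ (handled termwise by $(\mathscr R1)$), the argument goes through.

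A minor side remark on your $(\mathscr R1)$ paragraph: the phrase ``no sub-collection of a non-simplex is a simplex'' is false (any single vertex is a simplex) and is not needed---since $f(\u_j)=\v_j$, the image tuple is exactly $(\v_1,\dots,\v_k)$, and the contrapositive of ``$f$ is a simplicial map'' that you stated earlier already gives the conclusion.
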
 
\begin{proof} We need to prove that generators of $\Rat(\G)$ given by  ($\mathscr R1$), ($\mathscr R2$) and ($\mathscr R3$) are sent to $\Rat(\H)$.

 Let $k\in \mathbb N$ and $\v_1, \dots, \v_k \in \G_0$ such that $\v_1, \dots, \v_k$ do not form a simplex in $\G$.  Since $f : \H \to \G$ is a map of simplicial sets, it follows for any set of vertices $\u_1, \dots, \u_k \in \H_0$ with $f(\u_j) = \v_j$ for $j=1, \dots, k$, the vertices $\u_1, \dots, \u_k$ do not form a simplex in $\H$.  It follows that $f^*(C_{\v_1}) f^*(C_{\v_2}) \dots f^*(C_{\v_k})  \in \Rat(\H)$.

 Let now $\v \in \G_0$. We have 
 \begin{align*}
 f^*\Bigl(\,C_\v \bigl(\sum_{\mathbf w\in \G_0} C_{\mathbf w}\bigr) \Bigr) &= f^*(C_\v)  \Bigl(\sum_{\mathbf w\in \G_0} f^*\bigl(C_{\mathbf w}\bigr) \Bigr) = f^*(C_\v) \Bigl(\sum_{\mathbf x\in \H_0} C_{\mathbf x}\Bigr)  \in \Rat(\H).
 \end{align*}
 
Finally, let $\v,\w \in \G_0$ and $i\in \{1,\dots, d\}$ such that $v_i \neq w_i$. We have 
\begin{align*}
f^*\Bigl (C_{\v} C_\w \bigl(\,\sum_{\mathbf z \in \G_0: z_i = v_i} C_{\mathbf z}\,\bigr)\Bigr) &= f^*(C_\v) f^*(C_\w) \bigl(\,\sum_{\mathbf z \in \G_0: z_i = v_i} f^*(C_{\mathbf z})\,\bigr) \\
&= \sum_{\substack{\u \in \H_0\\ 
f(\u) =\v}}\sum_{\substack{\mathbf x \in \H_0 \\
f(\mathbf x) =\w}} C_\u C_{\mathbf x} \sum_{\substack{\mathbf y \in \H_0 \\ f_i(y_i) = v_i}} C_\mathbf y\\
&= \Bigl(\,\sum_{\substack{\u \in \H_0\\ 
f(\u) =\v}}\sum_{\substack{\mathbf x \in \H_0 \\
f(\mathbf x) =\w}} C_\u C_{\mathbf x} \sum_{\substack{\mathbf y \in \H_0 \\ y_i = u_i}} C_\mathbf y \, \Bigr) \\
&\qquad + \Bigl(\,\sum_{\substack{\u \in \H_0\\ 
f(\u) =\v}}\sum_{\substack{\mathbf x \in \H_0 \\
f(\mathbf x) =\w}} C_\u C_{\mathbf x} \sum_{\substack{\mathbf y \in \H_0 \\ f_i(y_i) = v_i \\ y_i \neq u_i}} C_\mathbf y \Bigr).
\end{align*}
For $\u, \mathbf x\in \H_0$ with $f(\u) = \v$ and $f(\mathbf x) =\w$, we have $u_i \neq x_i$.
Therefore, we have 
$C_\u C_{\mathbf x} \sum_{\substack{\mathbf y \in \H_0 \\ y_i =u_i}}C_\mathbf y \in \Rat(\H)$.  

In addition, for such $\u, \mathbf x \in \H_0$, and for any $\mathbf y\in \H_0$ with $f_i(y_i)=v_i$ and $y_i\neq u_i$, since $v_i \neq w_i$, we have  $y_i \neq x_i$. Thus, $C_\u C_{\mathbf x} C_{\mathbf y}$ is not a simplex in $\H$, and we have $C_\u C_{\mathbf x} C_{\mathbf y} \in \Rat(\H)$. This shows that 
$f^*\Bigl (C_{\v} C_\w \bigl(\,\sum_{\mathbf z \in \G_0: z_i = v_i} C_{\mathbf z}\,\bigr)\Bigr) \in \Rat(\H)$, and the proposition follows.
\end{proof}

\subsubsection{Permutation of factors}
Let $\sigma\in \mathfrak S_d$ be an element of the permutation group of order $d$. Given simple graphs $G_1, G_2, \dots, G_d$, define $\mathscr G_\sigma := G_{\sigma(1)}\times \dots \times G_{\sigma(d)}$, and denote by $\mathscr V_\sigma$ its vertex set. For any vertex $\v = (v_1, \dots, v_d) \in \mathscr V$, let $\v_\sigma:=(v_{\sigma(1)}, \dots, v_{\sigma(d)})$. We have an isomorphic of polynomial rings $\eta_\sigma : Z(\G) \rightarrow Z(\G_\sigma)$ which sends the generator $C_{\v}$ to $ C_{\v_\sigma}$. The following proposition is immediate. 
\begin{prop}\label{prop:permut} Notations as above, the map $\eta_\sigma$ induces an isomorphism of Chow rings $\Ch(\mathscr G) \rightarrow \Ch(\mathscr G_\sigma))$. 
\end{prop}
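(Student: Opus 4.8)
The statement to prove is Proposition~\ref{prop:permut}: that $\eta_\sigma$ induces an isomorphism of Chow rings $\Ch(\mathscr G)\to\Ch(\mathscr G_\sigma)$.

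The plan is as follows. First I would observe that $\eta_\sigma : Z(\G)\to Z(\G_\sigma)$ is already given as an isomorphism of polynomial rings, sending the degree-one generator $C_\v$ to $C_{\v_\sigma}$; its inverse is visibly $\eta_{\sigma^{-1}}$ after identifying $(\G_\sigma)_{\sigma^{-1}}$ with $\G$. So to obtain an isomorphism on the quotients $\Ch(\G)=Z(\G)/\Rat(\G)$ and $\Ch(\G_\sigma)=Z(\G_\sigma)/\Rat(\G_\sigma)$, it suffices to check that $\eta_\sigma$ carries $\Rat(\G)$ onto $\Rat(\G_\sigma)$; by symmetry (applying the same fact to $\eta_{\sigma^{-1}}$) it is enough to show $\eta_\sigma(\Rat(\G))\subseteq \Rat(\G_\sigma)$. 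Then $\eta_\sigma$ descends to a ring homomorphism $\overline{\eta_\sigma}$, and $\overline{\eta_{\sigma^{-1}}}$ is a two-sided inverse, giving the isomorphism.

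The core point is the inclusion $\eta_\sigma(\Rat(\G))\subseteq \Rat(\G_\sigma)$, and this reduces to checking it on each of the three families of generators $(\mathscr R1)$, $(\mathscr R2)$, $(\mathscr R3)$. The conceptual reason it works is that permuting factors is an isomorphism of the underlying simplicial sets $\G\to\G_\sigma$ on the nose: a collection $\v_1,\dots,\v_k\in\G_0$ forms a simplex in $\G$ if and only if $(\v_1)_\sigma,\dots,(\v_k)_\sigma$ forms a simplex in $\G_\sigma$, because the defining conditions (each coordinate either constant or an edge, and the index sets $I(\cdot,\cdot)$ pairwise disjoint) are merely reindexed by $\sigma$. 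Hence $\eta_\sigma$ sends an $(\mathscr R1)$-generator of $Z(\G)$ to an $(\mathscr R1)$-generator of $Z(\G_\sigma)$. For $(\mathscr R2)$: since $\eta_\sigma$ is a ring isomorphism with $\eta_\sigma(\sum_{\v\in\G_0}C_\v)=\sum_{\w\in(\G_\sigma)_0}C_\w$, we get $\eta_\sigma(C_\u\sum_\v C_\v)=C_{\u_\sigma}\sum_\w C_\w\in\Rat(\G_\sigma)$. For $(\mathscr R3)$: the generator attached to $\u,\w\in\G_0$ and index $i$ with $u_i\neq w_i$ maps to $C_{\u_\sigma}C_{\w_\sigma}\bigl(\sum_{\v:\,v_i=u_i}C_{\v_\sigma}\bigr)$; rewriting the condition $v_i=u_i$ in the $\sigma$-coordinates it becomes $\sum_{\v':\,v'_{\sigma^{-1}(i)}=(\u_\sigma)_{\sigma^{-1}(i)}}C_{\v'}$, so this is exactly the $(\mathscr R3)$-generator of $Z(\G_\sigma)$ attached to $\u_\sigma,\w_\sigma$ and index $j=\sigma^{-1}(i)$ (note $(\u_\sigma)_j\neq(\w_\sigma)_j$). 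Thus all generators of $\Rat(\G)$ land in $\Rat(\G_\sigma)$.

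There is essentially no obstacle here; this is a bookkeeping verification, and indeed the paper labels it "immediate." The only mild care needed is to be precise about the identification $(\G_\sigma)_{\sigma^{-1}}\cong\G$ so that $\eta_{\sigma^{-1}}\circ\eta_\sigma=\mathrm{id}_{Z(\G)}$ literally (or at least is the canonical identification of polynomial rings), which makes the inverse statement symmetric and lets one conclude the isomorphism of Chow rings. One could also, if one prefers a cleaner packaging, deduce the whole statement from Proposition~\ref{prop:func}: each factor permutation is an isomorphism of products of graphs in the appropriate functorial sense, so $f=\sigma$-permutation and its inverse both induce Chow ring maps by functoriality, and these compose to the identity — but spelling out the three-generator check directly, as above, is the most transparent route.
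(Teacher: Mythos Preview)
Your argument is correct and is exactly the routine verification the paper has in mind when it calls the proposition ``immediate'': check that the polynomial-ring isomorphism $\eta_\sigma$ sends each generator of type $(\mathscr R1)$, $(\mathscr R2)$, $(\mathscr R3)$ to one of the same type in $Z(\G_\sigma)$, and then invoke $\eta_{\sigma^{-1}}$ for the inverse. There is nothing to add.
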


\subsection{Intersection maps on the level of Chow groups} 
For a graph $G=(V,E)$ with a total order $\leq$ on its vertex set $V$, and for any vertex $v$, we denote by $G[\leq v]$ (resp. $G[<v]$) the induced graph on the set of vertices $\{u\,|\, u\leq v\}$ (resp. $\{u\,|\, u<v\}$). Let $G_1, \dots, G_d$ be simples graphs, and let $\v = (v_1, \dots, v_d)\in \mathscr V$ with $v_i\in V(G_i)$, for $i=1, \dots, d$. For each $1\leq i\leq d$, define 
$$H_i:= \begin{cases} G_i[\leq_i v_i] \qquad \textrm{if $i\neq k$}\\
G_k[<v_k] \qquad \textrm{if $i=k$},
\end{cases}$$
and set 
$\mathscr G_{\v,k} := H_1 \times \dots \times H_d.$ Denote by $\mathscr{V}_{\v, k}$ the set of vertices of $\G_{\v,k}$.

For each $i$, we have an inclusion $V(H_i) \subseteq V(G_i)$, which induces an inclusion $\mathscr V_{\v, k} \subseteq \mathscr V$.  Total orders $\leq_i$ induced total orders on the vertex set of each $H_i$, 
from which $\mathscr G_{\v,k}$ inherits a simplicial structure, and the inclusion respects the simplicial structures. Thus, we can write $\mathscr G_{\v,k} \subseteq \mathscr G$ as simplicial sets.

Consider the map of $\Z$-modules $\beta=\beta_{\v,k}: \Z[\mathscr V_{\v,k}] \rightarrow \Z[\mathscr V]$ defined by multiplication by $C_\v$
\[\forall\,i \in \mathbb N\,\,\,\forall\, \w_1, \dots, \w_i\in \mathscr V_{\v,k},\qquad \beta\bigl(C_{\w_1} C_{\w_2} \dots C_{\w_i}\bigr) := C_{\w_1} C_{\w_2}\dots C_{\w_i}C_\v.\]
We have 
\begin{prop}\label{prop:intersection}
 The map $\beta$ induces a well-defined map of $\Z$-modules $\beta: \Ch(\mathscr G_{\v,k}) \rightarrow \Ch(\mathscr G)$.
\end{prop}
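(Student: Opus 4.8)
The goal is to show that the multiplication-by-$C_\v$ map $\beta : \Z[\mathscr V_{\v,k}] \to \Z[\mathscr V]$ descends to a map $\Ch(\mathscr G_{\v,k}) \to \Ch(\mathscr G)$, i.e. that $\beta(\mathscr I_{\rat}(\mathscr G_{\v,k})) \subseteq \mathscr I_{\rat}(\mathscr G)$. Since $\beta$ is $\Z$-linear and $\mathscr I_{\rat}(\mathscr G_{\v,k})$ is generated as an ideal by the elements of type $(\mathscr R1)$, $(\mathscr R2)$, $(\mathscr R3)$ \emph{for the product $\mathscr G_{\v,k}$}, and since $\beta(x\cdot y) = \beta(x)\cdot y$ is not quite how multiplication interacts here, the plan is to check directly that $C_\v$ times each such generator (times an arbitrary monomial $C_{\w_1}\cdots C_{\w_i}$ in $Z(\mathscr G_{\v,k})$) lands in $\mathscr I_{\rat}(\mathscr G)$. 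First I would record the key structural fact about $\mathscr G_{\v,k} = H_1\times\dots\times H_d$ sitting inside $\mathscr G$: every vertex $\w = (w_1,\dots,w_d) \in \mathscr V_{\v,k}$ satisfies $w_i \leq_i v_i$ for all $i$, and $w_k <_k v_k$ strictly. In particular $\w \leq \v$ for the partial order on $\mathscr G_0$, with $w_k \neq v_k$; this will be used repeatedly.

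For $(\mathscr R1)$: if $\w_1,\dots,\w_m \in \mathscr V_{\v,k}$ do not form a simplex in $\mathscr G_{\v,k}$, then because $\mathscr G_{\v,k} \subseteq \mathscr G$ is an inclusion of simplicial sets (a sub-simplicial-set), they do not form a simplex in $\mathscr G$ either; hence $C_{\w_1}\cdots C_{\w_m}$, and \emph{a fortiori} $C_{\w_1}\cdots C_{\w_m} C_\v$, lies in $\mathscr I_1 \subseteq \mathscr I_{\rat}(\mathscr G)$. For $(\mathscr R2)$: given $\u \in \mathscr V_{\v,k}$, we must show $C_\u\bigl(\sum_{\w\in \mathscr V_{\v,k}} C_\w\bigr)C_\v \in \mathscr I_{\rat}(\mathscr G)$. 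Write $\sum_{\w\in\mathscr V} C_\w = \sum_{\w\in\mathscr V_{\v,k}} C_\w + \sum_{\w\in\mathscr V\setminus\mathscr V_{\v,k}} C_\w$. The first piece multiplied by $C_\u C_\v$ differs from $C_\u C_\v\bigl(\sum_{\w\in\mathscr V}C_\w\bigr) \in (\mathscr R2)$ by $-C_\u C_\v\sum_{\w\notin\mathscr V_{\v,k}} C_\w$, so it suffices to see each term $C_\u C_\v C_\w$ with $\w\notin\mathscr V_{\v,k}$ is in $\mathscr I_{\rat}(\mathscr G)$; here I would argue that if $\{\u,\v,\w\}$ were a simplex in $\mathscr G$ then $\w$ would have to satisfy constraints forcing $\w \in \mathscr V_{\v,k}$ — precisely, a simplex containing both $\u$ and $\v$ forces every vertex to be comparable with and "between" them coordinatewise, and combined with $\u\in\mathscr V_{\v,k}$ this pins $\w$ into the box $\prod H_i$. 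The remaining case ($C_\u C_\v C_\w = 0$ because it is not a simplex) is then covered by $(\mathscr R1)$.

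For $(\mathscr R3)$, which is the delicate one and I expect to be the main obstacle: take $\u,\mathbf x \in \mathscr V_{\v,k}$ and an index $j$ with $u_j \neq x_j$, and consider $C_\v\cdot C_\u C_\mathbf x\bigl(\sum_{\mathbf y\in\mathscr V_{\v,k}:\, y_j = u_j} C_\mathbf y\bigr)$. I would compare this with the genuine $(\mathscr R3)$-relation in $\mathscr G$, namely $C_\u C_\mathbf x\bigl(\sum_{\mathbf y\in\mathscr V:\, y_j = u_j} C_\mathbf y\bigr)$, where one must be careful that this is a valid $(\mathscr R3)$-generator only if $u_j\neq x_j$ (which holds). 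The difference, after multiplying by $C_\v$, is $C_\v C_\u C_\mathbf x \sum_{\mathbf y\in\mathscr V\setminus\mathscr V_{\v,k}:\, y_j=u_j} C_\mathbf y$, and again I must show each $C_\v C_\u C_\mathbf x C_\mathbf y$ with $\mathbf y\notin\mathscr V_{\v,k}$ is rationally trivial; the same "simplices containing $\u$ and $\v$ are boxed" argument applies — if $\{\u,\mathbf x,\mathbf y,\v\}$ is a simplex of $\mathscr G$ then in particular $\{\u,\mathbf y,\v\}$ is, forcing $\mathbf y$ into $\prod H_i$, a contradiction, so the monomial is already in $\mathscr I_1$. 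The one subtlety to handle carefully is that $\v$ itself might equal one of $\u,\mathbf x,\mathbf y$ or coincide with a coordinate constraint: since $v_k \neq w_k$ for all $w \in \mathscr V_{\v,k}$, in fact $\v \notin \mathscr V_{\v,k}$, so $\v$ is genuinely a new vertex and the products are honest degree-$(i+1)$ monomials; the length bookkeeping causes no trouble. Assembling these three verifications, $\beta$ kills $\mathscr I_{\rat}(\mathscr G_{\v,k})$ and the proposition follows. Throughout, the recurring technical lemma — that any simplex of $\mathscr G$ containing the two specified vertices $\u$ and $\v$ has all its vertices lying in the sub-box $\mathscr G_{\v,k}$ — is really a statement about the simplicial structure of products of graphs established in this section, namely that a set of vertices forms a simplex iff after sorting by the partial order the consecutive index sets $I(\cdot,\cdot)$ are disjoint; I would isolate and prove it first.
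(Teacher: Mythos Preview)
Your ``recurring technical lemma'' --- that any simplex of $\mathscr G$ containing both $\u \in \mathscr V_{\v,k}$ and $\v$ has all its vertices inside $\mathscr V_{\v,k}$ --- is false, and this breaks both your $(\mathscr R2)$ and $(\mathscr R3)$ arguments. A minimal counterexample: take $d=2$, $G_1=G_2=K_2$ on $\{0<1\}$, $\v=(1,1)$, $k=2$, so $\mathscr V_{\v,2}=\{(0,0),(1,0)\}$. With $\u=(0,0)$ and $\w=(0,1)$, the chain $(0,0)<(0,1)<(1,1)$ is a genuine $2$-simplex of $\mathscr G$, yet $\w\notin\mathscr V_{\v,2}$ because $w_2=v_2$. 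So in your $(\mathscr R2)$ step the correction term $C_\u C_\v C_{(0,1)}$ is \emph{not} in $\mathscr I_1$. For $(\mathscr R3)$ the same phenomenon occurs one dimension up: with $d=3$, $\v=(1,1,1)$, $k=3$, $\u=(1,0,0)$, $\mathbf x=(0,0,0)$, $j=1$, the vertex $\mathbf y=(1,0,1)$ has $y_1=u_1$ and lies outside $\mathscr V_{\v,3}$, yet $(0,0,0)<(1,0,0)<(1,0,1)<(1,1,1)$ is a non-degenerate $3$-simplex, so $C_\u C_{\mathbf x} C_{\mathbf y} C_\v \notin \mathscr I_1$.

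The point you are missing is that these leftover terms, while not individually in $\mathscr I_1$, assemble into an $(\mathscr R3)$-relation in $\mathscr G$ built from the distinguished index $k$: every ``bad'' $\w$ forming a simplex with $\u$ and $\v$ necessarily has $w_k=v_k$, and conversely the sum $C_\u C_\v\sum_{\w:\,w_k=v_k}C_\w$ is an honest $(\mathscr R3)$-generator since $u_k\neq v_k$. This is exactly what the paper does: for $(\mathscr R2)$ it first uses $(\mathscr R1)$ to reduce the sum over $\mathscr V_{\v,k}$ to those $\mathbf z$ with $z_k=u_k$, then compares with the $(\mathscr R3)$-relation $C_\u C_\v\sum_{z_k=u_k}C_{\mathbf z}$ in $\mathscr G$ rather than with $(\mathscr R2)$ in $\mathscr G$. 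For $(\mathscr R3)$ the paper argues more carefully, first reducing to $u_j<w_j$ (using the already-established $(\mathscr R2)$ case), disposing of $j=k$ directly via $(\mathscr R1)$, and for $j\neq k$ splitting the correction terms according to whether $z_k<v_k$, $z_k=v_k$, or $z_k>v_k$, each piece vanishing by a separate incomparability argument. Your overall architecture is right, but the key input is $(\mathscr R3)$ in $\mathscr G$ with index $k$, not the boxed-simplex lemma.
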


\begin{proof}
 We will prove the three set of relations $(\mathscr R1), (\mathscr R2),(\mathscr R3)$ defining $\Ch(\mathscr G_{\v,k})$ vanish by $\beta$ in $\Ch(\mathscr G)$, from which the result follows.

Using Proposition~\ref{prop:permut}, and permuting factors if necessary, we can without loss of generality assume that $k=d$.

\noindent $\bullet \,(\mathscr R1)$ If $\w_1, \dots, \w_i\in \mathscr V_{\v,d}$ do not form a simplex in $\mathscr G_{\v,d}$, then obviously, they do not form a simplex in $\mathscr G$, and we have 
  \[\beta(C_{\w_1} \dots C_{\w_i}) = C_{\w_1} \dots C_{\w_i}C_\v=0 \textrm{  in $\Ch(\mathscr G)$.}\]

\noindent  $\bullet \,(\mathscr R2)$ We show that for any $\u \in \mathscr V_{\v,d}$, we have 
 \[\beta\bigl( C_\u\sum_{\w\in \mathscr V_{\v,d}} C_\w \bigr)=0 \qquad \textrm{in }\, \Ch(\mathscr G).\]
We have 
 \begin{align*}
 \beta \bigl(C_\u \sum_{\w\in \mathscr V_{\v,d}} C_\w\bigr)  &=  C_{\u}  \bigl(\sum_{\w\in \mathscr V_{\v,d}} C_{\w}\bigr) C_\v\\
 &= C_{\u}\bigl(\sum_{\substack{{\bf z}\in \mathscr V\\ {\bf z\leq v} \,,\, z_{d}< v_d}} C_{\bf z}\bigr)  C_\v  \\
 &= C_{\u}\bigl(\sum_{\substack{{\bf z}\in \mathscr V\\ {\bf z\leq v} \,,\, z_{d} = u_d}} C_{\bf z}\bigr)  C_\v  \textrm{ by $(\mathscr R1)$ since $u_d < v_d$ and $z_d <v_d$}\\
 &= \bigl(\sum_{\substack{{\bf z}\in \mathscr V\\ z_{d}=u_d}} C_{\bf z}\bigr) C_{\u} C_\v \,\,-\,\, \bigl(\sum_{\substack{{\bf z}\in \mathscr V\\ {\bf z \not \leq \v}\,,\,z_{d}=u_d <v_d}}C_{\bf z}\bigr) C_{\u} C_\v \\
 &=0\,\,\in \Ch(\mathscr G).
\end{align*}
In the last equation above, we have used  the vanishing of the first term $\bigl(\sum_{\substack{{\bf z}\in \mathscr V \\ z_{d}=u_d}} C_{\bf z}\bigr) C_{\u} C_\v =0$,
implied by $(\mathscr R2)$  since $u_d\neq v_d$, and the vanishing for any $\bf z\not \leq v$ with $z_{d}=u_d<v_d$ of the product $C_{\bf z} C_{\u} C_\v$ , since in this case, 
$\bf z$ and $\v$ cannot form a simplex.

\medskip

\noindent $\bullet \,(\mathscr R3)$ We have to show that  for all $j \in \{1,\dots, d\}$ and any $\u,\w \in \mathscr V_{\v,d}$ with  $u_j\neq w_j$, we have  %
 \[\beta\bigl(C_\u C_\w\sum_{\substack{{\bf z}\in \mathscr V_{\v,d} \\ z_j=u_j}} C_{\bf z}\,\bigr) =0.\]
By $(\mathscr R1)$, we have $C_{\u}C_{\w}C_{\mathbf z} =0$ for any $\mathbf z$ with $z_{j} \neq u_j, w_j$. It follows that
$$C_\u C_\w\sum_{\substack{{\bf z}\in \mathscr V_{\v,d}}} C_{\bf z} = C_\u C_\w\sum_{\substack{{\bf z}\in \mathscr V_{\v,d} \\ z_j=u_j}} C_{\bf z} + C_\u C_\w\sum_{\substack{{\bf z}\in \mathscr V_{\v,d} \\ z_j=w_j}} C_{\bf z}.$$
 Since $\beta \bigl(C_\u C_\w \sum_{{\bf z}\in \mathscr V_{\v,d}} C_{\bf z}\bigr) =0$ in $\Ch(\mathscr G)$, by $(\mathscr R2)$ that we just proved, we can assume further that $u_j < v_j$. 
 
 If $j=d$, then since $u_d, w_d < v_d$, and $u_d\neq w_d$, we have $\beta(C_\u C_\w)=C_\u C_\w C_\v =0$ by $(\mathscr R1)$ in $\Ch(\mathscr G)$, which directly gives the assertion. 
 
 So we can assume that $j\neq d$. 
 We have 
 \begin{align*}
  \beta\bigl(C_\u C_\w \sum_{\substack{{\bf z}\in \mathscr V_{\v,d}\\ z_j=u_j}} C_{\bf z}\,\bigr) =& C_{\u} C_{\w}\Bigl(\,\sum_{\substack{{\bf z}\in \mathscr V_{\v,d}\\ z_j=u_j}} C_{\bf z}\,\Bigr)C_\v\\
  =&C_{\u} C_{\w}\Bigl(\,\sum_{\substack{{\bf z}\in \mathscr V \\ z_j=u_j}} C_{\bf z}\,\Bigr)C_\v\,-\,C_{\u} C_{\w}\Bigl(\,\sum_{\substack{{\bf z}\in \mathscr V\\ z_{j}=u_{j}\\ {\bf z}\not \leq \v,\, z_{d}<v_d}} C_{\bf z}\,\Bigr)C_\v\\
  &-\,\,C_{\u} C_{\w}\Bigl(\,\sum_{\substack{{\bf z}\in \mathscr V\\ z_{j}=u_{j}\\ {\bf z}\not \leq \v,\, z_{d}=v_d}} C_{\bf z}\,\Bigr)C_\v-\,\,C_{\u} C_{\w}\Bigl(\,\sum_{\substack{{\bf z}\in \mathscr V\\ z_{j}=u_{j}\\ {\bf x}\not \leq \v,\, z_{d}>v_d}} C_{\bf z}\,\Bigr)C_\v.
 \end{align*}
Since $u_{j} < w_{j}$, by $(\mathscr R3)$ in $\Ch(\mathscr G)$, the first term in the above sum vanishes, i.e., 
\[C_{\u} C_{\w} \Bigl(\,\sum_{\substack{{\bf x}\in \mathscr V\\ x_{j}=u_{j}}} C_{\bf x}\,\Bigr)C_\v = 0.\]

For ${\bf x}\not \leq \v$ with $x_{d} < v_d$, ${\bf x}$ and $\v$ do not form a simplex, and so the second term in the sum is also zero, i.e., 
\[C_{\u} C_{\w}\Bigl(\,\sum_{\substack{{\bf z}\in \mathscr V\\ z_{j}=u_{j}\\ {\bf z}\not \leq \v,\, z_{d}<v_d}} C_{\bf z}\,\Bigr)C_\v = 0.\]

Let now ${\bf z}\in \mathscr V$ with ${\bf z} \not \leq \v$ and $z_{d}=v_d$. Since $z_{j} = u_j <w_j$ and $z_d=v_d>w_d$, we infer that $\bf z$ and $\w$ do not form a simplex, and the third term in the sum vanishes as well, i.e., 
\[C_{\u} C_{\w}\Bigl(\,\sum_{\substack{{\bf z}\in \mathscr V\\ z_{j}=u_{j}\\ {\bf z}\not \leq \v,\, z_{d}=v_d}} C_{\bf z}\,\Bigr)C_\v = 0.\]
As for the last term, we have $z_j=u_j < v_j$ and $z_d>v_d$, so $\mathbf z$ and $\v$ do not form a simplex, which gives 
\[C_{\u} C_{\w}\Bigl(\,\sum_{\substack{{\bf z}\in \mathscr V\\ z_{j}=u_{j}\\ {\bf x}\not \leq \v,\, z_{d}>v_d}} C_{\bf z}\,\Bigr)C_\v =0.\]
Combining all this, we thus get $\beta\bigl(C_\u C_\w \sum_{\substack{{\bf z}\in \mathscr V_{\v,d}\\ z_j=u_j}} C_{\bf z}\,\bigr) =0,$ and the proposition follows.
\end{proof}

 \subsection{Moving lemma}
  The moving lemma for the  Chow ring~\cite{Kolb1} is the statement that each 
  graded piece $\Ch^{k+1}(\G)$ is generated by monomials of the form $\prod_{\v\in \sigma} C_{\v}$, 
  for $\sigma \in \G^{nd}_k$. 
  %$\v^0,\dots, \v^i$ form a non-degenerate simplex of dimension $i$ in $\G$.
  \begin{thm}\label{thm:nd} For any $k\in \mathbb N$, the  Chow group $\Ch^{k+1}(\G)$ is generated by 
   monomials of the form $C_\sigma=C_{\v_0} \dots C_{\v_k}$, where $\sigma$ is a non-degenerate 
   simplex of dimension $k$ in $\G$ with vertex set $\v_0,\dots, \v_k$.
  \end{thm}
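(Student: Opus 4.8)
The plan is to prove the moving lemma by an induction on $k$, reducing a monomial involving a degenerate or non-simplicial configuration of vertices to a $\Z$-linear combination of monomials $C_\sigma$ with $\sigma\in\G_k^{nd}$, using only the relations $(\mathscr R1)$, $(\mathscr R2)$, $(\mathscr R3)$. By $(\mathscr R1)$ we may assume at the outset that every monomial $C_{\v_0}\cdots C_{\v_k}$ under consideration is supported on a simplex $\sigma$ of $\G$, i.e. after reordering $\v_0\le\v_1\le\cdots\le\v_k$ with $\{\v_j,\v_{j+1}\}\in\G_1$ and the sets $I(\v_j,\v_{j+1})$ pairwise disjoint. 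The remaining issue is that $\sigma$ may be \emph{degenerate}: some consecutive $\v_j$'s coincide, so that $C_\sigma$ is a genuine monomial $C_{\v_0}^{n_0}\cdots C_{\v_m}^{n_m}$ with some $n_t\ge 2$ and $\sum n_t=k+1$. The key point will be a ``splitting'' step: given such a monomial with $n_t\ge 2$, I want to rewrite $C_{\v_t}^2\cdot(\text{rest})$ as a sum of monomials in which the square has been resolved into $C_{\v_t}C_{\v'}$ for vertices $\v'$ strictly comparable to $\v_t$, compatible with the rest of the simplex, thereby strictly increasing the number of distinct vertices appearing (or pushing the degeneracy ``outward'' toward a controlled termination).

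The mechanism for the splitting step is relation $(\mathscr R2)$ in its localized form. If $\v_{t-1}<\v_t<\v_{t+1}$ are the (distinct) neighbors of a vertex $\v_t$ appearing with multiplicity $\ge 2$, one chooses an index $i$ with, say, $v_{t-1,i}<v_{t,i}$ (such an index exists since $\v_{t-1}<\v_t$; if $t=0$ one uses instead an index where $v_{t,i}<v_{t+1,i}$, and this is exactly where the hypothesis that all $G_j$ are \emph{connected} and the ordering conventions enter). Then one applies $(\mathscr R3)$ with the pair $(\u,\w)=(\v_t,\v_{t+1})$ — or, in the boundary case, $(\mathscr R2)$ — to the product $C_{\v_t}C_{\v_{t+1}}\bigl(\sum_{\mathbf z:\,z_i=v_{t,i}}C_{\mathbf z}\bigr)$, which lies in $\Rat$. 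The sum $\sum_{\mathbf z:\,z_i=v_{t,i}}C_{\mathbf z}$ contains the term $C_{\v_t}$; multiplying the relation by the rest of the monomial $\prod_{s\ne t}C_{\v_s}^{n_s}\cdot C_{\v_t}^{n_t-2}$, and then using $(\mathscr R1)$ to kill every term $C_{\mathbf z}(\cdots)$ for which $\mathbf z$ is incompatible with the remaining vertices, lets one solve for $C_{\v_t}^{n_t}(\text{rest})$ as a combination of monomials where the extra factor of $C_{\v_t}$ has been replaced by some $C_{\mathbf z}$ with $\mathbf z\ne\v_t$, $\mathbf z$ forming a simplex with $\v_t,\v_{t+1}$ and the rest, and with $z_i=v_{t,i}$ so that $\mathbf z$ is genuinely ``new''. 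One must check that each resulting monomial is still supported on a simplex (automatic, after discarding the $(\mathscr R1)$-terms) and argue that a suitable complexity measure strictly decreases.

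The main obstacle, and where the argument needs genuine care, is the choice of the complexity measure (termination of the induction). The naive measure ``number of distinct vertices'' need not increase monotonically: replacing $C_{\v_t}$ by $C_{\mathbf z}$ can create a new coincidence $\mathbf z=\v_{t+1}$ or $\mathbf z=\v_{t-1}$. I would instead stratify by the pair $(k,\,M)$ where $M$ is, for instance, $\sum_t (n_t-1)$ weighted by how ``far'' the degeneracy sits from the top vertex $\v_m$, or better, I would set up the induction so that one always splits at the \emph{maximal} vertex carrying multiplicity $\ge 2$ and pushes mass strictly upward in the partial order $\le$ on $\G_0$ (which is finite, being a subposet of $V_1\times\cdots\times V_d$), so that the process must stop. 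Proving that this upward-pushing is always possible — that at the top one can always find an admissible $i$ and an admissible replacement $\mathbf z>\v_t$ — is the crux; it amounts to showing that if $\v_t<\v_m$ then $\v_t$ has \emph{some} neighbor strictly above it inside the relevant interval, which follows from the simplex structure, and handling the terminal case $\v_m=\mathbf 1_{\text{top}}$ (or the analogous extreme vertex), where multiplicity $\ge 2$ at the top vertex must be dispatched using $(\mathscr R2)$ directly, peeling off one factor and reindexing. Once termination is established, the statement follows, since the only monomials not further reducible are the $C_\sigma$ with $\sigma$ non-degenerate.
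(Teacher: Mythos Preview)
Your strategy is the paper's strategy: peel off one repeated factor using $(\mathscr R3)$ (or $(\mathscr R2)$ when only one distinct vertex is present), discard the non-simplicial debris via $(\mathscr R1)$, and iterate under a suitable complexity measure. One slip to fix: you select $i$ with $v_{t-1,i}<v_{t,i}$, i.e.\ $i\in I(\v_{t-1},\v_t)$, but then invoke $(\mathscr R3)$ with the pair $(\u,\w)=(\v_t,\v_{t+1})$. Since the sets $I(\v_{j},\v_{j+1})$ are pairwise disjoint for a simplex, your $i$ forces $v_{t,i}=v_{t+1,i}$, so the hypothesis $u_i\neq w_i$ of $(\mathscr R3)$ fails. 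The fix is immediate --- use the pair $(\v_{t-1},\v_t)$ instead --- and with it your analysis that each replacement $\mathbf z$ either equals some $\v_j$ with $j>t$ (mass moves strictly up) or is genuinely new (distinct-vertex count rises) goes through. Incidentally, connectivity of the $G_j$ is irrelevant here; the admissible index $i$ exists simply because $\v_{t-1}<\v_t$ strictly.

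For termination, the paper replaces your push-upward heuristic with a clean triple induction: on $k$; then a \emph{reverse} induction on the number $l$ of distinct vertices; and finally, for fixed $l$, on the multiplicity vector $(n_1,\dots,n_l)$ in lexicographic order. This avoids tracking ``the maximal vertex carrying multiplicity $\ge 2$'' and, more importantly, yields a sharper intermediate statement: every $C_\sigma$ with at least two distinct vertices equals an element of $\mathscr I_1$ plus an integer combination of products $C_\tau\prod_{(i,\epsilon)\in S}R^\epsilon_{\tau,i}$ over non-degenerate $\tau\supseteq\sigma$. That refined decomposition, not just the moving lemma, is what later drives the proof of the structure theorem.
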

 
 We give a proof of this theorem based on  Proposition~\ref{prop:rewr} below,
 which will be crucial later in the proof of the structure Theorem~\ref{thm:st}.

 \medskip
 
 First we introduce some terminology. For any $k$-simplex $\tau$ of $\G$ with at least two different 
 vertices,
 and for $i\in I(\tau)$ and $\epsilon \in \{0,1\}$, let $\{u_i,v_i\}$ be the corresponding edge of $G_i$ with $u_i <_i v_i$, and 
 define the element 
 $R^{\epsilon}_{\tau,i}$ of $Z(\G)$ by
 \[R^0_{\tau,i}:= \sum_{\substack{\w \in \G_0 \\ w_i =u_i}} C_{\w}, \qquad \textrm{and} \qquad  
 R^1_{\tau,i}:= \sum_{\substack{\w \in \G_0 \\ w_i =v_i}} C_{\w}.\]

 Note that the product $C_\tau R^\epsilon_{\tau,i}$ is among the relations $(\mathscr R3)$ and thus belongs to 
 $ \mathscr I_\rat$.

 \begin{defi}\rm Let $k,m$ be two natural numbers, and let $\tau \in \G_k^{nd}$ be a non-degenerate $k$-simplex. 
 Define the set 
 $\mathcal A_{\tau}(m)$ as the collection of all the multisets $S$ of size $m$ consisting of 
 $m$ (possibly equal) 
 elements $(i_1, \epsilon_1), 
 \dots, (i_m,\epsilon_m)$ with $i_1,\dots, i_m\in I(\tau)$ and 
 $\epsilon_1, \dots, \epsilon_m \in \{0,1\}$. 
 \end{defi}

 With these notations, we have the following useful proposition.
  \begin{prop}\label{prop:rewr}
Let $\sigma \in \G_k$ be a simplex with vertices $\v_0\leq \dots \leq \v_k$ and with at least two distinct 
vertices.   There exist an element $\beta \in \mathscr I_1$, and a collection of integers $a_{\tau,S} \in \mathbb Z$ for any $1\leq l \leq k$, any $\tau \in \G_l^{nd}$ such that 
$\sigma \subseteq \tau$, and any $S \in \mathcal A_\tau(k-l)$, such that we have
\[C_\sigma = \beta+\sum_{l=1}^k \sum_{\substack{\tau \in \G_{l}^{nd}\\ {\textrm s.t. }\sigma \subseteq \tau, \\ S \in \mathcal A_\tau(k-l)}} a_{\tau,S}\, C_\tau 
\prod_{(i,\epsilon) \in S} R_{\tau,i}^\epsilon.\]
%for some integers $a_{\tau,S} \in \mathbb Z$. 
%is of the form 
 \end{prop}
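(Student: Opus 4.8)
The plan is to prove the statement by a double induction: outer induction on $k$ (the dimension of the simplex $\sigma$), and, for fixed $k$, an inner induction measuring the "degeneracy" of $\sigma$, i.e. the number of coincidences among the vertices $\v_0 \leq \dots \leq \v_k$ (equivalently, $k - \dim\sigma^{\mathrm{nd}}$ where $\sigma^{\mathrm{nd}}$ is the non-degenerate simplex underlying $\sigma$). The base case is when $\sigma$ is already non-degenerate: then $l=k$, $\tau = \sigma$, $\mathcal A_\sigma(0)$ consists of the empty multiset, $\beta = 0$, and the formula reads $C_\sigma = C_\sigma$, which is trivial. So assume $\sigma$ has a repeated vertex, say $\v_t = \v_{t+1} = \u$ for some $t$, and write $C_\sigma = C_\u^2 \cdot C_{\sigma'}$ where $\sigma'$ is the $(k-1)$-simplex obtained by deleting one copy of $\u$.

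The key step is to rewrite one factor of $C_\u$. Since $\sigma$ has at least two distinct vertices, there is some index $i \in I(\sigma)$, coming from a genuine edge $\{u_i, v_i\}$ of $G_i$ used by $\sigma$ adjacent to the block of $\u$'s; by relation $(\mathscr R3)$ (applied with the pair of vertices being $\u$ itself and a neighbor, or more directly: the expression $C_\sigma R^{\epsilon}_{\sigma,i}$ lies in $\mathscr I_{\rat}$), modulo $\mathscr I_{\rat}$ we can replace $C_\u$ by a signed sum $-\sum_{\w \neq \u,\, w_i = u_i} C_\w$ (or similarly with $v_i$) plus a correction. Concretely: $C_{\sigma'}\cdot C_\u \cdot R^{0}_{\sigma,i} \in \mathscr I_{\rat}$ (this is of type $(\mathscr R3)$ up to the fact that one must check $\sigma'$-vertices are compatible — this is where relation $(\mathscr R1)$ kills the incompatible terms), so $C_\sigma = C_\u C_{\sigma'} \cdot C_\u \equiv - C_\u C_{\sigma'}\big(\sum_{\w: w_i = u_i,\, \w \neq \u} C_\w\big) \pmod{\mathscr I_{\rat}}$, and the difference lands in $\mathscr I_1$. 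For each surviving $\w$ in this sum, the monomial $C_\u C_{\sigma'} C_\w = C_{\sigma''}$ where $\sigma''$ has the same underlying vertex set as $\sigma$ but with the pair $(\u, \w)$ in place of $(\u,\u)$ — hence strictly less degenerate than $\sigma$, while still containing $\sigma^{\mathrm{nd}} \supseteq$ (appropriate sub-simplex), and of the same dimension $k$. Applying the inner induction hypothesis to each $C_{\sigma''}$, and absorbing the $R^{\epsilon}$-factors appropriately (noting that multiplying the rewriting of $C_{\sigma''}$ by the original $R^0_{\sigma,i}$ only enlarges the multisets $S$, which is exactly allowed since $\tau \supseteq \sigma'' $ implies $i \in I(\tau)$ once we verify $\tau \supseteq \sigma$), we obtain the desired expression for $C_\sigma$. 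One must track that $\mathscr I_1$ is an ideal so the $\beta$-terms stay in $\mathscr I_1$ after multiplication, and that $\sigma \subseteq \tau$ (not merely $\sigma'' \subseteq \tau$): this holds because every $\tau$ appearing contains $\sigma''$ whose vertex set contains that of $\sigma$.

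The main obstacle I expect is bookkeeping, rather than a conceptual difficulty: one needs to (a) correctly match the index $i$ and the superscript $\epsilon \in \{0,1\}$ used in the rewriting step to a genuine element of $I(\tau)$ for every $\tau$ produced, (b) verify that when $C_\u$ is replaced via $(\mathscr R3)$, the terms $C_\w$ with $\w$ incompatible with $\sigma'$ (failing to form a simplex) really do get absorbed into $\mathscr I_1$ and do not contribute spurious terms, which requires a careful look at the simplicial structure of $\G$ and the fact that $\sigma'$ plus any vertex with prescribed $i$-th coordinate either forms a simplex or a product of $C$'s lying in $\mathscr I_1$, and (c) confirm that the multiset $S$ is of the right size $k - l$ — each rewriting of one degenerate coincidence trades a vertex-repetition for one $R^{\epsilon}_{\tau,i}$ factor, so after resolving all $k - \dim \sigma^{\mathrm{nd}}$ coincidences down to a non-degenerate $\tau$ of dimension $l = \dim\tau \ge \dim\sigma^{\mathrm{nd}}$, the accumulated multiset has size $k - l$. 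Finally, I would remark that Theorem~\ref{thm:nd} (the moving lemma) follows immediately: modulo $\mathscr I_{\rat}$ each $C_\tau \prod R^{\epsilon}_{\tau,i} \in \mathscr I_{\rat}$ and $\beta \in \mathscr I_1 \subseteq \mathscr I_{\rat}$, so $C_\sigma \equiv 0$ unless $\sigma$ is already non-degenerate — wait, that over-claims; rather, the point is that applying $(\mathscr R1), (\mathscr R2)$ first reduces an arbitrary degree-$(k+1)$ monomial to a combination of $C_\sigma$ for simplices $\sigma$, and then Proposition~\ref{prop:rewr} together with $C_\tau \prod R^{\epsilon}_{\tau,i} \in \mathscr I_{\rat}$ shows each such $C_\sigma$ is congruent to a $\mathbb Z$-combination of non-degenerate $C_\tau$'s, giving Theorem~\ref{thm:nd}.
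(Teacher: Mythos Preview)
Your overall plan---resolve one repeated vertex at a time using an $R^\epsilon$ relation, then induct---is exactly the paper's strategy. But two steps in your argument do not hold as written.

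\textbf{The inner induction does not terminate.} You claim that each surviving term $C_{\sigma''}$, obtained by replacing one copy of $\u$ by some $\w$ with $w_i=u_i$, is ``strictly less degenerate than $\sigma$''. This is false when $\w$ happens to coincide with another vertex $\v_j$ of $\sigma$ (with $j$ outside the $\u$-block): such a $\w$ is not killed by $(\mathscr R1)$, since it already forms a simplex with the other vertices, yet the resulting $\sigma''$ has exactly the same number of coincidences as $\sigma$. For a concrete instance take a chain $\v_0<\u<\v_2$ with $n_\u=2$, pick $i\in I(\u,\v_2)$, and use $\epsilon=0$; then $\v_0$ appears among the $\w$'s and $\sigma''=\v_0^2\u\v_2$ is just as degenerate as $\sigma=\v_0\u^2\v_2$. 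The paper repairs this by adding a \emph{third} layer of induction: for fixed number $l$ of distinct vertices $\u_1<\dots<\u_l$ with multiplicities $(n_1,\dots,n_l)$, order these sequences lexicographically, choose $h$ with $n_h\ge 2$, take $i\in I(\u_{h-1},\u_h)$ and $\epsilon=1$. Then any existing vertex among the $\w$'s is some $\u_j$ with $j>h$, and the new sequence $(n_1,\dots,n_h-1,\dots,n_j+1,\dots,n_l)$ is strictly lex-smaller. This finer ordering is the missing ingredient.

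\textbf{The term $C_{\sigma_0}R^\epsilon$ is not in $\mathscr I_1$.} You write that $C_\sigma \equiv -C_\u C_{\sigma'}\sum_{\w\neq\u}C_\w \pmod{\mathscr I_{\rat}}$ ``and the difference lands in $\mathscr I_1$''. The difference is $C_{\sigma_0}R^\epsilon$ (with $\sigma_0$ the $(k{-}1)$-simplex obtained by deleting one $\u$), which lies in $\mathscr I_{\rat}$ via $(\mathscr R3)$ but \emph{not} in $\mathscr I_1$. This term cannot be discarded into $\beta$: it is exactly what produces the $\tau$'s of dimension $<k$ in the final formula. The correct handling is to apply the \emph{outer} induction on $k$ to $\sigma_0$, writing $C_{\sigma_0}=\beta_0+\sum_{l,\tau,S}a_{\tau,S}C_\tau\prod R$ with $\tau\supseteq\sigma_0$ and $|S|=(k{-}1)-l$, and then multiply through by $R^\epsilon_{\sigma,i}$; since $i\in I(\sigma_0)\subseteq I(\tau)$ this enlarges each $S$ to an element of $\mathcal A_\tau(k-l)$, and $\beta_0R^\epsilon\in\mathscr I_1$ because $\mathscr I_1$ is an ideal. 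Your later parenthetical about ``multiplying the rewriting of $C_{\sigma''}$ by $R^0$'' is also off: the $\sigma''$'s are already degree $k{+}1$ and are not multiplied by anything.

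With these two corrections your argument becomes the paper's proof.
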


\begin{proof}[Proof of Theorem~\ref{thm:nd}] By Proposition~\ref{prop:rewr}, for any simplex 
$\sigma \in \G_k$ with at least two different vertices, we can write 
\[C_\sigma = \beta + \sum_{l=1}^k \sum_{\substack{\tau \in \G_{l}^{nd} \\ S \in \mathcal A_\tau(k-l)}} a_{\tau,S}
\, C_\tau \prod_{(i,\epsilon) \in X} R_{\tau,i}^\epsilon,\]
for integers $a_{\tau,S}$ and $\beta \in \mathscr I_1 \subset \mathscr I_\rat$. For $l\neq k$, and any $S \in \mathcal A_\tau(k-l)$, the term 
$C_\tau \prod_{(i,\epsilon) \in S} R^\epsilon_{\tau,i}$ belongs to $\mathscr I_\rat$. It follows that 
\[C_\sigma \sim_\rat \sum_{\tau \in G_k^{nd}} a_{\tau, \emptyset} C_\tau,.\]
Also note that for any integer $k\geq 2$ and any $\u\in \G_0$, we have by
$(\mathscr R2)$
\[C_\u^k \sim_\rat -\sum_{\substack{\v\in \G_0\\ \{\u,\v\}\in \G_1^{nd}}} C_\v C_{\u}^{k-1}, \]
and so applying the previous case, it follows that all the monomials of degree $k$ in $Z(\G)$ are rationally equivalent to an integral linear combination of the monomials $C_\tau$ for 
$\tau \in \G_{k}^{nd}$, from which the theorem follows. 
\end{proof}

\begin{proof}[Proof of Proposition~\ref{prop:rewr}] The proof goes by induction on $k$. 
For the base case $k=1$, note that any 1-simplex with at least two distinct vertices is necessarily non-degenerate, and so the result trivially holds in this case. Let $k\geq 2$ be an integer, and assume the result holds for all $k'$-simplices with at least two distinct 
vertices for any 
$k'<k$.   We prove it holds  as well for any simplex $\sigma \in \G_k$ with at least two distinct vertices. 

We proceed by a reverse induction on the number of different vertices of $\sigma$. If $\sigma$ is non-degenerate, i.e., if it has 
$k+1$ distinct vertices, the result is obvious. Suppose that $2\leq l <k$, and the result holds for all 
simplices in $\G_k$ with at least 
$l+1$ distinct vertices. Let $\sigma \in \G_k$ with vertex set $\v_0\leq \v_1\leq \dots \leq \v_k$ such that the set 
$\{\v_0,\dots, \v_k\}$ is of size $l$. 
We prove the result for $\sigma$. Denote by $\u_1<\u_2<\dots <\u_l$ all the different vertices of $\sigma$, and by $n_1, \dots, n_l$ the multiplicity of 
$\u_1, \dots, \u_l$ in $\sigma$, respectively. (I.e., the number of times each $\u_j$ appears among the vertices $\v_0,\dots, \v_k$ of $\sigma$.) 
We proceed by (a third) induction on the lexicographical order on ordered sequences $(n_1, \dots, n_l)$. 
Recall that for two ordered sequences ${\mathbf m} = (m_1, \dots, m_l)$ and ${\mathbf n}=(n_1, \dots, n_l)$, 
we have ${\mathbf m} \geq_{lex} {\mathbf n}$ if there exists $0\leq s\leq l$ such that $m_{s+1}> n_{s+1}$, 
and $m_{t}\geq n_{t}$ for all $t\leq s$.

Consider first the smallest ordered sequence $(n_1, \dots, n_l)$ in the lexicographical order, so that 
we have $n_1 =\dots=n_{l-1}=1$, and $n_l = k-l+1>1$ (since $l<k$).  Let $i\in I(\u_{l-1}, \u_l)$. There exists an element $\beta_0 \in \mathscr I_1$ such that we have
\begin{align*}
 C_{\sigma} =& \beta_0+ C_{\u_1}\dots C_{\u_{l-1}}C_{\u_l}^{k-l+1} \\
 =& \beta_0+C_{\u_1}\dots C_{\u_{l-1}}C_{\u_l}^{n_l-1}
R_{\sigma,i}^1 - \sum_{\substack{\w\in \G_0\\
\w > u_l}} C_{\u_1} \dots C_{\u_{l-1}} C_{\u_l}^{n_l-1} C_{\w} \\
&- \sum_{\substack{\w\in \G_0 \\ \u_{l-1}<\w < \u_l\\
w_i = u_{l,i}}}C_{\u_1} \dots C_{\u_{l-1}}C_{\w} C_{\u_l}^{n_l-1}.
\end{align*}
Each term $C_{\u_1} \dots C_{\u_{l-1}} C_{\u_l}^{n_l-1} C_\w$ in the above sum either belongs to $\mathscr I_1$ or is of the form $C_\tau$ for a $k$-simplex $\tau$ which has
$l+1$ different vertices. Also the term $C_{\u_1}C_{\u_2} \dots C_{\u_{l}}^{n_l-1}$ is $C_\tau$ for a $(k-1)$-simplex $\tau$ with at least two distinct vertices. Thus the result  follows by applying the induction hypothesis to  each term appearing in the right hand side of the above equation.  

By symmetry 
the same reasoning applies to the maximum ordered sequence $(n_1, \dots, n_l)$ in the lexicographical order 
which has $n_2 =\dots =n_l=1$. 

Let now ${\mathbf n}=(n_1, \dots, n_l)$ be an arbitrary ordered sequence. We can assume that 
$\mathbf n$ is neither 
maximum nor minimum in the lexicographical order. Thus, there exists $1<h\leq l$ such that $n_h \geq 2$. Let $i\in I(\u_{h-1}, \u_h)$. 
Quite similarly as above, there exists $\beta_1 \in \mathscr I_1$ such that we have 

\begin{align*}
 C_{\sigma} = C_{\u_1}^{n_1}\dots C_{\u_{h}}^{n_h}\dots C_{\u_l}^{n_l} 
 =& \beta_1+ C_{\u_1}^{n_1}\dots C_{\u_{h-1}}^{n_{h-1}} C_{\u_{h}}^{n_h-1}\dots C_{\u_l}^{n_l}
R_{\sigma,i}^1 \\
&- \sum_{\substack{\w\in \G_0\\ \w > u_h}} C_{\u_1}^{n_1} \dots C_{\u_{h-1}}^{n_{h-1}}
C_{\u_{h}}^{n_h-1}\dots C_{\u_l}^{n_l} C_{\w} \\
&-\sum_{\substack{\w\in \G_0\\
\u_{h-1}<\w \leq u_h\\ w_i = u_{h,i}}} C_{\u_1}^{n_1} \dots 
C_{\u_{h-1}}^{n_{h-1}}C_{\u_{h}}^{n_h-1}\dots C_{\u_l}^{n_l} C_{\w}.
\end{align*}
The hypothesis of the induction applies to the first term as 
$C_{\u_1}^{n_1}\dots C_{\u_{h-1}}^{n_{h-1}} C_{\u_{h}}^{n_h-1}\dots C_{\u_l}^{n_l}$ has degree $k-1$. 
In the second term of the equation above, the induction hypothesis applies to each term 
$C_{\u_1}^{n_1} \dots C_{\u_{h-1}}^{n_{h-1}}
C_{\u_{h}}^{n_h-1}\dots C_{\u_l}^{n_l} C_{\w}$ in the sum: if $\w = \u_{j}$ for some $j\geq h+1$ since the the ordered sequence 
$(n_1, \dots, n_{h-1}, n_h-1, \dots, n_j+1, \dots, n_l)$ is smaller than $\mathbf n$ in the 
lexicographical order. If $\w \neq \u_j$ for all $j$, then the term is either in $\mathscr I_1$ or is of the form $C_{\sigma'}$ with $\sigma'\in \G_k$ with 
more than $l$ distinct vertices. 

Similarly, the hypothesis of the induction applies to each term $C_{\u_1}^{n_1} \dots 
C_{\u_{h-1}}^{n_{h-1}}C_{\u_{h}}^{n_h-1}\dots C_{\u_l}^{n_l} C_{\w}$ in the last sum, as each of those terms has $l+1$ distinct vertices. 
\end{proof}

\section{Proofs of Theorem~\ref{thm:st} and Theorem~\ref{thm1}}\label{sec:str}
By Theorem~\ref{thm:nd}, we have a surjective map 
$ \mathbb Z\langle \G_k^{nd} \rangle \longrightarrow \Ch^{k+1}$. The structure Theorem~\ref{thm:st} describes the kernel of this surjection.  

 First recall that for two elements $\u<\v$ of $\G_0$ such that $\{\u,\v\} \in \G_1$, we denote by $\e_{\u,\v}$ the cube of dimension $|I(\u,\v)|$ formed by all the vertices $\mathbf z=(z_1, \dots, z_d)$ in $\G_0$ with 
  $z_i \in \{u_i,v_i\}$, i.e., $\e_{\u,\v} = \prod_{i=1}^d \{u_i,v_i\}$.  
  
  Let $\sigma$ be a $k$-simplex in $ \G_{k}^{nd}$ with vertices $\v_0<\v_1<\dots<\v_k$. For two indices $1\leq i,j \leq d $ lying both in $I(\u_{t}, \u_{t+1})$ for some $0\leq t \leq r-1$, we defined 
 \[\widetilde R_{\sigma,i,j} := \sum_{\substack{\w\in \e_{\u_t, \u_{t+1}}\\ w_i =u_{t,i}}} C_\w - \sum_{\substack{\w \in \e_{\u_t,\u_{t+1}}\\ w_j =u_{t,j}}} C_\w.\]
 Remark that we have 
\begin{align*}
C_{\sigma} \widetilde R_{\sigma, i,j}  =& \sum_{\substack {\v_t<\w < \v_{t+1}\\\w \in \e_{\u,\v}, w_i=v_{t,i}}} C_{\v_0} \dots C_{\v_t} C_{\w} C_{\v_{t+1}}\dots C_{\v_k} \\
&-\sum_{\substack {\v_t<\w < \v_{t+1}\\\w\in \e_{\u,\v}, w_j=v_{t,j}}} C_{\v_0} \dots C_{\v_t} C_{\w} C_{\v_{t+1}}\dots C_{\v_k}, 
\end{align*}
and so  $C_{\sigma} \widetilde R_{\sigma, i,j} \in \mathbb Z\langle \G_{k}^{nd}\rangle$. Define $\mathscr I_k^{nd}$ as the submodule of $\mathbb Z\langle \G_k^{nd} \rangle$ generated by all the elements $C_\sigma \widetilde R_{\sigma, i,j}$, for any $\sigma, t, i,j$ as above.

  \medskip
  
For $\sigma, t, i,j$ as above, we  define 
\[R_{\sigma,i,j} := R^0_{\sigma,i} - R^0_{\sigma_j} = \sum_{\substack{\w\in \G_0 \\ w_i = v_{t,i}}} C_\w - \sum_{\substack{\w\in \G_0 \\ w_i = v_{t,j}}} C_\w.\]  
Note that we have $C_{\sigma} R_{\sigma, i,j} \sim_\rat 0$ by $(\mathscr R3)$. 

The following proposition is straightforward.
\begin{prop}\label{prop:triv}
Notations as above, there exists $\beta \in \mathscr I_1$ such that we have 
$C_\sigma R_{\sigma, i,j}  = \beta + C_\sigma \widetilde R_{\sigma, i,j}$.  
\end{prop}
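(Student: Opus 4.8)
The plan is to expand both $C_\sigma R_{\sigma,i,j}$ and $C_\sigma \widetilde R_{\sigma,i,j}$ as explicit $\mathbb Z$-linear combinations of degree-$(k+1)$ monomials in $Z(\G)$, and to check that their difference is supported on monomials that fail to be simplices, hence lies in $\mathscr I_1$. Recall that $R_{\sigma,i,j} = R^0_{\sigma,i} - R^0_{\sigma,j} = \sum_{\w\in\G_0,\, w_i = v_{t,i}} C_\w - \sum_{\w\in\G_0,\, w_j = v_{t,j}} C_\w$, whereas $\widetilde R_{\sigma,i,j}$ is the analogous difference but with $\w$ restricted to the cube $\e_{\v_t,\v_{t+1}}$. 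So $R_{\sigma,i,j} - \widetilde R_{\sigma,i,j}$ is a sum of $C_\w$ over vertices $\w$ with $w_i = v_{t,i}$ (resp.\ $w_j = v_{t,j}$) that do \emph{not} lie in the cube $\e_{\v_t,\v_{t+1}} = \prod_{m} \{v_{t,m}, v_{t+1,m}\}$.

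The key step is then the following claim: for any such $\w$ (say with $w_i = v_{t,i}$ but $\w\notin \e_{\v_t,\v_{t+1}}$), the monomial $C_{\v_0}\cdots C_{\v_k} C_\w$ vanishes in $\Ch(\G)$ because $\{\v_0,\dots,\v_k,\w\}$ is not a simplex, i.e.\ that monomial lies in $\mathscr I_1$. To see this, first use relation $(\mathscr R1)$: if $\{\v_t, \v_{t+1}, \w\}$ is already not a $2$-simplex we are done, so assume it is. Since $\{\v_t,\v_{t+1}\}$ is a non-degenerate $1$-simplex, for each coordinate $m$ we have $v_{t,m} \le_m v_{t+1,m}$ with $\{v_{t,m},v_{t+1,m}\}\in E_m$ whenever $m\in I(\v_t,\v_{t+1})$. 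If $\w$ forms a simplex with both $\v_t$ and $\v_{t+1}$ and $\w\notin \e_{\v_t,\v_{t+1}}$, then in some coordinate $m$ the value $w_m$ is strictly outside $\{v_{t,m}, v_{t+1,m}\}$; combining this with the edge conditions forces either $\{w_m, v_{t,m}\}\notin E_m$ or $\{w_m, v_{t+1,m}\}\notin E_m$ (an edge of a simple graph cannot be "between" the endpoints of another edge), contradicting simplex-hood of $\{\v_t,\w\}$ or $\{\v_{t+1},\w\}$. Hence every leftover monomial is in $\mathscr I_1$, their signed sum is some $\beta\in\mathscr I_1$, and $C_\sigma R_{\sigma,i,j} = \beta + C_\sigma \widetilde R_{\sigma,i,j}$ in $Z(\G)$ — in fact the identity already holds at the level of polynomials once we collect only the simplex monomials.

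The main obstacle is purely bookkeeping: one must carefully enumerate, for the two sums defining $R^0_{\sigma,i}$ and $R^0_{\sigma,j}$, exactly which vertices $\w$ survive multiplication by $C_\sigma$ modulo $\mathscr I_1$, and confirm that the surviving ones are precisely those in the cube $\e_{\v_t,\v_{t+1}}$ lying strictly between $\v_t$ and $\v_{t+1}$ in the relevant coordinate — this is what makes the truncated sums in $\widetilde R_{\sigma,i,j}$ match. Since $i,j\in I(\v_t,\v_{t+1})$ by hypothesis, both $v_{t,i}<v_{t+1,i}$ and $v_{t,j}<v_{t+1,j}$, so the constraint $w_i = v_{t,i}$ (resp.\ $w_j=v_{t,j}$) is compatible with $\w$ lying in the open sub-cube, and the combinatorics closes up. There is no genuine difficulty beyond this; the proposition is "straightforward" exactly because $(\mathscr R1)$ does all the work of discarding non-simplex terms.
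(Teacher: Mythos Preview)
Your overall strategy is correct, but the key step has a genuine gap. You claim that if $\{\v_t,\v_{t+1},\w\}$ is a $2$-simplex and $\w\notin\e_{\v_t,\v_{t+1}}$ then a contradiction arises because ``an edge of a simple graph cannot be between the endpoints of another edge.'' That reasoning only covers the case $\v_t<\w<\v_{t+1}$ (where the disjointness of $I(\v_t,\w)$ and $I(\w,\v_{t+1})$ indeed forces $\w\in\e_{\v_t,\v_{t+1}}$). It says nothing about the case $\w\le\v_t$, which is not ruled out: take for instance $d=3$, $\sigma=\{\v_0,\v_1,\v_2\}$ with $\v_0=(0,0,0)$, $\v_1=(0,0,1)$, $\v_2=(1,1,1)$, $t=1$, $i=1$. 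Then $\w=\v_0$ has $w_1=v_{1,1}$, lies outside $\e_{\v_1,\v_2}$, yet $C_\sigma C_\w=C_{\v_0}^2C_{\v_1}C_{\v_2}$ is a degenerate simplex and hence is \emph{not} in $\mathscr I_1$. So your assertion that ``the surviving ones are precisely those in the cube'' is false.

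The fix is a cancellation you do not mention. Suppose $w_i=v_{t,i}$, $\w\notin\e_{\v_t,\v_{t+1}}$, and $\{\v_0,\dots,\v_k,\w\}$ \emph{is} a (possibly degenerate) simplex. Since $w_i<v_{t+1,i}$, comparability forces $\w<\v_{t+1}$; and if $\v_t<\w<\v_{t+1}$ then (as you observe) $\w\in\e_{\v_t,\v_{t+1}}$, contrary to assumption. Hence $\w\le\v_t$. Now the pairwise disjointness of the $I$-sets in the enlarged simplex, together with $j\in I(\v_t,\v_{t+1})$, gives $w_j=v_{t,j}$ as well. Thus every such $\w$ appears with the same sign in both $R^0_{\sigma,i}$ and $R^0_{\sigma,j}$ and cancels in the difference $R_{\sigma,i,j}=R^0_{\sigma,i}-R^0_{\sigma,j}$. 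After this cancellation, every remaining term of $C_\sigma(R_{\sigma,i,j}-\widetilde R_{\sigma,i,j})$ does lie in $\mathscr I_1$, and the proposition follows. This is presumably what the paper has in mind when it calls the result straightforward; no proof is given there.
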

This shows that $\mathscr I_k^{nd} \subset \Rat$, and therefore, passing to the quotient, we get a surjection
\[\mathbb Z\langle \G_k^{nd} \rangle/\mathscr I_{k}^{nd} \longrightarrow \Ch^{k+1}.\]
In this section, we prove this map is injective, which implies Theorem~\ref{thm:st}. 

\medskip

Let $\alpha = \sum_{\tau \in \G_k^{nd}} a_\tau C_\tau \in \mathbb Z\langle \G_k^{nd}\rangle$,  with $a_\tau \in \mathbb Z$ for all $\tau \in \G_k^{nd}$,  be an element in the kernel, so we have $\alpha \simeq_\rat 0$. We shall prove that $\alpha \in \mathscr I_{k}^{nd}$. 

\medskip

Consider the graded piece $Z^{k+1}(\G)$ consisting of all polynomials of homogenous degree $k+1$ with integral coefficients in variables $C_\v$ for $\v\in \G_0$. We define on $Z^{k+1}(\G)$ a decreasing filtration $\mathcal F^{\bullet}:\,\, \mathcal F^{-1} = Z^{k+1}(\G) \supset \mathcal F^0 \supset \dots \supset \mathcal F^{k-1} \supset \mathcal F^k$  as follows. 

\begin{defi}\rm Define $\mathcal F^{-1 }:= Z^{k+1}(\G)$, $\mathcal F^{0 }:= \Rat^{k+1}(\G)$, and for each $ 1 \leq l \leq k-1$, define $\mathcal F^{l}$ as the set of all elements $\alpha$ which verify the following property: there exists an element $\beta \in \mathscr I_1$, and for each $l\leq t\leq k-1$, there are integers $a_{\tau, S}$ associated to any $\tau \in \G_{t}^{nd}$ and $S\in \mathcal A_\tau(k-t)$ such that we have 
\[\alpha =\beta+  \sum_{t=l}^{k-1} \sum_{\substack{\tau \in \G_{t}^{nd} \\ S \in \mathcal A_\tau(k-t)}} 
a_{\tau,S}\, C_\tau 
\prod_{(i,\epsilon) \in S} R_{\tau,i}^\epsilon.\]
Finally, define $\mathcal F^{k} := \mathscr I_k^{nd}$. 
\end{defi}
(Note that the inclusion $\mathcal F^0 \subset \mathcal F^1$ is implied from Proposition~\ref{prop:triv}.)

\medskip

Theorem~\ref{thm:st} is a consequence of the following two lemmas.
 
\begin{lemma}\label{lem:crucial} 
Let $\alpha \in \mathbb Z \langle \G_k^{nd}\rangle$ so that $\alpha \sim_\rat 0$. Then we have $\alpha \in \mathcal F^{k-1}$. 
\end{lemma}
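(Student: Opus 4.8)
The plan is to track carefully what it costs to witness that $\alpha \sim_\rat 0$, and to push any contribution coming from $(\mathscr R2)$ and from low-dimensional $(\mathscr R3)$-type relations down into $\mathcal F^{k-1}$. Since $\alpha \sim_\rat 0$, by definition there is a representation $\alpha = \beta + \sum_{\v\in\G_0} C_\v Q_\v \bigl(\sum_{\w\in\G_0}C_\w\bigr) + \sum C_\u C_\w Q_{\u,\w,i}\bigl(\sum_{z_i=u_i}C_\z\bigr)$ with $\beta\in\mathscr I_1$ and the $Q$'s homogeneous polynomials of the appropriate degree. The first move is to reduce each coefficient polynomial $Q$ modulo $\mathscr I_1$ to a $\Z$-linear combination of monomials $C_\rho$ attached to simplices $\rho$, using Theorem~\ref{thm:nd}/Proposition~\ref{prop:rewr}, and then to reduce those simplices to non-degenerate ones by the moving lemma, all while keeping track of which generators of $\Rat$ get used. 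This turns the relation into a statement of the form ``$\alpha$ equals a $\Z$-combination of the elements $C_\sigma R^\epsilon_{\sigma,i}$ and $C_\sigma\bigl(\sum_\w C_\w\bigr)$ with $\sigma$ non-degenerate, modulo $\mathscr I_1$.''

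Next I would handle the $(\mathscr R2)$-terms $C_\sigma \bigl(\sum_{\w\in\G_0}C_\w\bigr)$ for $\sigma$ a non-degenerate $k$-simplex. The point is that $\sum_\w C_\w$ can be split, relative to a chosen index $i\in I(\v_{t},\v_{t+1})$ for some consecutive pair in $\sigma$, as $\sum_{w_i=v_{t,i}}C_\w + \sum_{w_i=v_{t+1,i}}C_\w + (\text{terms with }w_i\notin\{v_{t,i},v_{t+1,i}\})$; the last group of terms multiplied by $C_\sigma$ lands in $\mathscr I_1$ because $\w$ together with the vertices of $\sigma$ cannot form a simplex, while the first two groups give exactly $C_\sigma R^0_{\sigma,i}$ and $C_\sigma R^1_{\sigma,i}$, which are $(\mathscr R3)$-type products and hence of the shape appearing in $\mathcal F^{k-1}$ (take $t=k-1$, $S$ a singleton). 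So modulo $\mathscr I_1$ every $(\mathscr R2)$-contribution is already visibly in $\mathcal F^{k-1}$, and similarly each genuine $(\mathscr R3)$-generator $C_\u C_\w\bigl(\sum_{z_i=u_i}C_\z\bigr)$, after expanding $C_\u C_\w$ into simplex monomials and killing the non-simplex pieces, becomes a $\Z$-combination of $C_\sigma R^0_{\sigma,i}$'s with $\sigma$ non-degenerate — again of the required form. Collecting everything, $\alpha$ lies in $\mathscr I_1 + (\text{span of }C_\sigma\prod R^\epsilon_{\sigma,i}\text{ with }|S|=1)$; but since $\alpha\in\Z\langle\G_k^{nd}\rangle$ and the $C_\sigma R^\epsilon_{\sigma,i}$ with $\sigma\in\G_k^{nd}$ already lie in $\Z\langle\G_k^{nd}\rangle$, comparing components forces the $\mathscr I_1$-part to vanish, giving $\alpha\in\mathcal F^{k-1}$.

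The main obstacle I anticipate is the bookkeeping in the reduction step: when one rewrites a general coefficient polynomial $Q$ as a combination of simplex monomials and then applies the moving lemma, new products $C_\sigma\prod_{(i,\epsilon)\in S}R^\epsilon_{\sigma,i}$ with $|S|>1$ and with $\sigma$ of dimension possibly larger or smaller than $k$ can appear, and one must argue that every such term can be re-expressed — via $(\mathscr R1)$, $(\mathscr R3)$, and degenerations — as a combination of the ``length-one'' terms $C_\sigma R^\epsilon_{\sigma,i}$ with $\sigma$ non-degenerate of dimension $k$, plus $\mathscr I_1$. Concretely one needs a lemma (implicit in the filtration set-up) saying $C_\tau\prod_{(i,\epsilon)\in S}R^\epsilon_{\tau,i} \in \mathcal F^{|\tau|}$ whenever $|S| = k-|\tau|$, i.e.\ that the filtration $\mathcal F^\bullet$ is stable under multiplying by the $R^\epsilon$'s and absorbing the resulting degenerate monomials; the induction structure of Proposition~\ref{prop:rewr} is exactly what makes this go through, so I would set up a downward induction on $|\tau|$ mirroring that proof. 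Once that stability is in place, the argument above closes: everything is pushed into $\mathcal F^{k-1}$, which is the claim.
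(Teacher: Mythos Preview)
Your sketch correctly captures the \emph{first} step of the paper's argument: rewriting a witness for $\alpha\sim_\rat 0$, absorbing the $(\mathscr R2)$-pieces via $C_\sigma(\sum_\w C_\w)=C_\sigma R^0_{\sigma,i}+C_\sigma R^1_{\sigma,i}+(\text{something in }\mathscr I_1)$, and then applying Proposition~\ref{prop:rewr} to reach a representation of the form $\beta+\sum_{t\geq 1}\sum_{\tau\in\G_t^{nd}}\sum_{S\in\mathcal A_\tau(k-t)}a_{\tau,S}C_\tau\prod_{(i,\epsilon)\in S}R^\epsilon_{\tau,i}$. This is exactly $\alpha\in\mathcal F^1$, which is the paper's base case. (There is some index slippage in your write-up: in degree $k+1$ the simplices $\sigma$ carrying a single $R^\epsilon$-factor must lie in $\G_{k-1}^{nd}$, not $\G_k^{nd}$, and $C_\tau R^\epsilon_{\tau,i}$ for $\tau\in\G_{k-1}^{nd}$ does \emph{not} lie in $\Z\langle\G_k^{nd}\rangle$ --- it contains degenerate monomials like $C_\tau C_{\u_j}$ --- so the ``comparing components'' line at the end of your second paragraph does not work as stated.)

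The genuine gap is the inductive step $\mathcal F^l\Rightarrow\mathcal F^{l+1}$, which you defer to a ``stability lemma'' saying $C_\tau\prod_{(i,\epsilon)\in S}R^\epsilon_{\tau,i}\in\mathcal F^{|\tau|}$. That statement is just the definition of $\mathcal F^{|\tau|}$ and buys nothing: a single term $C_\tau\prod R^\epsilon$ with $\tau\in\G_l^{nd}$ is \emph{not} in $\mathcal F^{l+1}$ in general, so no ``downward induction on $|\tau|$'' applied term-by-term can move you up the filtration. What the paper actually does is use, at each level $l$, the hypothesis that $\alpha$ is supported on non-degenerate $k$-simplices to force a cancellation among the level-$l$ terms. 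Concretely: for each $\tau\in\G_l^{nd}$ one first normalizes the $R$-factors (via Proposition~\ref{prop:keyprop} and Proposition~\ref{prop:triv2}) so that all indices lie in a fixed transversal $J_\tau$ of size $l$ and only the pairs $(i_0,0),\dots,(i_{l-1},0),(i_{l-1},1)$ occur; then one looks at the part of $\alpha$ living in $\Z[C_{\u_0},\dots,C_{\u_l}]$, observes it must vanish because $\alpha$ has no degenerate monomials, and uses the linear independence of the monomials $C_{\u_0}^{n_0}(C_{\u_0}+C_{\u_1})^{n_1}\cdots(C_{\u_0}+\cdots+C_{\u_{l-1}})^{n_{l-1}}C_{\u_l}^{n_l}$ to conclude that every normalized coefficient $b_{\tau,S}$ is zero. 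This vanishing (Claim~\ref{claim2} in the paper) is the real content of the step $\mathcal F^l\Rightarrow\mathcal F^{l+1}$, and it is absent from your outline.
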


\begin{lemma}\label{lem:crucial2}
We have $\mathbb Z \langle \G_{k}^{nd}\rangle  \cap \mathcal F^{k-1} = \mathcal F^k = \mathscr I_{k}^{nd}$. 
\end{lemma}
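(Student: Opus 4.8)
The plan is to prove Lemma~\ref{lem:crucial2}, i.e.\ the equality $\mathbb Z\langle \G_k^{nd}\rangle\cap \mathcal F^{k-1}=\mathscr I_k^{nd}$, by establishing the two inclusions separately. The inclusion $\mathscr I_k^{nd}\subseteq \mathbb Z\langle \G_k^{nd}\rangle\cap \mathcal F^{k-1}$ is the easy one: by construction $\mathscr I_k^{nd}\subseteq \mathbb Z\langle \G_k^{nd}\rangle$, and each generator $C_\sigma\widetilde R_{\sigma,i,j}$ equals $C_\sigma R_{\sigma,i,j}^{\,0}-C_\sigma R_{\sigma,j}^{\,0}$ up to an element of $\mathscr I_1$ by Proposition~\ref{prop:triv}, and $C_\sigma R_{\sigma,i}^{\,\epsilon}$ with $\sigma\in\G_k^{nd}$ is exactly a term of the shape appearing in the definition of $\mathcal F^{k-1}$ with $t=k-1$, $\tau=\sigma$ and $S=\{(i,\epsilon)\}\in\mathcal A_\sigma(1)$; since $\mathscr I_1\subseteq \mathcal F^{k-1}$ as well (taking all $a_{\tau,S}=0$), this gives the inclusion.

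The substantial direction is $\mathbb Z\langle \G_k^{nd}\rangle\cap \mathcal F^{k-1}\subseteq \mathscr I_k^{nd}$. Here I would take an element
\[\alpha=\beta+\sum_{\substack{\tau\in\G_{k-1}^{nd}\\ S\in\mathcal A_\tau(1)}}a_{\tau,S}\,C_\tau R_{\tau,i_S}^{\epsilon_S}\]
(with $\beta\in\mathscr I_1$) that happens to lie in $\mathbb Z\langle \G_k^{nd}\rangle$, and expand each product $C_\tau R_{\tau,i}^{\epsilon}$ explicitly as a sum of monomials $C_{\v_0}\cdots C_{\v_k}$. The point is that for fixed $\tau\in\G_{k-1}^{nd}$ with vertices $\w_0<\dots<\w_{k-1}$ and fixed $(i,\epsilon)$ with $i\in I(\w_t,\w_{t+1})$ for some $t$, the sum $C_\tau R_{\tau,i}^{\epsilon}=\sum_{w_i=(\cdot)}C_\tau C_{\w}$ decomposes according to where the new vertex $\w$ sits in the chain: the non-degenerate contributions are exactly those $\w$ with $\w_t<\w<\w_{t+1}$, while all other choices of $\w$ produce monomials that are either in $\mathscr I_1$ (not a simplex) or degenerate (repeat a vertex, i.e.\ $\w=\w_s$ for some $s$). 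I would organize the degenerate-but-simplicial terms: when $\w=\w_t$ we get $C_{\w_t}^2\prod_{s\neq t}C_{\w_s}$ and similarly for $\w=\w_{t+1}$. Using relation $(\mathscr R2)$ in the form $C_{\w_t}^2\sim_\rat -\sum_{\{\w_t,\w'\}\in\G_1^{nd}}C_{\w_t}C_{\w'}$ repeatedly, one rewrites these degenerate terms back in terms of non-degenerate $C_\sigma$'s plus $\mathscr I_1$-terms, and the bookkeeping must show that the net effect is to realize $\alpha$ as a combination of the $\widetilde R$-relations $C_\sigma\widetilde R_{\sigma,i,j}$.

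The key algebraic identity I expect to need is the following ``telescoping'' fact: if $i,j\in I(\w_t,\w_{t+1})$ then $C_\tau(R_{\tau,i}^{0}-R_{\tau,j}^{0})=C_\tau R_{\tau,i,j}$, and by Proposition~\ref{prop:triv} this differs from $C_\sigma\widetilde R_{\sigma,i,j}$-type elements only by $\mathscr I_1$; and when only one index lives in $I(\tau)$ at level $t$, the corresponding term, after using $(\mathscr R2)$ to absorb the sum over \emph{all} vertices, is forced to cancel. So the strategy is: first reduce modulo $\mathscr I_1$ (which is harmless, as $\mathscr I_1\cap\mathbb Z\langle\G_k^{nd}\rangle$ itself lies in $\mathscr I_k^{nd}$ — a small lemma to check, essentially because a monomial in $\mathbb Z\langle\G_k^{nd}\rangle$ is automatically simplicial, so its $\mathscr I_1$-component must vanish after full rewriting); then match, level by level in $t$, the coefficients of the non-degenerate monomials $C_\sigma$ with $\sigma\supseteq\tau$ against the defining relations of $\mathscr I_k^{nd}$.

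The main obstacle I anticipate is precisely this combinatorial matching: an element $\alpha\in\mathbb Z\langle\G_k^{nd}\rangle$ can be written in the $\mathcal F^{k-1}$-form in many ways, and I need to show that \emph{any} such expression, once its non-simplicial and degenerate parts are forced to cancel (because $\alpha$ is a genuine element of $\mathbb Z\langle\G_k^{nd}\rangle$), collapses into the span of the $C_\sigma\widetilde R_{\sigma,i,j}$. Concretely, the coefficients $a_{\tau,S}$ must satisfy a system of linear constraints coming from the vanishing of all monomial coefficients that are \emph{not} of the form $C_\sigma$ with $\sigma$ non-degenerate containing $\tau$; I expect to prove that the solution space of this system is exactly the image of the map sending the $\widetilde R$-generators to their $\mathcal F^{k-1}$-expansions. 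This is a careful but essentially linear-algebraic argument, and the natural way to carry it out is by induction on the number of vertices of $\tau$ (or on $d$, or on $k$), peeling off one ``direction'' $i$ at a time and invoking Proposition~\ref{prop:rewr} together with the functoriality and intersection maps of Section~\ref{sec:generalities} to reduce to lower-dimensional cubes where everything is explicit.
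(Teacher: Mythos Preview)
Your overall plan --- extract linear constraints on the coefficients $a_{\tau,(i,\epsilon)}$ from the fact that $\alpha$ has no degenerate monomials, then show these constraints force the expression into $\mathscr I_k^{nd}$ --- is the right shape, and matches the paper. But one of your proposed steps is off, and you miss the concrete constraints that make the argument go through.

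The step that does not work is rewriting the degenerate contributions $C_{\w_s}^2\prod_{r\neq s}C_{\w_r}$ via $(\mathscr R2)$. That relation is a \emph{rational equivalence}, i.e.\ an equality in $\Ch(\G)$, not in $Z(\G)$; using it would only let you modify the expression by an element of $\mathscr I_\rat$, which is far too coarse (you would be back to ``$\alpha\in\mathscr I_\rat$'', which you already knew). The equality $\alpha=\beta+\sum_{\tau,(i,\epsilon)} a_{\tau,(i,\epsilon)} C_\tau R_{\tau,i}^\epsilon$ holds in $Z^{k+1}(\G)$ itself, so you should compare coefficients of monomials there. Also note that the degenerate simplicial monomials coming from $C_\tau R_{\tau,i}^\epsilon$ are not only the two ``neighboring'' ones: for $i\in I(\u_j,\u_{j+1})$ the term $C_\tau C_{\u_s}$ appears for \emph{every} $s\le j$ (in $R^0$) or every $s\ge j+1$ (in $R^1$).

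What the paper does instead is exactly the coefficient comparison you mention in your last paragraph, but carried out explicitly and without induction. For each $\tau\in\G_{k-1}^{nd}$ with vertices $\u_0<\dots<\u_{k-1}$, set $\ell_{\tau,j}=\sum_{i\in I(\u_j,\u_{j+1})}a_{\tau,(i,0)}$ and $\rho_{\tau,j}=\sum_{i\in I(\u_j,\u_{j+1})}a_{\tau,(i,1)}$. Since the monomial $C_\tau C_{\u_s}$ is simplicial but degenerate, its coefficient in $\alpha$ is zero (by hypothesis) and its coefficient in $\beta$ is zero (every monomial of an element of $\mathscr I_1$ contains a non-simplicial subset of variables). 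Reading off its coefficient from the $R$-sum gives
\[
\sum_{j<s}\rho_{\tau,j}+\sum_{j\ge s}\ell_{\tau,j}=0\qquad(0\le s\le k-1),
\]
whence $\rho_{\tau,j}=\ell_{\tau,j}$ for all $j$ and $\sum_j\rho_{\tau,j}=0$. Now use Proposition~\ref{prop:triv2} (i.e.\ $C_\tau R_{\tau,i}^1\equiv C_\tau\bigl(\sum_\w C_\w\bigr)-C_\tau R_{\tau,i}^0$ mod $\mathscr I_1$) to replace every $R^1$ by an $R^0$: the ``full sum'' contributions assemble to $\bigl(\sum_j\rho_{\tau,j}\bigr)C_\tau\sum_\w C_\w$, which vanishes by the constraint just found. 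What remains is $\sum_{i\in I(\tau)}(a_{\tau,(i,0)}-a_{\tau,(i,1)})C_\tau R_{\tau,i}^0$. Finally replace each $R_{\tau,i}^0$ by $R_{\tau,\pi(i)}^0+R_{\tau,i,\pi(i)}$; the $R_{\tau,\pi(i)}^0$-part groups into $\sum_j(\ell_{\tau,j}-\rho_{\tau,j})C_\tau R_{\tau,i_j}^0=0$, and the $R_{\tau,i,\pi(i)}$-part is, by Proposition~\ref{prop:triv}, equal modulo $\mathscr I_1$ to a $\mathbb Z$-combination of $C_\tau\widetilde R_{\tau,i,\pi(i)}\in\mathscr I_k^{nd}$. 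Since the residual $\mathscr I_1$-piece must then lie in $\mathscr I_1\cap\mathbb Z\langle\G_k^{nd}\rangle=0$, you are done. No induction, no functoriality, no lower-dimensional cubes are needed.
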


The rest of this section is devoted to the proof of these two lemmas. 

\medskip

Before giving the proof of Lemma~\ref{lem:crucial}, we introduce few extra notations, and state a useful proposition. 

\medskip

Let $1\leq l \leq k-1$ be an integer. For any $l$-simplex $\tau \in \G_l^{nd}$ with vertices $\u_0<\dots <\u_l$, fix a subset $J_\tau$ of $I(\tau)$ of size $l$ with the property that $\bigl| J_\tau \cap I(\u_j, \u_{j+1}) \,\bigr| =1$ for all $0\leq j\leq l-1$.

Define the projection $\pi_\tau: I(\tau) \to J_\tau$ which projects the subset $I(\u_j, \u_{j+1}) \subset I(\sigma)$ to the unique element of the intersection $I(\u_j, \u_{j+1}) \cap J_\tau$.

\medskip

For any set $S\in \mathcal A_{\tau}(k-l)$, define $\pi_\tau(S)$ as the multiset consisting of all the pairs $(\pi(i), \epsilon)$ for any pair $(i,\epsilon) \in S$. We have 
\begin{prop} \label{prop:keyprop} Let $1\leq l\leq k-1$ be an integer. For any $\tau \in \G_l^{nd}$ and any $S \in \mathcal A_\tau(k-l)$, we have 
\[C_\tau \prod_{(i,\epsilon) \in S} R^\epsilon_i - C_\tau \prod_{(i,\epsilon) \in \pi(S)} R^\epsilon_{i} \in \mathcal F^{l+1}.\]
\end{prop}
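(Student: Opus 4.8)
The plan is to prove the statement by replacing, one factor at a time, each $R^\epsilon_{\tau,i}$ by $R^\epsilon_{\tau,\pi_\tau(i)}$, and to control the error term with a telescoping argument. Fix $\tau\in\G_l^{nd}$ with vertices $\u_0<\dots<\u_l$, and suppose $(i,\epsilon)\in S$ with $i\in I(\u_t,\u_{t+1})$, so $j:=\pi_\tau(i)$ is the unique element of $J_\tau\cap I(\u_t,\u_{t+1})$; write $S'$ for the multiset $S$ with this one occurrence of $(i,\epsilon)$ replaced by $(j,\epsilon)$. It suffices to show
\[
C_\tau \Bigl(\prod_{(m,\delta)\in S} R^\delta_{\tau,m}\Bigr) - C_\tau \Bigl(\prod_{(m,\delta)\in S'} R^\delta_{\tau,m}\Bigr)\in\mathcal F^{l+1},
\]
since the full statement follows by applying this $|S|$ times (each step strictly decreases the number of factors not yet projected, and $\mathcal F^{l+1}$ is a $\Z$-module). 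Factoring out the common product $P:=\prod_{(m,\delta)\in S\setminus\{(i,\epsilon)\}}R^\delta_{\tau,m}$, the difference equals $C_\tau\,P\,(R^\epsilon_{\tau,i}-R^\epsilon_{\tau,j})$.

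The heart of the matter is therefore to show $C_\tau\,P\,(R^\epsilon_{\tau,i}-R^\epsilon_{\tau,j})\in\mathcal F^{l+1}$. I would first treat the case $P=1$ (that is, $S=\{(i,\epsilon)\}$, $k=l+1$): here one shows $C_\tau(R^\epsilon_{\tau,i}-R^\epsilon_{\tau,j})$ lies in $\mathcal F^{l+1}=\mathscr I^{nd}_{l+1}$ (when $\epsilon$ matches the ``lower'' choice this is, up to an element of $\mathscr I_1$, exactly a generator $C_\tau\widetilde R_{\tau,i,j}$ of $\mathscr I^{nd}_{k}$ by Proposition~\ref{prop:triv}; the case $\epsilon=1$ is handled by combining with the $(\mathscr R3)$-relation for the opposite facet, i.e.\ using $R^0_{\tau,i}+R^1_{\tau,i}=\sum_\w C_\w$ together with $(\mathscr R2)$). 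For general $P$, I would expand $C_\tau R^\epsilon_{\tau,i}$ as a $\Z$-linear combination $\sum_\w C_{\tau\cup\{\w\}}$ over the vertices $\w$ contributing to $R^\epsilon_{\tau,i}$, use Proposition~\ref{prop:rewr} (the rewriting/moving lemma) to re-express each $C_{\tau\cup\{\w\}}\prod_{(m,\delta)\in S\setminus\{(i,\epsilon)\}}(\cdots)$, and observe that every resulting term is either of the shape $C_{\tau'}\prod R^\delta_{\tau',m}$ with $\tau'\in\G^{nd}_{\geq l+1}$ — hence in $\mathcal F^{l+1}$ by definition — or lies in $\mathscr I_1\subset\mathcal F^{l+1}$; the ``leftover'' terms for $i$ and for $j$ cancel against each other because $i$ and $j$ lie in the same block $I(\u_t,\u_{t+1})$, so they select the same set of intermediate vertices $\w$ sitting between $\u_t$ and $\u_{t+1}$ in the cube $\e_{\u_t,\u_{t+1}}$.

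The main obstacle I anticipate is the bookkeeping in this last cancellation: one must check that after inserting an intermediate vertex $\w\in\e_{\u_t,\u_{t+1}}$ (with $w_i=u_{t,i}$ for the $R^0_{\tau,i}$ piece) into $\tau$, the simplex $\tau\cup\{\w\}$ genuinely becomes non-degenerate of dimension $l+1$ (or the monomial dies in $\mathscr I_1$), and that the $R$-factors carried along reattach correctly to $\tau'=\tau\cup\{\w\}$ rather than to $\tau$ — this is where the hypothesis $|J_\tau\cap I(\u_j,\u_{j+1})|=1$ and the precise definition of $\pi_\tau$ are used, ensuring the surviving $R$-factors on the $i$-side and on the $j$-side are literally the same relations relative to $\tau'$. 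Once this is verified for a single swap, the induction on $|S|$ (and the already-available induction hypotheses of the section, which let us assume analogous statements in lower-degree simplices) closes the argument.
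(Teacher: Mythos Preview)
Your telescoping strategy—swapping one factor $(i,\epsilon)\mapsto(\pi_\tau(i),\epsilon)$ at a time and showing each error lies in $\mathcal F^{l+1}$—is exactly the paper's approach, and your outline is correct. The paper's execution of each swap is, however, considerably lighter than what you sketch. Rather than expanding $C_\tau R^\epsilon_{\tau,i}$ over all $\w$, invoking Proposition~\ref{prop:rewr}, and cancelling ``leftover'' degenerate terms between the $i$- and $j$-sides, the paper writes
\[
R^\epsilon_{\tau,i}-R^\epsilon_{\tau,j}=(-1)^\epsilon R_{\tau,i,j}
\]
(this is the definition for $\epsilon=0$; for $\epsilon=1$ it holds after multiplication by $C_\tau$ modulo $\mathscr I_1$, via Proposition~\ref{prop:triv2}), then applies Proposition~\ref{prop:triv} to get $C_\tau R_{\tau,i,j}\equiv C_\tau\widetilde R_{\tau,i,j}\pmod{\mathscr I_1}$. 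The key observation is that $C_\tau\widetilde R_{\tau,i,j}$ is \emph{already} a $\Z$-combination of $C_{\tau'}$ with $\tau'\in\G^{nd}_{l+1}$, $\tau\subset\tau'$, and $I(\tau')=I(\tau)$ with the same edge in each coordinate. Hence $R^\delta_{\tau,m}=R^\delta_{\tau',m}$ for every $(m,\delta)\in S'$, so $C_{\tau'}\cdot P$ is literally one of the defining generators of $\mathcal F^{l+1}$. No rewriting lemma, no cancellation argument, and no separate treatment of the base case $P=1$ are needed. Your route can be pushed through (the degenerate leftover terms $C_\tau C_{\u_s}P$ do indeed match on both sides, since $i,j\in I(\u_t,\u_{t+1})$ pick out the same range of $s$), but Proposition~\ref{prop:rewr} plays no role here and the extra bookkeeping only obscures this direct mechanism.
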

\begin{proof}
Let $m = k-l$, and denote by $(i_1,\epsilon_1), \dots, (i_{m}, \epsilon_m)$ all the elements of $S$.  Let $j$ be the index with 
$i_1 \in I(\u_j, \u_j+1)$. We can write 
\[R_{\tau, i_1}^{\epsilon_1} = R_{\tau, i_1}^{\epsilon_1} - R_{\tau, \pi(i_1)}^{\epsilon_1} + R_{\tau, \pi(i_1)}^{\epsilon_1} = (-1)^{\epsilon_1} R_{\tau, i_1,\pi(i_1)}+R_{\tau, \pi(i_1)}^{\epsilon_1}.\] By Proposition~\ref{prop:triv}, we have  
\[C_\tau R_{\tau, i_1,\pi(i_1)} = C_\tau \widetilde R_{\tau, i_1, \pi(i_1)} + \beta,\] 
for some $\beta\in \mathscr I_1$. 
Since $C_\tau \widetilde R^{\epsilon_1}_{i_1,\pi(i_1)} \in \mathbb Z\langle \G^{nd}_{l+1}\rangle$, setting $S':= S\setminus \{(i_1,\epsilon_1)\}$,
 we infer that $C_\tau \widetilde R^{\epsilon_1}_{i_1,\pi(i_1)} \prod_{(i,\epsilon) \in S'} R^{\epsilon}_{\tau, i} \in \mathcal F^{l+1}$.  Let $S_1 = S \cup \{(\pi(i_1), \epsilon)\} \setminus\{(i_1, \epsilon_1)\}$. Thus, since $\pi(\pi(i_1)) = \pi(i_1)$, we get
\[C_\tau  \prod_{(i,\epsilon) \in S} R^{\epsilon}_{\tau, i} - C_\tau \prod_{(i,\epsilon) \in S_1} R^{\epsilon}_{\tau, i} =  \in \mathcal F^{l+1}. \]
Proceeding by induction and applying the above reasoning to the set $S_t = S_{t-1} \cup\{(\pi(i_{t}),\epsilon_t)\} \setminus \{(i_t, \epsilon_t)\}$ and $(i_t, \epsilon_t) \in S_{t-1}$, for $t\geq 2$, we infer that for each $t$, 
\[C_\tau  \prod_{(i,\epsilon) \in S} R^{\epsilon}_{\tau, i} - C_\tau \prod_{(i,\epsilon) \in S_t} R^{\epsilon}_{\tau, i}  \in \mathcal F^{l+1}.\]
For $t=k-l$, we have $S_{k-l} = \pi(S)$, and the proposition follows.
\end{proof}

The following proposition is a direct consequence of the definition of simplicial structure on $\G$.
\begin{prop}\label{prop:triv2}
Let $\tau \in \G_l^{nd}$. For any $i\in I(\tau)$, we have
\[C_\tau (\sum_{\v\in \G_0}C_\v) - C_\tau (R^0_{\tau, i}+ R^1_{\tau,i}) \in \mathscr I_1.\]
\end{prop}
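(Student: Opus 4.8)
The plan is to expand both sides explicitly and reduce the claim to a statement about which monomials lie in $\mathscr I_1$. Since $\tau \in \G_l^{nd}$ is non-degenerate, write its vertices as $\u_0 < \u_1 < \dots < \u_l$, so $C_\tau = C_{\u_0}C_{\u_1}\cdots C_{\u_l}$. The index $i\in I(\tau)$ belongs to a unique set $I(\u_t,\u_{t+1})$; set $a:=u_{t,i}<_i b:=u_{t+1,i}$, an edge of $G_i$. Because the sets $I(\u_j,\u_{j+1})$ are pairwise disjoint, the $i$-th coordinate increases only at step $t$, so $u_{j,i}=a$ for $j\leq t$ and $u_{j,i}=b$ for $j\geq t+1$.

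First I would split $\sum_{\v\in\G_0}C_\v$ according to the value of the $i$-th coordinate, obtaining $\sum_{\v\in\G_0}C_\v = R^0_{\tau,i}+R^1_{\tau,i}+\sum_{\w\in\G_0,\,w_i\notin\{a,b\}}C_\w$. Multiplying by $C_\tau$ gives
\[C_\tau\Bigl(\sum_{\v\in\G_0}C_\v\Bigr)-C_\tau\bigl(R^0_{\tau,i}+R^1_{\tau,i}\bigr)=\sum_{\substack{\w\in\G_0\\ w_i\notin\{a,b\}}}C_{\u_0}C_{\u_1}\cdots C_{\u_l}C_\w,\]
so it suffices to show that for each such $\w$ the vertices $\u_0,\dots,\u_l,\w$ do not form a simplex of $\G$; then $C_{\u_0}\cdots C_{\u_l}C_\w$ is one of the generators of $\mathscr I_1$ of type $(\mathscr R1)$.

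Then I would run a short case analysis. Since $w_i\neq u_{j,i}$ for all $j$ we have $\w\neq\u_j$; if $\w$ is $\leq$-incomparable to some $\u_j$ there is nothing to prove, so assume $\w$ is comparable to all of them. Then $\w$ is strictly inserted into the chain: either $\u_s<\w<\u_{s+1}$ for some $0\leq s\leq l-1$, or $\w<\u_0$, or $\w>\u_l$. If $s<t$ then $u_{s,i}=u_{s+1,i}=a$ forces $w_i=a$, and if $s>t$ then likewise $w_i=b$, both impossible. If $s=t$, then $a\leq_i w_i\leq_i b$ together with $w_i\notin\{a,b\}$ gives $a<_i w_i<_i b$, so $i\in I(\u_t,\w)\cap I(\w,\u_{t+1})$, contradicting the disjointness required of the index sets of a simplex. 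If $\w<\u_0$ then $w_i<_i a=u_{0,i}$, so $i\in I(\w,\u_0)\cap I(\u_t,\u_{t+1})$, again a contradiction; the case $\w>\u_l$ is symmetric. Hence $\u_0,\dots,\u_l,\w$ never form a simplex, $C_{\u_0}\cdots C_{\u_l}C_\w\in\mathscr I_1$, and the proposition follows.

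The argument is entirely routine; the only point requiring a little care is the case $s=t$ (and the two boundary cases), where the contradiction must come from the disjointness of the sets $I(\cdot,\cdot)$ built into the definition of a simplex of $\G$, not merely from the edge condition on coordinate $i$: a priori both $\{a,w_i\}$ and $\{w_i,b\}$ could be edges of $G_i$, so one really needs $i$ to lie in two consecutive index sets of the putative simplex.
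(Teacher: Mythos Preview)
Your proof is correct and is precisely the routine verification the paper has in mind: the paper does not spell out a proof, stating only that the proposition ``is a direct consequence of the definition of the simplicial structure on $\G$,'' and your argument supplies exactly those details. The splitting of $\sum_\v C_\v$ according to the $i$-th coordinate and the subsequent case analysis showing that $\u_0,\dots,\u_l,\w$ cannot form a simplex when $w_i\notin\{a,b\}$ is the intended mechanism, and your careful handling of the case $s=t$ via the disjointness of the index sets (rather than the edge condition alone) is the right way to close that case.
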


With these preliminaries, we are ready to prove Lemma~\ref{lem:crucial}.

\begin{proof}[Proof of Lemma~\ref{lem:crucial}] Let $\alpha \in \mathbb Z\langle \G^{nd}_k\rangle$ be an element with $\alpha \in \mathcal F^0 = \Rat(\G)$.  Proceeding by induction, we will show that for any $1\leq l \leq k-1 $, we have $\alpha \in \mathcal F^l$; for $l=k-1$ we get the lemma.

For the base of our induction, we need to show that $\alpha \in \mathcal F^1$. 
Since $\alpha \sim_\rat 0$, by definition of the rational equivalence, there are elements $\beta \in \mathscr I_1$ and $\alpha_0 \in \Z(\G)$,
and for any $\sigma \in \G_1^{nd}$, and $i\in I(\sigma)$ and $\epsilon\in \{0,1\}$, there is an element  
$\alpha_{\sigma,i,\epsilon} \in \Z^{k-1}(\G)$ such that we have 

\begin{equation}\label{eq5bis}
\alpha = \beta + \alpha_0(\sum_{\w\in \G_0}C_\w)+ \sum_{\substack{\sigma \in \G_1^{nd} \\ 
(i,\epsilon) \in \mathcal A_\sigma(1)}} 
\alpha_{\sigma, i, \epsilon} C_{\sigma} R_{\sigma,i}^{\epsilon}.
\end{equation}

For any $\u \in \G_0$, we compare the the coefficient of $C_\u^{k+1}$ on both sides of Equation~\eqref{eq5bis}. On the left hand side, the coefficient is zero since $\alpha$ has support in the non-degenerate simplices. 
Thus, the coefficient of $C_\u^{k+1}$ in the right hand side of the equality must be zero. Since all the monomials in the last sum have at least two distinct variables among $C_\v$, we infer that the coefficient of 
$C_\u^{k}$ in $\alpha_0$ must be zero. Therefore, we can write $\alpha_0 = \beta_0 + \sum_{\sigma \in \G_{1}^{nd}} \gamma_\sigma C_\sigma$ for  $\beta_0 \in \mathscr I_1$, and $\gamma_\sigma \in Z^{k-1}(\G)$. For any $\sigma \in \G_1^{nd}$, picking an arbitrary $i_\sigma \in I(\sigma)$, 
 and using Proposition~\ref{prop:triv2}, we decompose
\[C_\sigma \sum_{\w\in \G_0}C_\w = \beta_ \sigma + C_\sigma R^0_{\sigma,i_\sigma} + C_\sigma R^1_{\sigma, i_\sigma},\] 
for an element $\beta_\sigma \in \mathscr I_1$. Therefore, we have 
\begin{align*}
\alpha_0 (\sum_{\w\in \G_0} C_\w)=  \beta_0' + \sum_{\substack{\sigma \in \G_{1}^{nd} \\ \epsilon \in \{0,1\}}} \gamma_\sigma C_\sigma R^\epsilon_{\sigma, i_\sigma}.
\end{align*}
for some $\beta'_0\in \mathscr I_1$. 

Thus, replacing $\beta$ of Equation~\eqref{eq5bis} with $\beta + \beta_0'$, and replacing $\alpha_{\sigma, i_\sigma, \epsilon}$ for each $\sigma \in \G_1^{nd}$ and $\epsilon \in\{0,1\}$ with $\alpha_{\sigma, i_\sigma, \epsilon} + \gamma_{\sigma, i_\sigma, \epsilon}$, we can ensure to have $\alpha_0 =0$, and get 
\begin{equation}
\alpha = \beta + \sum_{\substack{ \sigma \in \G_1^{nd} \\ 
(i,\epsilon) \in \mathcal A_\sigma(1)}} 
\alpha_{\sigma, i, \epsilon} C_\sigma R_{\sigma,i}^{\epsilon}.
\end{equation}

Let $\sigma \in \G_1^{nd}$ and $(i,\epsilon) \in \mathcal A_\sigma(1)$. 
Each monomial in $\alpha_{\sigma,i,\epsilon}C_{\sigma}$ is either in $\mathscr I_1$ or has at least two distinct vertices. 
Thus, applying Proposition~\ref{prop:rewr} to each of the monomial terms in $\alpha_{\sigma,i,\epsilon}C_\sigma$, for any $\sigma \in \G_1^{nd}$, and $ (i, \epsilon)\in \mathcal A_\sigma(1)$, we finally infer the existence of $\beta_2 \in \mathscr I_{1}$, and for each $1\leq t\leq k-1$, the existence of integers $a_{\tau, S}$ associated to $\tau \in \G_{t}^{nd}$ and $S \in \mathcal A_\tau(k-t)$, such that 
we can write
\[\alpha =\beta_2 +  \sum_{t=1}^{k-1} \sum_{\substack{\tau \in \G_{t}^{nd} \\ S \in \mathcal A_\tau(k-t)}} a_{\tau,S}\, C_\tau 
\prod_{(i,\epsilon) \in S} R_{\tau,i}^\epsilon.\]
This shows that $\alpha \in \mathcal F^1$. 

\medskip

Assume now that we have $\alpha \in \mathcal F^l$ for an integer $1\leq l<k-1$. We shall prove that $\alpha \in \mathcal F^{l+1}$. By definition of $\mathcal F^l$, there is an element $\beta_l \in \mathscr I_1,$ and for all $l\leq t\leq k-1$, there are integers $a^l_{\tau,S} \in \mathbb Z$ associated to $\tau \in \G_t^{nd}$ and $S \in \mathcal A_\tau (k-t)$,  so that we have

\begin{equation}\label{eq7bis}
\alpha =\beta_l +  \sum_{t=l}^{k-1} \sum_{\substack{\tau \in \G_{t}^{nd} \\ S \in \mathcal A_\tau(k-t)}} 
a^l_{\tau,S}\, C_\tau 
\prod_{(i,\epsilon) \in S} R_{\tau,i}^\epsilon.
\end{equation}
In order to prove $\alpha \in \mathcal F^{l+1}$, it will be enough to show that 
\[\sum_{\substack{\tau \in \G_{l}^{nd} \\ S \in \mathcal A_\tau(k-l)}} 
a^l_{\tau,S}\, C_\tau 
\prod_{(i,\epsilon) \in S} R_{\tau,i}^\epsilon \in \mathcal F^{l+1}. \] 
 We show the following stronger statement.
\begin{claim} \label{claim1} We have for any $\tau \in \G_{l}^{nd}$
\[ \sum_{\substack{S \in \mathcal A_\tau(k-l)}} 
a^l_{\tau,S}\, C_\tau 
\prod_{(i,\epsilon) \in S} R_{\tau,i}^\epsilon \in \mathcal F^{l+1}.\]
\end{claim}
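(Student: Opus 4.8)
The plan is to prove Claim~\ref{claim1} by isolating, inside Equation~\eqref{eq7bis}, the part of $\alpha$ supported on a fixed non-degenerate $l$-simplex $\tau$ and reducing everything to a statement about a single cube. First I would fix $\tau \in \G_l^{nd}$ with vertices $\u_0<\dots<\u_l$, and fix the subset $J_\tau \subset I(\tau)$ and projection $\pi_\tau$ introduced just before Proposition~\ref{prop:keyprop}. Using Proposition~\ref{prop:keyprop}, each summand $C_\tau\prod_{(i,\epsilon)\in S}R^\epsilon_{\tau,i}$ with $S\in\mathcal A_\tau(k-l)$ may be replaced, modulo $\mathcal F^{l+1}$, by $C_\tau\prod_{(i,\epsilon)\in\pi_\tau(S)}R^\epsilon_{\tau,i}$; so it is enough to treat the case where all multisets $S$ are supported on pairs $(i,\epsilon)$ with $i\in J_\tau$. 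Since $|J_\tau\cap I(\u_t,\u_{t+1})|=1$, this data amounts to choosing, for each $t$, a nonnegative number of copies of $R^0_{\tau,i_t}$ and of $R^1_{\tau,i_t}$ (where $J_\tau\cap I(\u_t,\u_{t+1})=\{i_t\}$), with total multiplicity $k-l$.

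Next I would argue that, after this normalization, a nonzero contribution of $\tau$ to $\alpha$ cannot be cancelled by contributions coming from \emph{other} simplices $\tau'\neq\tau$ of dimension $l$, nor by the higher-degree terms $t>l$ in~\eqref{eq7bis}. The mechanism is a support/leading-term argument: expanding $C_\tau\prod_{(i,\epsilon)\in\pi_\tau(S)}R^\epsilon_{\tau,i}$ as a sum of monomials $C_{\sigma'}$ with $\sigma'\in\G_k$ and $\tau\subseteq\sigma'$, the ``pure'' monomials whose variables all lie in the cube $\prod_{i}\{u_{0,i},u_{l,i}\}$ associated to $\tau$ — equivalently those $\sigma'$ whose vertex set is contained in the chain-closure of $\tau$ inside its cube — are exactly detected by $\tau$ and by no other $\tau'$ of the same dimension, because a non-degenerate $l$-simplex is recovered from its set of vertices. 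Terms with $t>l$ in~\eqref{eq7bis} produce only monomials $C_{\sigma'}$ with $\sigma'$ containing a non-degenerate simplex of dimension $>l$, hence lie in a strictly smaller piece of the filtration and do not interfere with the $t=l$, simplex-$\tau$ part after we have already placed them in $\mathcal F^{l+1}$. Combining these, I would extract from~\eqref{eq7bis} an identity, valid modulo $\mathscr I_1$ and modulo $\mathcal F^{l+1}$, purely between elements of $Z(\G)$ supported on the cube of $\tau$.

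The heart of the matter then becomes a computation in the Chow ring of a single hypercube, namely: if a $\Z$-linear combination $\sum_S a_{\tau,S}\,C_\tau\prod_{(i,\epsilon)\in\pi_\tau(S)}R^\epsilon_{\tau,i}$ lies in $\mathcal F^{l+1}$ (i.e.\ can be written using $C_{\tau'}\prod R^\epsilon$ with $\tau'$ of dimension $\geq l+1$, plus $\mathscr I_1$), then already each $\tau$-block is so expressible. Here I would exploit the combinatorial bijection between the relevant cube $\prod_i\{u_{0,i},u_{l,i}\}$ and a product $\square^{m_0}\times\dots\times\square^{m_{l-1}}$ of smaller hypercubes, one for each step $I(\u_t,\u_{t+1})$, together with the multiplicativity of the whole construction over the factors. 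On each factor $\square^{m_t}$ the elements $R^0_{\tau,i_t}$ and $R^1_{\tau,i_t}$ are (pullbacks of) the two generators $F_{\mathbf 0}$-type/facet classes of $\Ch(\square^{1})$, so the claim reduces to the rank-one statement for $\Ch^{m_t+1}(\square^{m_t})$ and an induction on $m_t$, which is precisely what the already-established moving lemma (Theorem~\ref{thm:nd}) and Proposition~\ref{prop:rewr} feed. I expect the main obstacle to be bookkeeping: making the ``leading monomial on the cube of $\tau$'' notion precise enough that the cross-cancellation between different $\tau$'s is genuinely ruled out, and checking that the rewriting from Proposition~\ref{prop:keyprop} does not smuggle terms back into the $t=l$ layer on a \emph{different} simplex. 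Once that separation is clean, Claim~\ref{claim1} follows summand by summand, and feeding it back into~\eqref{eq7bis} gives $\alpha\in\mathcal F^{l+1}$, completing the induction step toward Lemma~\ref{lem:crucial}.
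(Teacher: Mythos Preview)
Your opening move---applying Proposition~\ref{prop:keyprop} to reduce all multisets $S$ to ones supported on $J_\tau$---matches the paper exactly. After that the paper's argument diverges from yours and is considerably more direct, and your alternative route has a real gap.

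The paper does \emph{not} pass to the cube $\prod_i\{u_{0,i},u_{l,i}\}$ of $\tau$. Instead it makes a second reduction you are missing: using Proposition~\ref{prop:triv2}, one has $C_\tau R^0_{\tau,i}+C_\tau R^1_{\tau,i}\equiv C_\tau R^0_{\tau,i'}+C_\tau R^1_{\tau,i'}$ modulo $\mathscr I_1$ for any $i,i'\in J_\tau$, so one can trade every $R^1_{\tau,i_j}$ with $j<l-1$ for a combination of $R^0$'s and the single $R^1_{\tau,i_{l-1}}$. This cuts the support of the multisets down to the $(l+1)$-element set $\{(i_0,0),\dots,(i_{l-1},0),(i_{l-1},1)\}$, yielding coefficients $b_{\tau,S}$ indexed by $\mathbf n\in\mathbb Z_{\geq 0}^{l+1}$ with $|\mathbf n|=k-l$. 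The paper then restricts to monomials supported \emph{only on the $l+1$ vertices $\u_0,\dots,\u_l$ of $\tau$} (not on the cube). This restriction kills $\beta_l\in\mathscr I_1$, kills every term with $t>l$ (such terms involve $\geq l+2$ distinct vertices), and kills every other $\tau'\in\G_l^{nd}$ (since $\tau'\neq\tau$ forces a vertex outside $\{\u_0,\dots,\u_l\}$). On that restriction $R^0_{\tau,i_j}$ becomes $C_{\u_0}+\dots+C_{\u_j}$ and $R^1_{\tau,i_{l-1}}$ becomes $C_{\u_l}$; these $l+1$ linear forms are a polynomial change of variables in $\mathbb Z[C_{\u_0},\dots,C_{\u_l}]$, so the vanishing of the restricted sum (forced by $\alpha\in\mathbb Z\langle\G_k^{nd}\rangle$) gives $b_{\tau,S}=0$ for all $S$. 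No Chow-ring computation in a hypercube is needed.

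Your cube-based separation argument is where the gap lies. Distinct non-degenerate $l$-simplices can share the same cube $\prod_i\{u_{0,i},u_{l,i}\}$ (they differ only in the intermediate vertices), so ``pure monomials on the cube of $\tau$'' are \emph{not} detected by $\tau$ alone; your sentence ``a non-degenerate $l$-simplex is recovered from its set of vertices'' is true but does not help, since the cube contains many more vertices than $\tau$. Moreover, your endgame---reducing to the rank-one statement for $\Ch^{m_t+1}(\square^{m_t})$---is circular here: that rank-one fact is a consequence of the structure Theorem~\ref{thm:st}, whose proof Claim~\ref{claim1} is a step in. The fix is exactly the paper's: restrict to the $l+1$ vertices of $\tau$ itself, after first performing the $R^0+R^1$ reduction so that the restricted polynomials become linearly independent.
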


Fix a non-degenerate $l$-simplex $\tau \in \G_l^{nd}$ with vertices $\u_0<\dots <\u_l$. 
Let $J_\tau\subset I(\tau)$ be the set of size $l$ which intersects each $I(\u_j, \u_{j+1})$ in a unique element. By Proposition~\ref{prop:keyprop}, we have 
\begin{equation}\label{eq7}
\sum_{S\in \mathcal A_\tau(k-l)} a^l_{\tau, S} C_\tau \prod_{(i,\epsilon) \in S} R^\epsilon_{\tau, i} -  \sum_{S\in \mathcal A_\tau(k-l)} a^l_{\tau, S} C_\tau \prod_{(j,\epsilon) \in \pi(S)} R^\epsilon_{\tau, j} \in \mathcal F^{l+1}.  
\end{equation}

Denote by $i_0, \dots , i_{l-1}$ all the elements of $J_\tau$ with $i_j \in I(\u_{j}, \u_{j+1})$ for $j=0, \dots, l-1$.

\medskip

Proposition~\ref{prop:triv2} implies that for any for any $0\leq j<l$, we have 
\begin{equation}\label{eq8}
C_\tau R^0_{\tau, i_j} + C_\tau R^1_{\tau, i_j} -  C_\tau R^0_{\tau, i_l} - C_\tau R^1_{\tau, i_l} \in \mathscr I_1.
\end{equation}

\medskip

Let $\mathcal B_\tau(k-l)$ be the set of all mutisets $S$ of size $k-l$ such that each element $(i,\epsilon)\in S$ belongs to the set of pairs $\{(i_0, 0), (i_1, 0) ,\dots, (i_{l-1},0), (i_{l-1},1)\}. $

\medskip

Combining \eqref{eq7} and \eqref{eq8}, we infer the existence of integers $b_{\tau,S}$ for any $S \in \mathcal B_\tau(k-l)$ so that we have 
\begin{equation}\label{eq9}
\sum_{S\in \mathcal A_\tau(k-l)} a^l_{\tau, S} \, C_\tau \prod_{(i,\epsilon) \in S} R^\epsilon_{\tau, i} -  \sum_{S\in \mathcal B_\tau(k-l)} b_{\tau, S} \, C_\tau \prod_{(i,\epsilon) \in S} R^\epsilon_{\tau, i} \in \mathcal F^{l+1}.  
\end{equation}

Thus, Claim~\ref{claim1} will be a consequence of the following statement. 
\begin{claim}\label{claim2}
For any $\tau \in \G_{l}^{nd}$ and any $S \in \mathcal B_{\tau}(k-l)$, we have $b_{\tau, S}=0$. 
\end{claim}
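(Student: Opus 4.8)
The strategy is to extract the coefficients $b_{\tau,S}$ by evaluating both sides of~\eqref{eq9} against carefully chosen monomials, after passing to a quotient in which the contributions of $\mathcal F^{l+1}$ and of $\mathscr I_1$ disappear. Fix the non-degenerate $l$-simplex $\tau$ with vertices $\u_0<\dots<\u_l$, and fix the set $J_\tau=\{i_0,\dots,i_{l-1}\}$. The key observation is that for $S\in\mathcal B_\tau(k-l)$, the product $C_\tau\prod_{(i,\epsilon)\in S}R^\epsilon_{\tau,i}$, when expanded, is a $\Z$-linear combination of monomials $C_{\w_0}\cdots C_{\w_k}$ in which the multiset of vertices is obtained from $\{\u_0,\dots,\u_l\}$ by replacing, for each pair $(i_j,\epsilon)\in S$, one extra copy of some vertex lying strictly between $\u_j$ and $\u_{j+1}$ on the corresponding face of the cube $\e_{\u_j,\u_{j+1}}$ (the vertex selected by the condition $w_{i_j}=u_{j,i_j}$ when $\epsilon=0$, resp. $w_{i_j}=u_{j+1,i_j}$ when $\epsilon=1$). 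In particular, every monomial occurring has all its vertices in the union of the cubes $\e_{\u_j,\u_{j+1}}$ and involves at most $l+1$ \emph{distinct} vertices along each such cube; this is exactly the combinatorial regime that distinguishes $\mathcal F^l$-type terms indexed by $\tau$ from those indexed by higher-dimensional simplices $\tau'$ (which, by non-degeneracy, must use a vertex \emph{outside} the relevant cubes) and from $\mathscr I_1$ (whose generators are products of vertices not forming a simplex, hence never a subconfiguration of a genuine simplex like the ones here).

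Concretely, I would proceed as follows. First, I single out a distinguished ``marker'' monomial for each $S\in\mathcal B_\tau(k-l)$: writing $S$ as a multiset over $\{(i_0,0),(i_1,0),\dots,(i_{l-1},0),(i_{l-1},1)\}$ with multiplicities, I choose on each cube $\e_{\u_j,\u_{j+1}}$ a specific vertex $\w_j^{(0)}$ with $w_{i_j}=u_{j,i_j}$ and (if $j=l-1$) a specific vertex $\w_{l-1}^{(1)}$ with $w_{i_{l-1}}=u_{l,i_{l-1}}$, chosen so that these vertices are pairwise distinct from $\u_0,\dots,\u_l$ and from each other, and so that the resulting monomial $C_\tau\cdot(C_{\w_j^{(\epsilon)}})^{\text{mult}}$ appears with coefficient $\pm1$ in exactly one of the products $C_\tau\prod R^\epsilon_{\tau,i}$, namely the one indexed by $S$ itself. (Such vertices exist because the relevant faces of the cubes, being faces of $\square^{|I(\u_j,\u_{j+1})|}$ of dimension $\geq 1$ since $i_j\neq i_{j'}$ along that cube would be needed — here one uses that the cube has dimension at least one and the chosen faces are nonempty; when a face is a single vertex the corresponding $R$-term just reproduces a vertex of $\tau$, which I handle as a degenerate sub-case.) This reduces the claim to the linear-algebra statement that the matrix expressing the products $\{C_\tau\prod_{(i,\epsilon)\in S}R^\epsilon_{\tau,i}\}_{S\in\mathcal B_\tau(k-l)}$ in terms of monomials is, after the above choice of markers, upper-unitriangular for a suitable ordering of $\mathcal B_\tau(k-l)$ — whence the $b_{\tau,S}$ are uniquely determined, and since the left-hand side of~\eqref{eq9} lies in $\mathbb Z\langle\G_k^{nd}\rangle$ while the difference lies in $\mathcal F^{l+1}$, I must show that \emph{no} nonzero combination $\sum_S b_{\tau,S}C_\tau\prod R^\epsilon_{\tau,i}$ can land in $\mathbb Z\langle\G_k^{nd}\rangle+\mathcal F^{l+1}$ unless all $b_{\tau,S}=0$.

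The heart of the argument, and what I expect to be the main obstacle, is the last sentence: separating the ``$\tau$-part'' cleanly from $\mathcal F^{l+1}$ and from $\mathscr I_1$. The point is that an element of $\mathcal F^{l+1}$ is, by definition, a sum of terms $C_{\tau'}\prod_{(i,\epsilon)\in S'}R^\epsilon_{\tau',i}$ with $\dim\tau'=t'\geq l+1$, plus something in $\mathscr I_1$; and every monomial occurring in such a term either lies in $\mathscr I_1$ or has at least $l+2$ distinct vertices forming a simplex, whereas the marker monomials I constructed for $\mathcal B_\tau(k-l)$ have the vertices concentrated on the cubes of $\tau$ and — by the choice making the markers distinct from the $\u_j$ — have a controlled number of distinct vertices that is \emph{not} achievable by a $\tau'$ of strictly larger dimension whose $J_{\tau'}$-faces would be forced to reach outside. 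I would make this precise by introducing the invariant ``support cube'' of a monomial (the smallest product $\prod_i W_i$ of subsets $W_i\subseteq V_i$ containing all its vertices) together with the count of distinct vertices along each 1-simplex of its associated non-degenerate skeleton, and checking that these two data strictly increase when passing from $\tau$ to any $\tau'\supsetneq\tau$ of larger dimension appearing in $\mathcal F^{l+1}$, while they are preserved by the $R^\epsilon_{\tau,i}$-multiplications indexed by $\mathcal B_\tau(k-l)$. Granting this, comparing coefficients of the marker monomials in~\eqref{eq9} forces $b_{\tau,S}=0$ for every $S$, proving Claim~\ref{claim2} and hence Claim~\ref{claim1}, and therefore Lemma~\ref{lem:crucial}.
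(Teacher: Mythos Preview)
Your approach heads in the wrong direction and leaves a genuine gap, while the paper's argument is much simpler. You try to isolate the $b_{\tau,S}$ by looking at \emph{non-degenerate} marker monomials, choosing new vertices $\w_j^{(\epsilon)}$ in the cubes $\e_{\u_j,\u_{j+1}}$ distinct from the $\u_s$. The problem is precisely that such markers have more than $l+1$ distinct vertices, and hence they \emph{can} occur in the expansion of a term $C_{\tau'}\prod R^\epsilon_{\tau',i}$ with $\tau'\in\G_t^{nd}$, $t\geq l+1$ (for instance, $\tau'$ obtained from $\tau$ by inserting one of your $\w_j^{(\epsilon)}$'s). Your proposed ``support cube plus distinct-vertex count'' invariant does not separate these: a higher-dimensional $\tau'$ contained in the same union of cubes, together with a shorter product of $R$'s, will produce monomials with the same support cube and the same multiset of distinct vertices as your markers. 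So the coefficient comparison is contaminated by $\mathcal F^{l+1}$, and the argument does not close.

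The paper does the opposite: it restricts attention to the \emph{degenerate} monomials supported only on the vertices of $\tau$, i.e., to the part lying in $\mathbb Z[C_{\u_0},\dots,C_{\u_l}]$. This projection kills everything except the $\tau$-term cleanly:
$\alpha$ contributes nothing because it is supported on non-degenerate $k$-simplices and $l<k$;
$\mathscr I_1$ contributes nothing because it is a monomial ideal and $\{\u_0,\dots,\u_l\}$ is a simplex;
any term indexed by $\tau'\neq\tau$ of dimension $\geq l$ contributes nothing because $C_{\tau'}$ already involves a vertex outside $\{\u_0,\dots,\u_l\}$.
Since $i_j\in I(\u_j,\u_{j+1})$, the restriction of $R^0_{\tau,i_j}$ to $\mathbb Z[C_{\u_0},\dots,C_{\u_l}]$ is exactly $C_{\u_0}+\cdots+C_{\u_j}$, and that of $R^1_{\tau,i_{l-1}}$ is $C_{\u_l}$. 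Thus the surviving expression is
\[
C_\tau\sum_{\mathbf n} b_{\tau,S_{\mathbf n}}\,C_{\u_0}^{\,n_0}(C_{\u_0}+C_{\u_1})^{n_1}\cdots(C_{\u_0}+\cdots+C_{\u_{l-1}})^{n_{l-1}}C_{\u_l}^{\,n_l}=0,
\]
and since $C_{\u_0},\,C_{\u_0}+C_{\u_1},\,\dots,\,C_{\u_0}+\cdots+C_{\u_{l-1}},\,C_{\u_l}$ are algebraically independent, all $b_{\tau,S_{\mathbf n}}$ vanish. The moral is that the right ``markers'' are the degenerate monomials on the vertices of $\tau$, not new non-degenerate ones.
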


Each element in $\mathcal B_\tau(k-l)$ is given by an ordered sequence ${\mathbf n}=(n_0, \dots, n_{l-1}, n_{l})$ of multiplicities of $(i_0, 0), \dots, (i_{l-1},0), (i_{l-1},1)$, respectively, with the property that $n_s\geq 0$, and $n_0+\dots+n_{l} =  k-l$: denote by $S_{\mathbf n}$ the element of $\mathcal A_\tau(k-l)$ associated to the ordered sequence $\mathbf n$.

For $\tau \in \G_{l}^{nd}$ with vertices $\u_0<\dots <\u_l$,  consider in the sum on the right hand side of Equation~\eqref{eq7bis}, the sum of the monomials which are in the polynomial ring $\mathbb Z[C_{\u_0}, \dots, C_{\u_l}]$. This polynomial is precisely 
\[C_{\tau} \sum_{\substack{{\mathbf n} = (n_0, \dots, n_{l}) \in \mathbb Z_{\geq 0}^{l+1}\\ n_0+\dots+n_l =k-l}}  b_{\tau, S_{\bf n}} \,\bigl(C_{\u_0}\bigr)^{n_0} \bigl(C_{\u_0}+C_{\u_1}\bigr)^{n_1} \dots \bigl(C_{\u_0} + \dots +C_{\u_{l-1}}\bigr)^{n_{l-1}} \bigl(C_{\u_l}\bigr)^{n_l},\]
which must be thus vanishing since $\alpha$ is supported on non-degenerated simplices, and $k-l\geq 1$.  
Since we have an isomorphism of polynomials rings 
$$\mathbb Z[C_{\u_0}, \dots, C_{\u_{l}}] \simeq \mathbb Z[C_{\u_0}, C_{\u_0}+C_{\u_1},\dots, C_{\u_0}+ \dots + C_{\u_{l-1}}, C_{\u_{l}}],$$ it follows that the coefficients $b_{\tau, S_{\mathbf n}}$ are all zero, which proves Claim~\ref{claim2}, and finishes the proof of Lemma~\ref{lem:crucial}. 
\end{proof}

We now prove Lemma~\ref{lem:crucial2}, finishing the proof of Theorem~\ref{thm:st}
\begin{proof}[Proof of Lemma~\ref{lem:crucial2}] Let $\alpha \in \mathbb Z\langle \G_k^{nd}\rangle \cap \mathcal F^{k-1}$. By definition of the filtration, we can write $\alpha$ in the form
\begin{equation}\label{eq11}
\alpha = \beta + \sum_{\substack{\tau \in \G_{k-1} \\ (i,\epsilon) \in \mathcal A_\tau(1)}} a_{\tau, (i,\epsilon)} C_\tau R_{\tau, i}^\epsilon.\end{equation}
for some $\beta \in \mathscr I_1$, and integers $a_{\tau, (i,\epsilon)}$ associated to $\tau \in \G_{k-1}^{nd}$ and $(i,\epsilon) \in \mathcal A_\tau(1)$.

\medskip

Let $\tau \in \G_{k-1}^{nd}$ with vertex set $\u_0<\dots <\u_{k-1}$. For each $0\leq j \leq k-2$, define 
\[\rho_{\tau,j} = \sum_{i\in I(\u_j, \u_{j+1})} a_{\tau, (i,1)}, \]
and for each $0\leq j\leq k-2$, define 
\[\ell_{\tau,j} = \sum_{i\in I(\u_j, \u_{j+1})} a_{\tau, (i,0)}.\]
For an integer $0\leq s\leq k-1$, the coefficient of $C_\tau C_{\u_s}$ on the right hand side of Equation~\eqref{eq11} is $\sum_{j=0}^{s-1} \rho_{\tau,j} + \sum_{j=s}^{k-2}\ell_{\tau,j}$, which, since $\alpha\in \mathbb Z\langle \G_{k-1}^{nd}\rangle$, must be zero. We infer that 
\begin{equation}\label{eq:van}
\,0\leq s\leq k-1, \qquad \sum_{j=0}^{s-1} \rho_{\tau,j} + \sum_{j=s}^{k-2}\ell_{\tau,j} =0. 
\end{equation}
Subtracting these equations for the values of $s=j$ and $s=j+1$,  we get  in particular, 
\[\forall \,0\leq j\leq k-2, \quad  \rho_{\tau,j} = \ell_{\tau,j}. \]
Applying Proposition~\ref{prop:triv2} to any $i\in I(\tau)$, we infer the existence of $\beta' \in \mathscr I_1$ such that we have 
\begin{align*}
\alpha &=\beta' +\sum_{i \in I(\tau)} \bigl(a_{\tau, (i,0)} - a_{\tau, (i,1)}\bigr) C_\tau R_{\tau, i}^0 + \Bigl(\sum_{i\in I(\tau)} a_{\tau, (i,1)}\Bigr) \bigl(\sum_{\w \in \G_0} C_\w\bigr) \\
&= \beta' +\sum_{i \in I(\tau)} \bigl(a_{\tau, (i,0)} - a_{\tau, (i,1)}\bigr) C_\tau R_{\tau, i}^0 + \bigl(\sum_{j=0}^{k-2} \rho_{\tau,j}\bigr) \bigl(\sum_{\w \in \G_0} C_\w\bigr)\\
& =  \beta' +\sum_{i \in I(\tau)} \bigl(a_{\tau, (i,0)} - a_{\tau, (i,1)}\bigr) C_\tau R_{\tau, i}^0  \quad  \textrm{(by vanishing Equation~\eqref{eq:van} for $s=k-1$).}
\end{align*}

Let $J_\tau$ be the subset of $I(\tau)$ of size $k-1$ which intersects each interval $I(\u_t, \u_{t+1})$ in a single element.  By Proposition~\ref{prop:keyprop}, we can further write 
\begin{align*}
\alpha &=\beta' +\sum_{i \in I(\tau)} \bigl(a_{\tau, (i,0)} - a_{\tau, (i,1)}\bigr) C_\tau R_{\tau, i}^0 \\
&= \beta'' + \sum_{i \in I(\tau)} \bigl(a_{\tau, (i,0)} - a_{\tau, (i,1)}\bigr) C_\tau \widetilde{R}_{\tau, i,\pi(i)}+  \sum_{i\in I(\tau)}\bigl(a_{\tau, (i,0)} - a_{\tau, (i,1)}\bigr) R_{\tau, \pi(i)}^0,
\end{align*}
for an element $\beta'' \in \mathscr I_1$. 
For each $0\leq j\leq k-2$, let $i_j$ to be the unique element in the intersection $I(\tau) \cap J_\tau$, so that we have $\pi(I(\u_j, \u_{j+1})) =\{i_j\}$.  We further get
\begin{align*}
\alpha &= \beta'' + \sum_{i \in I(\tau)} \bigl(a_{\tau, (i,0)} - a_{\tau, (i,1)}\bigr) C_\tau \widetilde{R}_{\tau, i,\pi(i)}+  \sum_{j=0}^{k-2} \sum_{i\in I(\u_j,\u_{j+1})}\bigl(a_{\tau, (i,0)} - a_{\tau, (i,1)}\bigr) R_{\tau, i_j}^0 \\
&= \beta'' + \sum_{i \in I(\tau)} \bigl(a_{\tau, (i,0)} - a_{\tau, (i,1)}\bigr) C_\tau \widetilde{R}_{\tau, i,\pi(i)}+  \sum_{j=0}^{k-2} \bigl(\ell_{\tau,j} - \rho_{\tau,j}\bigr) R_{\tau, i_j}^0 \\
&= \beta'' + \sum_{i \in I(\tau)} \bigl(a_{\tau, (i,0)} - a_{\tau, (i,1)}\bigr) C_\tau \widetilde{R}_{\tau, i,\pi(i)} \qquad \qquad \textrm{ (by the equality $\rho_{\tau,j} =\ell_{\tau,j}$).}
\end{align*}
We finally conclude that $\beta'' =0$ and $\alpha \in \mathscr I_{k}^{nd}$, and the lemma follows.
\end{proof}

  \subsection{Proof of the localization theorem} \label{sec:loc}

  In this section, we prove the localization theorem~\ref{thm1}. We retain the terminology from the previous sections.  For $\e\in \E = E_1 \times \dots \times E_d$, let $\square_\e =e_1\times \dots \times e_d$.  
  Regarding each edge $e_i$ as a subgraph of $G$ with the induced total order from $G_i$ on its vertices, 
  and applying the functoriality to the inclusions $e_i \hookrightarrow G_i$, we get 
  a map $\iota_{\e}^*: \Ch(\G) \rightarrow \Ch(\square_\e) \simeq \Ch(\square^d)$ associated to the 
  inclusion map of simplicial sets
 $$\iota_{\e}:\square_{\e} \hookrightarrow \G.$$ 
 By definition, the map $\iota^*_\e$ is identity on the generators associated to the vertices of 
 $\square_\e$, and is zero otherwise. For an element $\alpha \in Z(\G)$, and for $\e \in \E$, we denote by 
$\alpha_{|\e} \in Z(\square_\e)$ the  restriction of $\alpha$ to the hypercube $\square_{\e}$, i.e., $\alpha_{|\e}= \iota_\e^*(\alpha)$.

We need the following proposition which follows directly from the definition.
 \begin{prop}\label{prop:triv3} For any collection of connected subgraphs $H_1, \dots, H_d$ of $G_1,\dots, G_d$, respectively, let $\mathscr H = H_1 \times \dots \times H_d$ with its induced simplicial structure. We have for any $1\leq k \leq d$, $\mathscr I_{k}^{nd}(\mathscr H) \subseteq \mathscr I_{k}^{nd}(\G) .$
 \end{prop}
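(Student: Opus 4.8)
The statement should follow by checking that every generator of $\mathscr I_k^{nd}(\mathscr H)$ is, after the evident identification of $Z(\mathscr H)$ with the subring of $Z(\G)$ on the variables $C_\u$ for $\u\in\mathscr H_0=V(H_1)\times\dots\times V(H_d)\subseteq\G_0$, literally one of the generators of $\mathscr I_k^{nd}(\G)$. Since $\mathscr I_k^{nd}(\mathscr H)$ is the $\mathbb Z$-span of the elements $C_\sigma\widetilde R_{\sigma,i,j}$, it then suffices to show each such element equals its namesake in $\G$.

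\textbf{Step 1: $\mathscr H\subseteq\G$ as simplicial sets.} First I would record that each inclusion $H_i\hookrightarrow G_i$ is a homomorphism of graphs respecting the total orders, the order on $V(H_i)$ being the restriction of the order on $V(G_i)$. By the functoriality of the simplicial structure (as in the discussion preceding Proposition~\ref{prop:func}), the induced map $\mathscr H\to\G$ is an inclusion of simplicial sets. In particular every non-degenerate $k$-simplex $\sigma=(\u_0<\dots<\u_k)$ of $\mathscr H$ is a non-degenerate $k$-simplex of $\G$, so $\mathbb Z\langle\mathscr H_k^{nd}\rangle\subseteq\mathbb Z\langle\G_k^{nd}\rangle$ and $C_\sigma$ is the same monomial computed in either product.

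\textbf{Step 2: the data $I(\u_t,\u_{t+1})$, $\e_{\u_t,\u_{t+1}}$ are intrinsic.} For consecutive vertices $\u_t,\u_{t+1}$ of $\sigma$, the index set $I(\u_t,\u_{t+1})=\{\,i: u_{t,i}<_i u_{t+1,i}\,\}$ depends only on the pair $(\u_t,\u_{t+1})$ and the fixed orders, not on whether we work in $\mathscr H$ or $\G$; for $i\in I(\u_t,\u_{t+1})$ the pair $\{u_{t,i},u_{t+1,i}\}$ is an edge of $H_i$, hence of $G_i$, and which endpoint is the smaller is fixed by the common order. Likewise the cube $\e_{\u_t,\u_{t+1}}=\prod_{i=1}^d\{u_{t,i},u_{t+1,i}\}$ consists of the same vertices in both cases, and it lies in $\mathscr H_0$ since each coordinate $u_{t,i},u_{t+1,i}$ is a vertex of $H_i$. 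Consequently, for any $i,j$ lying in a common $I(\u_t,\u_{t+1})$, the polynomial $\widetilde R_{\sigma,i,j}=\sum_{\w\in\e_{\u_t,\u_{t+1}},\,w_i=u_{t,i}}C_\w-\sum_{\w\in\e_{\u_t,\u_{t+1}},\,w_j=u_{t,j}}C_\w$ is the very same element of $Z(\G)$ whether formed for $\mathscr H$ or for $\G$. Therefore each generator $C_\sigma\widetilde R_{\sigma,i,j}$ of $\mathscr I_k^{nd}(\mathscr H)$ is one of the generators of $\mathscr I_k^{nd}(\G)$ (the admissible data for $\mathscr H$ forming a subset of those for $\G$), and taking $\mathbb Z$-linear combinations gives $\mathscr I_k^{nd}(\mathscr H)\subseteq\mathscr I_k^{nd}(\G)$.

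\textbf{Main difficulty.} There is essentially none: the content is entirely the bookkeeping in Step~2, namely that the induced orders and edge sets on the $H_i$ are the restrictions of those on the $G_i$, so that the smaller endpoint of an edge and the cube $\e_{\u,\v}$ are unambiguous and that $\e_{\u_t,\u_{t+1}}\subseteq\mathscr H_0$. This is precisely the sense in which the proposition "follows directly from the definition."
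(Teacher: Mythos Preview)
Your proposal is correct and is precisely the unpacking of ``follows directly from the definition'' that the paper asserts without further detail: you identify $\mathscr H\subseteq\G$ as simplicial sets via the order-preserving subgraph inclusions, and then observe that for $\sigma\in\mathscr H_k^{nd}$ the data $I(\u_t,\u_{t+1})$, $\e_{\u_t,\u_{t+1}}$, and hence each generator $C_\sigma\widetilde R_{\sigma,i,j}$, are literally the same whether computed in $\mathscr H$ or in $\G$. There is nothing to add.
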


%which consists of all the monomials in 
%$\alpha$ which involve only variables $C_\v$ for $\v$ a vertex of $\square_\e$, i.e., 
%$\v \in \e \cap \G_0$. Note that by definition, in the Chow ring $\Ch(\square_{\e})$, we have 
 With these notations, we first prove the injectivity part. 
%We have the following theorem.
 \begin{thm}\label{thm1-injectivity} The map of graded rings
$\Ch(\G) \rightarrow \prod_{\e \in \E} \Ch(\square_{\mathbf e})$ is injective. 
 \end{thm}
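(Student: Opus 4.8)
The plan is to deduce injectivity of $\prod_{\e}\iota_\e^*$ directly from the structure Theorem~\ref{thm:st}, by tracking what the restriction maps do on the explicit generators $C_\sigma$ for non-degenerate simplices. First I would fix a degree $k+1$ and recall that, by Theorem~\ref{thm:st}, $\Ch^{k+1}(\G)\simeq \mathbb Z\langle \G_k^{nd}\rangle/\mathscr I_k^{nd}$, and similarly $\Ch^{k+1}(\square_\e)\simeq \mathbb Z\langle (\square_\e)_k^{nd}\rangle/\mathscr I_k^{nd}(\square_\e)$ for each $\e\in\E$. The key combinatorial observation is that every non-degenerate $k$-simplex $\sigma=(\v_0<\dots<\v_k)$ of $\G$ lies inside the cube $\square_{\e}$ for exactly those $\e=(e_1,\dots,e_d)$ whose $i$-th coordinate $e_i$ is the edge $\{\text{(common or extreme values of the $i$-th coordinates along }\sigma)\}$ — more precisely, $\sigma\subseteq\square_\e$ iff for each $i$ the $i$-th coordinates of $\v_0,\dots,\v_k$ all lie in $\{u_i,v_i\}$ where $e_i=u_iv_i$; in particular $\sigma$ determines the cube $\e_{\v_0,\v_k}$ in the coordinates where $\sigma$ moves, and is free in the coordinates where $\sigma$ is constant. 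So $\iota_\e^*(C_\sigma)=C_\sigma$ if $\sigma\subseteq\square_\e$ and $=0$ otherwise.

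Next I would take $\alpha=\sum_{\tau\in\G_k^{nd}}a_\tau C_\tau$ with $\prod_\e\iota_\e^*(\alpha)=0$, i.e.\ for every $\e$ the element $\sum_{\tau\subseteq\square_\e}a_\tau C_\tau$ lies in $\mathscr I_k^{nd}(\square_\e)$, and I want to conclude $\alpha\in\mathscr I_k^{nd}(\G)$. By Proposition~\ref{prop:triv3} we have $\mathscr I_k^{nd}(\square_\e)\subseteq\mathscr I_k^{nd}(\G)$, so each local piece $\sum_{\tau\subseteq\square_\e}a_\tau C_\tau$ already lies in $\mathscr I_k^{nd}(\G)$; the remaining task is a bookkeeping/inclusion–exclusion argument to show that $\alpha$ itself, as the "union" of these local pieces, is a $\mathbb Z$-combination of such cube-supported vanishing expressions. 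Concretely I would stratify the non-degenerate simplices $\tau$ by the family of cubes containing them and argue by (say) decreasing dimension of the minimal cube $\square_{\e_\tau}$ spanned by $\tau$: a simplex lying in a cube of full dimension $d$ is only constrained by that one cube, and for lower-dimensional spanning cubes one subtracts off the contributions already accounted for in the larger cubes. Equivalently one observes $\Ch^{k+1}(\G)\to\bigoplus_\e\Ch^{k+1}(\square_\e)$ is, on the level of the explicit presentations, the map $\mathbb Z\langle\G_k^{nd}\rangle/\mathscr I_k^{nd}(\G)\to\bigoplus_\e \mathbb Z\langle(\square_\e)_k^{nd}\rangle/\mathscr I_k^{nd}(\square_\e)$ sending the class of $C_\tau$ to $(C_\tau)_{\e:\tau\subseteq\square_\e}$, and each free generator $C_\tau$ appears in at least one summand (the cube $\square_{\e}$ with $\e$ extending $\e_{\v_0,\v_k}$ arbitrarily in the constant coordinates), so nothing is lost.

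The cleanest way to organize this is probably to reduce first to the case where all $G_i$ are such that we only need finitely many cubes, and then prove the statement cube by cube. I would set up the argument so that it suffices to check: if $\alpha\in\mathbb Z\langle\G_k^{nd}\rangle$ restricts to zero in $\Ch^{k+1}(\square_\e)$ for all $\e$, then writing $\alpha=\sum_\e \alpha_\e$ where $\alpha_\e$ collects the monomials $C_\tau$ whose minimal spanning cube equals $\square_\e$, one shows inductively that each $\alpha_\e\in\mathscr I_k^{nd}(\square_\e)$: indeed $\iota_\e^*(\alpha)=\sum_{\e'\supseteq\e}\alpha_{\e'}$ (sum over cubes containing $\square_\e$), and by downward induction on $\dim\square_\e$ all the $\alpha_{\e'}$ with $\e'\supsetneq\e$ are already known to lie in $\mathscr I_k^{nd}(\square_{\e'})\subseteq\mathscr I_k^{nd}(\square_\e)$, hence $\alpha_\e=\iota_\e^*(\alpha)-\sum_{\e'\supsetneq\e}\alpha_{\e'}\in\mathscr I_k^{nd}(\square_\e)$. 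Summing over $\e$ and using Proposition~\ref{prop:triv3} gives $\alpha\in\mathscr I_k^{nd}(\G)$.

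The main obstacle I anticipate is making precise the relationship $\iota_\e^*(\alpha)=\sum_{\e'\supseteq\e}\alpha_{\e'}$ and, above all, checking that the submodules $\mathscr I_k^{nd}(\square_{\e'})$ for $\e'\supseteq\e$ genuinely land inside $\mathscr I_k^{nd}(\square_\e)$ in a way compatible with the restriction maps — i.e.\ that the generators $C_\tau\widetilde R_{\tau,i,j}$ of $\mathscr I_k^{nd}$ behave well under the inclusion of cubes. This is exactly the content of Proposition~\ref{prop:triv3}, but one has to be careful that the "minimal spanning cube" stratification interacts correctly with which indices $i,j$ are available for the relations $\widetilde R_{\tau,i,j}$ (only indices in $I(\u_t,\u_{t+1})$ occur, and these are intrinsic to $\tau$, so they are the same whether computed in $\G$ or in any cube containing $\tau$ — this is what saves the argument). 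Once that compatibility is nailed down, the rest is the inclusion–exclusion induction sketched above.
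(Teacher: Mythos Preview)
Your overall strategy---use the structure theorem on both sides and stratify non-degenerate simplices by their minimal spanning cube---is sound, but you have the direction of the key identity reversed. A simplex $\tau$ lies in $\square_\e$ precisely when its minimal spanning cube satisfies $\e_\tau\subseteq\e$, so $\iota_\e^*(\alpha)=\sum_{\e'\subseteq\e}\alpha_{\e'}$, not $\sum_{\e'\supseteq\e}\alpha_{\e'}$; and Proposition~\ref{prop:triv3} gives $\mathscr I_k^{nd}(\square_{\e'})\subseteq\mathscr I_k^{nd}(\square_\e)$ only when $\e'\subseteq\e$. Your downward induction therefore does not run as written. With the direction corrected the argument is in fact simpler than the inclusion--exclusion you propose: each generator $C_\sigma\widetilde R_{\sigma,i,j}$ of $\mathscr I_k^{nd}$ is a sum of monomials $C_\tau$ all sharing the \emph{same} minimal spanning cube (the extreme vertices $\v_0,\v_k$ are unchanged when one inserts the intermediate vertex $\w$), so both $\mathbb Z\langle\G_k^{nd}\rangle$ and $\mathscr I_k^{nd}$ split as direct sums over minimal spanning cubes $\e'$. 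Hence $\iota_\e^*(\alpha)\in\mathscr I_k^{nd}(\square_\e)$ for a single $d$-cube $\e$ already forces $\alpha_{\e'}\in[\mathscr I_k^{nd}]_{\e'}$ for every $\e'\subseteq\e$, and since every cube lies in some $d$-cube, no induction is needed.

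The paper takes a different route that avoids the stratification entirely: among all representatives $\alpha\in\mathbb Z\langle\G_k^{nd}\rangle$ of the given class, choose one maximizing the number of $d$-cubes on which $\alpha$ restricts to \emph{literal} zero in $Z(\square_\e)$; if $\alpha\neq 0$, find a cube $\e$ with $\alpha_{|\e}\neq 0$, note $\alpha_{|\e}\in\mathscr I_k^{nd}(\square_\e)\subseteq\mathscr I_k^{nd}(\G)$ by the structure theorem and Proposition~\ref{prop:triv3}, and replace $\alpha$ by $\alpha-\alpha_{|\e}$, contradicting maximality (the point being that $(\alpha_{|\e})_{|\e'}=(\alpha_{|\e'})_{|\e}=0$ whenever $\alpha_{|\e'}=0$). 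Both proofs rest on the same two inputs; the paper's extremal argument is shorter, while your corrected stratification actually exhibits a finer direct-sum decomposition of $\Ch^{k+1}(\G)$ indexed by cubes of all dimensions, which is a bit more than bare injectivity.
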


\begin{proof}
Let $\beta \in \Ch^k(\G)$ be an element such that 
 $\beta_{|\e} = 0$ in $\Ch^k(\square_\e)$ for all $\e\in \E$. We show that $\beta = 0$  in $\Ch(\G)$. By Theorem~\ref{thm:nd}, $\beta$ is represented in $\Ch^k(\G)$ by an element  $\alpha \in \mathbb Z\langle \G_{k}^{nd}\rangle$. Consider the element $\alpha$ such that the number of hypercubes $\e \in \E$ with $\alpha_{|\e} =0$ is maximized. We claim that $\alpha =0$ which proves the theorem.
 
 Suppose this is not the case, and consider a hypercube $\square_\e$ with $\alpha_{|\e} \neq 0$.  Since $\alpha_{|\e}$
 is zero in $\Ch(\square_\e)$, by Theorem~\ref{thm:st}, we get $\alpha_{|\e} \in \mathscr I_{k+1}^{nd}(\square_\e)$. Using the inclusion $\mathscr I_{k+1}^{nd}(\square_\e) \subseteq \mathscr I_{k+1}^{nd}(\G)$, which follows from Proposition~\ref{prop:triv3}, we find that $\alpha_{|\e}\sim_\rat 0$ in $\G$. Setting $\alpha'= \alpha -\alpha_{|\e}$, it follows that 
 $\alpha \sim_\rat \alpha'$ in $\G$.  On the other hand, we have $\alpha'_{|\e} =0$, and for any other $\e'$ with $\alpha_{|\e'} =0$, we also have $\alpha'_{|\e'}=0$. This contradicts the choice of $\alpha,$ and finishes the proof of the theorem.
 \end{proof}
 
 We now move to the proof of the second part of the theorem. 
%  this is given by 
%  $\iota_{\underline e}^*(C_{\underline v}) = C_{\underline v}$ if  $\underline v$ is a vertex of $\square^d$, and zero otherwise. 
 
 Let $(\alpha_\e)$ be a collection of elements in $Z^{k+1}(\square_\e)$, for 
$\e\in \E$, such that $(\alpha_\e)$ is in the kernel of the map $j$. It follows that for two hypercubes $\e$ and $\e'$ sharing a facet, we have 
$\alpha_{\e | \e'\cap \e} \sim_{\mathrm{rat}} \alpha_{\e'|\e'\cap \e}$ in 
$Z(\square_{\e\cap \e'})$. Using Theorem~\ref{thm:nd}, and Proposition~\ref{prop:func}, we can assume that $\alpha_\e \in \mathbb Z\langle \square_{\e,k}^{nd}\rangle$ for all $\e\in \E$. We show the existence of $\gamma_e \in \mathbb Z\langle \square_{\e,k}^{nd}\rangle$ for any $\e \in \E$ such that 
\begin{itemize}
\item[$(i)$]for any $\e\in \E$, we have $\gamma_\e \sim_\rat \alpha_\e$ in $\square_\e$; and 
\item[$(ii)$] for any two hypercubes $\e$ and $\e'$ sharing a facet, we have $\gamma_{\e|\e \cap \e'} = \gamma_{\e'|\e\cap \e'}$.
\end{itemize}
Assuming this, we get an element $\gamma \in \mathbb Z\langle \G_k^{nd}\rangle$ such that the class of $(\alpha_\e)_{\e\in \E}$ in $\prod_\e \Ch(\square_\e)$ is in the image of the restriction map $\Ch(\G) \rightarrow \prod_{\e\in \E}  \Ch(\square_\e)$ and the theorem follows. 

\medskip

Let $N =\bigl|\E\bigr|$, and enumerate all the elements of $\E$ as $\e_1, \dots, \e_N$.  Proceeding inductively, define $\gamma_{\e_1} = \alpha_{\e_1}$. For each $1\leq l \leq N-1$, suppose that $\gamma_{\e_1}, \dots, \gamma_{\e_l}$ have been defined, and $(i)$ and $(ii)$ are verified for $\gamma_{\e_1}, \dots, \gamma_{\e_l}$. Consider the hypercube $\e_{l+1}$. 
Denote by $\delta_1$ the restriction of $\alpha_{\e_{l+1}} - \gamma_{\e_1}$ to the intersection $\e_{l+1}\cap \e_1$.
Then $\delta_1 $ is rationally equivalent to zero, and so by Theorem~\ref{thm:st} and Proposition~\ref{prop:triv3}, belongs to $ \mathscr I_{k}^{nd}(\square_{\e_{l+1}\cap \e_1}) \subset \mathscr I_{k}^{nd}(\G)$.  
 % (the inclusion is a conse).  
Set $\lambda_1 = \alpha_{\e_{l+1}} - \delta_1 \sim \alpha_{\e_{l+1}}$, and note that $\lambda_1$ and $\alpha_{\e_{1}}$ coincide on the intersection $\e_1\cap \e_{l+1}$. Proceeding, inductively, for $t=1, \dots, l$, define $\delta_t$ as the restriction of $\lambda_{t-1} - \gamma_{t}$ to the intersection $\e_{l+1}\cap \e_t$, and note that $\delta_t \in \mathscr I_{k}^{nd}(\square_{\e_{l+1}\cap \e_t}) \subset \mathscr I_{k}^{nd}(\G)$. Define $\lambda_{t+1} := \lambda_t - \delta_t$.

Defining $\gamma_{\e_{l+1}} := \lambda_{l+1}$, we get a collection of elements $\gamma_{\e_1}, \dots, \gamma_{\e_{l+1}}$ which verify $(i)$ and $(ii)$ above, and this completes the proof of the second part of  Theorem~\ref{thm1}.
% follows. 
 
\section{Combinatorics of the degree map: Proof of Theorem~\ref{thm:main1-intro}}\label{sec:hypercube}
Let $\square^d = \{0,1\}^d$ be the $d$-dimensional hypercube with its standard simplicial structure,  which is the $d$-fold product of the complete graph $K_2$ on two vertices $0<1$.
 After stating some results concerning the structure of the Chow ring $\Ch(\square^d)$, we prove Theorem~\ref{thm:main1-intro}.

First recall that the elements of $\square^d = \{0,1\}^d$ are the vertices of the hypercube $[0,1]^d$ in $\R^d$, and the non-degenerate $d$-simplices $\sigma$ of $\square^d$ are in bijection with the elements $\gamma$ of the permutation group $\mathfrak S_d$, as follows: denoting by $\mathbf e_1, \dots,\mathbf e_d $ the standard basis of $\R^d$, the $d$-simplex $\sigma_\rho$ associated to $\rho \in \mathfrak S_d$ has vertices ${\bf 0}, {\e}_{\rho(1)},  \e_{\rho(1)} + \e_{\rho(2)}, \dots,  \e_{\rho(1)} + \dots + \e_{\rho(d)}$. 
 We first state the following corollary of the structure Theorem~\ref{thm:st}.
\begin{prop} For any two non-degenerate $d$-simplices $\sigma_1, \sigma_2 \in \square^d$, we have $C_{\sigma_1} = C_{\sigma_2}$ in $\Ch(\square^d)$, and the Chow group $\Ch^{d+1}(\square^d)$ is canonically isomorphic to $\mathbb Z$.
% in degree $d+1$.
\end{prop}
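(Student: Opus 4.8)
The plan is to read off the proposition from the structure Theorem~\ref{thm:st}, applied with $k=d$ and $\G=\square^d$. A non-degenerate $d$-simplex of $\square^d$ is a chain $\v_0<\v_1<\dots<\v_d$; since the lengths $|\v_t|$ are strictly increasing in $\{0,1,\dots,d\}$ they must equal $|\v_t|=t$, so every step $I(\v_t,\v_{t+1})$ is a singleton and the non-degenerate $d$-simplices are exactly the $\sigma_\rho$, $\rho\in\mathfrak S_d$. Distinct permutations give distinct vertex sets (their length-$t$ vertices already differ at the first place where $\rho$ and $\rho'$ disagree), hence distinct squarefree monomials $C_{\sigma_\rho}$, so $\mathbb Z\langle(\square^d)^{nd}_d\rangle$ is free on the basis $\{C_{\sigma_\rho}\}_{\rho\in\mathfrak S_d}$ and Theorem~\ref{thm:st} gives $\Ch^{d+1}(\square^d)\simeq\mathbb Z\langle(\square^d)^{nd}_d\rangle/\mathscr I_d^{nd}$. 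Let $\epsilon\colon\mathbb Z\langle(\square^d)^{nd}_d\rangle\to\Z$ be the coefficient-sum map, i.e.\ the $\Z$-linear map with $\epsilon(C_{\sigma_\rho})=1$ for every $\rho$. I claim that $\mathscr I_d^{nd}=\ker\epsilon$; this immediately finishes the proof, since then $\epsilon$ descends to an isomorphism $\Ch^{d+1}(\square^d)\xrightarrow{\sim}\Z$ carrying every $C_{\sigma_\rho}$ to the generator $1$, so all the $C_{\sigma_\rho}$ coincide and the identification is canonical (it is the degree map).

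The first inclusion, $\mathscr I_d^{nd}\subseteq\ker\epsilon$, is a counting matter. Recall that $\mathscr I_d^{nd}$ is spanned by the elements $C_\tau\widetilde R_{\tau,i,j}$ with $\tau=(\u_0<\dots<\u_{d-1})$ a non-degenerate $(d-1)$-simplex and $i\neq j$ lying in a common step $I(\u_t,\u_{t+1})$. By the expansion of $C_\tau\widetilde R_{\tau,i,j}$ recorded in Section~\ref{sec:str} (just before Proposition~\ref{prop:triv}), this element is the signed sum $\sum_{\w}C_{\tau\cup\{\w\}}-\sum_{\w'}C_{\tau\cup\{\w'\}}$, where $\w$ (resp.\ $\w'$) ranges over the vertices of the subcube $\e_{\u_t,\u_{t+1}}$ that lie strictly between $\u_t$ and $\u_{t+1}$ and satisfy $w_i=u_{t,i}$ (resp.\ $w'_j=u_{t,j}$). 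For such a $\w$ the two new steps $I(\u_t,\w)$ and $I(\w,\u_{t+1})$ partition $I(\u_t,\u_{t+1})$, hence remain disjoint from all other steps of $\tau$, so $\tau\cup\{\w\}$ is again a non-degenerate $d$-simplex and $C_{\tau\cup\{\w\}}$ is a basis element; moreover the two index sets $\{\w\in\e_{\u_t,\u_{t+1}}:w_i=u_{t,i}\}\setminus\{\u_t\}$ and $\{\w\in\e_{\u_t,\u_{t+1}}:w_j=u_{t,j}\}\setminus\{\u_t\}$ both have $2^{|I(\u_t,\u_{t+1})|-1}-1$ elements. Therefore $\epsilon(C_\tau\widetilde R_{\tau,i,j})=0$, and $\epsilon$ descends to a surjection $\bar\epsilon\colon\Ch^{d+1}(\square^d)\twoheadrightarrow\Z$.

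For the reverse inclusion $\ker\epsilon\subseteq\mathscr I_d^{nd}$, note that $\ker\epsilon$ is generated by the differences $C_{\sigma_\rho}-C_{\sigma_{\rho'}}$, and any such difference telescopes along a path in the Cayley graph of $\mathfrak S_d$ with respect to the adjacent transpositions $s_1,\dots,s_{d-1}$ (which is connected); so it suffices to treat $\rho'=\rho s_t$. For this, let $\tau$ be obtained from $\sigma_\rho$ by deleting its length-$t$ vertex; then $\tau$ is a non-degenerate $(d-1)$-simplex, its step across the deleted level is $I=\{\rho(t),\rho(t+1)\}$, the corresponding subcube is a square whose two intermediate vertices are precisely the length-$t$ vertices of $\sigma_\rho$ and of $\sigma_{\rho s_t}$, and a direct evaluation of $\widetilde R_{\tau,\rho(t),\rho(t+1)}$ gives $C_\tau\widetilde R_{\tau,\rho(t),\rho(t+1)}=C_{\sigma_{\rho s_t}}-C_{\sigma_\rho}\in\mathscr I_d^{nd}$. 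Hence $C_{\sigma_\rho}\equiv C_{\sigma_{\rho s_t}}$ modulo $\mathscr I_d^{nd}$; this proves both the inclusion $\ker\epsilon\subseteq\mathscr I_d^{nd}$ and the first assertion of the proposition, and combined with the previous paragraph it makes $\bar\epsilon$ the asserted canonical isomorphism.

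Given Theorem~\ref{thm:st} this is short; the only points needing care are that in the expansion of a generator $C_\tau\widetilde R_{\tau,i,j}$ over $\square^d$ every term really is one of the basis monomials $C_{\sigma_\rho}$ (so that the positive and negative contributions to $\epsilon$ can be counted and seen to cancel), and that the relations arising from $(d-1)$-simplices are enough to link all $d!$ basis vectors --- which reduces to the connectedness of the Cayley graph of $\mathfrak S_d$ on adjacent transpositions. I do not foresee a genuine obstacle beyond this bookkeeping.
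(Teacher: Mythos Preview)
Your proof is correct and follows essentially the same route as the paper: both reduce the equality $C_{\sigma_1}=C_{\sigma_2}$ to the case of an adjacent transposition via the Cayley graph of $\mathfrak S_d$, and then exhibit the difference as $C_\tau\widetilde R_{\tau,i,j}$ for the $(d-1)$-simplex $\tau$ obtained by deleting the length-$t$ vertex. Your explicit coefficient-sum map $\epsilon$ is a clean addition that makes the isomorphism $\Ch^{d+1}(\square^d)\simeq\Z$ (and not merely cyclicity) transparent, whereas the paper leaves this to the ambient invocation of the structure theorem.
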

\begin{proof}
Since by Theorem~\ref{thm:nd}, the non-degenerate simplices generate the Chow ring,  the second part of the theorem follows from the first part. 
 
 So let $\sigma_1$ and $\sigma_2$ be two non-degenerate $d$ simplices of $\square^d$, and denote by $\rho_1, \rho_2$ the corresponding elements of $\mathfrak S_d$, respectively. 
 
  Writing $\rho_1^{-1}\rho_2$ as the product of the transpositions of the form $(i\,,\, i+1)$, for $1\leq i\leq d-1$,  it will be enough to prove the equality of $C_{\sigma_1}$ and $C_{\sigma_2}$ for $\rho_2 = \rho_1 (i,i+1)$. Furthermore, using the action of $\mathfrak S_d$ on $\square^d$ via permutation of the factors and Proposition~\ref{prop:permut}, we can further reduce to prove the equality of $C_{\sigma_1}$ and $C_{\sigma_2}$ for $\rho_1 = \mathrm{id}$ and  $\rho_2 = (i,i+1)$.

The vertices of $s_i$, for $i=1,2$, are $\bf 0, \e_{\sigma_i(1)}, \mathrm \e_{\sigma_i(1)}+ \e_{\sigma_i(2)}, \dots, \e_{\sigma_i(1)}+ \dots + \e_{\sigma_i(d)}$

 In this case, the vertices of $\sigma_1$ are $\v_0 =\bf 0$, and $\v_j=\e_1+\dots+ \e_j$, for $j\in[d]$, and the vertices of $\sigma_2$ are $\u_j$, $0\leq j\leq d$ with $\u_j = \v_j$ for $j\neq i$, and 
 $\u_{i}=\e_1+\dots+\e_{i-1}+\e_{i+1}$. Let $\tau$ be the $d-2$-simplex of $\square^d$ with vertices $\v_j=\u_j$ for $0\leq j\leq d$ and $j\neq i,i+1$. The vanishing of $C_{\sigma_1} - C_{\sigma_2}$ in $\Ch(\square^d)$ now follows by observing that 
 \[C_{\sigma_1} - C_{\sigma_2} = C_\tau \widetilde R_{\tau, i, i+1} \in \mathscr I_\rat.\]
\end{proof}

We define the degree  map 
\[\deg: \Ch^{d+1}(\square^d) \rightarrow \Z,\]
the canonical isomorphism of the previous proposition. 
%map defined by the above corollary.

\begin{cor} For any collection $G_1=(V_1, E_1), \dots, (G_d, E_d)$ of $d$ simple connected graphs, we have
$\Ch^{d+1}(\mathscr G) \simeq \mathbb Z^{|\mathscr E|}$.
\end{cor}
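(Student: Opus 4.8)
The plan is to deduce the corollary directly from the localization Theorem~\ref{thm1}, together with the computation $\Ch^{d+1}(\square^d)\simeq\mathbb Z$ of the preceding Proposition and the vanishing of the Chow groups of a hypercube in degrees exceeding its dimension by more than one. Since the restriction maps $\iota_\e^*$ and the comparison map $j$ are homomorphisms of graded rings, the first step is to truncate the identification of Theorem~\ref{thm1} in graded degree $d+1$: it presents $\Ch^{d+1}(\G)$ as the kernel of
\[j^{d+1}\colon \bigoplus_{\e\in\E}\Ch^{d+1}(\square_\e)\longrightarrow \prod_{i=1}^d\prod_{\mathbf x\in\E_i}\Ch^{d+1}(\square_{\mathbf x}).\]

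The second step is to identify the source and the target of $j^{d+1}$. For the source, the preceding Proposition gives $\Ch^{d+1}(\square_\e)\simeq\Ch^{d+1}(\square^d)\simeq\mathbb Z$ for every $\e\in\E$, so $\bigoplus_{\e\in\E}\Ch^{d+1}(\square_\e)$ is a free abelian group of rank $|\mathscr E|$. For the target, I would observe that for each $i$ and each $\mathbf x\in\E_i$ the simplicial set $\square_{\mathbf x}$ is, by its very definition, the product of the $d-1$ edges $e_j$, $j\neq i$, with a single vertex, hence isomorphic to $\square^{d-1}$. As $\square^{d-1}$ has no non-degenerate simplex of dimension $\geq d$, the moving lemma (Theorem~\ref{thm:nd}) yields $\Ch^{m}(\square^{d-1})=0$ for all $m\geq d+1$; in particular $\Ch^{d+1}(\square_{\mathbf x})=0$. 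Hence the target of $j^{d+1}$ is the zero group.

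Combining the two steps, the kernel of $j^{d+1}$ is all of its source, so $\Ch^{d+1}(\G)\simeq\mathbb Z^{|\mathscr E|}$. I do not expect a genuine obstacle here: the only points that merit a word are that the restriction maps are honestly graded, so that one may restrict the localization sequence to a single graded degree (this is immediate from their action on generators), and that $\square_{\mathbf x}$ has dimension exactly $d-1$, which is built into the definition of $\E_i$ as a product involving the line graph factor $E(L(G_i))$.
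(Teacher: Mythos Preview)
Your argument is correct and follows the same approach as the paper: use the localization theorem in graded degree $d+1$, together with $\Ch^{d+1}(\square^d)\simeq\mathbb Z$ and the vanishing $\Ch^{d+1}(\square^{d-1})=0$. One minor quibble: $j$ is a graded map of abelian groups but not a ring homomorphism (each component is a difference of restriction maps), though this has no bearing on your argument since you only need that $j$ respects the grading.
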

\begin{proof} This follows from the localization Theorem~\ref{thm:st} for $\Ch^{d+1}(\G)$, the previous proposition, and the vanishing of the Chow group $\Ch^{d+1}(\square^{d-1})$. 
\end{proof}

\begin{defi}[Degree map] 
For $\G =G_1 \times \dots G_d$ a $d$-fold product of simple connected graphs $G_1, \dots, G_d$, we define the degree map $\deg: \Ch^{d+1}(\G) \rightarrow \Z$ by 
$$\deg(x) := \sum_{\e\in \E}\, \deg_\e (\iota_\e^*(x)),$$
for any $x\in \Ch^{d+1}(\G)$, where $\deg_\e$ is the degree map of $\Ch^{d+1}(\square_\e) \simeq \Ch^{d+1}(\square^d)$.         
\end{defi}

\subsection{Intersection maps for the inclusion of hypercubes} 
Let $k<d$ be two natural numbers, and $\v\in \square^d$ be an element of length $|v|=k+1\leq d$.  Let  $I =\{i_1,\dots, i_{k+1}\}$  be support of $\v$, i.e., the subset of $[d]$ consisting of all the indices $i$ with $v_i =1$. The first $k$ indices $i_1, \dots, i_k$ define an inclusion 
$\eta: \square^k \hookrightarrow \square^d$  which is given by sending $\w = (w_1,\dots, w_k) \in \square^k$ to the point $\u=\eta(w)\in \square^d$ with
$u_{i_j}= w_j$ for all $j=1,\dots, k$, and $u_i=0$ for all $i\notin \{i_1,\dots, i_k\}$. 

\medskip

Denote as before by $K_2$ be the complete graph on two vertices $0<1$, and let $K_1$ be the complete graph on a unique vertex $0$. Note that $K_1 = K_2[\leq 0] = K_2[<1]$ in the terminology of Section~\ref{sec:generalities}. We can view the hypercube $\square^k$ as the product of graphs $H_1, \dots, H_d$, with $H_i =K_2$ for $i=i_1, \dots, i_k$, and $H_i = K_1$ for all the other values of $i$. The cube $\square^d$ corresponds to the $d$-fold product of the complete graph $K_2$. In this way the map $\eta: \square^k \to \square^d$ corresponds to the inclusion map  $H_1\times \dots \times H_d \hookrightarrow K_2\times \dots\times  K_2$. Consider now the map of $\Z$-modules $\beta=\beta_\v: \Z[\square^k] \rightarrow \Z[\square^d]$ defined by multiplication by $C_\v$
\[\forall\,i \in \mathbb N\,\,\,\forall\, \w_1, \dots, \w_i\in \square^k,\qquad \beta\bigl(C_{\w_1} C_{\w_2} \dots C_{\w_i}\bigr) := C_{\eta(\w_1)} C_{\eta(\w_2)}\dots C_{\eta(\w_i)}C_\v.\]
% sending the monomial $C_{w_1} C_{w_2} \dots C_{w_i}$, for $w_i\in \square^k$, to 
% $C_{\eta(w_1)} C_{\eqta(w_2)}\dots C_{\eta(w_i)}C_v$. 
As a special case of Proposition~\ref{prop:intersection}, we get the following useful proposition. 
\begin{prop}\label{lem:embedding}
 The map $\beta$ induces a well-defined map of $\Z$-modules $\beta: \Ch(\square^k) \rightarrow \Ch(\square^d)$.
\end{prop}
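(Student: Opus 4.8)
The plan is to obtain this immediately from Proposition~\ref{prop:intersection} by matching notation. Recall that, as explained just above, $\square^k$ is viewed as the $d$-fold product $H_1 \times \dots \times H_d$ with $H_i = K_2$ for $i \in \{i_1, \dots, i_k\}$ and $H_i = K_1$ otherwise, sitting inside $\square^d = K_2 \times \dots \times K_2$ via $\eta$. Let $i_{k+1}$ be the element of the support $I = \{i_1, \dots, i_{k+1}\}$ of $\v$ not used by $\eta$. Applying the construction of Proposition~\ref{prop:intersection} to the vertex $\v \in \square^d$ and the index $i_{k+1}$ produces the product of graphs $\mathscr G_{\v, i_{k+1}}$ whose $i$-th factor is $K_2[\leq v_i] = K_2$ for $i \in \{i_1, \dots, i_k\}$, is $K_2[\leq 0] = K_1$ for $i \notin I$, and is $K_2[< 1] = K_1$ for $i = i_{k+1}$. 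Hence $\mathscr G_{\v, i_{k+1}}$ coincides with $\square^k$ in its above realization, and the inclusion $\mathscr G_{\v, i_{k+1}} \subseteq \square^d$ is exactly $\eta$.

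Under this identification, the map $\beta_{\v, i_{k+1}} : \Z[\mathscr V_{\v, i_{k+1}}] \to \Z[\square^d]$ of Proposition~\ref{prop:intersection}, which sends $C_{\w_1} \cdots C_{\w_m}$ to $C_{\w_1} \cdots C_{\w_m} C_\v$, is literally the map $\beta = \beta_\v$ of the present statement, since the vertices of $\mathscr G_{\v, i_{k+1}}$ are exactly the $\eta(\mathbf x)$ for $\mathbf x \in \square^k$. Proposition~\ref{prop:intersection} asserts that $\beta_{\v, i_{k+1}}$ descends to a well-defined map $\Ch(\mathscr G_{\v, i_{k+1}}) \to \Ch(\square^d)$, i.e. to a well-defined map $\Ch(\square^k) \to \Ch(\square^d)$, which is the claim.

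I do not anticipate any difficulty here: the entire content --- namely that the three families of relations $(\mathscr R1), (\mathscr R2), (\mathscr R3)$ for $\square^k$ land in $\mathscr I_{\rat}(\square^d)$ after multiplication by $C_\v$ --- is already contained in the proof of Proposition~\ref{prop:intersection}, and the only thing to verify is the elementary bookkeeping above. Alternatively, one could simply transcribe the proof of Proposition~\ref{prop:intersection} in this special case, using Proposition~\ref{prop:permut} to assume $i_{k+1} = d$ if desired.
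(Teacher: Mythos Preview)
Your proposal is correct and matches the paper's own argument exactly: the paper introduces Proposition~\ref{lem:embedding} with the sentence ``As a special case of Proposition~\ref{prop:intersection}, we get the following useful proposition,'' and gives no further proof. Your bookkeeping verifying that $\mathscr G_{\v,i_{k+1}}$ coincides with the $d$-fold product realizing $\square^k$ is precisely the identification the paper leaves implicit.
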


With these preliminaries, we are ready to present the proof of Theorem~\ref{thm:main1-intro}.

  \subsection{Proof of Theorem~\ref{thm:main1-intro}}\label{sec:expdegree} Let $\sigma$ be any $d$-simplex of $\square^d$ with vertices $\v_1<\v_2<\dots<\v_k$ where each vertex $\v_i$ has has multiplicity $n_i$ in $\sigma$ for numbers $n_i\geq 1$ with $\sum_i n_i =d+1$, as in the theorem. To simplify the notation, let $C_i = C_{\v_i}$, and set $\alpha = C_1^{n_1}\dots C_k^{n_k}$.

We first prove the vanishing result, namely part (1) of the  theorem.  

\begin{claim}\label{claim:deg} Assume there exists an $1\leq i<k$ with $n_1+\dots+n_i > |\v_{i+1}|$. Then $\alpha =0$.
\end{claim}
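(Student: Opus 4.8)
Here is the plan. The monomial $\alpha=C_{\v_1}^{n_1}\cdots C_{\v_k}^{n_k}$ factors as a ``lower part'' $C_{\v_1}^{n_1}\cdots C_{\v_i}^{n_i}$ times the rest, and the idea is to observe that the lower part already vanishes inside a suitable sub-hypercube of $\square^d$ whose dimension is too small to carry a class of that degree, and then to push this vanishing back to $\square^d$ by an intersection map. Set $m:=|\v_{i+1}|$, so the hypothesis reads $n_1+\dots+n_i\ge m+1$. Since $\v_i<\v_{i+1}$ strictly, the support of $\v_{i+1}$ strictly contains that of $\v_i$; choose an index $\kappa\in\mathrm{supp}(\v_{i+1})\setminus\mathrm{supp}(\v_i)$. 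I would then apply Proposition~\ref{prop:intersection} with $\G=\square^d=K_2\times\dots\times K_2$, vertex $\v=\v_{i+1}$, and this index $\kappa$. Since $(\v_{i+1})_\kappa=1$ we have $K_2[<(\v_{i+1})_\kappa]=K_1$, and the auxiliary product of graphs $\mathscr G_{\v_{i+1},\kappa}$ is the product of a copy of $K_2$ over each index of $\mathrm{supp}(\v_{i+1})\setminus\{\kappa\}$ and of $K_1$ over all remaining indices; hence $\mathscr G_{\v_{i+1},\kappa}$ is isomorphic to $\square^{m-1}$, and its vertex set $\mathscr V_{\v_{i+1},\kappa}$ consists exactly of the vertices of $\square^d$ supported inside $\mathrm{supp}(\v_{i+1})\setminus\{\kappa\}$.

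The point of this choice of $\kappa$ is that $\mathrm{supp}(\v_j)\subseteq\mathrm{supp}(\v_i)\subseteq\mathrm{supp}(\v_{i+1})\setminus\{\kappa\}$ for every $j\le i$, so all of $\v_1,\dots,\v_i$ are vertices of this copy of $\square^{m-1}$, and the monomial $C_{\v_1}^{n_1}\cdots C_{\v_i}^{n_i}$ represents a class in $\Ch^{n_1+\dots+n_i}(\square^{m-1})$. Now $n_1+\dots+n_i\ge m+1=(m-1)+2$, and $\Ch^{\ell}(\square^{m-1})=0$ for $\ell\ge m+1$: indeed, by the moving lemma (Theorem~\ref{thm:nd}) this group is generated by classes of non-degenerate $(\ell-1)$-simplices of $\square^{m-1}$, and $\square^{m-1}$ has no non-degenerate simplices of dimension $\ge m$. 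Therefore $C_{\v_1}^{n_1}\cdots C_{\v_i}^{n_i}=0$ in $\Ch(\mathscr G_{\v_{i+1},\kappa})$. Applying the $\Z$-linear map $\beta=\beta_{\v_{i+1},\kappa}:\Ch(\mathscr G_{\v_{i+1},\kappa})\to\Ch(\square^d)$ of Proposition~\ref{prop:intersection}, which is multiplication by $C_{\v_{i+1}}$, gives $C_{\v_1}^{n_1}\cdots C_{\v_i}^{n_i}\,C_{\v_{i+1}}=0$ in $\Ch(\square^d)$. Multiplying this identity by $C_{\v_{i+1}}^{n_{i+1}-1}C_{\v_{i+2}}^{n_{i+2}}\cdots C_{\v_k}^{n_k}$, which is legitimate because $n_{i+1}\ge 1$, yields $\alpha=0$, proving the claim.

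There is essentially a single idea here, so the only thing to verify carefully is that the auxiliary sub-hypercube produced by Proposition~\ref{prop:intersection} simultaneously (a) contains all the vertices $\v_1,\dots,\v_i$ and (b) has dimension exactly $|\v_{i+1}|-1$, so that the numerical hypothesis $n_1+\dots+n_i>|\v_{i+1}|$ translates precisely into the vanishing of the relevant graded piece; choosing $\kappa\in\mathrm{supp}(\v_{i+1})\setminus\mathrm{supp}(\v_i)$ is exactly what makes both hold, and this is the only delicate step. (Alternatively one could invoke the hypercube version Proposition~\ref{lem:embedding} directly, after using Proposition~\ref{prop:permut} to relabel the factors so that $\mathrm{supp}(\v_{i+1})$ is an initial segment of $[d]$ whose last index lies outside $\mathrm{supp}(\v_i)$.) Finally, the companion vanishing statement of part~(1) of Theorem~\ref{thm:main1-intro} — that $C_\sigma=0$ whenever $n_i+\dots+n_k>d-|\v_{i-1}|$ for some $i\ge 2$ — follows from exactly the same argument applied to the image of $\sigma$ under the reflection of $\square^d$ that exchanges the two vertices of each factor $K_2$, since this reflection reverses the order of the $\v_j$ and replaces each length $|\v_j|$ by $d-|\v_j|$.
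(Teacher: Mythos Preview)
Your proof is correct and takes a genuinely different route from the paper. The paper argues by a decreasing induction on the quantity $n_1+\dots+n_i+|\v_{i+1}|$: starting from the base case $i=1$, $|\v_2|=1$, $n_1=2$, it uses a relation of type $(\mathscr R3)$ to rewrite $\alpha$ as a sum of monomials with strictly smaller value of this parameter, splitting into the three sub-cases $i=1$; $i\ge 2$ with $n_i\ge 2$; and $i\ge 2$ with $n_i=1$. Your argument bypasses this induction entirely: you recognise the ``lower part'' $C_{\v_1}^{n_1}\cdots C_{\v_i}^{n_i}$ as a class in $\Ch(\mathscr G_{\v_{i+1},\kappa})\simeq\Ch(\square^{m-1})$, which vanishes for degree reasons (no non-degenerate simplices of dimension $\ge m$, by the moving lemma), and then push this vanishing into $\Ch(\square^d)$ via the intersection map $\beta$ of Proposition~\ref{prop:intersection} (equivalently Proposition~\ref{lem:embedding}). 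The choice of $\kappa\in\mathrm{supp}(\v_{i+1})\setminus\mathrm{supp}(\v_i)$ is exactly what is needed to make all of $\v_1,\dots,\v_i$ land in the sub-hypercube, and you identify this as the only delicate point, correctly. Your approach is shorter and more conceptual, leaning on the already-established well-definedness of $\beta$; the paper's approach is more hands-on, unwinding the relations directly without invoking the intersection maps. Your reduction of the companion statement (Claim~\ref{claim:deg2}) via the coordinatewise reflection $\v\mapsto\mathbf{1}-\v$ is precisely the ``by symmetry'' the paper itself invokes.
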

\begin{claim}\label{claim:deg2} Assume there exists an $1< i\leq k$ with $n_i+\dots+n_k > d-|\v_{i-1}|$. Then $\alpha =0$.
\end{claim}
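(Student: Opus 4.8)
The plan is to derive Claim~\ref{claim:deg2} from Claim~\ref{claim:deg} by a symmetry, and to prove Claim~\ref{claim:deg} by pushing the monomial onto a cube of dimension $|\v_{i+1}|-1$, where the relevant Chow group vanishes for degree reasons.

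For the symmetry, I would first observe that the complementation map $\v\mapsto\mathbf 1-\v$ on $\square^d=\{0,1\}^d$ is a bijection reversing the partial order, taking $1$-simplices to $1$-simplices and preserving each set $I(\cdot,\cdot)$, hence sending chains to chains; in particular it sends ``$\{\v_1,\dots,\v_m\}$ forms a simplex'' to the same statement for the complements. A direct check then shows the ring map $\phi\colon C_\v\mapsto C_{\mathbf 1-\v}$ of $Z(\square^d)$ carries each generator of type $(\mathscr R1)$, $(\mathscr R2)$, $(\mathscr R3)$ to a generator of the same type (for $(\mathscr R3)$ one uses that $\{\v:v_i=u_i\}$ is sent to $\{\v':v'_i=1-u_i\}$, and that $u_i\ne w_i\iff 1-u_i\ne 1-w_i$), so $\phi$ descends to a ring automorphism of $\Ch(\square^d)$. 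It carries $\alpha=C_{\v_1}^{n_1}\cdots C_{\v_k}^{n_k}$ to the monomial $(\v'_1)^{n'_1}\cdots(\v'_k)^{n'_k}$ attached to the $d$-simplex $\sigma'$ with $\v'_j=\mathbf 1-\v_{k+1-j}$ and $n'_j=n_{k+1-j}$, for which $|\v'_j|=d-|\v_{k+1-j}|$; substituting $j=k+1-i$ turns the inequality $n_i+\dots+n_k>d-|\v_{i-1}|$ of Claim~\ref{claim:deg2} into $n'_1+\dots+n'_j>|\v'_{j+1}|$, which is the hypothesis of Claim~\ref{claim:deg} for $\sigma'$. Thus Claim~\ref{claim:deg} applied to $\sigma'$ gives $\phi(\alpha)=0$, and hence $\alpha=0$.

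To prove Claim~\ref{claim:deg}, I would fix an index $1\le i<k$ with $N_i:=n_1+\dots+n_i>|\v_{i+1}|$. Since $\v_1<\dots<\v_k$ is a chain in the Boolean lattice, the supports are nested with $\mathrm{supp}(\v_i)\subsetneq\mathrm{supp}(\v_{i+1})$; permuting coordinates via Proposition~\ref{prop:permut}, we may assume $\mathrm{supp}(\v_t)=\{1,\dots,|\v_t|\}$ for all $t$, so that $\v_1,\dots,\v_i$ all have support inside $\{1,\dots,|\v_{i+1}|-1\}$. Applying Proposition~\ref{lem:embedding} to $\v=\v_{i+1}$ (omitting the coordinate $|\v_{i+1}|$ from its support) yields a $\mathbb Z$-linear map $\beta_{\v_{i+1}}\colon\Ch(\square^{|\v_{i+1}|-1})\to\Ch(\square^d)$ sending the class of $C_{\w_1}^{n_1}\cdots C_{\w_i}^{n_i}$ to that of $C_{\v_1}^{n_1}\cdots C_{\v_i}^{n_i}C_{\v_{i+1}}$, where $\w_j\in\square^{|\v_{i+1}|-1}$ is the preimage of $\v_j$. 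Now $C_{\w_1}^{n_1}\cdots C_{\w_i}^{n_i}$ has degree $N_i\ge|\v_{i+1}|+1=(|\v_{i+1}|-1)+2$, and $\Ch^m(\square^n)=0$ whenever $m\ge n+2$: by the moving lemma (Theorem~\ref{thm:nd}) that group is generated by classes of non-degenerate $(m-1)$-simplices, i.e.\ by chains of $m$ distinct vertices of $\{0,1\}^n$, of which there are none once $m>n+1$. Hence $C_{\w_1}^{n_1}\cdots C_{\w_i}^{n_i}=0$ in $\Ch(\square^{|\v_{i+1}|-1})$, so $C_{\v_1}^{n_1}\cdots C_{\v_i}^{n_i}C_{\v_{i+1}}\sim_\rat 0$ in $\square^d$; multiplying by the polynomial $C_{\v_{i+1}}^{n_{i+1}-1}C_{\v_{i+2}}^{n_{i+2}}\cdots C_{\v_k}^{n_k}$ (legitimate since $n_{i+1}\ge 1$) gives $\alpha\sim_\rat 0$, as desired.

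The only delicate points are bookkeeping: verifying that $\phi$ genuinely respects $(\mathscr R3)$, and matching the two boundary inequalities correctly under the reindexing $j=k+1-i$. Conceptually there is essentially no obstacle: the ``prefix'' monomial $C_{\v_1}^{n_1}\cdots C_{\v_i}^{n_i}C_{\v_{i+1}}$ lives on a cube of dimension $|\v_{i+1}|-1$, and its degree exceeds the top nonvanishing degree of that cube's Chow ring exactly when $N_i>|\v_{i+1}|$ — which is precisely the chip-counting obstruction that the points of $[0,|\v_{i+1}|)$ cannot absorb the chips sitting on $\v_1,\dots,\v_i$.
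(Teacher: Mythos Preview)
Your reduction of Claim~\ref{claim:deg2} to Claim~\ref{claim:deg} via the complementation automorphism $\phi\colon C_\v\mapsto C_{\mathbf 1-\v}$ is exactly the symmetry the paper invokes (the paper simply says the proof ``is similar, and follows by symmetry''), and your verification that $\phi$ preserves each of $(\mathscr R1),(\mathscr R2),(\mathscr R3)$ is correct.

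Your proof of Claim~\ref{claim:deg}, however, is genuinely different from the paper's and in fact cleaner. The paper argues by a decreasing induction on $n_1+\dots+n_i+|\v_{i+1}|$, splitting into three sub-cases ($i=1$; $i\ge2$ with $n_i\ge2$; $i\ge2$ with $n_i=1$) and repeatedly applying $(\mathscr R3)$ to peel off one factor at a time. You instead use the intersection map $\beta_{\v_{i+1}}$ of Proposition~\ref{lem:embedding} to transport the prefix $C_{\v_1}^{n_1}\cdots C_{\v_i}^{n_i}$ to $\Ch^{N_i}(\square^{|\v_{i+1}|-1})$, where it dies for degree reasons via the moving lemma. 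This is a one-line conceptual argument, avoiding all case analysis; it exploits machinery (Proposition~\ref{prop:intersection}) that the paper sets up but does not use for this claim. The paper's approach has the minor advantage of being self-contained at the level of the defining relations, whereas yours relies on the well-definedness of $\beta$---but since that is already established, your route is preferable. One small remark: in the boundary case $|\v_{i+1}|=1$ you are invoking Proposition~\ref{lem:embedding} with the target cube $\square^0$; this is fine (the proposition and Proposition~\ref{prop:intersection} allow it), but it is worth noting explicitly.
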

We only prove Claim~\ref{claim:deg}, as the proof of Claim~\ref{claim:deg2} is similar, and follows by symmetry. 

\begin{proof}[Proof of Claim~\ref{claim:deg}]  
We proceed  by a decreasing induction on $n_1+\dots+n_i + |\v_{i+1}|$. The base of our induction is 
the case $i=1$, $|\v_2| =1$, and $n_1 =2$. Note that since, $\v_1<\v_2$, this means $\v_1 = {\bf 0}$. It will be enough to prove that $C_1^2 C_2 =0$. Without loss of generality, and using Proposition~\ref{prop:permut}, we can suppose that $\v_2= \e_1$. By relation~$(\mathscr R3)$ in the Chow ring, we have
$$C_1 C_2 \,\sum_{\substack{\v \in \square^d\\ v_1 =0}}C_\v  =0.$$
Since for all $\v\neq \mathbf 0$ with $v_1=0$, by $(\mathscr R1)$, we have $C_\v C_1 C_2 =0$, we infer that $C_1^2 C_2 =0$.

\medskip

Let $N\geq 3$ be an integer, and suppose that the vanishing $\alpha =C_\sigma =0$ holds for any $\sigma$ verifying condition of the claim  for an $1\leq i<k$ such that $n_1+\dots+n_i+|\v_{i+1}|<N$.
%, we have $\alpha=0$. 
We show the vanishing $\alpha=C_\sigma=0$ holds for any $\sigma$ verifying hypothesis of the claim for an $1\leq i < k$ with $n_1+\dots+n_i + |\v_{i+1}|=N$.
 
 \medskip

The proof is divided into the following three cases, depending on whether $i=1$, or $i\geq 2$ and $n_i\geq 2$, or $i\geq 2$ and $n_i=1$.
\medskip

\noindent $\bullet$ \emph{Consider first the case $i=1$}.  Thus, we have $n_1 > |\v_2|$. For any $j\in I(\v_1, \v_2)$, we have $v_{1,j}=0, v_{2,j}=1$, and  by relation $(\mathscr R3)$ in the Chow ring, we have  
 \begin{equation} \label{eq3}
 \sum_{\substack{\v\in \square^d\\v_j=0}} C_1^{n_1-1}C_\v C_{2}^{n_2} \dots C_{k}^{n_k}  =0, \textrm{ which in turn implies }
 \end{equation}
  \begin{equation}
\label{eq4} C_1^{n_1}\dots C_k^{n_k} + \sum_{\substack{\v \in \square^d\\ \v_1< \v< \v_2\,,\,v_j=0}} C_1^{n_1-1}C_\v C_{2}^{n_2} \dots C_{k}^{n_k} + \sum_{\substack{\v \in \square^d\\ \v< \v_1\,,\,v_j=0}} C_1^{n_1-1}C_\v C_{2}^{n_2} \dots C_{k}^{n_k}= 0.
\end{equation} 

\noindent $-$ For $\v \in \square ^d$ with $\v_1<\v <\v_2$, we have the vanishing of the product $C_{1}^{n_1-1}C_\v C_{2}^{n_2}\dots C_{k}^{n_k}=0$ by the hypothesis of our induction. Indeed, in this case, we have 
 $n_1-1 > |\v_2|-1\geq |\v|$ and $n_1-1+|\v_2|<N$. 
 
 \noindent $-$ For $\v \in \square ^d$ with $\v<\v_1$, we again have $C_\v C_{1}^{n_1-1} C_2^{n_2}\dots C_k^{n_k}=0$ by the hypothesis of our induction, since we have $1+n_1-1 = n_1>|\v_2| > |v_1|$ and $n_1-1+|\v_2|<N$. 
 
 The only remaining term in  Equation~\eqref{eq4} is $C_1^{n_1}\dots C_k^{n_k}$ which must be thus zero. 
 
 \medskip

\noindent $\bullet$ \emph{Consider now the case $i\geq 2$ and $n_i \geq 2$}. The proof in this case is similar to the above situation. Namely, take an index $j \in I(\v_i, \v_{i+1})$ so that $v_{i,j}=0$ and $v_{i+1,j}=1$. Using the equation of type $(\mathscr R3)$, 
 \[C_1^{n_1} \dots C_{i-1}^{n_{i-1}}C_i^{n_i-1}\Bigl(\sum_{\v\in \square^d: v_j =0}C_\v\Bigr)C_{i+1}^{n_{i+1}} \dots C_{k}^{n_k} =0,\]
 we see as above that all the terms $\v \neq \v_i$ with $v_j=0$ contribute zero to the above sum, i.e., 
 $C_1^{n_1} \dots C_{i-1}^{n_{i-1}}C_i^{n_i-1} C_\v C_{i+1}^{n_{i+1}} \dots C_{k}^{n_k} =0,$ either by the $(\mathscr R1)$ if the corresponding sequence does not form a simplex, or by the induction hypothesis as in the previous case. The only remaining term in the sum above is for $\v=\v_i$, and it follows that 
 $C_1^{n_1}\dots C_k^{n_k} =0$.

\medskip

\noindent $\bullet$ \emph{Finally, consider the case $i\geq 2$ and $n_i=1$}. In this case, we have $n_1+\dots+n_{i-1} = n_1+\dots+n_i -1 > |\v_{i+1}|-1\geq |\v_{i-1}|$ and by the hypothesis of our induction, we again have $C_1^{n_1}\dots C_k^{n_k} =0$.
\end{proof}

We now turn to the proof of the second part of the theorem. So suppose there is no  $1\leq i<k$ with $n_1+\dots+n_i > |\v_{i+1}|$, and there is no $2 \leq i\leq k$ with $n_i+\dots+n_k > d - |\v_{i-1}|$. Since $\sum n_i = d+1$, this means that for all $1\leq i \leq  k-1$, we have 
\[ |\v_i| +1 \leq n_1 +\dots +n_i \leq |\v_{i+1}|,\]
and, obviously, $|\v_k|+1\leq n_1+\dots+n_k=d+1$. We first show the existence of the sequence $x_i,y_i$ verifying the properties stated in the theorem.
 Let $y_0=|v_1|$, and define $x_i,y_i$, for $i=1, \dots, k$, as follows:
\[x_i := n_1+\dots+n_i - |v_i|-1, \quad \textrm{and} \quad y_i := |v_{i+1}| - n_1-\dots -n_i.\]
Note that $x_i,y_i \geq 0$ for all $i$, and $|\v_i| = |\v_{i-1}| + x_i + y_i+1$ for $2\leq i\leq k$, and $n_i = y_{i-1}+x_i+1$ for $1\leq i\leq k$. Thus $x_i, y_i$ verify the three conditions stated in part (2) of the theorem. 

\begin{claim}\label{claim:degree3} With the above notations, we have 
\[\deg(\alpha) = (-1)^{d+1-k}\binom{y_0+x_1}{y_0} \binom{x_1+y_1}{x_1}\binom{y_1+x_2}{y_1} \dots \binom{x_{k-1}+y_{k-1}}{y_{k-1}}\binom{y_{k-1}+x_k}{x_k}.\]
\end{claim}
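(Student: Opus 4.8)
The plan is to prove Claim~\ref{claim:degree3} by induction on the number $m := d+1-k = \sum_{i=1}^{k}(n_i-1)$ of ``excess chips'', equivalently the number of points of $[d]$ which are not equal to $|\v_i|$ for any $i$. When $m=0$ all $n_i$ equal $1$, so $k=d+1$ and $\sigma$ is a non-degenerate $d$-simplex; then $\deg(C_\sigma)=1$ by the normalization of the degree map, while all the $x_i,y_i$ vanish and the right-hand side is $(-1)^{0}\cdot 1=1$, so the base case holds.

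For the inductive step, assume $m\ge 1$, so that $n_{i_0}\ge 2$ for some index $i_0$; using Proposition~\ref{prop:permut} we may normalize the vertices so that the support of $\v_i$ is $\{1,\dots,|\v_i|\}$ for every $i$ (possible since the supports are nested). I would then apply a relation of type $(\mathscr R2)$ (or $(\mathscr R3)$, after choosing an index $j$ in a gap adjacent to $\v_{i_0}$) to the factor $C_{\v_{i_0}}$ inside $C_\sigma$, multiplied by the complementary monomial of degree $d-1$, to obtain
\[
C_\sigma \;=\; -\sum_{\w} C_{\v_1}^{n_1}\cdots C_{\v_{i_0}}^{n_{i_0}-1}\,C_{\w}\,\cdots C_{\v_k}^{n_k},
\]
the sum running over the vertices $\w\neq\v_{i_0}$ compatible with the relation used. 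The next step is to invoke $(\mathscr R1)$ together with the already proved vanishing Claims~\ref{claim:deg} and~\ref{claim:deg2} to discard almost all terms: I expect that the surviving $\w$ are exactly those lying strictly between the two consecutive vertices of $\sigma$ neighbouring $\v_{i_0}$ and of one prescribed length, plus the ``drain'' terms $\w=\mathbf 0$ (present only if $\v_1\neq\mathbf 0$) and $\w=\mathbf 1$ (present only if $\v_k\neq\mathbf 1$). Each surviving term is $C_{\sigma'}$ for a configuration $\sigma'$ still satisfying the non-vanishing hypothesis, with $k+1$ distinct vertices and excess $m-1$, whose parameters $(x_i',y_i')$ are obtained from $(x_i,y_i)$ by an explicit local modification; the number of surviving $\w$ of ``interior'' type is a single binomial coefficient.

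Feeding the inductive hypothesis into each surviving term (each carrying the sign $(-1)^{d-k}=-(-1)^{d+1-k}$) and summing, the claimed formula reduces to a purely combinatorial identity among products of binomial coefficients: that the old product $\prod_{j}\binom{e_j+e_{j+1}}{e_j}$ is recovered as the weighted sum, over the admissible positions of the fired chip, of the new products, with the boundary terms accounting for $\w=\mathbf 0$ and $\w=\mathbf 1$. Concretely this unwinds into iterated applications of the Vandermonde/hockey-stick identity $\sum_{a+b=n}\binom{P+a}{a}\binom{b+Q}{b}=\binom{P+Q+n+1}{n}$, together with the check that the resulting sign is uniformly $(-1)^{d+1-k}$.

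I expect this last step --- verifying the binomial identity, and confirming that the vanishing claims really do annihilate every non-surviving term --- to be the main obstacle; everything else is a routine manipulation of the defining relations of $\Ch(\square^d)$. The conceptual content is that $\deg(C_\sigma)=(-1)^{m}$ times the number of non-degenerate $d$-simplices produced in any expansion of $C_\sigma$ by the relations, and that this count equals the number of chip-firing configurations described before the theorem. An alternative organization would first settle the case $k=1$, where the formula reads $\deg(C_{\v}^{d+1})=(-1)^d\binom{d}{|\v|}$ and the closing binomial identity is a single Vandermonde step, and then induct on $k$; or one could first reduce, by applying $(\mathscr R2)$ at the bottom and at the top, to the case $\v_1=\mathbf 0$ and $\v_k=\mathbf 1$, which removes the boundary terms from the recursion.
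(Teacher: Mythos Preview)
Your approach differs genuinely from the paper's: you induct on the excess $m=d+1-k$ while staying inside $\square^d$, whereas the paper inducts on $d$ and repeatedly drops to smaller hypercubes via the factorization Proposition~\ref{prop:aux1} (which expresses $\deg(\alpha_1 C_\v \alpha_2)$ as a product of degrees in $\square^{|\v|-1}$ and $\square^{d-|\v|-1}$) together with the base case Proposition~\ref{prop:power2} for $k=1$. In particular, when $n_1=1$ the paper simply peels off $C_{\mathbf 0}$ via Proposition~\ref{lem:embedding} and reduces to $\square^{d-1}$, never needing to expand at a middle index. When $n_1\ge 2$ both arguments apply $(\mathscr R3)$ in the same way and obtain the same surviving terms; the only difference is whether each surviving term is evaluated by your inductive hypothesis in $\square^d$ or by the paper's in $\square^{d-1}$. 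Either way the closing step is the single Pascal identity $\binom{y_0+x_1-1}{y_0-1}+\binom{y_0+x_1-1}{y_0}=\binom{y_0+x_1}{y_0}$; no Vandermonde or hockey-stick is actually needed.

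There is, however, a gap in your induction when $n_1=n_k=1$ (forcing $\v_1=\mathbf 0$, $\v_k=\mathbf 1$) and you must expand at a middle $i_0$. Your list of surviving $\w$ is then incomplete: besides the interior terms strictly between neighbours of $\v_{i_0}$, the \emph{merge} term $\w=\v_{i_0-1}$ (respectively $\w=\v_{i_0+1}$, depending on which side $j$ lies) survives whenever $y_{i_0-1}\ge 1$ (respectively $x_{i_0}\ge 1$), and it has the \emph{same} excess $m$ --- only the position of the leftmost $n_i\ge 2$ has shifted down by one. For instance with $d=4$, $\v_1=0000$, $\v_2=1000$, $\v_3=1110$, $\v_4=1111$ and multiplicities $(1,1,2,1)$, expanding at $i_0=3$ with $j=4$ produces the nonvanishing term $C_{\v_1}C_{\v_2}^2C_{\v_3}C_{\v_4}$. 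Your induction on $m$ alone therefore does not close; you need a secondary induction, e.g.\ lexicographic on $(m,\,\text{leftmost }i\text{ with }n_i\ge 2)$, or else you must borrow the paper's dimension-reduction to dispose of the case $n_1=1$ before your main induction begins.
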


We need to prove some preliminary results. 
\begin{prop} \label{prop:power1}We have $C_{\bf 1}^{d+1}= C_{\bf 0}^{d+1} = (-1)^d$.
\end{prop}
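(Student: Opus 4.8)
The plan is to compute $C_{\mathbf 1}^{d+1}$ directly using the structure theorem, and to get $C_{\mathbf 0}^{d+1}$ by symmetry. First I observe that $\mathbf 1$ is the unique vertex of $\square^d$ of length $d$, so in the notation of Theorem~\ref{thm:main1-intro} the monomial $C_{\mathbf 1}^{d+1}$ corresponds to the degenerate $d$-simplex $\sigma = \mathbf v_1^{n_1}$ with $k=1$, $\mathbf v_1 = \mathbf 1$, $n_1 = d+1$, hence $|\mathbf v_1| = d$. Checking the dichotomy: there is no $i$ with $1\le i<k=1$, so case (1) does not apply, and in case (2) we are forced to take $y_0 = |\mathbf v_1| = d$ and $x_1 = n_1 - |\mathbf v_1| - 1 = 0$, with the sequence reducing to $y_0 = d$, $x_1 = 0$. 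Then Theorem~\ref{thm:main1-intro}(2) gives
\[
\deg(C_{\mathbf 1}^{d+1}) = (-1)^{d+1-1}\binom{y_0 + x_1}{y_0}\binom{y_0 + x_1}{x_1} = (-1)^d \binom{d}{d}\binom{d}{0} = (-1)^d.
\]
Since $\Ch^{d+1}(\square^d)\simeq\Z$ and the degree map is the canonical isomorphism, this says $C_{\mathbf 1}^{d+1} = (-1)^d$ as an element of $\Ch^{d+1}(\square^d)$ under this identification, which is the claim.

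For $C_{\mathbf 0}^{d+1}$, I would invoke the symmetry of $\square^d$ under the involution $\mathbf v \mapsto \mathbf 1 - \mathbf v$ swapping $0$ and $1$ coordinatewise, which is a simplicial automorphism and hence induces an automorphism of $\Ch(\square^d)$ sending $C_{\mathbf 0}$ to $C_{\mathbf 1}$ and preserving the canonical generator of $\Ch^{d+1}$ (it permutes non-degenerate $d$-simplices). Therefore $\deg(C_{\mathbf 0}^{d+1}) = \deg(C_{\mathbf 1}^{d+1}) = (-1)^d$. Alternatively, applying Theorem~\ref{thm:main1-intro} directly to $C_{\mathbf 0}^{d+1}$: here $k=1$, $\mathbf v_1 = \mathbf 0$, $|\mathbf v_1| = 0$, $n_1 = d+1$, so $y_0 = 0$ and $x_1 = n_1 - 0 - 1 = d$, giving $\deg = (-1)^d\binom{d}{0}\binom{d}{d} = (-1)^d$ as well.

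Since Theorem~\ref{thm:main1-intro} has already been established (its proof occupies the preceding subsection, with the vanishing claims settled and the binomial formula of Claim~\ref{claim:degree3} being what the remaining propositions, including this one, feed into), there is essentially no obstacle: the statement is a direct specialization. The only point requiring a word of care is the logical ordering — this proposition is a lemma used \emph{within} the proof of Claim~\ref{claim:degree3}, so I cannot cite Claim~\ref{claim:degree3} itself. Instead I would give the short \emph{direct} argument: by relation $(\mathscr R2)$ we have $C_{\mathbf 1}(\sum_{\mathbf v} C_{\mathbf v}) = 0$, and by $(\mathscr R1)$ every product $C_{\mathbf 1}^{d}C_{\mathbf v}$ with $\mathbf v \neq \mathbf 1$ not forming a simplex with $\mathbf 1$ vanishes; the vertices forming a $1$-simplex with $\mathbf 1$ are exactly those $\mathbf v \le \mathbf 1$, i.e.\ all of $\square^d$, so this does not immediately collapse the sum. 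A cleaner route is to use the flag $\mathbf 0 < \mathbf e_{1} < \mathbf e_1 + \mathbf e_2 < \dots < \mathbf 1$ defining a non-degenerate $d$-simplex $\tau_0$ with $C_{\tau_0}$ the canonical generator, and to rewrite $C_{\mathbf 1}^{d+1}$ step by step: using $(\mathscr R3)$ repeatedly (as in the proof of Claim~\ref{claim:deg}) to push the extra factors of $C_{\mathbf 1}$ down along the flag, each rewriting replacing $C_{\mathbf 1}$ by $-\sum_{\mathbf v < \mathbf 1,\, v_j = 1}C_{\mathbf v}$ for a suitable coordinate $j$, and tracking that exactly $d$ sign changes occur and that the combinatorial count of surviving terms is $\binom{d}{d} = 1$. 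I expect the bookkeeping to be the main (minor) obstacle, but it is the same mechanism already used in the subsection, so it is routine; the sign $(-1)^d$ comes from the $d$ applications of $(\mathscr R3)$, one for each coordinate that must be reduced from $1$ to $0$ along the flag.
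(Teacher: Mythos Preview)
Your final approach --- building the flag $\mathbf 0 < \mathbf e_1 < \dots < \mathbf 1$ one step at a time, picking up a sign at each step, and killing the side terms with the already-established vanishing criterion --- is exactly what the paper does, and you correctly flagged the circularity in citing Claim~\ref{claim:degree3}. Two small corrections to your sketch: the very first reduction, from $C_{\mathbf 1}^{d+1}$ to $-C_{\mathbf 0}C_{\mathbf 1}^d$, must use $(\mathscr R2)$ rather than $(\mathscr R3)$ (the latter needs two vertices with distinct $i$-th coordinates already present, which $C_{\mathbf 1}^{d+1}$ alone does not supply); and at every step the terms that survive are singled out not by a direct combinatorial count but by invoking Claims~\ref{claim:deg} and~\ref{claim:deg2} to annihilate the intermediate $C_{\mathbf v}$ contributions --- you should make that dependence explicit rather than leave it as ``bookkeeping''.
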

\begin{proof} By symmetry,  we will only need to show $C_{\bf 1}^{d+1}=(-1)^d$. Proceeding by induction, we show that for any $0\leq i\leq d-1$, we have 
$$C_{\bf 1}^{d+1} = (-1)^{i+1}C_{\bf 0} C_{\e_1} C_{\e_1+\e_2} \dots C_{\e_1+ \dots+\e_i} C_{\bf 1}^{d-i},$$
which, for $i=d-1$, gives the equality $C_{\bf 1}^{d+1}= (-1)^d$, as required. (For $i=0$, this means $C_{\bf 1}^{d+1} =  - C_{\bf 0} C_{\bf 1}^{d}$.) 

\medskip

First note that by $(\mathscr R2)$, we have $\bigl(\sum_{\v \in \square^2} C_\v\bigr) C_{\bf 1}^d =0$, which implies that 
\[C_{\bf 1}^{d+1} = - C_{\bf 0}C_{\bf 1}^{d} - \sum_{\v\neq \bf 0,\bf 1}C_\v C_{\bf 1}^d.\]
By vanishing part of Theorem~\ref{thm:main1-intro}, that we established in Claims~\ref{claim:deg} and~\ref{claim:deg2},  we have $C_\v C_{\bf 1}^d=0$ for all $\v\neq \bf 0,\bf 1$. This gives 
$$C_{\bf 1}^{d+1} = -C_{\bf 0}C_{\bf 1}^{d}.$$
Suppose that we have already proved for an $0\leq i<d-1$, that 
$$C_{\bf 1}^{d+1} = (-1)^{i+1}C_{\bf 0} C_{\e_1} C_{\e_1+\e_2} \dots C_{\e_1+ \dots+\e_i} C_{\bf 1}^{d-i}.$$
By relation $(\mathscr R3)$ in the Chow ring, we have 
\begin{equation}\label{eq5}
C_{\bf 0}C_{\e_1} C_{\e_1+\e_2} \dots C_{\e_1+ \dots+\e_i} \bigl(\sum_{\v\in \square^d, v_{i+1}=1}C_\v\bigr) C_{\bf 1}^{d-i-1} =0.
\end{equation}
 For any $\v \in \square^d$ with $v_{i+1}=1$, if $\e_1+ \dots +\e_i \not < \v$, by the definition of the simplicial structure of $\square^d$ and the relation $(\mathscr R1)$, we get $C_{\bf 0}C_{\e_1} C_{\e_1+\e_2} \dots C_{\e_1+ \dots+\e_i} C_\v C_{\bf 1}^{d-i-1} = 0$. 
  On the other hand, for any $ \e_1 + \dots +\e_{i}+ \e_{i+1} < \v < \bf 1$ with $v_{i+1}=1$, 
by applying the vanishing criterium of Theorem~\ref{thm:main1-intro},  we get 
$C_{\bf 0}C_{\e_1} C_{\e_1+\e_2} \dots C_{\e_1+ \dots+\e_i} C_\v C_{\bf 1}^{d-i-1} = 0$. Indeed, in this situation, we have $|\v|\geq  i+2$, which gives the inequality $d-i-1 > d-|\v|$ as required in Claim~\ref{claim:deg2}. It follows from these observations and Equation~\eqref{eq5} that 
$$C_{\bf 0} C_{\e_1} C_{\e_1+\e_2} \dots C_{\e_1+ \dots+\e_i} C_{\bf 1}^{d-i} = - C_{\bf 0} C_{\e_1} C_{\e_1+\e_2} \dots C_{\e_1+ \dots+\e_{i+1}} C_{\bf 1}^{d-i-1},$$
and the lemma follows.
\end{proof}

Let now $\v\in \square^d$ be any element of length $1<\ell = |\v|<d$ with support the subset $I \subset [d]$. Let $i_1\in I$ and $i_2\not \in I$ be two elements of $[d]$, and define $I_1 := I \setminus\{i_1\}$ and $I_2 := I^c \setminus \{i_2\}$.  

Let $m \in \mathbb N$ be an integer, and suppose $\w_1, \dots, \w_m$ are vertices of  $\square^d$ with support in $I_1$. Consider an element $\alpha_1 = C_{\w_1}^{a_1}\dots C_{\w_m}^{a_m}$ for $a_i \in \mathbb N$ with $a_1+\dots+a_m =\ell$. 

Similarly, let $t\in \mathbb N$ be an integer, and suppose ${\bf z}_1, \dots, {\bf z}_t \in \square ^d$ are such that for each $1\leq j \leq t $, $I\cup\{i_2\} \subseteq \mathrm{support}({\bf z}_j)$. Consider an element of the form $\alpha_2 = C_{{\bf z}_1}^{b_1}\dots C_{{\bf z}_t}^{b_t}$ for $b_j \in \mathbb N$ with $b_1 + \dots + b_t = d-\ell$.

 We write $\alpha_1|_{I_1}$ for the element  of $\Ch(\square^{\ell-1})$ of degree $\ell$ obtained by viewing $\w_1, \dots, \w_m$ in $\square^{\ell-1}$ (keeping only the coordinates in $I_1$) and keeping the exponents $a_1, \dots, a_m$. Similarly, we write ${\alpha_2}|_{I_2}$ for the element  of $\Ch(\square^{d-\ell-1})$ of degree $d-\ell$ obtained by restricting ${\bf z}_j$ to $I_2$, and keeping the exponents $b_1, \dots, b_t$.

\medskip

Then we have 
\begin{prop}\label{prop:aux1}
 Notations as above, we have
 \[\deg(\alpha_1 C_\v \alpha_2) = \deg(\alpha_1|_{I_1}) \deg(\alpha_2|_{I_2}).\]
\end{prop}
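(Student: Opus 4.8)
The plan is to prove Proposition~\ref{prop:aux1} by reducing the computation inside $\Ch(\square^d)$ to a product of two independent degree computations in lower-dimensional cubes, using the factorization of $\square^d$ coming from the support decomposition $[d] = I_1 \sqcup \{i_1\} \sqcup I_2 \sqcup \{i_2\}$. First I would observe that by Proposition~\ref{lem:embedding} the multiplication by $C_\v$ map $\beta_\v$ gives a well-defined $\mathbb Z$-module map, and that in fact $\v$ ``separates'' the two groups of variables: every $\w_s$ has support inside $I_1 \subseteq \mathrm{support}(\v)$, while every $\mathbf z_j$ has support containing $I \cup \{i_2\}$, hence containing $\mathrm{support}(\v) \cup \{i_2\}$. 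The key structural point is that on a non-degenerate $d$-simplex of $\square^d$ the lengths $|\v_0| < |\v_1| < \dots < |\v_d|$ run through all of $\{0,1,\dots,d\}$, so that for the monomial $\alpha_1 C_\v \alpha_2$ to have nonzero degree, by the vanishing criterion (Claims~\ref{claim:deg} and~\ref{claim:deg2}) the vertices $\w_s$ must all lie strictly below $\v$ and the vertices $\mathbf z_j$ strictly above $\v$ in the partial order, and moreover at each ``level'' of length the simplex must pass through exactly one vertex.

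The main step is then to set up an isomorphism $\Ch^{\ell}(\square^{\ell-1}) \otimes \Ch^{d-\ell}(\square^{d-\ell-1}) \to \Ch^{d+1}(\square^d)$ sending $\alpha_1|_{I_1} \otimes \alpha_2|_{I_2}$ to $\alpha_1 C_\v \alpha_2$, compatibly with the degree maps. Concretely I would factor the embedding: writing $\square^{\ell-1}$ as the product of $K_2$'s indexed by $I_1$ and $\square^{d-\ell-1}$ as the product of $K_2$'s indexed by $I_2$, the monomial $\alpha_1 C_\v \alpha_2$ is the image of $\alpha_1|_{I_1}$ under the inclusion $\square^{\ell-1} \hookrightarrow \square^{\ell}$ (adding the coordinate $i_1$, where $\v$ has value $1$) followed by multiplication by the ``top'' vertex, times the image of $\alpha_2|_{I_2}$ under a dual inclusion $\square^{d-\ell-1} \hookrightarrow \square^{d-\ell}$ (adding $i_2$) followed by multiplication by the ``bottom'' vertex, and these two pieces involve disjoint sets of coordinates so they multiply independently. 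Using Proposition~\ref{prop:power1} for the normalization of $C_{\mathbf 1}^{\ell}$ on the $I_1$-side and $C_{\mathbf 0}^{d-\ell}$ on the $I_2$-side, together with the localization/structure theorems which give $\Ch^{d+1}(\square^d) \simeq \mathbb Z$, this identifies $\deg(\alpha_1 C_\v \alpha_2)$ with $\deg(\alpha_1|_{I_1}) \cdot \deg(\alpha_2|_{I_2})$.

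An alternative, more hands-on route — probably cleaner to write — is by downward induction on $\ell = |\v|$ and the ``shape'' of $\alpha_1,\alpha_2$, pushing all of $\alpha_1$ down towards $\mathbf 0$ and all of $\alpha_2$ up towards $\mathbf 1$ using relations $(\mathscr R2)$ and $(\mathscr R3)$ exactly as in the proof of Proposition~\ref{prop:power1}: at each stage one rewrites a power $C_{\w_s}^{a_s}$ or $C_{\mathbf z_j}^{b_j}$ by a relation, and all the extra terms vanish either by $(\mathscr R1)$ (non-simplices) or by the vanishing criterion of Claims~\ref{claim:deg} and~\ref{claim:deg2}. The bookkeeping must be arranged so that the $I_1$-moves never interact with the $I_2$-moves, which is guaranteed since $\w_s < \v < \mathbf z_j$ forces the relevant coordinate indices used in the rewriting on each side to stay within $I_1 \cup \{i_1\}$ and $I_2 \cup \{i_2\}$ respectively.

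The hard part will be the careful verification that all the error terms produced in the rewriting indeed vanish — i.e. that one never lands on a monomial whose length-sequence violates neither the non-simplex condition nor the inequalities of Claims~\ref{claim:deg} and~\ref{claim:deg2}. This requires keeping precise track, at each inductive step, of the multiset of lengths $\{|\w_1|,\dots,|\v|,\dots,|\mathbf z_t|\}$ with multiplicities, and checking that any term in which a vertex strictly between two consecutive ``used'' lengths appears can be discarded. I expect this is where essentially all the work lies; the rest is formal manipulation of binomial-free identities and the already-established isomorphism $\Ch^{d+1}(\square^d) \simeq \mathbb Z$.
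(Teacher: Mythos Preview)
Your first approach is correct and is essentially the paper's proof, but you are making it sound much harder than it is. The paper's argument is two lines: since $\beta_\v:\Ch(\square^{\ell-1})\to\Ch(\square^d)$ is well-defined (Proposition~\ref{lem:embedding}) and $\Ch^{\ell}(\square^{\ell-1})\simeq\mathbb Z$ is generated by any non-degenerate $(\ell-1)$-simplex $C_{\v_0}\cdots C_{\v_{\ell-1}}$ with $\v_0=\mathbf 0$ and $\mathrm{supp}(\v_j)\subset I_1$, you immediately get $\alpha_1 C_\v=\deg(\alpha_1|_{I_1})\,C_{\v_0}\cdots C_{\v_{\ell-1}}C_\v$ in $\Ch(\square^d)$. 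The symmetric (order-reversed) version of $\beta_\v$ gives $C_\v\alpha_2=\deg(\alpha_2|_{I_2})\,C_\v C_{\v_{\ell+1}}\cdots C_{\v_d}$ for a chain $\v<\v_{\ell+1}<\dots<\v_d$ with $I\cup\{i_2\}\subset\mathrm{supp}(\v_{\ell+1})$. Multiplying, you get a non-degenerate $d$-simplex times the product of the two degrees, and you are done.

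In particular: you do \emph{not} need Proposition~\ref{prop:power1} for any normalization, you do not need the vanishing criteria of Claims~\ref{claim:deg} and~\ref{claim:deg2}, and there is no ``hard part'' with error terms. All of that is already absorbed into the statement that $\beta_\v$ is well-defined on Chow groups. Your tensor-product formulation is fine but heavier than needed; you never have to prove that the map is an isomorphism, only that it sends the specific element $\alpha_1|_{I_1}\otimes\alpha_2|_{I_2}$ where you want. Your second, inductive route would also work but is strictly more painful than the two-line argument above; I would drop it.
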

\begin{proof}
 Choose $\v_0 = {\bf 0} < \dots <\v_{\ell-1}<\v$ with support of $\v_i$ included in $I_1$ for each $1\leq i\leq \ell-1$. It follows from Proposition~\ref{lem:embedding} and the fact that $\Ch(\square^{\ell-1})$ is one dimensional in degree $\ell$ generated by $C_{\v_0} \dots C_{\v_{\ell-1}} C_\v$ that 
\[C_{\w_1}^{n_1}\dots C_{\w_m}^{n_m}C_\v = \deg(\alpha_1|_{I_1})C_{\v_0}C_{\v_1}\dots C_{\v_{\ell-1}}C_\v.\]
Similarly, chose $\v_{\ell+1}< \dots < \v_{d}$ such that $I\cup\{i_2\}$ is included in the support of $\v_{\ell+1}$. We have
\[C_\v C_{{\bf z}_1}^{b_1}\dots C_{{\bf z}_t}^{b_t} = \deg(\alpha_2|_{I_2})C_\v C_{\v_{\ell+1}}\dots C_{\v_{d}}.\]
We infer that 
\begin{align*}
 \alpha_1 C_\v \alpha_2 &=\deg(\alpha_1|_{I_1})C_{\v_0}\dots C_{\v_{\ell-1}}C_\v \alpha_2 \\
 &=\deg(\alpha_1|_{I_1})\deg(\alpha_2|_{I_2})C_{\v_0}C_{\v_1}\dots C_{\v_{\ell-1}}C_\v C_{\v_{\ell+1}}\dots C_{\v_{d}},
\end{align*}
from which the result follows. 
\end{proof}

The previous proposition allows to prove the following generalization of Proposition~\ref{prop:power1}.
\begin{prop} \label{prop:power2}
  For any $\v\in \square^d$, we have  $C_\v^{d+1} = (-1)^d \binom{d}{|\v|}$.
\end{prop}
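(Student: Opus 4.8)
The plan is to reduce the computation of $C_\v^{d+1}$ to the already-established case $C_{\mathbf 0}^{d+1}=C_{\mathbf 1}^{d+1}=(-1)^d$ (Proposition~\ref{prop:power1}) by splitting the hypercube $\square^d$ along the support of $\v$ and applying Proposition~\ref{prop:aux1}. Write $\ell=|\v|$ and let $I\subseteq[d]$ be the support of $\v$. The key idea: insert one copy of $C_\v$ between a power of $C_{\mathbf 0}$ (viewed as living on the coordinates outside $I$, together with auxiliary vertices) and a power of $C_{\mathbf 1}$ (viewed on the coordinates inside $I$), and observe that both factors reduce to the known one-dimensional computation on smaller cubes.

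First I would dispose of the degenerate cases $\ell=0$ and $\ell=d$, which are exactly Proposition~\ref{prop:power1}. For $0<\ell<d$, pick any $i_1\in I$ and any $i_2\notin I$, and set $I_1=I\setminus\{i_1\}$, $I_2=I^c\setminus\{i_2\}$ as in the setup preceding Proposition~\ref{prop:aux1}. Now write $C_\v^{d+1}=\alpha_1\,C_\v\,\alpha_2$ where $\alpha_1=C_\v^{\ell}$ and $\alpha_2=C_\v^{d-\ell}$; the issue is that $\v$ does not have support in $I_1$, nor does $I\cup\{i_2\}$ lie in the support of $\v$, so Proposition~\ref{prop:aux1} does not apply directly with these choices. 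The fix is to first use relations $(\mathscr R2)$ and $(\mathscr R3)$ together with the vanishing criterion (Claims~\ref{claim:deg} and~\ref{claim:deg2}) — exactly as in the proof of Proposition~\ref{prop:power1} — to rewrite $C_\v^{\ell}$ and $C_\v^{d-\ell}$ in terms of a chain of strictly increasing vertices: $C_\v^{\ell}\cdot C_\v$ becomes (up to sign) $C_{\v_0}C_{\v_1}\cdots C_{\v_{\ell-1}}C_\v$ for any chosen chain $\mathbf 0=\v_0<\cdots<\v_{\ell-1}<\v$ with supports inside $I$, and similarly $C_\v\cdot C_\v^{d-\ell}$ becomes (up to sign) $C_\v C_{\v_{\ell+1}}\cdots C_{\v_d}$ for a chain $\v<\v_{\ell+1}<\cdots<\v_d=\mathbf 1$. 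The signs $(-1)^{\ell}$ and $(-1)^{d-\ell}$ coming from the repeated application of $(\mathscr R2)$ are precisely what is needed.

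The cleaner route, which I would actually carry out, is to invoke Proposition~\ref{prop:aux1} in the following way: one shows by the same induction as in Proposition~\ref{prop:power1} that $C_\v^{\ell}$, restricted to the sub-hypercube on coordinates $I$, equals $(-1)^{\ell-1}\binom{\ell-1}{\,?\,}$-type data — more directly, $C_\v^{\ell}$ viewed inside $\Ch(\square^{I})\cong\Ch(\square^{\ell})$ is $C_{\mathbf 1}^{\ell}$ in that smaller cube, whose "degree over the facet" is governed by Proposition~\ref{prop:power1} applied in dimension $\ell-1$; likewise $C_\v^{d-\ell}$ on the complementary coordinates is $C_{\mathbf 0}^{d-\ell}$ in $\Ch(\square^{d-\ell})$. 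Applying Proposition~\ref{prop:aux1} with $\alpha_1|_{I_1}=C_{\mathbf 1}^{\ell}$ in $\square^{\ell-1}$ and $\alpha_2|_{I_2}=C_{\mathbf 0}^{d-\ell}$ in $\square^{d-\ell-1}$ gives
\[
\deg(C_\v^{d+1})=\deg\bigl(C_{\mathbf 1}^{\ell}\big|_{\square^{\ell-1}}\bigr)\cdot\deg\bigl(C_{\mathbf 0}^{d-\ell}\big|_{\square^{d-\ell-1}}\bigr)=(-1)^{\ell-1}\cdot(-1)^{d-\ell-1}=(-1)^{d},
\]
wait — this would only give $(-1)^d$, not $(-1)^d\binom{d}{|\v|}$. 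So the binomial coefficient must enter through the number of chains, i.e. through counting: the element $C_\v^{d+1}$ is \emph{not} a single monomial $C_\sigma$ for a non-degenerate simplex, and when we expand it via the moving lemma (Theorem~\ref{thm:nd}) into non-degenerate $d$-simplices $C_\tau$, the coefficient is the number of ways to interleave a chain through $\v$, which is $\binom{d}{\ell}$ (choosing which of the $d$ "steps" of a maximal chain lie below $\v$). Concretely: every maximal chain $\mathbf 0=\w_0<\w_1<\cdots<\w_d=\mathbf 1$ passing through $\v$ contributes, and there are $\binom{|\v|}{|\v|}\cdot\binom{d-|\v|}{d-|\v|}$... no — the count of maximal chains through a fixed $\v$ of length $\ell$ in the Boolean lattice is $\ell!\,(d-\ell)!$, while the total is $d!$; the relevant normalized count giving the coefficient in $\Ch^{d+1}$ is $\binom{d}{\ell}$. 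I would make this precise by applying Theorem~\ref{thm:main1-intro} directly to the monomial $\sigma=\v^{d+1}$ (a degenerate $d$-simplex with $k=1$, $n_1=d+1$, $|\v_1|=\ell$): then there is no index $i$ with $i<k$, the sequence has $y_0=\ell$, $x_1=d+1-\ell-1=d-\ell$, and the formula gives $\deg(C_\v^{d+1})=(-1)^{d+1-1}\binom{y_0+x_1}{y_0}=(-1)^d\binom{d}{\ell}=(-1)^d\binom{d}{|\v|}$, as claimed.

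The main obstacle, and the step I would write most carefully, is justifying that Theorem~\ref{thm:main1-intro} applies to the degenerate simplex $\v^{d+1}$ with the stated parameters — in particular checking that case (1) of that theorem does not trigger (there is no $1\le i<k=1$, so vacuously true) and that the parameter identification $y_0=|\v|$, $x_1=d-|\v|$, $n_1=y_0+x_1+1=d+1$ is the unique valid one. Given that, the proof is a one-line specialization; the only real content is making sure the bookkeeping of Theorem~\ref{thm:main1-intro} is invoked legitimately for $k=1$, which is the boundary case of that theorem's setup.
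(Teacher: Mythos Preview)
Your final approach --- specializing Theorem~\ref{thm:main1-intro} to the case $k=1$ --- is circular in the paper's logical structure. Proposition~\ref{prop:power2} \emph{is} the $k=1$ case of Theorem~\ref{thm:main1-intro}, and the paper's proof of that theorem (specifically Claim~\ref{claim:degree3}) explicitly invokes Proposition~\ref{prop:power2} to dispose of $k=1$ before handling $k\geq 2$. So you cannot cite the theorem to establish the proposition without first supplying an independent argument for exactly the case you are trying to prove. Your earlier attempts via Proposition~\ref{prop:aux1} also do not go through as written: you correctly diagnose that the support hypotheses fail for $\alpha_1=C_\v^\ell$ and $\alpha_2=C_\v^{d-\ell}$, and your computation yielding only $(-1)^d$ confirms that a naive splitting loses the binomial coefficient.

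The paper's actual argument is a short induction on $d$ that avoids all of this. Using $(\mathscr R2)$ one has $\bigl(\sum_{\w}C_\w\bigr)C_\v^d=0$; the vanishing criteria (Claims~\ref{claim:deg} and~\ref{claim:deg2}), which are already proved at this point, kill every term $C_\w C_\v^d$ with $\w\neq \mathbf 0,\mathbf 1,\v$, leaving
\[
C_\v^{d+1}=-C_{\mathbf 0}C_\v^d - C_\v^d C_{\mathbf 1}.
\]
Each of the two terms on the right is then computed via Proposition~\ref{lem:embedding} (the intersection map $\beta$), which reduces $\deg(C_{\mathbf 0}C_\v^d)$ and $\deg(C_\v^d C_{\mathbf 1})$ to degrees of $(C_{\v|_I})^d$ in $\square^{d-1}$ for appropriate $I$; the induction hypothesis gives $(-1)^{d-1}\binom{d-1}{|\v|-1}$ and $(-1)^{d-1}\binom{d-1}{|\v|}$, and Pascal's identity finishes. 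This is the argument you should supply; note it uses only the vanishing part of Theorem~\ref{thm:main1-intro}, not the degree formula.
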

\begin{proof}
We proceed by induction on $d$. The case $d=1$ trivially holds.
 Suppose the statement holds for $d-1$. 
 Let now $\v \in \square^d$ be an element with $|\v|=\ell$.  
We can suppose that $\v\neq {\bf 0}, {\bf 1}$, since we already treated these cases. We have by $(\mathscr R2)$
\begin{equation}\label{eq6}
\bigl(\sum_{\w\in \square^d}C_\w\bigr)C_\v^{d} =0.
\end{equation}

For all  $\w\neq {\bf 0},{\bf 1}, \v$, we have by applying either $(\mathscr R1)$ or by the vanishing criterium of Theorem~\ref{thm:main1-intro}, that $C_\w C_\v^d =0$. Therefore, from Equation~\eqref{eq6} we get
\[C_\v^{d+1} = - C_{\bf 0} C_\v^{d} - C_\v^d C_{\bf 1}.\]
Let $i \in \{1,\dots, d\}$ be an index with $v_i =1$, and define $I := [d] \setminus \{i\}$. By Proposition~\ref{lem:embedding}, we have $\deg(C_{\bf 0} C_{\v}^d) = \deg\bigl((C_{\v}|_{I})^d\bigr)$, which applying the hypothesis of the induction, gives
\[\deg(C_{\bf 0} C_{\v}^d) = \deg\bigl((C_{\v}|_{I})^d\bigr) = (-1)^{d-1} \binom{d-1}{|\v|-1}.\]
Similarly, let $j$ be an index with $v_j =0$, and $J = [d]\setminus\{j\}$. We have
\[\deg(C_{\v}^dC_{\bf 1}) = \deg(C_{\v}|_{J}^d) = (-1)^{d-1} \binom{d-1}{|\v|}.\]
The result now follows from the standard binomial identity $\binom{d}{|v|} =\binom{d-1}{|v|}+\binom{d-1}{|v|-1}.$
\end{proof}

We are ready to prove Claim~\ref{claim:degree3}, and complete the proof of Theorem~\ref{thm:main1-intro}.
\begin{proof}[Proof of Claim~\ref{claim:degree3}]
The proof goes by induction on $d$. So suppose that the statement holds in all hypercubes $\square^{d'}$ for any positive integer $d'<d$. We show that it holds also in $\Ch(\square^d)$.

We already treated the case $k=1$ in Proposition~\ref{prop:power2}. So we can suppose $k\geq 2$. 

Suppose first that $n_1=1$. In this case we must have $\v_1 = {\bf 0}$, since otherwise, we would have $n_2+ \dots + n_k = d > d-|\v_1|$, and by Claim~\ref{claim:deg2}, we would have $\alpha=0$.
Let now $i\in [d]$ be an index with  $v_{2,i}=1$, and let $I = [d]\setminus\{i\}$ and $\alpha_2 = C_{\v_2}^{n_2} \dots C_{\v_k}^{n_k}$. By Proposition~\ref{lem:embedding}, we get
$\deg(C_{\bf 0} \alpha_2) = \deg(\alpha_2|_{I})$, and the statement then follows by the induction hypothesis in the hypercube $\square^{d-1}$.

\medskip

So we can suppose that $n_1 \geq 2$. We divide the proof into two parts depending on whether $\v_1={\bf 0}$ or not. 

\medskip

 Suppose first that $\v_1 \neq 0$. Let $i \in I(\v_1, \v_2)$, so we have $v_{1,i} =0$ and $v_{2,i}=1$. By relation $(\mathscr R3)$, we get
\[C_{\v_1}^{n_1-1}\Bigl[C_{\v_1}\Bigl(\sum_{\w\in \square^d, w_i=0} C_\w \Bigr) C_{\v_2}\Bigr] C_{\v_2}^{n_2-1}C_{\v_3}^{n_3} \dots C_{\v_k}^{n_k}=0,\]
in the Chow ring, from which we deduce, by developing, and using $(\mathscr R1)$ and the vanishing criterium in Theorem~\ref{thm:main1-intro}, that
\begin{align*}
 C_{\v_1}^{n_1}\dots C_{\v_k}^{n_k} = - C_{\bf 0} C_{\v_1}^{n_1-1}\dots C_{\v_k}^{n_k} - \sum_{\substack{\v_1<\v < \v_2\\
 |\v| = |\v_1|+x_1}} C_{\v_1}^{n_1-1}C_\v C_{\v_2}^{n_2}\dots C_{\v_k}^{n_k}.
\end{align*}
Therefore, we have 

\begin{equation}\label{eq7:4}
\deg(C_{\v_1}^{n_1}\dots C_{\v_k}^{n_k}) = - \deg(C_{\bf 0} C_{\v_1}^{n_1-1}\dots C_{\v_k}^{n_k}) - \sum_{\substack{\v_1<\v < \v_2\\
 |\v| = |\v_1|+x_1}} \deg(C_{\v_1}^{n_1-1}C_\v C_{\v_2}^{n_2}\dots C_{\v_k}^{n_k}).
 \end{equation}
 Let $j$ be an arbitrary element in the support of $\v_1$, and set $J :=[d] \setminus \{j\}$.  From Proposition~\ref{prop:aux1}, we get 
 \begin{align*}
 \deg&(C_{\bf 0} C_{\v_1}^{n_1-1}\dots C_{\v_k}^{n_k}) = \deg\Big(\bigl(C_{\v_1}^{n_1-1}\dots C_{\v_k}^{n_k}\bigr)|_J\Bigr) \\
 &= (-1)^{d-k}\binom{y_0+x_1-1}{y_0-1} \binom{x_1+y_1}{x_1}\binom{y_1+x_2}{y_1} \dots \binom{x_{k-1}+y_{k-1}}{y_{k-1}}\binom{y_{k-1}+x_k}{x_k}.
 \end{align*}
 In the last inequality we used the hypothesis of our induction in $\square^{d-1}$.
 
 \medskip
 
 Now for each $\v_1<\v<\v_2 $ with $|\v| = |\v_1| + x_1 = y_0+x_1 = n_1-1$, let $i_{\v,1} \in I(\v_1, \v)$ be an  arbitrary element, and define $I_{\v,1} = \mathrm{support}(\v) \setminus \{i_{\v,1}\}$. Similarly, let $i_{\v,2} \in I(\v, v_2)$ be an arbitrary element, and define $I_{\v,2} = [d] \setminus \Bigl(\mathrm{support}(\v)  \cup\{i_{\v,2}\}\Bigr)$.
By Proposition~\ref{prop:aux1}, we have 
 \begin{align*}
  \deg&(C_{\v_1}^{n_1-1}C_\v C_{\v_2}^{n_2}\dots C_{\v_k}^{n_k})  =  \deg(C_{\v_1}^{n_1-1}|_{I_{\v,1}}) \deg\Bigl(\bigl(C_{\v_2}^{n_2}\dots C_{\v_k}^{n_k}\bigr)|_{I_{\v,2}} \Bigr)\\
  &= (-1)^{n_1-2} \binom{n_1-2}{|v_1|} \times (-1)^{d-|v|-k+1}\binom{y_1+x_2}{y_1} \dots \binom{x_{k-1}+y_{k-1}}{y_{k-1}}\binom{y_{k-1}+x_k}{x_k}\\
  &= (-1)^{n_1-2 + d-|v|-k+1}\binom{y_0+x_1-1}{y_0} \binom{y_1+x_2}{y_1} \dots \binom{x_{k-1}+y_{k-1}}{y_{k-1}}\binom{y_{k-1}+x_k}{x_k}\\
  &= (-1)^{d-k}\binom{y_0+x_1-1}{y_0} \binom{y_1+x_2}{y_1} \dots \binom{x_{k-1}+y_{k-1}}{y_{k-1}}\binom{y_{k-1}+x_k}{x_k}
 \end{align*}
Since $|\v_2| = x_1+y_1$, there are in total $\binom{x_1+y_1}{x_1}$ choices for $\v_1<\v <\v_2$ with $|\v| = |\v_1|+x_1$. It finally follows from Equation~\eqref{eq7:4} and the calculation of degrees above that 
\begin{align*}
\deg(\alpha) &= (-1)^{d+1-k} \Bigl[\,\binom{y_0+x_1-1}{y_0-1} + \binom{y_0+x_1-1}{y_0}\Bigr]\binom{x_1+y_1}{x_1} \dots \binom{y_{k-1}+x_k}{x_k}\\
&=(-1)^{d+1-k} \,\binom{y_0+x_1}{y_0}\binom{x_1+y_1}{x_1} \dots \binom{y_{k-1}+x_k}{x_k},
\end{align*}
and the theorem follows. 

\medskip

In the final case $\v_1 = {\bf 0}$, using $(\mathscr R1)$ and the vanishing criterium in Theorem~\ref{thm:main1-intro}, we have, similarly as in the previous case above, that  
\begin{align*}
 C_{\v_1}^{n_1}\dots C_{\v_k}^{n_k} = - \sum_{\substack{\v_1<\v < \v_2\\
 |\v| = |\v_1|+x_1}} C_{\v_1}^{n_1-1}C_\v C_{\v_2}^{n_2}\dots C_{\v_k}^{n_k}.
\end{align*}
from which the result again follows by the hypothesis of our induction using a similar argument as in the previous case $\v_1 \neq {\bf 0}$. 
  \end{proof}

\section{Fourier transform and a dual description of $\Ch(\square^d)$}\label{sec:fourier}   Identify the points of $\square^d$ with the elements of the vector space $\mathbb F_2^d$, and consider the scalar product $\langle\,,\rangle$ on $\mathbb F_2^d$ defined by 
$\langle\v,\u\rangle  = \sum_{i=1}^d v_i.u_i \in \mathbb F_2,$ for any $\u,\v\in \mathbb F_2^d$. Recall that for any $\w \in \mathbb F_2^d$, we defined $F_\w$ by
\[F_\w := \sum_{\v\in\square^d} (-1)^{\langle \v,\w\rangle}C_\v,\]
and noticed that by Fourier duality,  the set $\{F_\w\}_{\w\in \mathbb F_2^d}$ forms another system of generators for the localized Chow ring $\Ch(\square^d)[\frac 12]$.  The following set of relations are verified by $F_\w$s in $\Ch(\square^d)$~\cite{Kolb1}.

\begin{itemize}
 \item[($\mathscr R^* 1$)] For any $\bf w \in \mathbb F^d$, we have $F_{\bf 0} F_{\bf w}=0$;
 \item[($\mathscr R^*2$)] For any $i\in [d]$, and any $\bf w, \bf z \in \mathbb F^d$, we have $F_{\e_i}(F_{\bf w}-F_{\w+\e_i})(F_{\bf z}+ F_{\bf z+\e_i})=0$;
 \item[($\mathscr R^*3$)] For any pair of indices $i,j\in[d]$, and any $\bf w, \bf z$, we have $(F_{{\bf w}+\e_i+\e_j}-F_{\bf w})(F_{{\bf z}+\e_i+\e_j}-F_{\bf z}) = (F_{{\bf w}+\e_i}-F_{{\bf w}+\e_j})(F_{{\bf z}+\e_i}-F_{{\bf z}+\e_j})$.
\end{itemize}
Consider the ideal $\widetilde \Rat(\square^d)$ of $\Z[F_{\bf w}]$ generated by ($\mathscr R^*1$), ($\mathscr R^*2$), and ($\mathscr R^*3$), and define $\widetilde \Ch(\square^d) := \Z[F_{\bf w}]/\widetilde \Rat(\square^d).$ 

We now give a proof of these relations in the Chow ring, proving at the same time Theorem~\ref{thm:iso}, which shows that $\widetilde \Ch(\square^d)[\frac 12]  =\Ch(\square^d)[\frac 12]$. 

\begin{proof}[Proof of Theorem~\ref{thm:iso}]
 We shall show that inverting $2$, we have
$\widetilde \Rat(\square^d) = \Rat(\square^d)$, and $\widetilde \Ch(\square^d)  =\Ch(\square^d)$. 

First note that $F_{\bf 0} = \sum_{\mathbf{v} \in \mathbb F_2^d} C_{\mathbf v},$
and so for any $\w$, we have
\[F_{\mathbf 0} F_{\mathbf w} = \sum_{\mathbf{u} \in \mathbb F_2^d} (-1)^{\langle \u, \w\rangle}\Bigl(\sum_{\v\in \mathbb F_2^d} C_{\bf v}\Bigr)C_{\bf u}.\]
This shows that $F_{\bf 0} F_{\bf w} \in \mathscr I_\rat$, and yields to the proof of ($\mathscr R^*1$). On the other hand, we see from the above description, and using the Fourier duality, that for any $\u \in \mathbb F_2^d$,
\[\Bigl(\sum_{\v\in \mathbb F_2^d} C_{\v}\Bigr)C_{\u} = \frac 1{2^d} \sum_{{\bf w} \in \mathbb F_2^d}(-1)^{\langle \u, \w\rangle} F_{\bf 0}F_{\bf w}. \] 
This shows that any generator $\Bigl(\sum_{\bf v} C_{\bf v}\Bigr)C_{\bf u}$ of type ($\mathscr R2$) in $\Rat(\square^d)$ 
 belongs to the ideal $\widetilde\Rat(\square^d)$ of $\Z[\frac 12][F_{\bf w}]$.

\medskip

Let $\e = \e_i$ for some $i\in [d]$. Note that for any ${\bf w}, {\bf z} \in \mathbb F_2^d$, we have
\begin{equation}\label{eq10}
 F_{\bf w} - F_{\bf w+\e} = 2\sum_{\substack{\u\in \mathbb F_2^d: \,\,u_i =1}} (-1)^{\langle \u,{\bf w}\rangle} C_{\u}\qquad , \qquad F_{\bf z} + F_{{\bf z}+\e} = 2\sum_{\substack{\v\in \mathbb F_2^d:\,\, v_i =0}}
(-1)^{\langle \v,{\bf z}\rangle} C_{\v}.
\end{equation}

For any $\epsilon \in \{0,1\}$, define 
\[R^\epsilon_{i} := \sum_{\substack{{\bf y} \in \mathbb F_2^d\\ y_i=\epsilon}}C_{\bf y}.\]
From Equation~\eqref{eq10}, we get
\begin{align}
 F_{\e}(F_{\bf w}-F_{\w+\bf e})(F_{\bf z}+ F_{\bf z+\e})&= 4 \Bigl(\sum_{{\bf y} \in\mathbb F_2^d} (-1)^{y_i}C_{\bf y}\Bigr)\Bigl(\sum_{\substack{\u\in \mathbb F_2^d \\ u_i =1}} (-1)^{\langle \u,{\bf w}\rangle} 
 C_{\u}\Bigr)\Bigl(\sum_{\substack{\v \in \mathbb F_2^d\\ v_i =0}}(-1)^{\langle\v,{\bf w}\rangle} C_{\v}\Bigr) \\
 \label{eq2}&=4\sum_{\substack{\epsilon\in \mathbb \{0,1\}; \u, \v \in \mathbb F_2^d\\u_i =1, v_i=0}} (-1)^{\epsilon+\langle\u,{\bf w}\rangle+\langle \v, {\bf z}\rangle})  C_{\u}C_{\v} R^\epsilon_{i},
\end{align}
Since $C_{\u}C_{\v} R^\epsilon_{i} \in \mathscr I_\rat$, this  implies the relation ($\mathscr R^*2$) among the $F_{\w}$ in the Chow ring $\Ch(\square^d)$. 

Define now the two functions $f, g : \mathbb F_2 \times \mathbb F_2^{d} \times \mathbb F_2^{d} \rightarrow \Z[C_w]$, as follows. For 
any triple $(\epsilon, \u, \v) \in \mathbb F_2 \times \mathbb F_2^{d} \times \mathbb F_2^{d}$, set
\[f(\epsilon, \u,\v) := \begin{cases}
                         C_{\u}C_\v R^\epsilon_i & \qquad \textrm{if $u_i =1$, $v_i=0$}\\
                         0 & \qquad \textrm{otherwise}.
                        \end{cases} \]
 For any triple $(\delta, {\bf w},{\bf z}) \in \mathbb F_2 \times \mathbb F_2^{d} \times \mathbb F_2^{d}$, set
\[g(\delta, {\bf w},{\bf z}) = \begin{cases}
                                 F_{\e}(F_{\bf w}-F_{\w+\e})(F_{\bf z}+ F_{{\bf z}+\e}) &\qquad \textrm{if $\delta=1$}\\
                                 \sum_{\substack{\u,\v \in \mathbb F_2^d \\ u_i=1, v_i=0}} (-1)^{\langle\u, {w} \rangle +\langle \v, {\bf z} \rangle} C_{\u}C_\v F_{\bf 0}& \qquad \textrm{if $\delta=0$}.                    
                            \end{cases}
                            \]
 Note that we have for any $(\delta, \w, {\bf z}) \in \mathbb F_2 \times \mathbb F_2^d \times \mathbb F_2^d$,
\[g(\delta, {\bf w},{\bf z}) = 4\sum_{\substack{\epsilon \in\{0,1\}\\ \u,\v \in \mathbb F_2^d}} (-1)^{\delta.\epsilon + \langle \u,{\bf w}\rangle+\langle \v,{\bf z}\rangle} {f}(\epsilon, \u,\v).\]
Indeed, for $\delta=1$ this is identical to Equation~\eqref{eq2}, and for $\delta=0$, both sides of the equations are equal by definition (the right hand side is a sum of the terms of the form 
$C_{\u}C_{\v} (R_i^1+R^0_i) =C_{\u}
C_\v(\sum_{\bf x}C_{\bf x}) = C_\u C_\v F_{\bf 0} $.   By Fourier duality in $\mathbb F_2^{2d+1}$, it follows that 
\[{f}(\epsilon, \u,\v) = \frac 1{2^{2d-1}}\sum_{(\delta, \w,{\bf z})\in \mathbb F_2^{2d+1}}(-1)^{\delta.\epsilon + \langle \u,{\bf w}\rangle+\langle \v,{\bf z}\rangle}g(\delta, {\bf w},{\bf z}).\]
Since $g$ takes values in $\widetilde{\Rat}(\square^d)$, this shows that for any $\epsilon, \u,\v$, we have ${f}(\epsilon, \u,\v)\in \widetilde \Rat(\square^d)$. In particular,  inverting $2$, 
all the generators of type ($\mathscr R3$) in $\Rat(\square^d)$ belong to $\widetilde \Rat(\square^d)$.

\medskip

Finally, to prove ($\mathscr R^*3$), let $i,j\in [d]$, and $\e=\e_i$ and $\e'=\e_j$. By an easy computation,  we have
\begin{align*}
 (F_{\bf w+\e+\e'}-F_{\bf w})(F_{\bf z+\e+\e'}-F_{\bf z})  -&(F_{\bf w+\e}-F_{\bf w+\e'})(F_{\bf z+\e}-F_{\bf z+\e'})\\
 &= \sum_{\u,\v \in \mathbb F_2^d} (-1)^{\langle \u, {\bf w}\rangle + \langle\v, {\bf z}\rangle } h(u_i,u_j,v_i,v_j) C_{\u}C_{\v},
\end{align*}
where $h : \mathbb F_2^4 \rightarrow \mathbb Z$ is the function given by
\[  h(a,b,c,d) :=\Bigl((-1)^{a+b} -1 \Bigr)\Bigl((-1)^{c+d} -1 \Bigr)-\Bigl((-1)^{a} -(-1)^{b} \Bigr)\Bigl((-1)^{c} -(-1)^{d} \Bigr),\]
for any $(a,b,c,d) \in \mathbb F_2^4.$ In particular, we get $h(0,1, 1,0) = h(1,0,0,1) =8$, and $h$ vanishes at all other points of $\mathbb F_2^4$. Now, note that for any pair of points $\u,\v \in \mathbb F_2^d$ such that $\{(u_i,u_j),(v_i,v_j)\} = \{(0,1), (1,0)\}$, since $\u$ and $\v$ do not form a simplex in $\square^d$, we have $C_\u C_\v \in \Rat(\square^d)$. 
This proves that the relation ($\mathscr R^*3$) holds among the $F_\w$. 

To conclude, using Fourier duality in 
$\mathbb F_2^d \times \mathbb F_2^d$, 
we see that for any pair $i,j \in [d]$, 
$2^{2d}h(u_i,u_j, v-i,v_j)C_\u C_\v$ is a linear combination with $\pm$ coefficient of the terms 
$(F_{{\bf w}+\e+\e'}-F_{\bf w})(F_{{\bf z}+\e+\e'}-F_{\bf z}) - 
  (F_{\bf w+\e}-F_{\bf w+\e'})(F_{\bf z+\e}-F_{\bf z+\e'})$, and thus belongs to
  $\widetilde \Rat(\square^d)$. For $\u$ and $\v$ which do not form a simplex
in $\square^d$, there are indices $i,j\in [d]$, 
such that $\{(u_i, u_j), (v_i,v_j)\} = \{(0,1), (1,0)\}$. Since $h(0,1,0,1) =8$, we infer that
$C_\u C_\v \in \widetilde \Rat(\square^d)$.

This shows all the relations of type ($\mathscr R1$) in $\Rat(\square^d)$ belong to $\widetilde\Rat(\square^d)$, 
which finally shows that 
$\widetilde \Rat(\square^d)=\Rat(\square^d)$ in $\Z[\frac 12][C_\v]=\Z[\frac 12][F_\w]$, and the theorem follows.
\end{proof}
  
\subsection{Functoriality of $\widetilde{\Ch}$ for the inclusion of hypercubes} 
Let $r<d$ be two integers.  Let $\widetilde \Ch(\square^d)$ be the chow ring of $\square^d$, with generators $F_\u$ for $\u \in \mathbb F_2^d$ and with relations $(\mathscr R^*1), (\mathscr R^*2), (\mathscr R^*3)$. Similarly, let $\widetilde \Ch(\square^r)$ be the chow ring of $\square^r$, with generators $\widetilde F_\w$ for $\w \in \mathbb F_2^r$ and with relations $(\mathscr R^*1), (\mathscr R^*2), (\mathscr R^*3)$. 

Let $I \subseteq \{1,\dots, d\}$ be a subset of size $r$. Viewing $\mathbb F_2^r$ as the set of elements of $\mathbb F_2^d$ with support in $I$, we get an inclusion $\eta: \square^r \hookrightarrow \square^d$. 

\begin{prop}\label{prop:func2}
We have a morphism of graded rings 
$\eta_*: \widetilde \Ch(\square^r) \rightarrow \widetilde \Ch(\square^d)$ defined by sending $\widetilde F_\w $ to $F_{\eta(\w)}$. 
\end{prop}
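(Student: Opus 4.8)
The plan is to verify that the assignment $\widetilde F_\w \mapsto F_{\eta(\w)}$ extends to a well-defined ring homomorphism, i.e., that each of the defining relations $(\mathscr R^*1)$, $(\mathscr R^*2)$, $(\mathscr R^*3)$ of $\widetilde\Ch(\square^r)$ is sent into $\widetilde\Rat(\square^d)$. The key point I would establish first is a compatibility dictionary between the Fourier generators of $\square^r$ and those of $\square^d$ under the inclusion $\eta$. Concretely, for $\w\in\mathbb F_2^r\subseteq\mathbb F_2^d$ (support in $I$), one has $F_{\eta(\w)}=\sum_{\v\in\square^d}(-1)^{\langle\v,\w\rangle}C_\v$, and since $\langle\v,\w\rangle$ depends only on the coordinates of $\v$ in $I$, this sum reorganizes as $F_{\eta(\w)}=\sum_{\v'\in\mathbb F_2^I}(-1)^{\langle\v',\w\rangle}\bigl(\sum_{\v\in\square^d,\ \v|_I=\v'}C_\v\bigr)$. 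In other words $F_{\eta(\w)}$ is, up to the sum over the $C$-generators of the ``extra'' coordinates, the pullback of $\widetilde F_\w$. I would phrase this so that $\eta_*$ on the $F$-side matches the multiplication/inclusion maps already studied; in particular, the two identities of the form $\e_i\in I$, $\e_j\in I$ behave exactly as in $\square^r$, while for $i\notin I$ one has $F_{\eta(\w)+\e_i}=F_{\eta(\w)}$ inside the relevant expressions, reflecting that $\eta(\w)$ has no support outside $I$.

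Next I would check the three relations one at a time. For $(\mathscr R^*1)$: since $\eta(\mathbf 0)=\mathbf 0$ in $\mathbb F_2^d$, we have $\eta_*(\widetilde F_{\mathbf 0}\widetilde F_{\mathbf w})=F_{\mathbf 0}F_{\eta(\w)}$, which is a relation of type $(\mathscr R^*1)$ in $\widetilde\Rat(\square^d)$. For $(\mathscr R^*2)$: given $i\in[r]$ (which we identify with an element of $I\subseteq[d]$ via the ordering), $\eta(\e_i)=\e_{i}$ in $\mathbb F_2^d$, and $\eta(\w+\e_i)=\eta(\w)+\e_i$, $\eta(\z+\e_i)=\eta(\z)+\e_i$; hence $\eta_*$ sends the relation $\widetilde F_{\e_i}(\widetilde F_{\w}-\widetilde F_{\w+\e_i})(\widetilde F_{\z}+\widetilde F_{\z+\e_i})$ literally to a relation of type $(\mathscr R^*2)$ in $\square^d$. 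The relation $(\mathscr R^*3)$ is handled identically: for $i,j\in[r]$, $\eta$ commutes with adding $\e_i,\e_j$, so both sides of $(\mathscr R^*3)$ are transported to the corresponding instance of $(\mathscr R^*3)$ in $\square^d$. Thus $\eta_*$ descends to a graded ring homomorphism $\widetilde\Ch(\square^r)\to\widetilde\Ch(\square^d)$.

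The only genuine subtlety — and the step I expect to need the most care — is the bookkeeping of indices and orderings: one must fix, once and for all, the identification of $[r]$ with $I$ induced by the given total orders, and check that ``$\e_i$ for $\square^r$'' really maps to ``$\e_{i}$ for $\square^d$'' under $\eta$ (as opposed to some shifted index), and that the membership $\e_i\in I$ is what makes the $(\mathscr R^*2)$ computation go through cleanly. Once that identification is pinned down, each relation transports symbolically and there is no computation to grind through; the verification is essentially the observation that $\eta$ is a morphism of abelian groups $\mathbb F_2^r\hookrightarrow\mathbb F_2^d$ and of the ordered index sets, so it respects every operation appearing in $(\mathscr R^*1)$–$(\mathscr R^*3)$. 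I would close by remarking that, after inverting $2$, Proposition~\ref{prop:func2} is simply the Fourier-dual incarnation of Proposition~\ref{lem:embedding}/Proposition~\ref{prop:func} applied to the inclusion $\square^r\hookrightarrow\square^d$, which provides an independent check of well-definedness.
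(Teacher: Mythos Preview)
Your proof is correct and follows exactly the paper's approach: define the map on the polynomial ring by $\widetilde F_\w \mapsto F_{\eta(\w)}$ and check that each relation of type $(\mathscr R^*1)$, $(\mathscr R^*2)$, $(\mathscr R^*3)$ in $\square^r$ is sent to a relation of the same type in $\square^d$, using that $\eta$ is an additive inclusion $\mathbb F_2^r\hookrightarrow\mathbb F_2^d$ sending $\mathbf 0\mapsto\mathbf 0$ and $\e_i\mapsto\e_i$. The paper's proof is a one-sentence compression of your second paragraph; your first paragraph (the ``compatibility dictionary'') is not needed, and the aside there that $F_{\eta(\w)+\e_i}=F_{\eta(\w)}$ for $i\notin I$ is not correct as written (these are distinct generators), though you never actually use it.
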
  
\begin{proof} Consider the map of polynomial rings 
$\Z[F_{\w}]_{\w\in \mathbb F_2^r} \to \Z[F_{\u}]_{\u\in \mathbb F_2^d}$ induced by $\eta$. 
The image by $\eta$ of any relation of type $(\mathscr R^*1), (\mathscr R^*2), (\mathscr R^*3)$ in 
$\widetilde \Ch(\square^r)$ is a relation of type 
$(\mathscr R^*1), (\mathscr R^*2), (\mathscr R^*3)$ in $\widetilde \Ch(\square^d)$. 
It follows that passing to the quotient, we get a well-defined map $\eta_*: \widetilde \Ch(\square^r) \rightarrow \widetilde \Ch(\square^d)$ of Chow rings. 
\end{proof}
\begin{cor} The inclusion $\eta: \square^I \rightarrow \square^d$ induces a map of localized Chow rings 
\[\eta_*: \Ch(\square^r)[\frac 12] \rightarrow \Ch(\square^d)[\frac12].\]
\end{cor}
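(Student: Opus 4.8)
The plan is to deduce the Corollary directly from Proposition~\ref{prop:func2} together with the isomorphism $\widetilde\Ch(\square^n)[\frac12]\simeq\Ch(\square^n)[\frac12]$ furnished by Theorem~\ref{thm:iso}. First I would note that Theorem~\ref{thm:iso}, applied to the hypercubes $\square^r$ and $\square^d$ separately, gives canonical identifications $\widetilde\Ch(\square^r)[\frac12]=\Ch(\square^r)[\frac12]$ and $\widetilde\Ch(\square^d)[\frac12]=\Ch(\square^d)[\frac12]$, both realized concretely inside the localized polynomial rings $\Z[\frac12][C_\v]=\Z[\frac12][F_\w]$ by the change of basis $F_\w=\sum_\v(-1)^{\langle\v,\w\rangle}C_\v$ and its Fourier inverse $C_\v=\frac1{2^n}\sum_\w(-1)^{\langle\v,\w\rangle}F_\w$.

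Next I would base-change the map of Proposition~\ref{prop:func2} by $\Z[\frac12]$: the ring homomorphism $\eta_*\colon\widetilde\Ch(\square^r)\to\widetilde\Ch(\square^d)$, $\widetilde F_\w\mapsto F_{\eta(\w)}$, induces $\eta_*\colon\widetilde\Ch(\square^r)[\frac12]\to\widetilde\Ch(\square^d)[\frac12]$. Composing with the two identifications above yields precisely the asserted map $\eta_*\colon\Ch(\square^r)[\frac12]\to\Ch(\square^d)[\frac12]$. The only thing to check is that this is well-defined and independent of the auxiliary presentations, which is automatic since everything happens inside $\Z[\frac12][F_\bullet]$; one also records, for clarity, how $\eta_*$ acts on the original generators $C_\v$ (of $\square^r$): since $C_\v=\frac1{2^r}\sum_{\w\in\mathbb F_2^r}(-1)^{\langle\v,\w\rangle}F_\w$, we get $\eta_*(C_\v)=\frac1{2^r}\sum_{\w\in\mathbb F_2^r}(-1)^{\langle\v,\w\rangle}F_{\eta(\w)}$, which one can expand back in the $C$-basis of $\square^d$ if desired, though this is not needed.

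There is essentially no obstacle here: the content was already done in Proposition~\ref{prop:func2} and Theorem~\ref{thm:iso}, and the Corollary is the translation of the former across the isomorphism of the latter. The one point deserving a sentence of care is that Proposition~\ref{prop:func2} is stated integrally for $\widetilde\Ch$, whereas Theorem~\ref{thm:iso} only identifies $\widetilde\Ch$ with $\Ch$ after inverting $2$; hence the Corollary is only claimed for the localized rings, and the proof must pass to $\Z[\frac12]$ before invoking Theorem~\ref{thm:iso}. I would write the proof as the following short paragraph.

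\begin{proof}
By Theorem~\ref{thm:iso}, inverting $2$ we have canonical identifications
$\widetilde\Ch(\square^r)[\tfrac12]=\Ch(\square^r)[\tfrac12]$ and $\widetilde\Ch(\square^d)[\tfrac12]=\Ch(\square^d)[\tfrac12]$,
both realized inside $\Z[\tfrac12][C_\bullet]=\Z[\tfrac12][F_\bullet]$ via $F_\w=\sum_{\v}(-1)^{\langle\v,\w\rangle}C_\v$.
Tensoring the ring homomorphism $\eta_*\colon\widetilde\Ch(\square^r)\to\widetilde\Ch(\square^d)$ of Proposition~\ref{prop:func2}
with $\Z[\tfrac12]$ over $\Z$ yields a morphism $\widetilde\Ch(\square^r)[\tfrac12]\to\widetilde\Ch(\square^d)[\tfrac12]$
sending $\widetilde F_\w$ to $F_{\eta(\w)}$. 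Composing with the above identifications gives the desired map
$\eta_*\colon\Ch(\square^r)[\tfrac12]\to\Ch(\square^d)[\tfrac12]$. On the original generators it is given by
\[
\eta_*(C_\v)=\frac1{2^r}\sum_{\w\in\mathbb F_2^r}(-1)^{\langle\v,\w\rangle}F_{\eta(\w)},\qquad \v\in\square^r,
\]
which is well defined since the right-hand side lies in $\Z[\tfrac12][F_\bullet]=\Z[\tfrac12][C_\bullet]$.
\end{proof}
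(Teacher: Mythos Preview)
Your proof is correct and follows exactly the same approach as the paper: the paper's proof is the single sentence ``This follows from the previous proposition and the isomorphism of Theorem~\ref{thm:iso},'' and you have simply spelled out the base-change and composition that this sentence encodes.
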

\begin{proof} This follows from the previous proposition and the isomorphism of Theorem~\ref{thm:iso}.
\end{proof}
The morphism $\eta: \square^r \rightarrow \square^{d}$ is induced from a morphism of graphs 
$\eta_i: H_i \rightarrow K_2$, for $i=1, \dots, r$, with $K_2$ the complete 
graph on two vertices $0<1$, and $H_i =G_i$ for $i\in I$, and $H_i = K_2[<1] \simeq K_1$ for 
$i\notin I$. It follows from Proposition~\ref{prop:func}, that we have a morphism of Chow rings 
$\eta^* : \Ch(\square^d)[\frac 12] \rightarrow \Ch(\square^d)[\frac 12]$. 
\begin{prop} The composition morphism $\eta^*\circ \eta_*$ of $\Ch(\square^r)[\frac 12]$ is the identity.   
\end{prop}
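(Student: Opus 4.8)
The plan is to prove that the composition $\eta^*\circ \eta_*$ is the identity on $\Ch(\square^r)[\frac12]$ by checking it on the Fourier generators $\widetilde F_\w$, $\w \in \mathbb F_2^r$, since these generate the localized ring. First I would unwind the two maps on these generators. By Proposition~\ref{prop:func2}, $\eta_*(\widetilde F_\w) = F_{\eta(\w)}$, where $\eta(\w) \in \mathbb F_2^d$ is the element whose coordinates in $I$ agree with $\w$ and whose coordinates outside $I$ vanish. On the other hand, $\eta^*$ is the functoriality map of Proposition~\ref{prop:func} attached to the graph homomorphisms $\eta_i : H_i \to K_2$, where $H_i = G_i \simeq K_2$ for $i \in I$ and $H_i = K_2[<1] \simeq K_1$ for $i \notin I$; concretely $\eta^*$ is defined on the $C$-generators by $\eta^*(C_\v) = \sum_{\u \mapsto \v} C_\u$. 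So the computation amounts to expanding $F_{\eta(\w)} = \sum_{\v \in \square^d} (-1)^{\langle \v, \eta(\w)\rangle} C_\v$, applying $\eta^*$ term by term, and recognizing the result as $\widetilde F_\w = \sum_{\z \in \square^r}(-1)^{\langle \z, \w\rangle} C_\z$.

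The key step is the following bookkeeping. For $\u \in \square^r$ (viewed as an element of $\mathbb F_2^r$, i.e.\ with support in $I$), we have $\eta^*(C_\u)$ is the sum of $C_\v$ over all $\v \in \square^d$ whose image under the map $\square^d \to \square^r$ (projection/contraction of the $K_1$-factors) equals $\u$ — but since $H_i = K_1$ only has the vertex $0$ for $i \notin I$, the graph map $\eta_i : H_i \to K_2$ for $i \notin I$ sends the single vertex to $0 \in K_2$, hence the relevant fiber condition forces $v_i$ to be arbitrary only... — here I must be careful about the direction. Actually $\eta^*$ is pullback along $\eta : \square^r \hookrightarrow \square^d$, so $\eta^*(C_\v) = \sum_{\u \in \square^r,\ \eta(\u) = \v} C_\u$, which is $C_{\v|_I}$ if $\v$ is supported in $I$ and $0$ otherwise. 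Therefore
\[
\eta^*(F_{\eta(\w)}) = \sum_{\substack{\v \in \square^d \\ \mathrm{supp}(\v) \subseteq I}} (-1)^{\langle \v, \eta(\w)\rangle} C_{\v|_I} = \sum_{\z \in \square^r} (-1)^{\langle \z, \w\rangle} C_\z = \widetilde F_\w,
\]
using that $\langle \v, \eta(\w) \rangle = \langle \v|_I, \w\rangle$ when $\mathrm{supp}(\v) \subseteq I$, and that $\v \mapsto \v|_I$ is a bijection from $\{\v \in \square^d : \mathrm{supp}(\v)\subseteq I\}$ to $\square^r$. Since the $\widetilde F_\w$ generate $\Ch(\square^r)[\frac12]$ and $\eta^*, \eta_*$ are ring homomorphisms, this proves $\eta^* \circ \eta_* = \mathrm{id}$.

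The main obstacle, such as it is, is purely a matter of keeping the directions of the two maps straight: $\eta_*$ is the Gysin-type pushforward built from Proposition~\ref{prop:intersection}/Proposition~\ref{prop:func2} (multiplication by a class and change of ambient cube), while $\eta^*$ is the contravariant functoriality of Proposition~\ref{prop:func}; one must verify that on the nose $\eta^*(C_\v)$ is $C_{\v|_I}$ or $0$ according to whether $\v$ is supported on $I$, which is exactly the description recalled after Proposition~\ref{lem:embedding} and in the paragraph defining $\eta$. Once that is pinned down the verification is a one-line Fourier/character computation as above. I would also remark that it suffices to work with the $C$-generators after inverting $2$ (equivalently with the $F$-generators), and that everything is compatible with the grading, so the statement holds degree by degree; in top degree $r+1$ this recovers the compatibility of the degree maps already implicit in Proposition~\ref{prop:aux1}.
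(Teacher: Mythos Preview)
Your proof is correct and follows essentially the same approach as the paper: reduce to degree one since both maps are ring homomorphisms and the $\widetilde F_\w$ generate, then expand $F_{\eta(\w)}$ in the $C_\v$ and apply $\eta^*$ term by term, using that $\eta^*(C_\v)$ equals $C_{\v|_I}$ when $\mathrm{supp}(\v)\subseteq I$ and vanishes otherwise. The one cosmetic point is that your middle paragraph meanders before you correct yourself; in a final write-up you could cut straight to the observation that, since $\eta:\square^r\hookrightarrow\square^d$ is injective, the sum $\eta^*(C_\v)=\sum_{\u:\eta(\u)=\v}C_\u$ has at most one term, and that term exists exactly when $\v$ is supported in $I$.
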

\begin{proof} It will be enough to prove this in degree one. Let $\w \in \square^r$. We have 
\begin{align*}
\eta^*\circ \eta_*(\widetilde F_\w) = \eta^*(F_{\eta(\w)}) &= \eta_*\bigl( \sum_{\v\in \mathbb F_2^d} (-1)^{\langle \v,\eta(\w) \rangle } C_\v \bigl) \\
&= \sum_{\v\in \mathbb F_2^d} (-1)^{\langle \v,\eta(\w) \rangle }  \eta_*(C_\v) = \sum_{{\bf z} \in \mathbb F_2^r} (-1)^{\langle {\bf z},\w\rangle} C_{\bf z}=\widetilde F_{\w}.
\end{align*}
\end{proof}
\subsection{Vanishing theorem} In this section, we prove the vanishing Theorem~\ref{thm:vanishing-intro}. 
So consider elements $\w_0,\dots, \w_d \in \mathbb F_2^d$.    Let $\mathcal P = \{P_1,\dots,P_k\}$ be 
a partition of $[d]$ into $k$ disjoint non-empty sets. Recall that for each $\w_i$, we denote by 
$\alpha(\w_i,\mathcal P)$ the number of indices $1\leq i\leq k$ such that there exists an index 
$j\in P_i$ with $\w_j=1$. Suppose that the condition in Theorem~\ref{thm:vanishing-intro} is verified, namely, $\sum_{i=0}^d \alpha(\w_i, \mathcal P) <d+k$. 
We have to prove that $F_{\w_0}\dots F_{\w_d}=0$.

\medskip

 We first reformulate the condition in the theorem as follows. 
 Let $\w_0,\dots,\w_d \in \mathbb F_2^d$, possibly with $\w_i =\w_j$ for $i \neq j$.  
 We construct a bipartite graph $H = H(\mathcal P; \w_0, \dots, \w_d)$ as follows.  The graph 
 $H$ has the vertex set partitioned into two separate parts $W$ and $V$ of size $d+1$ and $d$, respectively. Let 
 $W = \{\omega_0,\dots, \omega_d\}$ be $d+1$ vertices on one side, and 
$V = \{1,\dots,d\}$ the $d$ vertices on the other side. 
 There is an edge between $\omega_i \in W$ and $j\in V$ if the $j$-coordinate of $\w_i$ is one. We have the following proposition.

\begin{prop} The following three conditions are equivalent
\begin{itemize}
 \item[$(i)$] The graph $H$ is disconnected;
 \item[$(ii)$] There exists a partition $\mathcal P=\{P_1,P_2\}$ of $\{1,\dots, d\}$ such that $\sum_{i=0}^d \alpha(\w_i, \mathcal P) <d+2$;
 \item[$(iii)$] There exists an integer $k\in \mathbb N$ and a partition $\mathcal P=\{P_1,P_2, \dots, P_k\}$ such that 
 $$\sum_{i=0}^d \alpha(\w_i, \mathcal P) <d+k.$$
\end{itemize}
\end{prop}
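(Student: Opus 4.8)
The plan is to prove the cyclic chain of implications $(i)\Rightarrow(ii)\Rightarrow(iii)\Rightarrow(i)$, the middle implication being immediate since a partition into two parts is a partition with $k=2$. The whole argument will rest on translating the quantity $\alpha(\w_i,\mathcal P)$ into connectivity language: given a partition $\mathcal P = \{P_1,\dots,P_k\}$ of $\{1,\dots,d\}$, form the bipartite graph $H = H(\mathcal P;\w_0,\dots,\w_d)$ and then merge, for each $s$, the vertices of $V$ lying in $P_s$ into a single super-vertex, obtaining a bipartite (multi)graph $H_{\mathcal P}$ with parts $W$ of size $d+1$ and $\{P_1,\dots,P_k\}$ of size $k$. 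By construction $\alpha(\w_i,\mathcal P)$ is exactly the degree of $\omega_i$ in $H_{\mathcal P}$, so $\sum_{i=0}^d \alpha(\w_i,\mathcal P)$ is the number of edges of $H_{\mathcal P}$.

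For $(iii)\Rightarrow(i)$: suppose $\mathcal P=\{P_1,\dots,P_k\}$ satisfies $\sum_i \alpha(\w_i,\mathcal P) < d+k$. The graph $H_{\mathcal P}$ has $(d+1)+k$ vertices and fewer than $d+k$ edges, hence fewer than $(\#\text{vertices}) - 1$ edges, so $H_{\mathcal P}$ is disconnected. Disconnectedness of $H_{\mathcal P}$ pulls back to disconnectedness of $H$: each connected component of $H_{\mathcal P}$ corresponds to a union of components of $H$ (merging $V$-vertices that lie in a common $P_s$ only identifies vertices, it never connects previously separate pieces — wait, it can, so one must argue the other direction). Concretely, if $H$ were connected then any identification of vertices keeps it connected, so $H_{\mathcal P}$ would be connected; contrapositively, $H_{\mathcal P}$ disconnected forces $H$ disconnected. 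This gives $(i)$.

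For $(i)\Rightarrow(ii)$: assume $H$ is disconnected. Pick a connected component and let $P_1 \subseteq \{1,\dots,d\}$ be the set of $V$-vertices it contains and $P_2$ the complementary set of $V$-vertices; since $H$ has no isolated $V$-vertices only in the degenerate cases, one must first handle the possibility that some coordinate $j$ appears in no $\w_i$ (then $\{j\}$ can be split off freely) — I would note that if some $j\in V$ is isolated we may put it wherever we like, so WLOG $P_1,P_2$ are both nonempty, as a partition requires. Now observe that no $\omega_i$ is adjacent to both $P_1$ and $P_2$ (that would connect the component to its complement), so for every $i$ we have $\alpha(\w_i,\{P_1,P_2\})\le 1$ whenever $\w_i$ is supported in a single part, and $=0$ when $\w_i=\mathbf 0$; in all cases $\alpha(\w_i,\{P_1,P_2\})\le 1 < 2$, and moreover at least one $\omega_i$ must have $\alpha=0$ or the inequality can be sharpened — in fact $\sum_i\alpha(\w_i,\mathcal P)\le d+1 < d+2$ follows from counting: $H_{\mathcal P}$ with $\mathcal P=\{P_1,P_2\}$ is a disconnected bipartite graph on $(d+1)+2$ vertices with at least two components, hence at most $(d+3)-2 = d+1$ edges. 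This yields $(ii)$.

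\textbf{Main obstacle.} The routine part is the edge-counting (a disconnected graph on $n$ vertices has at most $n-2$ edges is false in general — a disconnected graph can have many edges; the correct bound uses that $H_{\mathcal P}$ being a \emph{forest}-like object is not automatic). So the real subtlety, and the step I expect to require the most care, is the precise bookkeeping in $(iii)\Rightarrow(i)$ and $(i)\Rightarrow(ii)$: one cannot simply say ``few edges implies disconnected'' for an arbitrary graph. The clean way is to argue directly with components rather than edge counts — namely, $\sum_i\alpha(\w_i,\mathcal P)$ counts edges of $H_{\mathcal P}$, and if $H_{\mathcal P}$ has $c$ connected components then its number of edges is at least $(d+1+k) - c$, so $\sum_i\alpha(\w_i,\mathcal P) < d+k$ forces $c\ge 2$, i.e. $H_{\mathcal P}$ disconnected, and then transfer to $H$. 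Conversely $H$ disconnected $\Rightarrow$ there is a coarsest partition $\mathcal P$ (group the $V$-vertices by which component of $H$ they sit in) for which $H_{\mathcal P}$ is still disconnected with at least as many components, giving the strict inequality with $k = $ number of such groups. Handling isolated $V$-vertices (coordinates never switched on by any $\w_i$) as a separate easy case is the only other thing to watch.
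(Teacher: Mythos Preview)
Your approach is the same as the paper's: form the contracted bipartite graph $H_{\mathcal P}$ (the paper calls it $\widetilde H$), identify $\sum_i\alpha(\w_i,\mathcal P)$ with its edge count, use the bound $|E|\ge |V|-1$ for connected graphs to get $(iii)\Rightarrow(i)$, and for $(i)\Rightarrow(ii)$ split $V=V_1\sqcup V_2$ along a component and observe that each $\omega_i$ meets at most one side so $\alpha(\w_i,\{V_1,V_2\})\le 1$ and the sum is $\le d+1<d+2$.

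Your ``main obstacle'' worry is misplaced for $(i)\Rightarrow(ii)$: the inequality $\alpha(\w_i,\{P_1,P_2\})\le 1$ comes directly from the component decomposition, not from any global edge count, so the (false) principle ``disconnected $\Rightarrow$ few edges'' is never needed. Your \emph{first} argument for $(i)\Rightarrow(ii)$ was already the correct one and is exactly what the paper writes; the edge-counting reformulation you then attempted (``disconnected on $d+3$ vertices hence $\le d+1$ edges'') is both unnecessary and wrong, and your own self-correction recognised this. The only genuine residual subtlety is the one you flag at the end: ensuring $V_1,V_2$ are both nonempty. The paper, like you, glosses over the degenerate case where the only disconnection of $H$ comes from an isolated $\omega_i$ (i.e.\ some $\w_i=\mathbf 0$); strictly speaking $(ii)$ can fail there (e.g.\ $d=3$, $\w_0=\mathbf 0$, $\w_1=\w_2=\w_3=\mathbf 1$), but this is harmless for the application since then $F_{\mathbf 0}$ already kills the product by $(\mathscr R^*1)$.
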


\begin{proof}
 To show that $(i)$ implies $(ii)$, note that if $H$ is disconnected, 
 then we can write $W = W_1\sqcup W_2$ and $V=V_1\sqcup V_2$ such that all the edges of $H$ join a vertex of 
 $W_i$ to a vertex of $V_i$, for $i=1,2$. 
 In this case, let $\mathcal P = \{V_1,V_2\}$. Then we have 
 $\sum_{i=0}^d \alpha(\w_i, \mathcal P) \leq |W_1|+ |W_2| = d+1 < d+2$.
 
 Obviously, $(ii)$ implies $(iii)$. 
 
 Finally, let $\mathcal P=\{P_1,P_2, \dots, P_k\}$ be 
 a partition of $V$ such that $\sum_{i=0}^d \alpha(\w_i, \mathcal P) <d+k$. Let $\widetilde H$ 
 be the graph obtained from $H$ by contracting each $P_i$ 
 into a single vertex.
 The number of edges of 
 $\widetilde H$ is precisely $\sum_{i=0}^d \alpha(\w_i, \mathcal P)$. On the other hand, the
 number of vertices of $\widetilde H$ is $d+1+k$. Since $|E(\widetilde H)| < |V(\widetilde H)|-1$, 
 the graph $\widetilde H$ 
 is not connected, which shows that $H$ cannot be connected neither.
\end{proof}

Let now $\w_0, \dots, \w_d$ be a sequence of elements of the hypercube such that the associated graph $H$
is not connected. Thus, there exists a decomposition $W = W_1\sqcup W_2$ and $V=V_1\sqcup V_2$ such that 
there is no edge between $V_i$ and $W_j$ provided that $i \neq j$.

Since $|W_1| + |W_2| > |V_1|+ |V_2|$, we can suppose without loss of generality that $|W_1| > |V_1|$. Let 
$r := |V_1|$, and note that $0<r<d$. 
We have the following proposition,  which clearly implies the vanishing theorem above.  
(Recall that for an element $\w\in \mathbb F_2^d$, the support of $\w$ is
the set of all $j\in [d]$ with $w_j =1$.) 
\begin{prop}\label{prop:vanishing} Let $r$ be a non-negative integer. 
 Let $\w_0, \dots, \w_r$ be elements of $\mathbb F_2^d$ with support in a subset $V_1$ of $V=[d]$ 
 of size $r$. Then for any $\v \in \mathbb F_2^d$, we have 
 \[F_{\w_0}\dots F_{\w_r} F_\v =0. \]
\end{prop}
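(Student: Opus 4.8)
\textbf{Proof strategy for Proposition~\ref{prop:vanishing}.}

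The plan is to reduce the statement to the structure theorem for $\Ch(\square^r)$ combined with the functoriality established in Propositions~\ref{lem:embedding} and~\ref{prop:func2}. The key observation is that the product $F_{\w_0}\dots F_{\w_r}$, involving $r+1$ factors each of which is a polynomial in the $C_\v$ supported (after Fourier expansion) on vertices whose coordinates vanish outside $V_1$, should be thought of as living, up to the inclusion $\eta:\square^r\hookrightarrow\square^d$ with $\square^r=\square^{V_1}$, in the image of a degree-$(r+1)$ class of $\Ch(\square^r)$. Since $\dim\square^r = r$, the Chow ring $\Ch(\square^r)$ vanishes in degrees $\geq r+2$; but here we have degree $r+1$, which is exactly the socle degree, so this alone is not enough — we genuinely need the extra factor $F_\v$ to push us into degree $r+2$ \emph{within $\square^r$}, or else exploit that $F_\v$ restricted to the relevant subcube collapses appropriately.

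First I would make precise the reduction. Write $\eta:\square^r=\square^{V_1}\hookrightarrow\square^d$ for the inclusion coming from $V_1\subseteq[d]$ (identifying $\mathbb F_2^r$ with the elements of $\mathbb F_2^d$ supported in $V_1$). Since $\w_0,\dots,\w_r\in\mathbb F_2^r$ under this identification, I claim $F_{\w_0}\dots F_{\w_r}$ lies in the image of the map $\eta_*:\Ch(\square^r)[\tfrac12]\to\Ch(\square^d)[\tfrac12]$ of the Corollary following Proposition~\ref{prop:func2}: indeed $\eta_*(\widetilde F_{\w_i}) = F_{\eta(\w_i)} = F_{\w_i}$ by definition of $\eta_*$, so $F_{\w_0}\dots F_{\w_r} = \eta_*(\widetilde F_{\w_0}\dots\widetilde F_{\w_r})$, and $\widetilde F_{\w_0}\dots\widetilde F_{\w_r}$ is a class of degree $r+1$ in $\Ch(\square^r)[\tfrac12]$. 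Now the degree-$(r+1)$ piece $\Ch^{r+1}(\square^r)$ is free of rank one (by the structure theorem, or the proposition computing $\Ch^{d+1}(\square^d)\simeq\mathbb Z$ applied with $d$ replaced by $r$), generated by $C_{\sigma}$ for a non-degenerate $r$-simplex $\sigma$ of $\square^r$. So $\widetilde F_{\w_0}\dots\widetilde F_{\w_r} = c\cdot C_\sigma$ for some $c\in\mathbb Z[\tfrac12]$, hence $F_{\w_0}\dots F_{\w_r} = c\cdot\eta_*(C_\sigma)$ in $\Ch(\square^d)[\tfrac12]$. It then remains to multiply by $F_\v$ and show the result vanishes; since $\eta_*(C_\sigma)$ is (a scalar times) the image of a single monomial $C_{\u_0}\dots C_{\u_r}$ with the $\u_j$ supported in $V_1$, the product $F_\v\cdot\eta_*(C_\sigma)$ is a $\mathbb Z$-linear combination of monomials $C_\v C_{\u_0}\dots C_{\u_r}$; I would expand $F_\v=\sum_{\mathbf x\in\square^d}(-1)^{\langle\mathbf x,\v\rangle}C_{\mathbf x}$ and show each resulting term $C_{\mathbf x}C_{\u_0}\dots C_{\u_r}$ is rationally equivalent to zero. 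This is where I would invoke Proposition~\ref{prop:aux1} (or the moving/degree machinery of Section~\ref{sec:hypercube} directly): the monomial $C_{\u_0}\dots C_{\u_r}$ has degree $r+1$ but its vertices all lie in a coordinate-$r$ subcube, so by the vanishing criterion of Theorem~\ref{thm:main1-intro} (part (1)), multiplying by any further variable $C_{\mathbf x}$ forces a "too many chips on the left/right" condition and the product is $0$; more robustly, $\deg$ of any such product in $\Ch^{r+2}(\square^{d})$-type localized computation vanishes because $\Ch(\square^{r})$ has no degree $r+2$ part, and one transfers the vanishing along $\eta_*$ and $\iota_\e^*$.

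The cleanest route, which I would actually take, avoids inverting $2$: work directly in $Z(\square^d)$. The element $F_{\w_0}\dots F_{\w_r}F_\v$ expands as an integer combination $\sum (\pm)\, C_{\mathbf x_0}\dots C_{\mathbf x_r}C_{\mathbf y}$ where $\mathbf x_0,\dots,\mathbf x_r$ range over $\mathbb F_2^d$ supported in $V_1$ and $\mathbf y$ ranges over all of $\mathbb F_2^d$. I group the terms by the multiset $\{\mathbf x_0,\dots,\mathbf x_r\}$, and for each fixed such monomial $C_{\mathbf x_0}\dots C_{\mathbf x_r}$ of degree $r+1$ supported in the subcube $\square^{V_1}$, I use the identification of this subcube with $\square^r$ together with Proposition~\ref{lem:embedding}/Proposition~\ref{prop:intersection}: the class $C_{\mathbf x_0}\dots C_{\mathbf x_r}$ in $\Ch^{r+1}(\square^{V_1})\simeq\mathbb Z$ is some integer $m$ times the class of a non-degenerate $r$-simplex $C_{\tau}$ with $\tau\subset\square^{V_1}$, and then $C_{\mathbf x_0}\dots C_{\mathbf x_r}C_{\mathbf y} = m\cdot C_\tau C_{\mathbf y}$ in $\Ch(\square^d)$; this last product is a degree-$(r+2)$ class all of whose vertices, together with $\mathbf y$, I can handle: if $\tau\cup\{\mathbf y\}$ is not a simplex it dies by $(\mathscr R1)$, and if it is a simplex then the resulting class is a monomial $C_{\sigma'}$ for $\sigma'\in\square^d$ of dimension $\leq r+1$ but with all vertices sharing the feature that their $V_1$-restrictions already fill a non-degenerate $r$-simplex, forcing the vanishing condition (1) of Theorem~\ref{thm:main1-intro} (concretely: restricting $\sigma'$ to any cube $\square_\e$ and looking at lengths relative to $V_1$, one gets $n_1+\dots+n_i > |\v_{i+1}|$ for an appropriate split). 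By the localization Theorem~\ref{thm1} it suffices to check this vanishing after restriction to each $\square_\e$, i.e., in $\square^d$ itself, which is what the previous sentence does. Summing over all terms gives $F_{\w_0}\dots F_{\w_r}F_\v = 0$.

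\textbf{Main obstacle.} The genuinely delicate point is the bookkeeping in the last step: after fixing the multiset $\{\mathbf x_0,\dots,\mathbf x_r\}$ supported in $\square^{V_1}$ and replacing $C_{\mathbf x_0}\dots C_{\mathbf x_r}$ by $m\,C_\tau$ with $\tau$ a non-degenerate $r$-simplex of $\square^{V_1}$, one must verify that for \emph{every} $\mathbf y\in\mathbb F_2^d$ the product $C_\tau C_{\mathbf y}$ vanishes in $\Ch(\square^d)$. When $\mathbf y$ is compatible with $\tau$ (they form a simplex), this is not automatic from $(\mathscr R1)$ and requires either the explicit vanishing criterion of Theorem~\ref{thm:main1-intro}(1) applied after suitably permuting coordinates (Proposition~\ref{prop:permut}) so that the first few lengths of the chain exceed the next vertex length — because $\tau$ already uses up all $r$ "directions" of $V_1$ while contributing $r+1$ vertices — or a direct argument with $(\mathscr R3)$ as in the proof of Proposition~\ref{prop:power1}. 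Making this step uniform over all $\mathbf y$ and all multisets, and checking the length inequalities carefully, is the part that needs the most care; everything else is formal manipulation of the Fourier expansion and functoriality.
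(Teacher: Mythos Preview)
Your proposal contains a genuine gap that invalidates both routes you sketch.

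\medskip

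\textbf{The ``cleanest route'' fails at the first line.} You write that $F_{\w_0}\dots F_{\w_r}F_\v$ expands as $\sum(\pm)\,C_{\mathbf x_0}\dots C_{\mathbf x_r}C_{\mathbf y}$ where the $\mathbf x_i$ range over $\mathbb F_2^d$ \emph{supported in $V_1$}. This is false: by definition $F_{\w_i}=\sum_{\mathbf x\in\square^d}(-1)^{\langle\mathbf x,\w_i\rangle}C_{\mathbf x}$, and the sum runs over \emph{all} of $\square^d$. The fact that $\w_i$ is supported in $V_1$ only means the sign $(-1)^{\langle\mathbf x,\w_i\rangle}$ depends on $\mathbf x|_{V_1}$ alone; it does not restrict the $\mathbf x$'s themselves.

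\medskip

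\textbf{The term-by-term vanishing is also false.} Even granting your reduction via $\eta_*$ to a non-degenerate $r$-simplex $\tau\subset\square^{V_1}$, the claim that $C_\tau C_{\mathbf y}=0$ for every $\mathbf y\in\square^d$ is wrong. Take $d=2$, $r=1$, $V_1=\{1\}$, so $\tau=\{(0,0),(1,0)\}$, and set $\mathbf y=(1,1)$. Then $C_{(0,0)}C_{(1,0)}C_{(1,1)}$ is the class of a non-degenerate $2$-simplex and has degree $1\neq 0$. No case of Theorem~\ref{thm:main1-intro}(1) applies here. The vanishing of $F_{\w_0}\dots F_{\w_r}F_\v$ is a cancellation coming from the Fourier signs, not from each $C$-monomial dying individually. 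A related point: $\eta_*(C_\sigma)$ is not a single $C$-monomial supported in $V_1$; one computes $\eta_*(C_\v)=\sum_{\u\in\square^d:\,\u|_{V_1}=\v}C_\u$, so $\eta_*(C_\sigma)$ is a large sum and your description of it is incorrect.

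\medskip

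\textbf{What the paper does.} The paper keeps everything in the $F$-basis. Your first paragraph is right up to the point where $\widetilde F_{\w_0}\dots\widetilde F_{\w_r}$ is a scalar times a generator of $\Ch^{r+1}(\square^r)$; via Lemma~\ref{lem:u1} that generator can be taken to be $\widetilde F_{\e_1}\dots\widetilde F_{\e_r}\widetilde F_{\e_1+\dots+\e_r}$, so one is reduced to showing $F_{\e_1}\dots F_{\e_r}F_{\e_1+\dots+\e_r}F_\v=0$. This is then proved by induction on $r$: for $r=1$ one writes $2F_{\e_1}^2F_\v$ as a sum of two expressions killed by $(\mathscr R^*1)$ and $(\mathscr R^*2)$; for the inductive step one splits $2F_{\e_1}\dots F_{\e_r}F_{\e_1+\dots+\e_r}F_\v$ using $F_\v=\tfrac12\bigl((F_\v-F_{\v+\e_r})+(F_\v+F_{\v+\e_r})\bigr)$, inserts $\pm F_{\e_1+\dots+\e_{r-1}}$ using the inductive hypothesis $F_{\e_1}\dots F_{\e_{r-1}}F_{\e_1+\dots+\e_{r-1}}F_{\e_r}=0$, and kills the resulting terms with $(\mathscr R^*2)$. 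The argument never passes through the $C$-variables after the initial reduction.
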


We need the following useful lemma, proved in~\cite[Proposition 4.29]{Kolb1}.
\begin{lemma} \label{lem:u1}We have 
$F_{\e_1} F_{\e_2}\dots F_{\e_d} F_{\e_1+\dots +\e_d} = (-4)^d C_{\bf 0} C_{\e_1} C_{\e_1+\e_2} \dots C_{\e_1 + \dots +\e_d}$.
\end{lemma}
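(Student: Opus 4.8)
The plan is to reduce the identity to a numerical statement and then induct on $d$. Since $\square^d$ has dimension $d$, the structure theorem (Theorem~\ref{thm:st}, in the form of the proposition recalled just before the degree map) gives $\Ch^{d+1}(\square^d)\simeq\mathbb Z$, generated by $C_\sigma$ for any non-degenerate $d$-simplex $\sigma$; in particular $C_{\mathbf 0}C_{\e_1}C_{\e_1+\e_2}\cdots C_{\e_1+\dots+\e_d}=C_\sigma$, with $\deg C_\sigma=1$. Hence it suffices to prove $F_{\e_1}\cdots F_{\e_d}F_{\mathbf 1}=(-4)^d C_\sigma$, and, both sides being integer multiples of $C_\sigma$, one only needs to track the coefficient.

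For $d=1$ we compute directly: $F_{\e_1}^2=(C_{\mathbf 0}-C_{\mathbf 1})^2=C_{\mathbf 0}^2-2C_{\mathbf 0}C_{\mathbf 1}+C_{\mathbf 1}^2=-4C_{\mathbf 0}C_{\mathbf 1}$, using $C_{\mathbf 0}^2=C_{\mathbf 1}^2=-C_{\mathbf 0}C_{\mathbf 1}$ from $(\mathscr R2)$. For the inductive step I would split everything along the last coordinate, writing a vertex of $\square^d$ as $(\mathbf v',\epsilon)$ with $\mathbf v'\in\mathbb F_2^{d-1}$ and $\epsilon\in\{0,1\}$. Decompose $F_{\e_i}=P^A_i+P^B_i$ (the parts supported on vertices with last coordinate $0$, resp. $1$) for $i<d$, together with $F_{\mathbf 1}=P^A_{\mathbf 1}-P^B_{\mathbf 1}$ and $F_{\e_d}=R^0_d-R^1_d$. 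The key input is the relation, derived from $(\mathscr R^*2)$ — concretely from Equation~\eqref{eq2} with $i=d$ together with Fourier inversion over $\mathbf w,\mathbf z$ — that $F_{\e_d}\,C_{\mathbf u}C_{\mathbf v}=0$ whenever $u_d\neq v_d$. Expanding $F_{\e_1}\cdots F_{\e_{d-1}}=\prod_{i<d}(P^A_i+P^B_i)$ and multiplying by $F_{\e_d}F_{\mathbf 1}$, this relation annihilates every monomial in which $F_{\e_d}$ meets both a last-coordinate-$0$ and a last-coordinate-$1$ vertex, leaving only two surviving contributions:
\[F_{\e_1}\cdots F_{\e_d}F_{\mathbf 1}=F_{\e_d}\,Q-F_{\e_d}\,Q',\qquad Q:=\Bigl(\textstyle\prod_{i<d}P^A_i\Bigr)P^A_{\mathbf 1},\quad Q':=\Bigl(\textstyle\prod_{i<d}P^B_i\Bigr)P^B_{\mathbf 1},\]
a homogeneous degree-$d$ polynomial in the last-coordinate-$0$, resp. last-coordinate-$1$, vertices.

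Now, since $F_{\mathbf 0}=R^0_d+R^1_d$ and $F_{\mathbf 0}Q=F_{\mathbf 0}Q'=0$ by $(\mathscr R2)$, one gets $F_{\e_d}Q=-2R^1_d Q$ and $F_{\e_d}Q'=2R^0_d Q'$. The point is that $R^1_d Q$ collapses to a single term: if $m$ is a chain-monomial of degree $d$ in last-coordinate-$0$ vertices, then $C_{(\mathbf w',1)}\cdot m=0$ unless $\mathbf w'=\mathbf 1_{d-1}$ — indeed, in the resulting chain the $d$ chips carried by the last-coordinate-$0$ part already exceed the length $|(\mathbf w',1)|=|\mathbf w'|+1\le d-1$, so case (1) of Theorem~\ref{thm:main1-intro} forces the product to vanish. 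Hence $R^1_d Q=C_{\mathbf 1}Q$; and comparing the explicit degree formula of Theorem~\ref{thm:main1-intro} on $\square^d$ and on $\square^{d-1}$ — appending the point $d$ with its single, necessarily kept, chip alters neither the overall sign nor the product of binomial coefficients — gives $\deg_{\square^d}(C_{\mathbf 1}\cdot m)=\deg_{\square^{d-1}}(\iota^*m)$, where $\iota:\square^{d-1}\hookrightarrow\square^d$ is the inclusion onto the last-coordinate-$0$ sub-hypercube and $\iota^*$ the induced map (Proposition~\ref{prop:func}). Since $\iota^*Q=F_{\e_1}\cdots F_{\e_{d-1}}F_{\e_1+\dots+\e_{d-1}}$ in $\Ch(\square^{d-1})$, which equals $(-4)^{d-1}C_\sigma$ by the induction hypothesis, we obtain $R^1_d Q=(-4)^{d-1}C_\sigma$; applying the same argument with the roles of $0$ and $1$ interchanged (using the inclusion onto the last-coordinate-$1$ sub-hypercube) gives $R^0_d Q'=(-4)^{d-1}C_\sigma$. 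Therefore $F_{\e_1}\cdots F_{\e_d}F_{\mathbf 1}=-2R^1_d Q-2R^0_d Q'=-4\,(-4)^{d-1}C_\sigma=(-4)^d C_\sigma$, closing the induction.

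The delicate points — hence where I expect the main work to lie — are: (i) checking that the relation $F_{\e_d}C_{\mathbf u}C_{\mathbf v}=0$ (for $u_d\neq v_d$) really does kill every ``mixed'' monomial in the expansion, so that the clean splitting into $F_{\e_d}Q-F_{\e_d}Q'$ holds on the nose; and (ii) the degree comparison between $\square^d$ and $\square^{d-1}$, which I would carry out by a direct inspection of the binomial product in Theorem~\ref{thm:main1-intro}. An alternative, more computational route bypasses the induction entirely: expand $F_{\e_1}\cdots F_{\e_d}F_{\mathbf 1}$ in the $C$-basis and evaluate $\deg$ term by term via Theorem~\ref{thm:main1-intro}; the statement then becomes the combinatorial identity that the resulting signed sum of chip-firing counts equals $(-4)^d$.
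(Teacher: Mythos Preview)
The paper does not give its own proof of this lemma; it is simply cited from \cite[Proposition~4.29]{Kolb1}. So there is no in-paper argument to compare against. That said, your proposal is a correct self-contained proof using only the tools developed in the present paper, which is worth having.

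Two comments on the execution. First, the relation $F_{\e_d}C_{\mathbf u}C_{\mathbf v}=0$ for $u_d\neq v_d$ follows more directly from $(\mathscr R3)$ than from the Fourier route you sketch: writing $F_{\e_d}=R^0_d-R^1_d$, both $C_{\mathbf u}C_{\mathbf v}R^0_d$ and $C_{\mathbf u}C_{\mathbf v}R^1_d$ are generators of type $(\mathscr R3)$, so the vanishing is immediate. With this in hand, every mixed monomial in the expansion of $\bigl(\prod_{i<d}F_{\e_i}\bigr)F_{\mathbf 1}$ contains a factor $C_{\mathbf u}C_{\mathbf v}$ with $u_d\neq v_d$, and the clean splitting into $F_{\e_d}Q-F_{\e_d}Q'$ does hold on the nose; your ``delicate point (i)'' is not actually delicate.

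Second, for the degree comparison you can bypass the binomial inspection entirely. The intersection map $\beta_{\mathbf 1,d}:\Ch(\square^{d-1})\to\Ch(\square^d)$ of Proposition~\ref{prop:intersection} (with $\mathbf v=\mathbf 1$, $k=d$) satisfies $\beta_{\mathbf 1,d}(\iota^*Q)=C_{\mathbf 1}Q$ whenever $Q$ is supported on last-coordinate-$0$ vertices, and it carries a generator $C_{\sigma'}$ of $\Ch^d(\square^{d-1})$ to the generator $C_{\sigma'}C_{\mathbf 1}$ of $\Ch^{d+1}(\square^d)$. Hence $R^1_dQ=C_{\mathbf 1}Q=\beta_{\mathbf 1,d}\bigl((-4)^{d-1}C_{\sigma'}\bigr)=(-4)^{d-1}C_\sigma$ directly from the induction hypothesis. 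The companion identity $R^0_dQ'=C_{\mathbf 0}Q'=(-4)^{d-1}C_\sigma$ then follows by the bit-flip involution $\mathbf v\mapsto\mathbf 1-\mathbf v$ of $\square^d$, which preserves $\mathscr I_{\rat}$ and the degree map.
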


\begin{proof}[Proof of Proposition~\ref{prop:vanishing}]
Consider the inclusion $\eta: \mathbb F_2^r \rightarrow \mathbb F_2^d$ induced by the subset $V_1$, and let $\eta_*: \widetilde \Ch(\square^r) \rightarrow \widetilde\Ch(\square^d)$ be the induced map on the level of Chow rings given by Proposition~\ref{prop:func2}.

 Since $\Ch(\square^r)$ is one dimensional in degree $r+1$, using Lemma~\ref{lem:u1}, we infer the existence of a rational number $a \in \mathbb Z[\frac12]$ such that we have $\widetilde F_{\w_0} \widetilde F_{\w_1}\dots \widetilde F_{\w_r} = a \widetilde F_{\e_1} \dots \widetilde F_{\e_r} \widetilde F_{\e_1+\dots+\e_r}$, 
 where, by an abuse of the notation, $\e_1,\dots, \e_r$ denote the basis of $\mathbb F_2^r \hookrightarrow \mathbb F_2^d$ corresponding to the elements of $V_1$. It follows that we have 
 \[ F_{\w_0} \dots F_{\w_r} =  a F_{\e_1}\dots F_{\e_r} F_{\e_1+\dots+\e_r}\]
 in the Chow ring $\Ch(\square^d)[\frac 12]$, and it will be enough to prove that for any $\v \in \square^d$, we have
 \[F_{\e_1}\dots F_{\e_r} F_{\e_1+\dots+\e_r}F_\v =0.\]
 
 We prove this by induction on $r$. 
 
 \begin{itemize}
  \item For the base of our induction $r=1$, we need to prove that $F_{\e_1}^2 F_\v =0$.  
 \end{itemize}

  We have $2 F_{\e_1}^2 F_\v = F_{\e_1} (F_{\e_1}+F_0) (F_\v-F_{\v+\e_1}) + F_{e_1} (F_{\e_1}-F_0) (F_\v+F_{\v+\e_1}) =0,$ which proves the claim.
  
  \begin{itemize}
   \item Suppose $r\geq 2$ and assume that the statement holds for  $r-1$, we show it for $r$. 
  \end{itemize}
 Write 
 \begin{align*}
  2F_{\e_1}\dots F_{\e_r} F_{\e_1+\dots+\e_r}F_\v =&  F_{\e_1}\dots F_{\e_{r-1}} F_{\e_r} F_{\e_1+\dots+\e_r} (F_\v - F_{\v+\e_r}) \\
  &+ F_{\e_1}\dots F_{\e_{r-1}} F_{\e_r} F_{\e_1+\dots+\e_r} (F_\v + F_{\v+\e_r}) \\
  =& F_{\e_1}\dots F_{\e_{r-1}} F_{\e_r} \bigl(F_{\e_1+\dots+\e_r} + F_{\e_1+\dots+\e_{r-1}}\bigr) (F_\v - F_{\v+\e_r})\\
  &+ F_{\e_1}\dots F_{\e_{r-1}} F_{\e_r} \bigl(F_{\e_1+\dots+\e_r} - F_{\e_1+\dots+\e_{r-1}}\bigr) (F_\v + F_{\v+\e_r}),
 \end{align*}
which is zero by the relation $(\mathscr R^*3)$ satisfied by the generators $F_\w$ of $\widetilde \Ch(\square^d)$. In the last equalities, we used the induction assumption that $F_{\e_1}\dots F_{\e_{r-1}}F_{\e_1+\dots+\e_{r-1}} F_{\e_r}=0.$
\end{proof}

\begin{remark}\rm
It would be interesting to find a combinatorial formula for the degree with respect to the dual basis 
$F_{\bf w}$. For any collection of $d+1$ elements $(\w_0, \dots, \w_d) \in (\mathbb F_2^d)^{d+1}$, we have 
\[\deg(F_{\w_0} \dots F_{\w_d}) = \sum_{\u_0, \dots, \u_d} (-1)^{\sum_{i} \langle\u_i,\w_i\rangle} \deg(C_{\u_0} 
\dots C_{\u_d}).\]
So the question can be reformulated in asking for the Fourier transform in $(\mathbb F_2^d)^{d+1}$ 
of the degree map on $(\mathbb F_2^d)^{d+1}$ given by Theorem~\ref{thm:main1-intro}. 
%In particular, is it true that any monomial of degree $d+1$ in the dual basis 
%has either degree equal to zero or to a power of 2  (up to sign) ?
\end{remark}

 \end{document}